\theoremstyle{plain}
\newtheorem{theorem}{Theorem}[section]
\newtheorem{Claim}[theorem]{Claim}
\newtheorem{Fact}[theorem]{Fact}
\newtheorem{lemma}[theorem]{Lemma}
\newtheorem{proposition}[theorem]{Proposition}
\newtheorem{fact}[theorem]{Fact}
\newtheorem{question*}{Question}
\theoremstyle{definition}
\newtheorem{definition}[theorem]{Definition}
\newtheorem{remark}[theorem]{Remark}
\newtheorem{observation}[theorem]{Observation}
\newcommand{\trcl}[1]{\ensuremath{\mathrm{trcl}(\{#1\})}}
\newcommand{\dom}[1]{\ensuremath{\mathrm{dom}}(#1)}
\newcommand{\set}[2]{\ensuremath{\{#1 \,|\, #2 \}}}
\newcommand{\seq}[2]{\ensuremath{\langle #1 \,|\, #2 \rangle}}
\newcommand{\sub}{\subseteq}
\newcommand{\la}{\langle}
\newcommand{\ra}{\rangle}
\newcommand{\then}{\rightarrow}
\newcommand{\bb}{\mathbb}
\newcommand{\beq}{\begin{equation}}
\newcommand{\eeq}{\end{equation}}
\newcommand{\brm}{\begin{remark}\begin{rm}}
\newcommand{\erm}{\end{rm}\end{remark}}
\newcommand{\bce}{\begin{compactenum}}
\newcommand{\ece}{\end{compactenum}}
\newcommand{\Ord}{\mathrm{Ord}}
\newcommand{\Add}{\mathrm{Add}}
\newcommand{\R}{\bb{R}}
\newcommand{\Q}{\bb{Q}}
\renewcommand{\P}{\bb{P}}
\newcommand{\TP}{{\sf TP}}
\newcommand{\wTP}{\mathsf{wTP}}
\newcommand{\SH}{\sf{SH}}
\newcommand{\U}{\bb{U}}
\newcommand{\T}{\bb{T}}
\newcommand{\M}{\bb{M}}
\newcommand{\x}{\times}
\newcommand{\Col}{\mathrm{Col}}
\renewcommand{\S}{\mathbb{S}}
\newcommand{\CU}{\mathrm{CU}}
\newcommand{\ZFC}{\sf ZFC}
\newcommand{\GCH}{\sf GCH}
\newcommand{\SR}{{\sf SR}}
\newcommand{\AP}{{\sf AP}}
\newcommand{\CSR}{\mathsf{CSR}}
\newcommand{\cof}{\mathrm{cof}}
\newcommand{\cf}{\mathrm{cf}}
\newcommand{\lam}{\lambda}
\newcommand{\N}{\bb{N}}
\newcommand{\rest}[0]{\!\restriction\!}
\title[Trees and Stationary Reflection at Double Succ.'s of Regular Card.'s]{Trees and stationary reflection at double successors of regular cardinals}
\author{Thomas Gilton}
\address{University of Pittsburgh\\Department of Mathematics\\166 Thackeray Avenue\\Pittsburgh, PA 15213\\United States of America}
\email{tdg25@pitt.edu}
\author{Maxwell Levine}
\address{University of Freiburg\\Institute of Mathematics\\Ernst-Zermelo-Strasse 1\\Freiburg im Breisgau 79104\\Germany}
\email{maxwell.levine@mathematik.uni-freiburg.de}
\author{{\v S}{\'a}rka Stejskalov{\'a}}
\address{Charles University, Department of Logic, Celetn{\' a} 20, Prague 1, 116 42, Czech Republic}
\address{Institute of Mathematics, Czech Academy of Sciences, {\v Z}itn{\'a} 25, Prague 1, 115 67, Czech Republic}
\email{sarka.stejskalova@ff.cuni.cz}
\thanks{The first author proved results that were independently obtained by the second and third author using slightly different methods. In particular, Theorem \ref{tp-csr-ap} was obtained independently by the second and third authors on the one hand and by the first author (in joint work with Omer Ben-Neria) on the other. When we found out about this, we decided to work together to shape our work into a cohesive statement, combining material as necessary. Theorems \ref{tp-csr-notap} and \ref{csr-notsh} are in the first author's thesis.\\ \indent The second author was supported by FWF Grant \#28157. The third author was partially supported by FWF/GA{\v C}R grant \emph{Compactness principles and combinatorics} (19-29633L)}
\begin{document}

\begin{abstract} We obtain an array of consistency results concerning trees and stationary reflection at double successors of regular cardinals $\kappa$, updating some classical constructions in the process. This includes models of $\CSR(\kappa^{++})\wedge \TP(\kappa^{++})$ (both with and without $\AP(\kappa^{++})$) and models of the conjunctions $\SR(\kappa^{++}) \wedge \wTP(\kappa^{++}) \wedge \AP(\kappa^{++})$ and $\neg \AP(\kappa^{++}) \wedge \SR(\kappa^{++})$ (the latter was originally obtained in joint work by Krueger and the first author \cite{GilKru:8fold}, and is here given using different methods). Analogs of these results with the failure of $\SH(\kappa^{++})$ are given as well. Finally, we obtain all of our results with an arbitrarily large $2^\kappa$, applying recent joint work by Honzik and the third author.\end{abstract}

\maketitle

\section{Introduction}

The study of compactness properties of successor cardinals is a prominent theme in set theory. Much of this research concerns the consistency strength required to obtain certain combinations of these properties. In this paper, we will consider variants of the tree property together with variants of the stationary reflection property at double successors of regular cardinals, and we will construct models using optimal large cardinal hypotheses. Our results will all use some variant of Mitchell forcing. Throughout the paper, we will divide these results into those which require weakly compact  cardinals and those which require Mahlo cardinals.

Mitchell and Silver proved that if $\lambda$ is a weakly compact cardinal, then for every regular $\kappa<\lambda$, there is a forcing extension in which $\lambda = \kappa^{++}$ and $\TP(\kappa^{++})$ holds \cite{M:tree}. (Exact definitions of these combinatorial principles are given below in Section~\ref{sec-definitions}.) It is known that $\SR(\kappa^{++})$ holds in Mitchell's model of the tree property \cite{8fold}, although $\CSR(\kappa^{++})$ fails.\footnote{If $\lambda,\kappa$ are regular with $\lambda > \kappa^{++}$, we can find a stationary subset $S$ of $\lambda\cap \cof(\kappa)$ and a stationary subset $A$ of $\lambda\cap\cof(\kappa^+)$ such that $S$ does not reflect at any point of $A$ (see Jech, Exercise 23.12 \cite{JECHbook}). If $\lambda$ is weakly compact, then $\M(\kappa,\lambda)$ is $\lambda$-cc, so $S$ and $A$ are still stationary in the extension. Since $\kappa$ and $\kappa^+$ are preserved by Mitchell forcing, we see that $S\sub \kappa^{++}\cap \cof(\kappa)$ and  $A\sub\kappa^{++}\cap\cof(\kappa^+)$ in $V[\M(\kappa,\lambda)]$, moreover $S$ cannot reflect at any point of $A$.} On the other hand, Magidor proved in \cite{M:sr} that assuming the existence of a weakly compact $\lambda$ and a regular $\kappa<\lambda$, there is a forcing extension in which $\CSR(\kappa^{++})$ holds. In Magidor's model, $\wTP(\kappa^{++})$ fails because $2^{\kappa} = \kappa^{+}$. Mitchell and Silver as well as Magidor showed that their respective results require a weakly compact cardinal.

This raises the natural question of whether $\TP(\kappa^{++})$ and $\CSR(\kappa^{++})$ can be obtained simultaneously, and moreover whether a weakly compact cardinal is sufficient for this purpose. We prove in this paper that one can indeed obtain both properties from a weakly compact cardinal:

\begin{theorem}\label{tp-csr-ap}
Assume that $\kappa$ is regular with $\kappa^{<\kappa} = \kappa$ and $\lambda>\kappa$ is a weakly compact cardinal. Then there is a model where $\lambda = \kappa^{++}$ and both $\TP(\kappa^{++})$ and $\CSR(\kappa^{++})$ hold, and $\AP(\kappa^{++})$ holds as well.
\end{theorem}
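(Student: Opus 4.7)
The plan is to define a hybrid Mitchell--Magidor forcing $\M=\M(\kappa,\lambda)$ of length $\lambda$ that simultaneously performs a Mitchell-style iteration adding Cohen subsets of $\kappa$ and adds Levy collapses $\Coll(\kappa^+,\alpha)$ at (sufficiently many) intermediate inaccessibles $\alpha<\lambda$, sending them down to $\kappa^+$. One wants $\M$ to be $\lambda$-cc and to collapse $\lambda$ to $\kappa^{++}$ while preserving $\kappa$ and $\kappa^+$. The standard strategy is to arrange a factorization $\M \cong \M_0 \times \T$, where $\M_0$ is $\kappa^+$-cc (carrying the Cohen additions) and $\T$ is $\kappa^+$-closed (carrying term forcings for the collapses); $\kappa^+$-preservation then follows from a standard product argument, and $\lambda$-cc from the weak compactness of $\lambda$ via the stationarily many inaccessibles below $\lambda$.

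With $\M$ in hand, the three properties in the extension $V[G]$ need to be verified. For $\TP(\kappa^{++})$, fix a $\kappa^{++}$-tree $T$, lift the weakly compact embedding $j\colon V \to M$ with critical point $\lambda$ to an embedding $j\colon V[G] \to M[G^*]$ via a master condition, and extract from a node of $j(T)$ on level $\lambda$ a cofinal branch through $T$ in $M[G^*]$; an application of the Silver-style branch lemma to the quotient $j(\M)/G$, decomposed as a $\kappa^+$-cc forcing times a $\kappa^+$-closed forcing, shows that this branch is already in $V[G]$. For $\CSR(\kappa^{++})$, given a stationary $S \subseteq \kappa^{++}$ in $V[G]$, use the same lifting: $\lambda$ has cofinality $\kappa^+$ in $M[G^*]$ because the collapse components of $\M$ guarantee this, and $j(S)\cap \lambda = S$ is stationary in $\lambda$, so $\lambda$ is a reflection point of cofinality $\kappa^+$ for $j(S)$ in $M[G^*]$; by elementarity, the set of such reflection points below $\kappa^{++}$ is stationary, and one upgrades stationarity to club-ness by varying the master condition (equivalently, by noting that the set of $\alpha$ of cofinality $\kappa^+$ at which reflection fails, if nonstationary in $M[G^*]$, is nonstationary in $V[G]$). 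Finally, $\AP(\kappa^{++})$ will follow from the canonical approachability sequence induced by the Mitchell-style decomposition together with the collapses: each $\alpha < \kappa^{++}$ has a nice-name enumeration coming from $\M_0$ together with the collapse onto $\kappa^+$ that witnesses approachability.

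The main obstacle is balancing the collapse components (needed to make the reflection points have cofinality $\kappa^+$ and to upgrade ordinary stationary reflection to the club version) against the tree-property argument, which depends on the quotient $j(\M)/G$ factoring cleanly as a $\kappa^+$-cc part and a $\kappa^+$-closed part with no new cofinal branches through $\kappa^{++}$-trees. Adding too many collapses threatens this branch lemma and also the lifting of $j$; placing them incorrectly can destroy $\kappa^+$ or prevent a master condition from existing. The technical heart of the proof is thus a careful design of $\M$ that admits the required factorization and lifting, together with a verification that the closed quotient on the $M$-side is tame enough not to introduce unwanted cofinal branches.
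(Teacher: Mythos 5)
Your proposal never leaves the Mitchell extension, and that is where it fails: a hybrid Mitchell forcing (with Cohen additions and interleaved collapses) can give you $\TP(\kappa^{++})$, $\AP(\kappa^{++})$, and ordinary $\SR(\kappa^{++})$, but it cannot give you $\CSR(\kappa^{++})$. In fact the opposite holds: as the paper notes in its introduction, in $V$ there are stationary sets $S \subseteq \lambda \cap \cof(\kappa)$ and $A \subseteq \lambda\cap\cof(\kappa^+)$ with $S$ reflecting at \emph{no} point of $A$ (Jech, Exercise~23.12), and since $\M(\kappa,\lambda)$ is $\lambda$-cc and preserves $\kappa^+$, both $S$ and $A$ remain stationary in $V[G]$ with $S$ still failing to reflect at any point of $A$. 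So $\CSR(\kappa^{++})$ is outright \emph{false} after any $\lambda$-cc Mitchell-style poset. The reflection-lifting argument you sketch is a standard way to get $\SR$, but the passage from ``the set of reflection points is stationary'' to ``the set of non-reflection points is nonstationary'' has no justification; ``varying the master condition'' does not produce a club, and the parenthetical remark you give is a tautology rather than an argument.

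What the paper actually does is follow the Mitchell forcing $\M(\kappa,\lambda)$ with a Magidor-style $<\!\lambda$-support iteration $\P_{\lambda^+}$ of club-adding posets $\CU(\dot T_\alpha)$, each shooting a club through the set of $\kappa^+$-cofinal reflection points of a potential counterexample $\dot S_\alpha$ (together with all ordinals of cofinality $\le\kappa$). The real work is then showing (i) that $\P_{\lambda^+}$ is $\lambda$-distributive over $V[\M]$, so $\lambda$ survives and $\AP(\lambda)$ is preserved, and (ii) that one can lift a weakly compact embedding through both $\M$ and $\P_\alpha$ simultaneously. For this the paper proves an Absorption Lemma for Mitchell forcing (Lemma~\ref{absorption}): for a $\kappa^+$-closed $\Q$ of size $\delta$ in $V[\bar G]$, one has $\M(\kappa,\lambda)/\bar G \simeq \Q \times \M(\kappa,\lambda)/\bar G$. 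This lets one absorb the generic $H_\alpha$ for $\P_\alpha$ into a $j(\M)$-generic $\tilde G$, build a master condition for $\P_\alpha$ in $N[\tilde G]$ (whose existence needs the preservation of the $S_\beta$'s through the quotient, which is itself proved from the factorization into a square-$\kappa^+$-cc times $\kappa^+$-closed poset), and then run the branch-preservation argument for $\TP$. None of this machinery appears in your sketch. You do not need explicit Levy collapses for this theorem --- the classical $\M(\kappa,\lambda)$ already forces $\AP$ and makes $\lambda = \kappa^{++}$; the collapses-by-$\ell$ variant is used in the paper only for the $\neg\AP$ results. The missing ingredient in your proposal is the club-adding iteration together with the absorption/lifting analysis that keeps it compatible with $\TP$.
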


In order to prove Theorem~\ref{tp-csr-ap}, we force with Mitchell forcing followed by an iteration of club-adding posets. This requires us to show that the Mitchell poset has an absorption property similar to but distinct from that of the L{\'e}vy collapse. The variant of Mitchell forcing used for proving Theorem~\ref{tp-csr-ap} in fact forces $\AP(\kappa^{++})$, and although there are alternate versions that force $\AP(\kappa^{++})$ to fail \cite{8fold}, it is unclear whether they have an absorption property. Therefore we employ a guessing variant of Mitchell forcing---similar to the one used by Cummings and Foreman \cite{CUMFOR:tp} and used by the first author in his thesis \cite{Gilton-thesis}---to obtain a similar model in which $\AP(\kappa^{++})$ fails. Variations of Mitchell forcing with guessing were introduced by Abraham \cite{ABR:tree}.

\begin{theorem}\label{tp-csr-notap} Assume that $\kappa$ is regular with $\kappa^{<\kappa}=\kappa$ and $\lambda>\kappa$ is a weakly compact cardinal. Then there is a model where $\lambda=\kappa^{++}$ and where $\CSR(\kappa^{++}) \wedge \TP(\kappa^{++})\wedge \neg\AP(\kappa^{++})$ holds.\end{theorem}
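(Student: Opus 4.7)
The plan is to mimic the proof of Theorem \ref{tp-csr-ap}, replacing the base Mitchell forcing with a guessing variant $\M^g(\kappa,\lambda)$ along the lines of Abraham \cite{ABR:tree} and Cummings--Foreman \cite{CUMFOR:tp}. The guessing coordinates are engineered so that every nice name for a fresh subset of $\kappa^+$ is captured by a small sub-forcing; this is precisely the feature used to force $\neg\AP(\kappa^{++})$ while preserving $\TP(\kappa^{++})$. First I would recall (or verify) that $\M^g(\kappa,\lambda)$ is $\kappa^+$-distributive, $\kappa^{++}$-Knaster, preserves $\kappa$, collapses $\lambda$ to $\kappa^{++}$, forces $\neg\AP(\kappa^{++})$ via the guessing structure, and preserves the tree property by the usual weak-compactness-plus-branch-capture argument.

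The main new work is to establish an absorption property for $\M^g(\kappa,\lambda)$ analogous to the one proven for plain $\M(\kappa,\lambda)$ in Theorem \ref{tp-csr-ap}: namely, every sufficiently small poset $\Q$ living in an intermediate generic extension should embed regularly into a suitable quotient of $\M^g(\kappa,\lambda)$. Because the coordinates of $\M^g(\kappa,\lambda)$ are indexed by predicted names for posets appearing along the iteration, inserting $\Q$ amounts to finding a stage $\alpha<\lambda$ at which the guess matches $\Q$ and then exploiting that the tail above $\alpha$ has Mitchell-like shape. I would model the argument closely on the absorption proof for ordinary Mitchell forcing, and the key extra task is to check that projecting and quotienting through the guessing conditions does not obstruct the absorption map, and that the absorbing quotient is still sufficiently distributive.

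With absorption in hand, the rest of the proof follows the blueprint of Theorem \ref{tp-csr-ap}. Over $V[\M^g(\kappa,\lambda)]$ I would iterate club-adding posets of length $\kappa^{++}$, at each stage absorbing the chosen iterand through $\M^g(\kappa,\lambda)$ in order to factor it as a quotient whose tail is tree-property-preserving. This both keeps $\TP(\kappa^{++})$ alive through the iteration and prevents the addition of new fresh subsets of $\kappa^+$, so $\neg\AP(\kappa^{++})$ is carried up from the intermediate model. Standard bookkeeping ensures that in the final extension every stationary $S\sub\kappa^{++}\cap\cof({<}\kappa^+)$ reflects at a point of cofinality $\kappa^+$, giving $\CSR(\kappa^{++})$.

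The hard part will be the absorption lemma for $\M^g(\kappa,\lambda)$. As the authors emphasize in the discussion preceding Theorem \ref{tp-csr-notap}, the natural $\neg\AP$-producing variants from \cite{8fold} are not known to admit absorption; one must therefore design the guessing recipe carefully enough that the two features coexist, which in practice means matching the guessing predicates with exactly the class of small posets one wants to absorb and then verifying that the associated quotients still satisfy a Mitchell-style factorization. Once that absorption is proven, the club-adding iteration is a routine adaptation of the argument for Theorem \ref{tp-csr-ap}.
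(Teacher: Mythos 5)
Your proposal diverges from the paper's argument at the decisive step. You propose to prove an absorption lemma for the guessing variant $\M^g(\kappa,\lambda)$ — that is, to show in the ground model (or an intermediate extension) that every small $\kappa^+$-closed poset $\Q$ can be found at some stage $\alpha<\lambda$ where the guess matches $\Q$, and then to use a Mitchell-style factorization of the tail. But the paper explicitly does \emph{not} take this route, and indeed cautions that it is unclear whether the $\neg\AP$-producing variants of Mitchell forcing admit an absorption lemma of that kind. You acknowledge this caution but then declare the absorption lemma to be "the main new work," without saying how to get around the obstacle.

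The key idea you are missing is the \emph{weakly compact Laver diamond}. The paper works over a ground model in which $\lambda$ carries such a diamond $\ell:\lambda\to V_\lambda$, and uses it when choosing the weakly compact embedding $j:M\to N$: by the Laver-diamond property one can arrange $j(\ell)(\lambda)=\dot{\P}_\alpha$, the name of the particular club-adding iteration one needs to absorb. The guessing variant $\M_\ell(\kappa,\lambda)$ is built \emph{from this same $\ell$}, so this choice of $j$ guarantees that $j(\M_\ell(\kappa,\lambda))$ factors through $\lambda$ as $\M_\ell(\kappa,\lambda)\ast(\dot{\P}_\alpha\times\Add(\kappa,\lambda^*))\ast\dot{\N}_{\lambda^*}$. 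In other words, the absorption happens "at the critical point" of the image poset, not at some stage $\alpha<\lambda$ of the ground-model poset, and it is delivered by the large-cardinal guessing rather than by a combinatorial absorption lemma. Your proposal has no analogue of this mechanism: looking for a stage $\alpha<\lambda$ where $\ell(\alpha)$ names $\dot{\P}_\alpha$ runs into the problem that $\dot{\P}_\alpha$ is a name in the full extension $V[\M]$ (of size $\lambda$), not at any bounded stage; there is no reason for it to appear as a guess below $\lambda$. Once you replace your absorption attempt by the Laver-diamond trick, the rest of your outline (master condition for the club iteration, branch preservation via the square-$\kappa^+$-cc/$\kappa^+$-closed decomposition, and the approachability-avoidance argument using the $\Add(\kappa,\lambda^*)$ factor making $2^\kappa>\lambda$ before $\lambda$ is collapsed) can be carried out essentially as you describe, and matches the paper.
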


We next obtain a similar result with the failure of the Suslin hypothesis---recall that the Suslin hypothesis \emph{fails} if a Suslin tree does in fact exist. Since the existence of Suslin trees is a typical consequence of square principles, which also imply the failure of various forms of stationary reflection, one can view the failure of the Suslin hypothesis as being naturally in tension with stationary reflection properties. Nonetheless, we find that there is compatibility here.

\begin{theorem}\label{csr-notsh} Assume that $\kappa$ is regular with $\kappa^{<\kappa}=\kappa$ and $\lambda>\kappa$ is a weakly compact cardinal. Then there is a model in which $\lambda=\kappa^{++}$ and $\CSR(\kappa^{++}) \wedge \neg \SH(\kappa^{++}) \wedge \neg \AP(\kappa^{++})$ holds. (In particular, $\TP(\kappa^{++})$ fails.)\end{theorem}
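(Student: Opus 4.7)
The strategy parallels Theorem \ref{tp-csr-notap} but modifies the collapse phase so that a $\kappa^{++}$-Suslin tree is produced and survives the subsequent club-adding iteration. The plan is to define a forcing $\M^*$ of the form $\M_g(\kappa,\lambda) \ast \dot{\S}$, where $\M_g(\kappa,\lambda)$ is the guessing variant of Mitchell forcing used for Theorem \ref{tp-csr-notap} (responsible for collapsing $\lambda$ to $\kappa^{++}$ while forcing $\neg\AP(\kappa^{++})$) and $\dot{\S}$ is a name for the standard $\kappa^{+}$-closed, $\kappa^{++}$-c.c.\ poset whose generic is a $\kappa^{++}$-Suslin tree $T$ built from its initial segments. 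On top of $\M^*$ one performs a $\lambda$-length iteration $\dot{\mathcal R}$ of club-adding posets, book-kept along a $\lambda$-sequence of names for stationary subsets of $\kappa^{++}\cap\cof(\kappa)$ that threaten non-reflection, exactly as in Theorem \ref{tp-csr-notap}.

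The first step is to verify the basic combinatorial properties of $\M^* \ast \dot{\mathcal R}$: preservation of $\kappa$ and $\kappa^{+}$, preservation of the stationary sets one intends to reflect, and collapse of $\lambda$ to $\kappa^{++}$. The second step is to show, using the weak compactness of $\lambda$ together with the absorption property of the guessing Mitchell forcing established for Theorem \ref{tp-csr-notap}, that every stationary $S\subseteq \kappa^{++}\cap\cof(\kappa)$ in the final model reflects at a closed unbounded set of points; this is the $\CSR(\kappa^{++})$ conclusion, and the argument is essentially a transcription of the one for Theorem \ref{tp-csr-notap}, since the $\dot{\S}$-factor is highly closed and does not interfere with the weakly compact embedding argument or with the guessing that witnesses $\neg\AP(\kappa^{++})$.

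The main obstacle is preserving the Suslin tree $T$ through the club-adding iteration $\dot{\mathcal R}$. Club-adding posets are not $\kappa^{+}$-closed and can in general destroy Suslin trees, so one needs a dedicated preservation argument. The plan is to show that the product $\dot{\mathcal R} \times T$, evaluated in $V^{\M^*}$, is $\kappa^{++}$-c.c., by means of a $\Delta$-system argument that exploits the $\kappa^{+}$-closure of the quotients of $\M^*$ and the internal homogeneity of the generic Suslin tree added by $\dot{\S}$; equivalently, one shows directly that no initial segment of $\dot{\mathcal R}$ adjoins either a new cofinal branch or a $\kappa^{++}$-sized antichain through $T$. A further check, analogous to the one in Theorem \ref{tp-csr-ap}, confirms that no new cofinal $\kappa$-sequences are added to ordinals of cofinality $\kappa^{+}$, which is also used for the $\CSR$ argument. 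Once the preservation goes through, $T$ remains a $\kappa^{++}$-Suslin tree in the final extension, witnessing $\neg\SH(\kappa^{++})$ (and, since $T$ is Aronszajn, the failure of $\TP(\kappa^{++})$); combined with $\CSR(\kappa^{++})$ and $\neg\AP(\kappa^{++})$, this delivers the theorem.
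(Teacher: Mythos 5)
Your high-level skeleton (guessing Mitchell forcing, then a poset adding a Suslin tree, then a club-adding iteration) matches the paper's choice of $\M_\ell(\kappa,\lambda)\ast\dot{\S}\ast\dot{\P}$, and you correctly identify preservation of the Suslin tree through the iteration as the crux. But there are two serious gaps in your proposal.

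First, the remark that ``the $\dot{\S}$-factor is highly closed and does not interfere with the weakly compact embedding argument'' is precisely where the argument breaks. Kunen's poset $\S$ is $\lambda$-strategically closed, not $\lambda$-closed, and to lift an embedding $j:M[G]\to N[\tilde G]$ through the $\S$-stage you must exhibit a master condition $t^*\in j(\S)$ extending the whole generic tree $T$. Since $T$ has height $\lambda$, such a $t^*$ of height $>\lambda$ can only exist if $N[\tilde G]$ already contains a cofinal branch $B$ through $T$. This branch does not come for free: the paper makes it appear by arranging that $j(\ell)(\lambda)=\dot{\bb K}_\alpha$ where $\bb K_\alpha=\S\ast(\dot{\T}\times\dot{\P}_\alpha)$ — i.e.\ the guessing function must guess the iteration \emph{including the tree forcing $\dot{\T}$}, so that the quotient $j(\M)/G$ adjoins the branch $B$. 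Simply guessing $\dot{\P}_\alpha$ as in Theorem~\ref{tp-csr-notap}, as your proposal suggests, would leave you with no way to build $t^*$.

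Second, the proposed mechanism for preserving Suslinity --- showing $\dot{\mathcal R}\times T$ is $\kappa^{++}$-c.c.\ by a $\Delta$-system argument --- does not work. The club-adding iteration $\P$ is $\lambda^+$-c.c.\ but has size $\lambda$ and conditions of size up to $\kappa^+$; it is not $\lambda$-c.c.\ (where $\lambda=\kappa^{++}$), and therefore $\P\times T$ cannot be $\lambda$-c.c.\ either (recall that $\P\times T$ is $\lambda$-c.c.\ iff $T$ forces $\P$ to be $\lambda$-c.c.). A $\Delta$-system lemma at $\kappa^{++}$ would require conditions of size $<\kappa^+$ (or similar structure), which fails here. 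The paper's preservation of Suslinity (Lemma~\ref{mainlifting-wcnotsh}(\ref{mainlifting-wcnotsh-suslinpreservation})) is instead proved simultaneously with the lifting lemma and the distributivity, by an inductive argument: one lifts $j:M[G\ast T\ast H_\alpha]\to N[\tilde G\ast\tilde T\ast\tilde H_\alpha]$ with $\tilde T\ni t^*$ (the minimal extension of $T$ by $B$), and then a density argument in $\T\times\P_\alpha$ over $V[G\ast T]$, exploiting the homogeneity of $T$ and the genericity of the branch $B$, shows that every node of $t^*$ at level $\lambda$ extends an element of the given antichain $A$; this yields $j(A)=A$, hence $A$ is bounded. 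The interdependence between the lifting, the distributivity, and the Suslinity of $T$ at earlier stages (needed so that the $S_\beta$'s stay stationary in $N[G\ast T\ast B\ast H_\beta]$) is why these three statements must be proved together by simultaneous induction on $\alpha<\lambda^+$, and no chain-condition shortcut replaces this.
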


Upon reading through our construction, the reader may observe that one could use the standard L{\'e}vy collapse in the construction to obtain the conjunction of $\neg \SH(\kappa^{++})$ with $\CSR(\kappa^{++})$, but it is the failure of $\AP(\kappa^{++})$ for which our use of Mitchell forcing is needed.

We proceed to consider compactness properties that can be obtained from Mahlo cardinals. Harrington and Shelah showed that $\SR(\kappa^{++})$ is equiconsistent with a Mahlo cardinal \cite{HS:sr}, and Mitchell and Silver showed that $\wTP(\kappa^{++})$ is also equiconsistent with a Mahlo cardinal \cite{M:tree}. We simultaneously achieve both properties from a Mahlo cardinal, together with the approachability property, using techniques similar to those used to prove Theorem~\ref{tp-csr-ap}.

\begin{theorem}\label{wtp-sr-ap}
Assume that $\kappa$ is regular with $\kappa^{<\kappa} = \kappa$ and $\lambda>\kappa$ is a Mahlo cardinal. Then there is a model in which $\lambda = \kappa^{++}$ and in which $\wTP(\kappa^{++})$, $\SR(\kappa^{++})$, and $\AP(\kappa^{++})$ all hold.\end{theorem}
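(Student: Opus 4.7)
The overall strategy mirrors the proof of Theorem~\ref{tp-csr-ap}: first force with Mitchell forcing $\M(\kappa,\lambda)$, then over the resulting model iterate club-adding posets of length $\kappa^{++}$. The distinguishing feature of the present situation is that $\lambda$ is only assumed to be Mahlo rather than weakly compact. This is exactly the hypothesis under which Mitchell forcing is known to make $\lambda = \kappa^{++}$ while forcing $\wTP(\kappa^{++})$ and $\AP(\kappa^{++})$ in the intermediate extension: the absence of a weakly compact cardinal means we only obtain $\wTP(\kappa^{++})$ rather than $\TP(\kappa^{++})$, and correspondingly the reflection principle we will eventually force is $\SR(\kappa^{++})$ rather than $\CSR(\kappa^{++})$.

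In the Mitchell extension $V_1 := V[\M(\kappa,\lambda)]$, define by $\kappa^{++}$-bookkeeping an iteration $\la \P_\alpha, \dot{\Q}_\beta : \alpha \le \kappa^{++},\ \beta < \kappa^{++} \ra$ of club-adding posets, where $\dot{\Q}_\beta$ shoots a club through the complement of the $\beta$-th enumerated non-reflecting stationary subset of $\kappa^{++} \cap \cof(<\kappa^+)$. With the appropriate support, the iteration is $\kappa^+$-closed, and a standard $\Delta$-system argument (using the Mahloness of $\lambda$ in $V$) yields the $\kappa^{++}$-chain condition, so cofinalities above $\kappa$ are preserved and $2^\kappa$ remains at least $\kappa^{++}$. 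By the bookkeeping, every stationary subset of $\kappa^{++} \cap \cof(<\kappa^+)$ in the final model either has a reflection point or has been destroyed along the way, yielding $\SR(\kappa^{++})$. Preservation of $\AP(\kappa^{++})$ follows from the closure of the iteration tail combined with the structure of the approachable sequence already present in $V_1$.

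The key technical step, paralleling Theorem~\ref{tp-csr-ap}, is the preservation of $\wTP(\kappa^{++})$. Suppose toward contradiction that $T$ is a special $\kappa^{++}$-Aronszajn tree in the final model $V_2$, witnessed by a specializing function $f \colon T \to \kappa^+$. Using the absorption property of Mitchell forcing established in the proof of Theorem~\ref{tp-csr-ap}, one rearranges the composite forcing over $V$ so that it factors as a Mitchell-like poset up to a certain stage followed by a highly closed tail; since the tail is too closed to introduce a new specializing function for a tree of height $\kappa^{++}$, both $T$ and $f$ must effectively appear in a Mitchell extension, contradicting $\wTP(\kappa^{++})$ in $V_1$.

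The main obstacle will be verifying the absorption-style factoring in the purely Mahlo setting. Unlike the weakly compact case, one cannot appeal to branch reflection, so the argument must instead show that the club-adding iterands combine with Mitchell forcing to produce an isomorphic copy of a Mitchell-type poset plus a sufficiently closed remainder, and that special Aronszajn trees and their specializing functions are preserved correctly across this isomorphism. A secondary concern is ensuring that the iteration of club-adding posets by itself does not introduce special Aronszajn trees; this is handled by the closure and chain condition of the iteration together with the fact that specializing functions can be approximated from the Mitchell stage.
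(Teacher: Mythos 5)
The proposal correctly identifies the rough architecture (Mitchell forcing followed by a club-adding iteration) and gets the easy pieces ($\SR$ via bookkeeping, $\AP$ since it already holds in the Mitchell model and persists through a $\lambda$-distributive iteration). But the argument for $\wTP(\kappa^{++})$ has a genuine gap, and the central technical lemma is missing entirely.

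The claim that ``the tail is too closed to introduce a new specializing function for a tree of height $\kappa^{++}$, [so] both $T$ and $f$ must effectively appear in a Mitchell extension'' is false. The club-adding iteration is only $\kappa^{+}$-closed, and the best one can prove (after substantial work) is that it is $\lambda$-distributive over $V[\M]$. Neither $\kappa^{+}$-closure nor $\lambda$-distributivity stops a forcing from adding a new tree of height $\lambda$ together with a specializing function $f:T\to\kappa^{+}$; these are objects of size $\lambda$, not bounded sequences. So there is no way to push $T$ and $f$ into the Mitchell extension, and consequently one cannot simply invoke $\wTP(\kappa^{++})$ in $V[\M]$.

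There is a second, related omission: you never establish that $\lambda$ is preserved by the iteration, i.e.\ that $\P_{\lambda^{+}}$ is $\lambda$-distributive over $V[\M]$. Closure below $\kappa^{+}$ plus the $\lambda^{+}$-chain condition handle cardinals $\leq\kappa^{+}$ and $\geq\lambda^{+}$, but say nothing about $\lambda$ itself, and this is exactly where the iteration is dangerous (each iterand destroys a stationary set). This is the technical core of the theorem.

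The paper resolves both issues with Mahlo reflection via rich elementary submodels, not absorption-style global factoring. Fix $N=H_{\Theta}$ and a rich $M\prec N$; let $\pi_{M}$ be the transitive collapse, $\bar\lambda=\pi_{M}(\lambda)$, and $j=\pi_{M}^{-1}:\bar M\to N$ an elementary embedding with critical point $\bar\lambda$. One proves by induction on $\alpha<\lambda^{+}$ that (i) $j$ lifts to $\bar M[\bar G][\bar H]\to N[G][H]$, (ii) $\P_{\alpha}$ is $\lambda$-distributive, and (iii) the collapses $\bar S_{\alpha}$ of the target stationary sets are already nonstationary in $N[\bar G][\bar H]$. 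The last point is what makes $\bar\P_{\alpha}$ have a $\bar\lambda$-closed dense subset, which is what lets the Absorption Lemma absorb $\bar\P_{\alpha}$ into the Mitchell quotient $\M/\bar G$. For $\wTP$: take names $\dot T,\dot f$ in a rich $M$, lift $j$, note $\bar T=T\restriction\bar\lambda$ and $\bar f=f\restriction\bar\lambda$; then $\bar T$ has a cofinal branch $b$ in $N[G][H]$ (any node of $T$ above level $\bar\lambda$ gives one), and one shows $b\in N[\bar G][\bar H]$ using $\lambda$-distributivity of $\P_{\alpha}$ and Lemma~\ref{preservation}; but then $\bar f\restriction b$ collapses $\bar\lambda$ to $\kappa^{+}$, contradicting the fact that $\bar\M\ast\bar\P_{\alpha}$ preserves $\bar\lambda$. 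This reflection-at-a-rich-submodel argument is the Mahlo analogue of the weakly compact lifting, and is what your proposal is missing; the ``absorption-style factoring in the purely Mahlo setting'' you flag as an obstacle is in fact not the right target.

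Minor points: the iteration should have length $\lambda^{+}$ (i.e.\ $\kappa^{+++}$ in the extension), not $\kappa^{++}$, and it has the $\lambda^{+}$-cc rather than the $\kappa^{++}$-cc. Mahloness of $\lambda$ is not what yields the chain condition; it is what yields a supply of rich submodels.
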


It is well-known that the failure of $\AP(\mu^+)$ implies $\wTP(\mu^+)$ for an arbitrary cardinal $\mu$ using the equivalence of $\wTP(\mu^+)$ with the failure of weak square $\square_\mu^*$ and the fact that $\square_\mu^*$ implies $\AP(\mu^+)$. Krueger and the first author constructed a model with $\SR(\aleph_2)$ and the failure of $\AP(\aleph_2)$ from a Mahlo cardinal \cite{GilKru:8fold}, so in their model $\wTP(\aleph_2)$ holds. More specifically, they use a \emph{disjoint stationary sequence on $\aleph_2$} (for the definition see \cite{GilKru:8fold}) because this notion is amenable to preservation theorems. However, Theorem~\ref{wtp-sr-ap} separates $\wTP(\kappa^{++})$ and $\neg\AP(\kappa^{++})$ in the presence of $\SR(\kappa^{++})$.

Using the guessing variant of Mitchell forcing employed for proving Theorem~\ref{tp-csr-notap}, together with an application of the principle $\diamondsuit_\lambda(\text{Reg})$, we show that we can modify the proof of Theorem~\ref{wtp-sr-ap} to obtain the result of Krueger and the first author without using disjoint stationarity.

\begin{theorem}\label{gk}
Assume that $\kappa$ is regular with $\kappa^{<\kappa} = \kappa$ and $\lambda>\kappa$ is a Mahlo cardinal. Then there is a model in which $\lambda = \kappa^{++}$ and in which $\SR(\kappa^{++})$ holds and $\AP(\kappa^{++})$ fails.\end{theorem}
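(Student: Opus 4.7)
The plan is to combine the proof strategy of Theorem~\ref{wtp-sr-ap}---Mitchell-style forcing followed by a $\diamondsuit_\lambda(\Reg)$-guided iteration of club-adding posets---with the guessing variant of Mitchell forcing used in the proof of Theorem~\ref{tp-csr-notap}, call it $\M^*(\kappa, \lambda)$, which was designed precisely to force $\neg \AP(\kappa^{++})$. I would work with the two-step forcing $\M^*(\kappa, \lambda) * \dot{\mathbb{Q}}$, where $\dot{\mathbb{Q}}$ is an iteration of length $\lambda$ of club-adding posets. Fix a $\diamondsuit_\lambda(\Reg)$ sequence $\la \dot{S}_\alpha : \alpha \in \Reg \cap \lambda \ra$, where each $\dot{S}_\alpha$ is coded as an $\M^*(\kappa, \alpha)$-name for a subset of $\alpha$. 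At each inaccessible stage $\alpha$, if $\dot{S}_\alpha$ is forced to be stationary in $\alpha$ (in the appropriate intermediate model), shoot a club through its set of reflection points; otherwise do trivial forcing. The principle $\diamondsuit_\lambda(\Reg)$ holds in $V$ under the Mahlo hypothesis (possibly after a brief preparatory forcing) and is preserved by the $\lambda$-cc of the main forcing.

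Cardinal preservation ($\lambda = \kappa^{++}$) follows from the standard analysis of $\M^*(\kappa, \lambda)$ together with the closure of $\dot{\mathbb{Q}}$. To verify $\SR(\kappa^{++})$: given a stationary $S \sub \kappa^{++}$ in the final extension, use Mahloness of $\lambda$ with the $\diamondsuit$ sequence to locate an inaccessible $\alpha < \lambda$ at which $S \cap \alpha$ is stationary in $\alpha$ and at which the diamond correctly anticipates a name for $S \cap \alpha$; the iteration step at $\alpha$ then ensures $\alpha$ is a reflection point of $S$ in the final model. To verify $\neg \AP(\kappa^{++})$, appeal to the witness produced by $\M^*(\kappa, \lambda)$ from the proof of Theorem~\ref{tp-csr-notap} and check that the club-adding iteration---being sufficiently strategically closed on the relevant tail---does not add a new approachability witness.

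The main obstacle is twofold and lies in the interaction between the two ingredients. First, one must establish, at each inaccessible $\alpha < \lambda$, a factorization $\M^*(\kappa, \lambda) \cong \M^*(\kappa, \alpha) * \dot{\mathbb{R}}_\alpha$ with $\dot{\mathbb{R}}_\alpha$ having enough closure (e.g.\ admitting a $\kappa^+$-closed dense subposet in $V^{\M^*(\kappa, \alpha)}$) so that stationarity is preserved upon passing to the quotient and the $\SR$ argument goes through exactly as it does for standard Mitchell forcing in the proof of Theorem~\ref{wtp-sr-ap}. Second, one must re-examine the guessing argument of Theorem~\ref{tp-csr-notap} and verify that the witness to $\neg \AP(\kappa^{++})$ survives after $\dot{\mathbb{Q}}$ is added; the critical point is that the club-shooting posets do not introduce the kind of short functions that could approachability-witness the relevant club. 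Both verifications closely parallel analyses already carried out in the proofs of Theorems~\ref{wtp-sr-ap} and~\ref{tp-csr-notap}, and the novelty here lies chiefly in their simultaneous compatibility, which obviates the need for the disjoint stationary sequences used in the original argument of Krueger and the first author.
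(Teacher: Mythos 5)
Your broad plan---use the guessing variant $\M_\ell(\kappa,\lambda)$ to secure $\neg\AP(\kappa^{++})$ and follow it with a club-adding iteration for $\SR(\kappa^{++})$, invoking $\diamondsuit_\lambda(\Reg)$ along the way---matches the paper's. But the way you deploy $\diamondsuit_\lambda(\Reg)$ is misplaced, and this gap is fatal to the argument as written.

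In the paper, $\diamondsuit_\lambda(\Reg)$ is \emph{not} used to guess names for stationary subsets of $\lambda$ and guide the club-adding iteration. Rather, it is used (via Lemma~\ref{mahlolaverfunction}) to manufacture the Laver-like function $\ell:\lambda\to V_\lambda$ that is fed into the definition of $\M_\ell(\kappa,\lambda)$. The point is that when $M\prec N$ is a rich model containing the relevant parameters, $\ell(\bar\lambda_M)$ must be able to guess $\pi_M(\dot\P_\alpha)$, i.e.\ the (collapsed) club-adding stage itself. Only then does the quotient $\M_\ell(\kappa,\lambda)/\bar G$ factor as $(\bar\P_\alpha\times\Add(\kappa,\lambda^*))\ast\dot\N_{\lambda^*}$, which is what lets you turn an $\bar\M\ast\bar\P_\alpha$-generic into a full $\M_\ell\ast\P_\alpha$-generic and lift the embedding $j:\bar M\to N$. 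You gesture at a factorization $\M^*(\kappa,\lambda)\cong\M^*(\kappa,\alpha)*\dot{\mathbb R}_\alpha$ with $\dot{\mathbb R}_\alpha$ suitably closed, but closure is not the point: without the guessing, $\dot{\mathbb R}_\alpha$ has no reason to contain a copy of $\bar\P_\alpha$, and the lifting lemma (the analogue of Lemma~\ref{mainlifting2-notap}) collapses. For classical $\M(\kappa,\lambda)$ this is supplied by the Absorption Lemma (Lemma~\ref{absorption}); for $\M_\ell$ that lemma is unavailable and the guessing must stand in for it.

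The iteration you describe is also different from the paper's, and it is not clear it yields $\SR(\kappa^{++})$. The paper's $\P_{\lambda^+}$ lives in $V[\M]$, has length $\lambda^+$, and at each step shoots a club \emph{disjoint} from a name for a stationary set with no reflection points of cofinality $\kappa^+$---the Harrington--Shelah device of killing potential counterexamples. You instead propose a $\diamondsuit$-guided iteration of length $\lambda$, shooting a club \emph{through} the set of reflection points of a guessed $\dot S_\alpha\subset\alpha$ at inaccessible stages $\alpha$. That is the Magidor $\CSR$ technique, which requires weak compactness (to preserve the stationarity of the guessed set in the lifted model); from a Mahlo cardinal there is no such preservation, and a club on reflection points of $S\cap\alpha$ inside $\alpha$ does not obviously yield a reflection point of $S$ in the final model. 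Finally, your claim that the iteration ``being sufficiently strategically closed on the relevant tail, does not add a new approachability witness'' does not hold: $\P_\alpha$ is only $\kappa^+$-closed, not $\lambda$-strategically closed, and $\neg\AP(\kappa^{++})$ is not preserved by such forcings in general. The paper argues $\neg\AP$ by lifting an embedding to $j:\bar M[\bar G][\bar H]\to N[G][H]$, pulling a short club for $\bar\lambda_M$ into a cofinal branch of a tree $U=(\bar\lambda^{<\kappa^+})^{N[\bar G][\bar H]}$, and showing via the quotient's branch-preservation properties that this branch must already live in a model where $\bar\lambda_M=\kappa^{++}$, a contradiction.
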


These techniques can also be used to obtain a Mahlo analog of Theorem~\ref{csr-notsh}:

\begin{theorem}\label{sr-notsh} Assume that $\kappa$ is regular with $\kappa^{<\kappa} = \kappa$ and $\lambda>\kappa$ is a Mahlo cardinal. Then there is a model in which $\lambda=\kappa^{++}$ and $\SR(\kappa^{++}) \wedge \neg \SH(\kappa^{++}) \wedge \neg \AP(\kappa^{++})$ holds.\end{theorem}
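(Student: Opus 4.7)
The strategy is to mirror the proof of Theorem~\ref{csr-notsh}, replacing the weakly-compact-driven iteration toward $\CSR(\kappa^{++})$ by the Mahlo-driven iteration toward $\SR(\kappa^{++})$ used in the proofs of Theorems~\ref{wtp-sr-ap} and~\ref{gk}. First, force with the guessing variant of Mitchell forcing $\M$ at $\kappa$ and $\lambda$ from the proof of Theorem~\ref{gk}: this collapses $\lambda$ to $\kappa^{++}$, preserves $\kappa$ and $\kappa^+$, and forces $\neg \AP(\kappa^{++})$. Moreover, exactly as in the construction behind Theorem~\ref{csr-notsh}, the $\kappa^+$-closed quotient of $\M$ adds a Suslin tree $T$ on $\kappa^{++}$, which will be the witness to $\neg \SH(\kappa^{++})$ in the final model.

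In $V[\M]$, we then carry out a $\lambda$-length iteration $\P$ of $\kappa^+$-closed, $T$-preserving club-shooting posets, with supports of size at most $\kappa$. Using $\diamondsuit_\lambda(\Reg)$, which holds in $V[\M]$ by the Mahloness of $\lambda$, we guess at each inaccessible stage $\alpha$ a nice name for a potential non-reflecting stationary subset of $\kappa^{++} \cap \cof(<\kappa^+)$ and shoot a club through its complement. The Mahlo-style bookkeeping (as in the Harrington--Shelah construction, recalled in the proof of Theorem~\ref{wtp-sr-ap}) ensures that in $V[\M][\P]$ every stationary subset of $\kappa^{++} \cap \cof(<\kappa^+)$ has a reflection point, giving $\SR(\kappa^{++})$.

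The main obstacle is to verify that the iteration $\P$ simultaneously preserves the Suslin tree $T$, the failure of $\AP(\kappa^{++})$, and the cardinal $\kappa^{++} = \lambda$. For the Suslin tree, we appeal to a preservation theorem for $T$-preserving $\kappa^+$-closed iterations with supports of size at most $\kappa$, analogous to the one underlying the proof of Theorem~\ref{csr-notsh}; the essential point is that the freedom to choose which club to shoot through the complement of a stationary set allows us to avoid creating a branch through $T$. For $\neg \AP(\kappa^{++})$, we use that the club-shooting steps of $\P$ are $\kappa^+$-closed (and hence add no sequences of length $<\kappa^+$) and argue, as in the proof of Theorem~\ref{gk}, that the non-approachability witness produced by $\M$ survives $\P$. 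Cardinal preservation is handled by a standard $\lambda$-cc analysis of the two-step iteration $\M * \dot\P$. Once these preservations are established, $\neg \SH(\kappa^{++})$ follows from the survival of $T$, completing the proof.
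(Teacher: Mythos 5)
There is a genuine gap in the proposal, centered on how the $\kappa^{++}$-Suslin tree is produced.

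You claim that ``exactly as in the construction behind Theorem~\ref{csr-notsh}, the $\kappa^+$-closed quotient of $\M$ adds a Suslin tree $T$ on $\kappa^{++}$.'' That is not what happens, in either construction. The Mitchell forcing $\M_\ell(\kappa,\lambda)$ does not add a $\kappa^{++}$-Suslin tree; no quotient of it is claimed to do so, and indeed the whole analysis of its quotients as projections of a square-$\kappa^+$-cc times a $\kappa^+$-closed forcing is what the paper uses to show such quotients do \emph{not} add cofinal branches to $\kappa^{++}$-trees. In the paper's construction for Theorem~\ref{csr-notsh} (and, by design, for Theorem~\ref{sr-notsh}), the Suslin tree is added deliberately by a separate poset: Kunen's forcing $\S$, taken over $V[\M]$. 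The target model is therefore $V[\M \ast \S \ast \P]$, where $\P$ is then defined over $V[\M\ast\S]$. Your two-step model $V[\M\ast\P]$ has no reason to contain a Suslin tree at all, so $\neg\SH(\kappa^{++})$ is unaddressed.

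Omitting $\S$ also changes everything downstream. The quantity that the guessing function $\ell$ must predict (via the rich-model reformulation of $\diamondsuit_\lambda(\mathrm{Reg})$, which, incidentally, holds in $V$ and not in $V[\M]$ and does not follow from Mahloness alone) is $\bb{K}_\alpha = \S\ast(\dot\T\times\dot\P_\alpha)$, not $\P_\alpha$. And the preservation of Suslinity of $T$ under $\P_\alpha$ is not, as you suggest, a soft consequence of having ``freedom to choose which club to shoot'': the clubs are determined by genericity, and the paper's argument (Lemma~\ref{mainlifting-mahlonotsh}(\ref{mainlifting-mahlonotsh-suslinpreservation})) works by deliberately forcing a branch $\bar B$ through $\bar T$, extending $\bar T$ to the minimal condition $t^*\in j(\S)$ via Fact~\ref{lemma:branchextend}, and then using the homogeneity of Kunen's poset and a density argument in $\T\times\P_\alpha$ to show that a ground-model maximal antichain is already maximal in $t^*$, hence bounded in $T$. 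That step is where most of the real work lies, and your sketch does not engage with it. (The smaller discrepancies --- your iteration is stated with length $\lambda$ and $\le\!\kappa$-supports rather than length $\lambda^+$ with $<\lambda$-supports --- would matter for the $\lambda^+$-cc bookkeeping, but are secondary to the missing $\S$.)
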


Our above-mentioned models all have the property that $2^{\kappa} = \kappa^{++}$. We end the paper by showing that we can in fact obtain both results with large $2^{\kappa}$.

\begin{theorem}\label{wc-large-cont} There are models for the conclusions of Theorems \ref{tp-csr-ap}, \ref{tp-csr-notap}, and \ref{csr-notsh} in which $2^{\kappa}=\mu>\lambda$, for arbitrary $\mu$ of cofinality greater than $\kappa$.\end{theorem}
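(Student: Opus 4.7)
The plan is to follow the same overall strategy as in Theorems~\ref{tp-csr-ap}, \ref{tp-csr-notap}, and \ref{csr-notsh}, but to replace the base Mitchell-type forcing with a variant $\M(\kappa,\mu,\lambda)$ that incorporates $\Add(\kappa,\mu)$ in order to force $2^\kappa=\mu$ while still producing $\lambda=\kappa^{++}$. This is the construction developed in the recent joint work of Honzik and the third author that is referenced in the abstract. A condition in $\M(\kappa,\mu,\lambda)$ has an $\Add(\kappa,\mu)$-coordinate together with a term-forcing component handling the collapses of ordinals in $(\kappa^+,\lambda)$, and the whole forcing projects onto $\Add(\kappa,\mu)$. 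The guessing variant of Mitchell forcing used for Theorem~\ref{tp-csr-notap} admits a parallel modification with the same factorization, since the guessing condition is local to the $\lambda$-coordinate. The hypothesis $\cf(\mu)>\kappa$ is needed precisely so that $\Add(\kappa,\mu)$ does not collapse $\mu$, so that $2^\kappa=\mu$ in the extension.

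The first step is to verify that $\M(\kappa,\mu,\lambda)$ and its guessing variant retain the structural features appealed to in our earlier proofs: they are $\kappa^+$-cc and $\lambda$-cc (using weak compactness of $\lambda$), and they admit a factorization as a product or iteration of $\Add(\kappa,\mu)$ with a sufficiently closed term forcing. These are the only properties of Mitchell forcing used in our chain condition, termspace, and absorption arguments, so once they are in hand, the corresponding analyses from Theorems~\ref{tp-csr-ap}--\ref{csr-notsh} go through essentially verbatim.

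Next, I would rerun the iteration of club-adding posets on top of this new base. The quotient and preservation analysis depends only on the chain condition and closure of the base, both of which remain valid, so the preservation of $\CSR(\kappa^{++})$ and, where applicable, $\TP(\kappa^{++})$ or $\neg\SH(\kappa^{++})$ proceeds as before. The success or failure of $\AP(\kappa^{++})$ similarly depends on features of the $\lambda$-coordinate and is unaffected by the enlargement of the Cohen part.

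The main obstacle is verifying the tree property for the extension of Theorem~\ref{tp-csr-ap}: one must lift the weakly compact embedding $j: V \to M$ to act on $\M(\kappa,\mu,\lambda)$ even when $\mu$ is arbitrarily large above $\lambda$. This is handled by a master-condition argument using the sufficient closure of $\Add(\kappa,\mu)$ from the perspective of $M$, which is precisely the technical step for which the Honzik--Stejskalov{\'a} framework is designed. Once the lifting is in place, the branch-elimination argument and the absorption argument for the club-adding iteration proceed as in the proofs of Theorems~\ref{tp-csr-ap}, \ref{tp-csr-notap}, and \ref{csr-notsh}.
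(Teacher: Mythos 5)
Your plan is to rebuild the base poset as a three-parameter Mitchell variant $\M(\kappa,\mu,\lambda)$ with a long Cohen part, then rerun the entire club-adding construction and lifting analysis from scratch. The paper does something different and more modular: it takes the \emph{already-constructed} models $W$ from Theorems~\ref{tp-csr-ap}--\ref{csr-notsh} and forces with $\Add(\kappa,\mu)$ \emph{over} $W$, then establishes that each desired property survives. For $\CSR(\kappa^{++})$ it invokes the Honzik--Stejskalov{\'a} preservation theorem (Fact~\ref{th:csr}); for $\neg\SH(\kappa^{++})$ it uses that Knaster forcings preserve Suslinity; for $\neg\AP(\kappa^{++})$ it uses the Gitik--Krueger theorem for $\kappa$-centered posets together with Lemma~\ref{Cohen:centered} and Proposition~\ref{AP-trick}; and for $\TP(\kappa^{++})$ it gives a quotient analysis (Lemma~\ref{tp-large-cont}) that reuses the lifting from Lemma~\ref{mainlifting} and then traces where the relevant branch lives via Facts~\ref{F:ccc_Closed}, \ref{F:Square_cc} and Observation~\ref{product-trick}. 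This two-stage route avoids having to re-establish absorption, factorization, and lifting for a new poset.

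There are concrete gaps in your version. First, you state that the Mitchell-type posets are $\kappa^+$-cc; they are not (the collapse coordinate alone has antichains of size $\kappa^+$), only the $\Add(\kappa,\cdot)$ projection is $\kappa^+$-cc, and the full poset is $\lambda$-Knaster. This matters because the claim that your earlier arguments ``go through essentially verbatim'' relies on structural properties that you have misstated. Second, you characterize the Honzik--Stejskalov{\'a} contribution as a lifting or master-condition framework built around a modified Mitchell poset; in fact what the paper imports from \cite{HS:u} is a \emph{preservation theorem} stating that $\Add(\kappa,\mu)$ preserves $\CSR(\kappa^{++})$, which plays no role in any master-condition argument. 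Third, you appeal to ``sufficient closure of $\Add(\kappa,\mu)$'' to drive a master-condition step; but $\Add(\kappa,\mu)$ is only $\kappa$-closed, not $\kappa^+$-closed, so there is no closure to draw on there, and indeed the paper's treatment of the Cohen coordinate is to factor $\Add(\kappa,j(\lambda)) = \Add(\kappa,\lambda)\times\Add(\kappa,[\lambda,j(\lambda)))$ and use that $j$ is the identity below $\lambda$, not a closure-based master condition. Finally, the reason $\cf(\mu)>\kappa$ is required is not that $\Add(\kappa,\mu)$ would otherwise collapse $\mu$ (it is $\kappa^+$-cc and collapses nothing above $\kappa$); it is that $\cf(2^\kappa)>\kappa$ by K\"onig's theorem, so $2^\kappa=\mu$ is simply impossible unless $\cf(\mu)>\kappa$. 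To make your ``longer Cohen part'' approach rigorous you would need to (i) prove an absorption lemma for $\M(\kappa,\mu,\lambda)$ (the paper's Lemma~\ref{absorption} is specifically about $\M(\kappa,\lambda)$), (ii) verify that $j(\M(\kappa,\mu,\lambda))$ admits a generic over $V$ extending $j[G]$ when $\mu>\lambda$ is moved by $j$, and (iii) reprove the branch-exclusion analysis; none of these are addressed, and the second is especially delicate. The paper's two-stage route sidesteps all of this.
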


\begin{theorem}\label{mahlo-large-cont}
There are models for the conclusions of Theorems \ref{wtp-sr-ap}, \ref{gk}, and \ref{sr-notsh} in which $2^{\kappa}=\mu>\lambda$ for arbitrary $\mu$ of cofinality greater than $\kappa$.
\end{theorem}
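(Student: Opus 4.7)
The plan is to modify each of the three Mahlo-cardinal constructions by replacing the Mitchell-type forcing used in its proof with the Honzik--Stejskalov{\'a} variant $\M(\kappa,\lambda,\mu)$ designed to also adjoin $\mu$-many Cohen subsets of $\kappa$, while retaining the crucial factorization as a $\kappa$-closed part followed by a $\kappa^+$-cc quotient. Concretely, one realizes $\M(\kappa,\lambda,\mu)$ as a projection of a two-step iteration of the form $\Add(\kappa,\mu) \ast \dot{\T}$, where $\dot{\T}$ is a term-forcing approximation of the iterated collapses of inaccessibles below $\lambda$ to $\kappa^+$. The assumption $\cf(\mu)>\kappa$ is used in the standard way to guarantee $|\Add(\kappa,\mu)|^\kappa = \mu$, and hence $2^\kappa = \mu$ in the extension, while the $\lambda$-cc of the collapse quotient still makes $\lambda$ into $\kappa^{++}$.

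Having replaced the Mitchell part by $\M(\kappa,\lambda,\mu)$---or by its guessing analog where Theorem~\ref{gk} or Theorem~\ref{sr-notsh} requires $\neg\AP(\kappa^{++})$---I would then perform exactly the same iteration of club-adding posets used in the original proofs of Theorems~\ref{wtp-sr-ap}, \ref{gk}, and \ref{sr-notsh} in order to obtain $\SR(\kappa^{++})$. The preservation arguments for $\wTP(\kappa^{++})$, $\SR(\kappa^{++})$, $\neg\AP(\kappa^{++})$, and $\neg\SH(\kappa^{++})$ transfer in outline without change: each uses only the projection analysis of the Mitchell-type forcing together with Mahlo reflection in the ground model, both of which survive the enlargement of the Cohen width from something below $\lambda$ to $\mu$.

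The main obstacle is verifying that each preservation argument continues to go through in the presence of the additional $\Add(\kappa,\mu)$ factor. One must check that the extra $\mu$ Cohen subsets of $\kappa$ do not reinstate an approachability sequence in the guessing-based models (here one uses that the guessing bookkeeping still localizes any potential sequence into a $\kappa^+$-cc quotient in which the original blocking argument applies verbatim), that the Suslin tree constructed for Theorem~\ref{sr-notsh} remains Suslin after adding $\mu$ fresh subsets of $\kappa$ (using the $\kappa$-closure of $\Add(\kappa,\mu)$ together with the fact that the tree is built generically in the $\kappa^+$-cc quotient), and that the club-adding iteration remains sufficiently $\kappa^{++}$-distributive to preserve the relevant stationary sets against the wider Cohen part. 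These verifications are precisely what the Honzik--Stejskalov{\'a} framework is engineered to supply, and once they are in place the resulting models witness the three theorems with $2^\kappa = \mu$ as required.
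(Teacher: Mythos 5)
Your approach is genuinely different from the paper's, and the difference matters. The paper does \emph{not} modify the Mitchell forcing at the ground level; it keeps $\M(\kappa,\lambda)$ (or $\M_\ell(\kappa,\lambda)$) exactly as in Theorems~\ref{wtp-sr-ap}, \ref{gk}, \ref{sr-notsh}, forces the full iteration $\M\ast\P_{\lambda^+}$ (resp.\ with $\S$ interposed) to get the model $W$, and then forces $\Add(\kappa,\mu)$ on top of $W$. The work then consists of preservation arguments: $\SR(\kappa^{++})$ survives because $\Add(\kappa,\mu)$ is $\kappa^+$-cc and $\SR$ is preserved by $\kappa^+$-cc forcing (Fact~\ref{th:sr}, the Honzik--Stejskalov\'a theorem you allude to); $\neg\AP(\kappa^{++})$ survives via the Gitik--Krueger theorem (Fact~\ref{th:AP}) plus $\kappa$-centeredness of $\Add(\kappa,\mu)$; $\neg\SH(\kappa^{++})$ survives because Knaster forcing cannot shoot a $\lambda$-antichain through a $\lambda$-Suslin tree; and $\wTP(\kappa^{++})$ requires a genuinely new lifting argument (Lemma~\ref{wtp-large-cont}), first reducing $\Add(\kappa,\mu)$ to $\Add(\kappa,\lambda)$ via Proposition~\ref{a-tree-trick} and then threading a cofinal branch back through several factorizations using Facts~\ref{F:Square_cc} and~\ref{F:ccc_Closed}, the product-reinterpretation Observation~\ref{product-trick}, and Lemma~\ref{fancylemma}.

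The gap in your proposal is the sentence asserting that the preservation arguments ``transfer in outline without change.'' That is exactly the load-bearing claim, and it is not at all automatic. Concretely: the lifting machinery for the Mahlo theorems runs through rich submodels $M\prec N$ of size $\bar\lambda_M<\lambda$, and a crucial step is that the forcing collapses correctly under $\pi_M$, e.g.\ $\pi_M(\M(\kappa,\lambda))=\M(\kappa,\bar\lambda_M)$ because Mitchell conditions are bounded below $\lambda$. If you widen the Cohen component to $\Add(\kappa,\mu)$ with $\mu>\lambda$, then $\mu\not\subseteq M$, the transitive collapse of $\mu\cap M$ is some $\bar\mu$ unrelated to $\bar\lambda_M$, and $\pi_M$ of your $\M(\kappa,\lambda,\mu)$ is not a copy of the same poset at $\bar\lambda_M$. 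The entire inductive proof of distributivity and branch-non-addition (Lemma~\ref{mainlifting2} and its analogues) would need to be reworked to account for this, and nothing in your sketch addresses it. The paper's approach sidesteps this entirely: it lifts through the \emph{original} Mitchell poset, absorbs the extra Cohen coordinates afterward, and only then uses a name-reflection trick (Proposition~\ref{a-tree-trick}) to cut the Cohen width from $\mu$ down to $\lambda$ before lifting. Your appeal to ``the Honzik--Stejskalov\'a framework'' to supply the missing verifications also misattributes what that work provides; as cited in this paper (\cite{HS:u}), it gives preservation theorems for $\SR$ and $\CSR$ under $\kappa^+$-cc forcing --- tools that fit the post-forcing strategy, not a recipe for a modified $\M(\kappa,\lambda,\mu)$ with the guessing and absorption properties needed here.
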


In their paper, Krueger and the first author obtain their result with a large continuum by using an unpublished preservation theorem of Neeman for showing that the Cohen forcing at $\aleph_0$ does not destroy $\SR(\aleph_2)$ \cite{GilKru:8fold}. However, Neeman's proof does not seem to generalize to Cohen forcing at an arbitrary regular $\kappa>\omega$. We overcame this obstacle by using a preservation theorem by Honzik and the third author (see \cite{HS:u}). This preservation theorem comes with an analog used to prove Theorem~\ref{wc-large-cont}.

This paper will be organized as follows: Section~\ref{sec-background} will cover background; Section~\ref{sec-techniques} will deal with our modest attempts to improve and bolster existing techniques for dealing with forcing quotients; in Section~\ref{sec-wc-results} we will prove the results that require weakly compact cardinals; in Section~\ref{sec-mahlo-results} we will prove the results that require Mahlo cardinals; and finally in Section~\ref{sec-large-cont} we will show how to obtain our results with large $2^\kappa$. Some of the presented arguments will become routine over the course of the paper. We will present earlier versions in detail and later versions with reference to the earlier ones.

\section{Background}\label{sec-background}

We assume that the reader is familiar with the basics of forcing \cite{JECHbook}.

\subsection{Basic Definitions}\label{sec-definitions}

Here we explicitly define our basic compactness concepts, and clarify the notation we use for them.

\begin{definition}\

\begin{enumerate}

\item We say that a regular cardinal $\lambda$ has the \emph{tree property}, denoted $\TP(\lambda)$, if every tree of height $\lambda$ and with levels of size $<\lambda$ has a cofinal branch.

\item Let $\mu$ be an infinite cardinal and let $T$ be a tree of height $\mu^+$ with levels of size $\le \mu$. We say that $T$ is a \emph{special} $\mu^+$-\emph{Aronszajn tree} if there is a function $f:T \to \mu$ such that if $x$ and $y$ are elements of the tree that are related in the ordering of the tree, then $f(x) \ne f(y)$.

\item The \emph{weak tree property}, denoted $\wTP(\mu^+)$, holds at $\mu^+$ for some infinite cardinal $\mu$ if there are no special $\mu^+$-Aronszajn trees.

\end{enumerate}\end{definition}

\begin{definition} Assume that $\mu$ is regular.\footnote{Alternate forms of these definitions can be applied to inaccessibles or successors of singulars.}

\begin{enumerate}

\item The \emph{stationary reflection property} at $\mu^+$ is denoted $\SR(\mu^+)$, and it holds if for every stationary subset $S \subseteq \mu^+\cap\cof(<\mu)$, there is some $\alpha<\mu^+$ (called a \emph{reflection point}) of cofinality $\mu$ such that $S \cap \alpha$ is stationary in $\alpha$.

\item The \emph{club stationary reflection property} at $\mu^+$ is denoted $\CSR(\mu^+)$, and it holds if for every stationary $S\sub\mu^+\cap\cof(<\mu)$, there is a \emph{club} $C \subset \mu^+$ such that every point $\alpha \in C$ of cofinality $\mu$ is a reflection point of $S$.

\end{enumerate}\end{definition}

\begin{definition} Let $\lambda$ be regular and let $\vec{a}=\seq{a_\alpha}{\alpha<\lambda}$ be a sequence of bounded subsets of $\lambda$. An ordinal $\gamma<\lambda$ is \emph{approachable} with respect to $\vec{a}$ if there is an unbounded subset $A\sub\gamma$ of order type $\cf( \gamma)$ such that for all $\beta<\gamma$ there is $\alpha<\gamma$ with $A\cap\beta=a_\alpha$. The \emph{approachability property} holds at $\lambda$ if there is a club $C \subseteq \lambda$ such that every $\gamma\in C$ is approachable with respect to $\vec{a}$, and we denote this $\AP(\lambda)$.\end{definition}

\begin{fact} Suppose $W$ is a model of $\ZFC-\textup{\textsf{Powerset}}$ and $\AP(\lambda)$ holds for a regular $\lambda$. If $W' \supseteq W$ is an extension such that $W' \models$``$\lambda$ is regular'', then $W' \models \AP(\lambda)$.\end{fact}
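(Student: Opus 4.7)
The plan is to transfer the witness of $\AP(\lambda)$ from $W$ to $W'$, possibly after modifying the sequence $\vec{a}$. Let $\vec{a} = \la a_\alpha : \alpha < \lambda \ra$ and $C \sub \lambda$ witness $\AP(\lambda)$ in $W$, and for each $\gamma \in C$ fix the witness $A_\gamma \in W$: an unbounded subset of $\gamma$ of order type $\cf^W(\gamma)$ whose initial segments $A_\gamma \cap \beta$ (for $\beta < \gamma$) all appear as $a_\alpha$ for some $\alpha < \gamma$. First I would verify that $C$ remains a club in $\lambda$ in $W'$: closedness is absolute, since the statement ``$\gamma$ is a limit point of $C$'' depends only on the ordinal structure, and unboundedness in $\lambda$ is preserved because $\lambda$ is regular in $W'$ and the supremum of a set of ordinals is absolute.

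The only subtlety is that in $W'$ one may have $\cf^{W'}(\gamma) < \cf^W(\gamma) = \ot(A_\gamma)$, so that $A_\gamma$ no longer satisfies the order-type clause of approachability. For each such $\gamma$ I would select in $W'$ a cofinal subset $B_\gamma \sub A_\gamma$ of order type $\cf^{W'}(\gamma)$; however, the new initial segments $B_\gamma \cap \beta$ need not appear in $\{a_\alpha : \alpha < \gamma\}$. I would therefore define in $W'$ a modified sequence $\vec{a}^* = \la a^*_\alpha : \alpha < \lambda \ra$ which enumerates both the original $a_\alpha$'s and all the new sets $B_\gamma \cap \beta$ for $\gamma \in C$, $\beta < \gamma$, placing each $B_\gamma \cap \beta$ at an index strictly less than $\gamma$.

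The heart of the construction is a counting argument to show that this bookkeeping is feasible. For each $\gamma \in C$, the cumulative demand for sets at indices below $\gamma$ consists of at most $|\gamma|$ original $a_\alpha$'s together with $\sum_{\gamma' \in C \cap \gamma} \cf^{W'}(\gamma') \leq |\gamma|$ new initial segments, using $\cf^{W'}(\gamma') \leq \gamma' < \gamma$. Since the total has cardinality at most $|\gamma|$ while there are $\gamma$ index slots available below $\gamma$, a standard recursive construction---or equivalently a pairing function $\pi : \lambda \times 2 \to \lambda$ with $\pi[\gamma \times 2] = \gamma$ on a club---produces the desired $\vec{a}^*$. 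Then $(\vec{a}^*, C)$ witnesses $\AP(\lambda)$ in $W'$, using $A_\gamma$ as the approaching set when cofinality is preserved and $B_\gamma$ otherwise. The main obstacle is the bookkeeping step, which the counting above resolves.
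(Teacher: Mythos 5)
Your proof correctly identifies the only real subtlety---when $\cf^{W'}(\gamma) < \cf^W(\gamma)$ the old witness $A_\gamma$ has the wrong order type---and the idea of passing to a cofinal $B_\gamma \subseteq A_\gamma$ of the new cofinality and enlarging $\vec{a}$ to absorb its initial segments is the right move. The absoluteness observations in your first paragraph (closure, unboundedness, boundedness of the $a_\alpha$) are fine.

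The gap is in the bookkeeping, which you acknowledge as ``the heart of the construction'' but do not actually carry out. The cardinality estimate ($\le |\gamma|$ demands versus $\gamma$ slots) does not by itself produce a single sequence $\vec{a}^*$ that works on a club: if you choose the sets $B_\gamma$ arbitrarily, as you do, then there is a genuine Fodor obstruction. Fix a single $\xi_0$ and suppose that for club-many $\gamma$ the set $b^\gamma_{\xi_0} := B_\gamma\cap\beta_{\xi_0}$ appears as some $a^*_\alpha$ with $\alpha<\gamma$. Setting $g(\gamma)$ to be the least such $\alpha$ gives a regressive function, so by Fodor $g$ is constant on a stationary set; in particular $b^\gamma_{\xi_0}$ takes a single fixed value on a stationary set of $\gamma$'s. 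Nothing in your construction guarantees this, and for ``generic'' choices of $B_\gamma$ the sets $b^\gamma_{\xi_0}$ are pairwise distinct, so no enumeration can place them all below $\gamma$ club-often. The same issue shows up concretely in the pairing approach: if $\gamma$ is the $\eta$-th element of $C$, then any index of the form $\pi(\eta,\cdot)$ or $\Gamma(\eta,\cdot)$ is $\ge\eta$, and $\eta=\gamma$ on a club (the fixed points of the enumeration of $C$), so these indices fail to lie below $\gamma$ exactly where you need them. A correct argument has to exploit the structure of the new initial segments, e.g.\ by showing that each $b^\gamma_\xi$ is definable from a single $a_\alpha$ (with $\alpha<\gamma$) and a cofinal set $E_{\tau}$ depending only on $\tau := \cf^W(\gamma)$, so that the slot $\Gamma(\alpha,\tau)$ lies below $\gamma$ whenever $\tau<\gamma$; this handles all non-regular $\gamma$, which is a club when $\lambda=\kappa^{++}$. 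Also worth noting: in the paper this fact is only ever invoked after a $\lambda$-distributive forcing, which changes no cofinalities below $\lambda$, so the cofinality-dropping case you are working hard to handle never arises in the actual applications.
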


\begin{definition} If $\lambda$ is regular, a $\lambda$-\emph{Suslin tree} is a tree $T$ with height $\lambda$ and levels of size $<\lambda$, so that for all antichains $A \subset T$, we have $|A|<\lambda$. (This is equivalent to saying that the poset $(T,<_T)$ is $\lambda$-cc.) If $\lambda$ does not carry a Suslin tree, we say that the \emph{Suslin hypothesis} holds for $\lam$, and we denote this $\SH(\lambda)$.\end{definition}

Every normal $\lambda$-Suslin tree is a $\lambda$-Aronszajn tree, so $\TP(\lambda)$ implies $\SH(\lambda)$.

\subsection{Mitchell Forcing}\label{Mitchell-exposition} All of our results make use of some variant of Mitchell forcing. The classical Mitchell forcing takes the following form:

\begin{definition}\label{Def:Mitchell}
Let $\kappa$ be a regular cardinal and $\lambda$ an inaccessible cardinal with $\lambda > \kappa$. The \emph{Mitchell forcing} at $\kappa$ of length $\lambda$, denoted by $\M(\kappa,\lambda)$, is the set of all pairs $(p,q)$ such that:

\begin{enumerate}
\item $p$ is in  $\Add(\kappa,\lambda)$;
\item $q$ is a function with $\dom{q}\sub\lambda$ of size at most $\kappa$;
\item for every $\beta\in\dom{q}$, $1_{\Add(\kappa,\beta)}\Vdash q(\beta)\in\dot{\Add}(\kappa^+,1)$
where $\dot{\Add}(\kappa^+,1)$ is the canonical $\Add(\kappa,\beta)$-name for adding a Cohen subset of $\kappa^+$.
\end{enumerate}

A condition $(p,q)$ is stronger than $(p',q')$ if:
\begin{enumerate}[(i)]
\item $p\leq_{\Add(\kappa,\lambda)} p'$;
\item $\dom{q}\supseteq \dom{q'}$ and for every $\beta\in\dom{q'}$, ${p}\rest{\beta}\Vdash q(\beta)\leq q'(\beta)$.
\end{enumerate}
\end{definition}

It can be shown that approachability holds in extensions by the classical Mitchell forcing:

\begin{Fact} \cite{8fold} $V[\M(\kappa,\lambda)] \models \AP(\lambda)$ .\end{Fact}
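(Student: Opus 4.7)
The plan is to construct an approachability sequence $\vec{a} = \la a_\alpha \mid \alpha < \lambda\ra$ in $V[G]$ (where $G$ is $\M(\kappa,\lambda)$-generic over $V$) by enumerating bounded subsets of $\lambda$ stage by stage along the natural filtration of Mitchell forcing by its initial segments, and then to produce witnesses for approachability of each $\gamma$ in a club using the standard factorization of Mitchell forcing as a Cohen forcing composed with a $\kappa^+$-closed quotient.

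First I would set up the filtration. For each $\gamma < \lambda$, the natural restriction $\M(\kappa,\gamma)$ (conditions with supports in $\gamma$) is a regular suborder of $\M(\kappa,\lambda)$, so $G_\gamma := G \cap \M(\kappa,\gamma)$ gives an increasing continuous chain of intermediate generics with $V[G] = \bigcup_{\gamma<\lambda} V[G_\gamma]$. For $\gamma$ inaccessible in $V$, a standard computation using $\kappa^{<\kappa}=\kappa$ and inaccessibility of $\gamma$ yields $|\M(\kappa,\gamma)| = \gamma$ together with the $\gamma$-c.c.; a nice-name count then gives at most $\gamma$ bounded subsets of $\gamma$ in $V[G_\gamma]$. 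I would then define $\vec{a}$ in $V[G]$ by devoting, for each $V$-inaccessible $\gamma_i$ below $\lambda$, the index block $[\gamma_{i-1},\gamma_i)$ (with $\gamma_{-1}:=0$) to enumerating the bounded subsets of $\gamma_i$ appearing in $V[G_{\gamma_i}]$. This guarantees that for every $\gamma$ in the club $C$ of $V$-inaccessibles below $\lambda$, the set $\{a_\alpha \mid \alpha<\gamma\}$ contains every bounded subset of $\gamma$ lying in $V[G_\gamma]$.

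To verify approachability at $\gamma \in C$, I would use the factorization $\M(\kappa,\lambda) \cong \Add(\kappa,\lambda) * \dot{\U}$ with $\dot{\U}$ forced to be $\kappa^+$-closed in the Cohen extension. In $V[G]$, the cardinal $\gamma$ has been collapsed to $\kappa^+$ and a collapsing surjection $f_\gamma \colon \kappa^+ \to \gamma$ arising from the $\dot{\U}$-generic has range $A_\gamma$ cofinal in $\gamma$ of order type $\cf^{V[G]}(\gamma) \in \{\kappa,\kappa^+\}$. The $\kappa^+$-closure of $\dot{\U}$ places every proper initial segment $f_\gamma\restriction\xi$ (with $\xi < \kappa^+$) inside $V[\Add(\kappa,\lambda)]$, and a chain-condition analysis of supports shows that each such initial segment actually lies in $V[G_\gamma]$. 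By the bookkeeping from Step 1, each proper initial segment of $A_\gamma$ therefore equals some $a_\alpha$ with $\alpha<\gamma$, witnessing approachability of $\gamma$.

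The main obstacle is the final support analysis: $\kappa^+$-closure of $\dot{\U}$ gets initial segments of $A_\gamma$ into $V[\Add(\kappa,\lambda)]$, but one must further verify that for the chosen $A_\gamma$ the Cohen names for those segments are supported in $\Add(\kappa,\gamma)$, so that the segments genuinely lie in $V[G_\gamma]$ rather than in the larger intermediate extension. Once this is in place, the cardinality count for $V[G_\gamma]$ in Step 1 supplies enough room in the first $\gamma$ terms of $\vec{a}$, and approachability follows.
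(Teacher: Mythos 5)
Your overall plan---filter $\M(\kappa,\lambda)$ by its initial segments $\M(\kappa,\gamma)$, enumerate bounded subsets stage by stage, and then use the fact that $\gamma$ gets collapsed to $\kappa^+$ to manufacture approachability witnesses---is the right shape of argument, and it is essentially the strategy used in the literature. However, as written it has several genuine gaps.

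First, the ``factorization $\M(\kappa,\lambda)\cong\Add(\kappa,\lambda)\ast\dot\U$ with $\dot\U$ forced to be $\kappa^+$-closed'' is not what Abraham's analysis gives. What one actually has is a \emph{projection} from the product $\Add(\kappa,\lambda)\times\U$ onto $\M(\kappa,\lambda)$, where $\U$ is a $\kappa^+$-closed term forcing, and $V[\M(\kappa,\lambda)]$ is a \emph{proper} submodel of $V[\Add(\kappa,\lambda)\times\U]$. The quotient of $\M$ over the Cohen factor is not $\kappa^+$-closed. Consequently, a collapsing function ``arising from the $\dot\U$-generic'' lives in $V[\Add(\kappa,\lambda)\times\U]$ and there is no reason for it---or for any derived approachability witness $A_\gamma$---to belong to $V[G]$. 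This is fatal as stated: $\AP(\lambda)$ must be witnessed inside $V[G]$.

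The fix, and the route you want, is to extract the witness from the Mitchell generic $G$ itself. For each $\gamma$ in the support, $G$ determines a generic filter $g_\gamma$ for $\Add(\kappa^+,1)^{V[\Add(\kappa,\gamma)]}$ at coordinate $\gamma$, and this object lies in $V[G_{\gamma+1}]\subseteq V[G]$. When $\gamma$ is $V$-inaccessible, $V[\Add(\kappa,\gamma)]\models 2^\kappa=\gamma$, so this generic codes a surjection $h_\gamma\colon\kappa^+\to\gamma$. Crucially, every proper initial segment $(\bigcup g_\gamma)\rest\xi$ with $\xi<\kappa^+$ is literally a \emph{condition} of $\Add(\kappa^+,1)^{V[\Add(\kappa,\gamma)]}$, hence an element of $V[\Add(\kappa,\gamma)]\subseteq V[G_\gamma]$. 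This dissolves your ``main obstacle'': no further support analysis on Cohen names is needed, because membership of the initial segments in $V[\Add(\kappa,\gamma)]$ is definitional, not something to be argued via chain conditions. Note also that the range of a surjection from $\kappa^+$ onto $\gamma$ is $\gamma$ itself, not a set of order type $\cf(\gamma)$; you should instead take the derived club $A_\gamma=\{\sup h_\gamma[\xi]:\xi<\kappa^+\ \text{limit}\}$, which is cofinal of order type $\kappa^+$ (and $\cf^{V[G]}(\gamma)=\kappa^+$ for $V$-inaccessible $\gamma$, not $\kappa$).

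Second, the set of $V$-inaccessibles below $\lambda$ is not a club: it is not closed (a limit of inaccessibles can be singular) and it is not even unbounded unless $\lambda$ is Mahlo, whereas the Fact only assumes $\lambda$ is inaccessible. So you cannot take $C$ to be that set. You need a genuine club of approachable points. The standard route is to handle $\lambda\cap\cof(\le\kappa)$ via the fact (already used in the paper, see the footnote to Fact~\ref{F:St_Closed_cof}) that $\kappa^{++}\cap\cof(\le\kappa)\in I[\kappa^{++}]$ outright, and then argue separately that a club of points of cofinality $\kappa^+$ are approachable; the non-inaccessible $\gamma$ of $V[G]$-cofinality $\kappa^+$ in that club also need an argument (e.g.\ by pulling back along a $V$-cofinal map from $\cf^V(\gamma)$, which is $\ge\kappa^+$ since $\M$ is $\kappa$-closed). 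As written, your bookkeeping only covers the $V$-inaccessibles, and the ``club'' claim papers over the remaining cases.

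Finally, a minor issue in the bookkeeping: at limit points $\gamma$ of your sequence of inaccessibles $\gamma_i$, the claim that $\{a_\alpha:\alpha<\gamma\}$ contains every bounded subset of $\gamma$ in $V[G_\gamma]$ does not follow from the block construction, because $V[G_\gamma]\supsetneq\bigcup_{\delta<\gamma}V[G_\delta]$ in general. You only need it for the initial segments of $A_\gamma$, which are sets of size $\le\kappa$; once you use the coordinate-$\gamma$ collapse as above, those initial segments land in $V[\Add(\kappa,\gamma)]$ and the counting works, but you should make the enumeration match the witnesses you actually produce rather than claim closure under all bounded subsets.
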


Assuming that $\kappa < \lambda$, $\lambda$ is inaccessible, and $\kappa$ is regular, $\M(\kappa,\lambda)$ is $\lambda$-Knaster (hence it preserves $\lambda$) and $\kappa$-closed; $\M(\kappa,\lambda)$ also collapses cardinals in the open interval $(\kappa^+,\lambda)$. Furthermore, if $\kappa^{<\kappa}=\kappa$, $\M(\kappa,\lambda)$ also forces $2^\kappa=\lambda = \kappa^{++}$. A helpful way to see this, which we will use throughout the paper, is due to an analysis of Abraham. Namely, there is a projection $\pi:\Add(\kappa,\lambda)\x \U \then \M(\kappa,\lambda)$ where $\U$ is a $\kappa^+$-closed forcing \cite{ABR:tree}. Note that since $\kappa^{<\kappa}=\kappa$, $\Add(\kappa,\lambda)$ is also square-$\kappa^+$-cc, meaning that $\Add(\kappa,\lambda) \times \Add(\kappa,\lambda)$ has the $\kappa^+$-chain condition. This projection analysis is extremely useful. In particular we find that any downwards-absolute notion that is preserved by a product of a square-$\kappa^+$-cc forcing with a $\kappa^+$-closed forcing is also preserved by $\M(\kappa,\lambda)$. In particular, one obtains the preservation of $\kappa^+$ by Easton's Lemma:

\begin{fact}\label{L:Easton}\emph{(Easton)}
Let $\kappa>\aleph_0$ be a regular cardinal, and assume that $\P$ and $\Q$ are forcing notions, where $\P$ is $\kappa$-cc and $\Q$ is $\kappa$-closed. Then the following hold:
\begin{enumerate}
\item\label{L:Easton-cc} $\Q$ forces that $\P$ is $\kappa$-cc.
\item $\P$ forces that $\Q$ is $\kappa$-distributive.
\end{enumerate}
\end{fact}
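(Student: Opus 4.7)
This is the classical Easton's Lemma, so my plan is to give the standard two-step argument, treating the chain condition preservation in (1) and distributivity in (2). My approach will be to work with the product $\P \times \Q$ throughout, exploiting the complementarity between $\kappa$-closure and the $\kappa$-chain condition.

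For part (1), I would argue by contradiction. Suppose some $q_0 \in \Q$ forces that $\dot{A}$ is a $\P$-antichain of size $\kappa$ in $V^\Q$. Working in $V$, I would inductively build a $\leq_\Q$-decreasing sequence $\la q_\alpha : \alpha < \kappa \ra$ below $q_0$ together with conditions $p_\alpha \in \P$ such that $q_\alpha \Vdash_\Q \check{p}_\alpha \in \dot{A}$ and the $p_\alpha$ are pairwise distinct. The $\kappa$-closure of $\Q$ handles limit stages, and at successor stages the fact that $\dot{A}$ is forced to have size $\kappa$ while fewer than $\kappa$ elements have been named so far lets me extend to some $q_\alpha$ deciding a fresh $p_\alpha$. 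The key observation is that for $\beta < \alpha$, the statement ``$p_\alpha$ and $p_\beta$ have no common extension in $\P$'' is absolute between $V$ and $V^\Q$; since $q_\alpha$ forces both to lie in the antichain $\dot{A}$, they must already be $\P$-incompatible in $V$. Then $\{p_\alpha : \alpha < \kappa\}$ is a $\P$-antichain of size $\kappa$ in $V$, contradicting the $\kappa$-cc of $\P$.

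For part (2), I would fix $(p_0, q_0) \in \P \times \Q$ and a $\P * \check{\Q}$-name $\dot{f}$ such that $(p_0, q_0)$ forces $\dot{f} : \check{\delta} \to \Ord$ with $\delta < \kappa$, aiming to produce a $\P$-name $\dot{h}$ and an extension $(p_0, q_\delta) \leq (p_0, q_0)$ forcing $\dot{f} = \dot{h}$. The main construction is a $\leq_\Q$-decreasing sequence $\la q_\alpha : \alpha \leq \delta \ra$ together with, for each $\alpha < \delta$, a maximal antichain $A_\alpha \subset \P$ below $p_0$ such that every $(p, q_{\alpha+1})$ with $p \in A_\alpha$ decides $\dot{f}(\check{\alpha})$ as some ordinal $\xi_{p,\alpha}$. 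At each successor stage, I would first observe that the set of $p \leq p_0$ for which there is $q \leq q_\alpha$ with $(p, q)$ deciding $\dot{f}(\check{\alpha})$ is dense below $p_0$; I would pick a maximal antichain $A_\alpha$ of such $p$'s, which by $\kappa$-cc has size $< \kappa$, record for each $p \in A_\alpha$ a witness $q(p) \leq q_\alpha$, and then use $\kappa$-closure of $\Q$ to take a common lower bound $q_{\alpha+1}$. Limit stages use $\kappa$-closure directly. Once $q_\delta$ is obtained, the $\P$-name $\dot{h}$ given by $\dot{h}(\check{\alpha}) = \check{\xi}_{p,\alpha}$ whenever $p \in A_\alpha \cap \dot{G}_\P$ is forced by $(p_0, q_\delta)$ to equal $\dot{f}$, so $\dot{f}^G \in V[G]$ for any $\P$-generic $G$, establishing $\kappa$-distributivity of $\Q$ in $V[G]$.

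The main obstacle will be the bookkeeping in part (2): the lower bound $q_{\alpha+1}$ must simultaneously refine $q(p)$ for every $p \in A_\alpha$, which is precisely where the two hypotheses combine, since $|A_\alpha| < \kappa$ by the $\kappa$-cc while $\kappa$-closure of $\Q$ is exactly what lets us meet fewer than $\kappa$ conditions. Part (1) is comparatively routine; its only subtle point is noticing that $\P$-incompatibility is absolute, which allows us to decode the putative antichain in $V^\Q$ back into a genuine antichain in $V$.
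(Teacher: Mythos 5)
Your argument for part (1) is correct: incompatibility in $\P$ is absolute between $V$ and $V^\Q$ because $\P$ and its order lie in $V$, and your recursion only ever asks $\Q$ to close a \emph{descending chain} of length $<\kappa$, so $\kappa$-closure applies at every stage. (The paper states Easton's Lemma without proof, so there is nothing to compare against on the paper's side.)

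Part (2), however, has a genuine gap at the successor step. After choosing the maximal antichain $A_\alpha$ and recording a witness $q(p)\le q_\alpha$ for each $p\in A_\alpha$, you invoke $\kappa$-closure of $\Q$ to find a common lower bound $q_{\alpha+1}$. But $\kappa$-closure only produces lower bounds for \emph{descending chains} of length $<\kappa$ (or, under the stronger directed-closure hypothesis which is not assumed here, for directed families); it gives nothing for an arbitrary collection of $<\kappa$ conditions below $q_\alpha$. Witnesses chosen for distinct, pairwise-incompatible $p\in A_\alpha$ have no reason to be pairwise compatible on the $\Q$-side, so a common lower bound may simply fail to exist. Your closing remark that ``$\kappa$-closure of $\Q$ is exactly what lets us meet fewer than $\kappa$ conditions'' pinpoints the misconception. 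The repair inside your framework is a nested recursion: build $A_\alpha=\{p_i : i<\gamma\}$ one element at a time \emph{alongside} a single descending chain $q_\alpha=q_{\alpha,0}\ge q_{\alpha,1}\ge\cdots$; at step $i$, if $\{p_j:j<i\}$ is not yet maximal below $p_0$, pick $p'\le p_0$ incompatible with all of them and extend $(p',q_{\alpha,i})$ to some $(p_i,q_{\alpha,i+1})$ deciding $\dot f(\check\alpha)$. The $\kappa$-cc halts this after $\gamma<\kappa$ steps, regularity of $\kappa$ keeps the cumulative chain of $q$'s across all $\alpha<\delta$ shorter than $\kappa$, and since every $q$ in sight now lies on one chain, $\kappa$-closure applies at every limit and at the end. (A cleaner alternative avoids the density bookkeeping entirely: by part (1), $\P$ remains $\kappa$-cc in $V[H]$, so a function $\delta\to\Ord$ with $\delta<\kappa$ in $V[G\times H]=V[H][G]$ admits a nice $\P$-name in $V[H]$ that can be coded as a $<\kappa$-sequence of elements of $V$; since $\Q$ is $\kappa$-closed, hence $<\kappa$-distributive over $V$, that name already lies in $V$, and therefore the function lies in $V[G]$.)
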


Furthermore, $\M(\kappa,\lambda)$ preserves stationary subsets of $\mu \cap \cof(\le\!\kappa)$ for regular $\mu \ge \kappa^+$ by the following two facts and Fact~\ref{L:Easton}(\ref{L:Easton-cc}):

\begin{fact}\label{F:St_cc}
Let $\lambda$ be regular. If $\P$ is $\lambda$-cc, then it preserves stationary subsets of $\lambda$.
\end{fact}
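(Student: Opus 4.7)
The plan is to run the standard reflection argument for $\lambda$-cc preservation of stationary sets. Suppose toward a contradiction that some $\P$-name $\dot{C}$ and condition $p_0 \in \P$ witness that $\dot{C}$ is forced to be a club in $\lambda$ disjoint from $S$, where $S \subseteq \lambda$ is stationary in $V$. I will derive a contradiction by producing an ordinal $\alpha \in S$ that must nevertheless lie in $\dot C$.

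First I would use the $\lambda$-cc to define, in $V$, a function $f : \lambda \to \lambda$ bounding the possible values of the ``next element of $\dot C$ above $\beta$''. Concretely, for each $\beta < \lambda$ pick a maximal antichain $A_\beta \subseteq \P$ of conditions deciding the ordinal $\min(\dot{C} \setminus (\beta+1))$; by $\lambda$-cc we have $|A_\beta| < \lambda$, and since $\lambda$ is regular we may set
\[
 f(\beta) \;=\; \sup\bigl\{\,\gamma+1 : \exists q \in A_\beta,\; q \Vdash \min(\dot C \setminus (\beta+1)) = \gamma\bigr\} \;<\; \lambda.
\]

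Next, let $D = \{\alpha < \lambda : \alpha > 0 \text{ and } f[\alpha] \subseteq \alpha\}$ be the set of closure points of $f$; this is a club in $\lambda$. Since $S$ is stationary, pick $\alpha \in D \cap S$ with $\cf(\alpha) > 0$ (any $\alpha \in D \cap S$ works). I then claim $p_0 \Vdash \alpha \in \dot C$, contradicting $p_0 \Vdash \dot C \cap S = \emptyset$. To prove the claim, fix any $q \leq p_0$ and any $\beta < \alpha$; by the definition of $A_\beta$ and closure of $\alpha$ under $f$, there is $q' \leq q$ and $\gamma < \alpha$ with $q' \Vdash \min(\dot C \setminus (\beta+1)) = \gamma$. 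Iterating this density argument shows that $q \Vdash \sup(\dot C \cap \alpha) = \alpha$, and then closure of $\dot C$ forces $\alpha \in \dot C$.

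This is a completely standard argument and I expect no real obstacle; the only point requiring a modicum of care is the use of $\lambda$-cc together with regularity of $\lambda$ to guarantee $f(\beta) < \lambda$, which is exactly where the hypothesis is used. The rest is bookkeeping about closure points and density.
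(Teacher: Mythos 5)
Your argument is the standard one and is correct; the paper states this as a well-known Fact and gives no proof, so there is nothing internal to compare against. One cosmetic caveat: the maximal antichain $A_\beta$ should be taken among conditions extending (or at least compatible with) $p_0$, since conditions incompatible with $p_0$ need not force $\dot C$ to be unbounded and hence need not decide $\min(\dot C \setminus (\beta+1))$; your later use of $A_\beta$ (passing to $q' \le q \le p_0$) already implicitly does this, so the argument goes through.
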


\begin{fact}\label{F:St_Closed_cof}
Let $\lambda = \kappa^{++}$ and let $\P$ be $\kappa^+$-closed. If $S\sub \lambda \cap\cof(\le\kappa)$ is stationary, then $\P$ preserves the stationarity of $S$.\footnote{This item should be read in light of the fact that $\kappa^{++}\cap\cof(\leq\kappa)\in I[\kappa^{++}]$ by \cite{CUM:comb}.}\end{fact}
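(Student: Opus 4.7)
The plan is to prove Fact \ref{F:St_Closed_cof} by the standard elementary-submodel argument, exploiting the $\kappa^+$-closure of $\P$ to build, for any $\P$-name $\dot C$ for a club in $\lambda = \kappa^{++}$ and any $p \in \P$, a master condition $q \le p$ together with a point $\alpha \in S$ such that $q \Vdash \alpha \in \dot C$. This is enough, because then $\dot C$ is forced to meet $S$ below some condition extending any prescribed one.

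To set this up, I would fix a sufficiently large regular $\theta$ and choose an elementary submodel $M \prec H_\theta$ satisfying $|M| = \kappa$, $\kappa + 1 \sub M$, $M^{<\kappa} \sub M$ (available since $\kappa^{<\kappa} = \kappa$ is assumed throughout the paper), $\{\P, \dot C, p, S\} \sub M$, and such that $\alpha := \sup(M \cap \lambda)$ lies in $S$ with $\alpha \notin M$. The set of such $M$ is stationary in $[H_\theta]^\kappa$: the stationarity of $S \sub \lambda \cap \cof(\le\kappa)$ lifts to stationarity of the preimage under $M \mapsto \sup(M \cap \lambda)$, while the other constraints define a club. Since $|M \cap \alpha| \le \kappa$ and $M \cap \alpha$ is unbounded in $\alpha$, one automatically has $\cf(\alpha) \le \kappa$, which is consistent with $\alpha \in S$.

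Next, I would fix a cofinal sequence $\langle \eta_i : i < \cf(\alpha)\rangle$ in $\alpha$ with every $\eta_i \in M$, and then build a descending sequence $p = p_0 \ge p_1 \ge \dots$ of conditions in $M$ by recursion on $i < \cf(\alpha)$. At successor steps, elementarity of $M$ combined with the fact that $\dot C$ is forced to be unbounded in $\lambda$ allows me to pick $p_{i+1} \in M$ extending $p_i$ and deciding some value $\gamma_i \in \dot C$ with $\gamma_i > \eta_i$; since $p_i, \dot C, \eta_i \in M$, the witness $\gamma_i$ itself may be chosen in $M$, and therefore $\gamma_i < \alpha$. At a limit stage $j < \cf(\alpha) \le \kappa$, the initial segment $\langle p_i : i < j\rangle$ lies in $M$ thanks to $M^{<\kappa} \sub M$, so by elementarity applied to the $\kappa^+$-closure of $\P$ a lower bound $p_j \in M$ again exists.

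Finally, since the full sequence has length $\cf(\alpha) \le \kappa < \kappa^+$, the $\kappa^+$-closure of $\P$ yields a lower bound $q \in \P$, which need not lie in $M$. This $q$ forces $\gamma_i \in \dot C$ for every $i < \cf(\alpha)$, and since $\eta_i < \gamma_i < \alpha$ the values $\gamma_i$ are cofinal in $\alpha$; because $\dot C$ is forced to be closed, we conclude $q \Vdash \alpha \in \dot C$, as required. The main obstacle is keeping the inductive construction inside $M$ at every limit stage below $\cf(\alpha)$, which is exactly what the $<\kappa$-closure of $M$ provides. An alternative path, hinted at by the footnote, would be to choose the $\eta_i$ so as to enumerate an approachability witness $A_\alpha$ arising from $\kappa^{++} \cap \cof(\le\kappa) \in I[\kappa^{++}]$: each proper initial segment of $A_\alpha$ is then indexed by $\vec{a} \restriction \alpha \in M$, supplying the relevant initial segments canonically inside $M$ at limit stages without appealing to closure of $M$ under $<\kappa$-sequences.
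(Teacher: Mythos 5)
Your main argument has a genuine gap. If $M \prec H_\theta$ with $|M| = \kappa$, $M^{<\kappa} \sub M$, and $\alpha := \sup(M\cap\lambda) \notin M$, then necessarily $\cf(\alpha) = \kappa$: were $\cf(\alpha) = \tau < \kappa$, you could pick a cofinal sequence $\langle \eta_i : i < \tau\rangle$ in $\alpha$ with all $\eta_i \in M$ (since $M\cap\alpha$ is cofinal in $\alpha$); by $M^{<\kappa}\sub M$ this sequence belongs to $M$, hence so does its supremum $\alpha$, a contradiction. Consequently the set of $M$ you describe only catches points of $S$ of cofinality exactly $\kappa$. But $S$ is merely assumed stationary in $\lambda\cap\cof(\le\kappa)$, and it may well be concentrated on cofinalities $<\kappa$ (e.g.\ $S\sub\lambda\cap\cof(\omega)$ with $\kappa>\omega$); in that case your collection of candidate models is empty and the stationarity-lifting claim fails outright. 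So the closure hypothesis $M^{<\kappa}\sub M$, which is precisely what you rely on to keep the limit stages of the fusion inside $M$, is in tension with reaching all the cofinalities that $S$ is allowed to hit.

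The "alternative path" you mention at the end is in fact the proof, not an alternative: the whole point of the footnote's appeal to $\kappa^{++}\cap\cof(\le\kappa)\in I[\kappa^{++}]$ is that approachability, rather than $<\kappa$-closure of $M$, is what puts the initial segments of the fusion sequence into $M$ at limit stages. Concretely, one takes a witnessing pair $(\vec{a},C)$ for approachability, arranges $\alpha\in S\cap C$, and uses the fact that the approach $A_\alpha$ has every proper initial segment equal to some $a_\beta$ with $\beta<\alpha$ lying in $M$; then the recursion producing $\langle p_i\rangle$ is definable from these initial segments together with parameters in $M$. Note, however, that as you have sketched it there is still a mismatch: with $|M|=\kappa$ one only gets $a_\beta\in M$ for $\beta\in M\cap\alpha$, which is a proper subset of $\alpha$ once $\alpha>\kappa^+$, so "$\vec{a}\restriction\alpha\in M$" is not available. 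The standard argument instead takes $M$ of size $\kappa^+$ (for instance as a member of an internally approachable chain) with $M\cap\lambda=\alpha\in\lambda$, so that indeed every $\beta<\alpha$ is in $M$. The paper itself does not give a proof; it records this as a known fact and the footnote indicates the Shelah-style preservation theorem for stationary subsets of $I[\lambda]$, which is the argument you should be reproducing.
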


There are natural projections from the Mitchell forcing of length $\lambda$ to Mitchell forcings of shorter lengths. If $\delta < \lambda$ is inaccessible and $\bar G$ is $\M(\kappa,\delta)$-generic, we write $\M(\kappa,\lambda)/\bar G$ to represent the natural quotient. These projections also commute with Abraham's projection analysis. More precisely, in $V[\bar{G}]$, $\M(\kappa,\lambda)/\bar G$ is (a dense subset of) a projection of the product of a square-$\kappa^+$-cc forcing with a $\kappa^+$-closed forcing. Because $V[\bar G] \models \text{``} 2^\kappa = \delta$'', it follows that $\M(\kappa,\lambda)/\bar G$ does not add cofinal branches to $\mu$-trees over $V[\bar G]$ for $\kappa^+ \le \mu \le \delta$ by the following two facts:

\begin{fact}[Silver]\label{silverslemma}
Let $\kappa$ be regular, let $\delta$ be an ordinal of cofinality at least $\kappa^+$, and let $\P$ be $\kappa^{+}$-closed. If $T$ is a tree of height $\delta$ and levels of size $<2^\kappa$, then forcing with $\P$ does not add cofinal branches to $T$.\end{fact}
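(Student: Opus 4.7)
The plan is a classical Kunen-style tree-of-conditions argument. Assume for contradiction that some $p^* \in \P$ forces $\dot{b}$ to be a new cofinal branch through $T$.

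The key auxiliary claim is a splitting lemma: for every $q \le p^*$ and every $\alpha < \delta$, there exist $\gamma \in (\alpha,\delta)$, extensions $r_0, r_1 \le q$, and distinct nodes $t_0 \ne t_1$ in $T_\gamma$ with $r_i \Vdash \dot{b}(\gamma) = t_i$. I would prove this by contradiction: if no such splitting exists above some $q$ and some $\alpha$, then for every $\gamma \ge \alpha$ there is at most one $t \in T_\gamma$ that any extension of $q$ forces $\dot{b}(\gamma)$ to equal. Since $q$ forces $\dot{b}(\gamma)$ to lie in $T_\gamma$, this unique $t_\gamma$ must exist, and in fact $q \Vdash \dot{b}(\gamma) = t_\gamma$. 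The map $\gamma \mapsto t_\gamma$, completed on $\gamma < \alpha$ using the tree order below some fixed $t_{\alpha}$, then exhibits a cofinal branch of $T$ already in the ground model that $q$ forces to equal $\dot{b}$, contradicting the newness of $\dot{b}$.

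Using this splitting lemma, I build by induction on $\beta < \kappa$ a binary tree of conditions $\langle p_s : s \in 2^{<\kappa}\rangle$ together with a strictly increasing sequence $\langle \gamma_\beta : \beta < \kappa\rangle$ of ordinals below $\delta$, with the property that for every $s$ of length $\beta$ the extensions $p_{s\conc 0}$ and $p_{s\conc 1}$ force $\dot{b}(\gamma_\beta)$ to be distinct tree nodes. Set $p_\emptyset = p^*$. At a successor step, apply the splitting lemma separately to each $p_s$ with $|s|=\beta$ (above all previously chosen $\gamma_{\beta'}$), then take $\gamma_\beta < \delta$ dominating all the individual split levels, and finally extend each half of each split to decide $\dot{b}(\gamma_\beta)$. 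The values decided at $\gamma_\beta$ by the two halves of each split remain distinct because they extend in $T$ the already-distinct values decided below. At a limit $\beta < \kappa$, for each $s \in 2^\beta$ use the $\kappa^+$-closure of $\P$ to take a lower bound of the length-$\beta$ chain $\langle p_{s \restriction \beta'} : \beta' < \beta\rangle$.

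Since $\cf(\delta) \ge \kappa^+$, the length-$\kappa$ sequence $\langle \gamma_\beta : \beta<\kappa\rangle$ is bounded below $\delta$; set $\alpha = \sup_{\beta<\kappa}\gamma_\beta$. For each $x \in 2^\kappa$, $\kappa^+$-closure again yields a lower bound $p_x$ of $\langle p_{x \restriction \beta} : \beta<\kappa\rangle$, which I extend to some $p_x'$ deciding $\dot{b}(\alpha) = t_x \in T_\alpha$. For distinct $x, y \in 2^\kappa$, at their least level of disagreement $\beta$ the conditions $p_x'$ and $p_y'$ force $\dot{b}(\gamma_\beta)$ to be distinct nodes $a \ne b$ of $T_{\gamma_\beta}$; since both force $\dot{b}$ to be a branch, $t_x$ lies above $a$ and $t_y$ lies above $b$ in $T$, whence $t_x \ne t_y$. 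This exhibits $2^\kappa$ distinct elements of $T_\alpha$, contradicting $|T_\alpha| < 2^\kappa$. The principal obstacle is the uniformization of $\gamma_\beta$ at successor stages: one must juggle $2^\beta$ many conditions simultaneously and dominate all their individual split levels below $\delta$, which is where the cofinality hypothesis on $\delta$ together with the paper's standing assumption $\kappa^{<\kappa}=\kappa$ (giving $2^\beta \le \kappa < \kappa^+ \le \cf(\delta)$ for $\beta < \kappa$) is invoked.
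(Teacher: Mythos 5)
The paper states this as a fact attributed to Silver without proof, so there is no in-paper argument to compare against; your job here is simply to reconstruct the standard argument, and you have done so correctly. The splitting lemma, its proof by contradiction via the uniquely determined ground-model branch $\gamma\mapsto t_\gamma$, the binary tree of conditions built using $\kappa^+$-closure at limit steps and for the length-$\kappa$ chains, the uniformization of split levels at each stage, and the final injection of $2^\kappa$ into $T_\alpha$ all check out.

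Your closing remark about $\kappa^{<\kappa}=\kappa$ is the one point genuinely worth flagging, and you handle it correctly. As literally stated, the Fact does not include any hypothesis controlling $2^{<\kappa}$, but the binary-tree argument needs $2^\beta<\cf(\delta)$ for every $\beta<\kappa$ in order to bound the $2^\beta$-many split levels by a single $\gamma_\beta<\delta$; given only $\cf(\delta)\geq\kappa^+$, this amounts to $2^{<\kappa}\leq\kappa$, i.e.\ $\kappa^{<\kappa}=\kappa$. This holds in every place the paper invokes Fact~\ref{silverslemma} (it is assumed throughout, and is preserved by the $\kappa$-closed Mitchell-type posets used), so the use in the paper is sound, but the Fact as displayed is stated a bit more generally than the standard argument actually delivers.
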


\begin{fact}\label{F:Square_cc}\cite{Devlin-aleph1trees, Unger:II}
Let $\lambda$ be regular, and let $\P$ be square-$\lambda$-cc. If $T$ is a tree of height $\lambda$, then forcing with $\P$ does not add cofinal branches to $T$.
\end{fact}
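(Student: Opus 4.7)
The natural strategy is to argue by contradiction: suppose some $p^{\ast} \in \P$ forces $\dot b$ to be a cofinal branch through $T$ which is not in the ground model, and derive from this an antichain of size $\lambda$ in $\P \times \P$, contradicting the square-$\lambda$-cc assumption.

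The first ingredient is the standard \emph{splitting observation}: since $\dot b$ is forced to be new, for every $r \leq p^{\ast}$ and every $\beta < \lambda$ there exist a level $\alpha > \beta$, extensions $r_0, r_1 \leq r$, and distinct nodes $x_0, x_1 \in T_\alpha$ with $r_i \Vdash x_i \in \dot b$. Otherwise, some $r \leq p^{\ast}$ would decide $\dot b(\alpha)$ for cofinally many $\alpha$, and reading these decisions off inside $V$ would compute $\dot b$ as a ground model branch, contradicting newness.

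Using this, I would construct by recursion on $\xi < \lambda$ pairs $(p_\xi, q_\xi) \in \P \times \P$ below $(p^{\ast}, p^{\ast})$, a strictly increasing sequence of levels $\alpha_\xi < \lambda$, and distinct nodes $x_\xi, y_\xi \in T_{\alpha_\xi}$ with $p_\xi \Vdash x_\xi \in \dot b$ and $q_\xi \Vdash y_\xi \in \dot b$, arranged so that for each $\zeta < \xi$ the pair $(x_\xi, y_\xi)$ is not coordinate-wise $\leq_T$-above $(x_\zeta, y_\zeta)$ in $T \times T$. Given such a sequence, the pairs $(p_\xi, q_\xi)$ form an antichain in $\P \times \P$: any common extension $(r,s)$ of $(p_\xi, q_\xi)$ and $(p_\zeta, q_\zeta)$ would force $\{x_\zeta, x_\xi\} \subseteq \dot b$, which, given $\alpha_\zeta < \alpha_\xi$, forces $x_\zeta <_T x_\xi$, and analogously $y_\zeta <_T y_\xi$, contradicting the recursive clause.

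The main obstacle is the recursive step itself. At stage $\xi$ one must select $\alpha_\xi$, nodes $x_\xi \neq y_\xi \in T_{\alpha_\xi}$, and a pair $(p_\xi, q_\xi)$ witnessing the forcing requirements while simultaneously avoiding every ``double cone'' $\{x : x \geq_T x_\zeta\} \times \{y : y \geq_T y_\zeta\}$ at level $\alpha_\xi$, for $\zeta < \xi$. The plan is to iterate the splitting observation along a descending chain of conditions, steering past each prior obstruction in turn; the existence of a suitable avoidance at some sufficiently high level follows from a counting argument using that $\lambda$ is regular, $|\xi| < \lambda$, and the relevant levels of $T$ have size less than $\lambda$ (which is the case of interest for the tree-property-type applications in the paper).
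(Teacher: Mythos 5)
Your overall architecture is the right one: assume $p^\ast$ forces $\dot b$ to be a new cofinal branch, build $\lambda$-many pairs $(p_\xi,q_\xi)$ with increasing levels $\alpha_\xi$ and distinct nodes $x_\xi\neq y_\xi\in T_{\alpha_\xi}$, and prove pairwise incompatibility in $\P\times\P$ from the requirement that no $(x_\xi,y_\xi)$ lies coordinate-wise $\geq_T$-above any earlier $(x_\zeta,y_\zeta)$. The splitting observation you state is also correct. The problem is that your plan for the recursive step does not work, and it is exactly there that the content of the lemma lives.

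You propose to handle the $|\xi|$-many double-cone obstructions by iterating the splitting observation along a descending chain of conditions and then appealing to a counting argument using small levels. Two objections. First, $\P$ carries no closure hypothesis, so descending chains of length $|\xi|$ (or any infinite length) need not have lower bounds, and without a lower bound the iteration never produces a single condition $p_\xi$ that respects all the earlier steers. Second, the statement of the fact places no restriction on the sizes of the levels of $T$, so a counting argument that uses ``levels have size $<\lambda$'' proves a weaker statement than the one asserted (and the paper does lean on the fact in situations where one does not want to track level sizes). Neither of these is a cosmetic issue: as written, the recursion does not close.

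The missing idea is a single extension rather than a chain. At stage $\xi$, set $\beta:=\sup_{\zeta<\xi}\alpha_\zeta$, which is below $\lambda$ by regularity. First extend $p^\ast$ to one condition $r$ that decides $\dot b\cap T_\beta$, say $r\Vdash\dot b\cap T_\beta=\{z\}$; such an $r$ exists because $\dot b$ is forced to be a cofinal branch. Now apply the splitting observation once to $r$ to obtain $\alpha_\xi>\beta$, conditions $p_\xi,q_\xi\leq r$, and distinct $x_\xi\neq y_\xi\in T_{\alpha_\xi}$ with $p_\xi\Vdash x_\xi\in\dot b$ and $q_\xi\Vdash y_\xi\in\dot b$. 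Because both $p_\xi$ and $q_\xi$ extend $r$, both $x_\xi$ and $y_\xi$ lie $\geq_T$-above $z$, and hence $x_\xi$ and $y_\xi$ have the \emph{same} restriction to every level $\leq\beta$, in particular to each $\alpha_\zeta$ with $\zeta<\xi$. Since $x_\zeta\neq y_\zeta$ are distinct nodes at level $\alpha_\zeta$, it is impossible that simultaneously $x_\zeta<_T x_\xi$ and $y_\zeta<_T y_\xi$. Thus all of the obstructions are dodged in a single step, with no iterated chain, no closure, and no hypothesis on level sizes. The rest of your argument (pairwise incompatibility of the $(p_\xi,q_\xi)$ and the resulting $\lambda$-sized antichain in $\P\times\P$) then goes through as you intended.
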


If we combine these two facts with Easton's Lemma, we find that forcing with a product of a square-$\kappa^+$-cc forcing with a $\kappa^+$-closed forcing does not add cofinal branches to a tree of height of cofinality at least $\kappa^+$ with levels of size $<2^\kappa$.

Hence we have the following preservation properties for these quotients:

\begin{Fact}\label{Mitchell-preservation-summary} Suppose $\kappa^{<\kappa} = \kappa < \lambda$ and that $\P=  \M(\kappa,\lambda)/\bar{G}$ where $\bar{G}$ is $\M(\kappa,\delta)$-generic for inaccessible $\delta<\lambda$.

\begin{enumerate}

\item $\P$ preserves $\kappa^+$.

\item $\P$ does not add cofinal branches to $\mu$-trees if $\kappa^+ \le \mu \le \delta$.

\item $\P$ preserves stationary subsets of $\mu \cap \cof(\le\!\kappa)$ if $\kappa^+ \le \mu\le\delta$. 

\end{enumerate}\end{Fact}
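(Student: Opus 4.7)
The proof is a routine assembly of the preceding facts via Abraham's projection analysis. The plan is, for each of the three clauses, to reduce it to the analogous statement for the product $\Add(\kappa,\lambda)^{V[\bar{G}]} \times \U$ of a square-$\kappa^+$-cc with a $\kappa^+$-closed forcing, which in $V[\bar{G}]$ projects onto a dense subset of $\P$ by the analysis recalled just before the statement. Since any projection makes every $\P$-generic extension a submodel of some generic extension by the product, the ``absence'' nature of each conclusion---no collapse of $\kappa^+$, no new cofinal branch through a fixed tree in $V[\bar{G}]$, no new club in $V[\bar{G}]$ disjoint from a fixed stationary set---transfers downward from the product to $\P$.

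Item (1) follows from Easton's Lemma (Fact~\ref{L:Easton}) applied to $\U$ and $\Add(\kappa,\lambda)$: the $\kappa^+$-closed factor $\U$ preserves $\kappa^+$ and leaves $\Add(\kappa,\lambda)$ still $\kappa^+$-cc over the $\U$-extension, so the product preserves $\kappa^+$. For Item (2), fix a $\mu$-tree $T \in V[\bar{G}]$ with $\kappa^+ \le \mu \le \delta$; its levels have size $< \mu \le \delta = (2^\kappa)^{V[\bar{G}]}$. By Silver's Lemma (Fact~\ref{silverslemma}), $\U$ adds no cofinal branch to $T$; a second application of Easton's Lemma (to $\U$ and $\Add(\kappa,\lambda) \times \Add(\kappa,\lambda)$) shows that $\Add(\kappa,\lambda)$ remains square-$\kappa^+$-cc, hence square-$\mu$-cc, in the $\U$-extension, so by Fact~\ref{F:Square_cc} it adds no further cofinal branch to $T$. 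For Item (3), $\U$ preserves stationary subsets of $\mu \cap \cof(\le\kappa)$ by Fact~\ref{F:St_Closed_cof}, and then $\Add(\kappa,\lambda)$ does so by Fact~\ref{F:St_cc}, using that $\kappa^+$-cc implies $\mu$-cc for $\mu \ge \kappa^+$.

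No step presents a real obstacle. The only subtlety to track is that Fact~\ref{F:Square_cc} requires \emph{square}-$\kappa^+$-cc rather than bare $\kappa^+$-cc; this is automatic for $\Add(\kappa,\lambda)$ under $\kappa^{<\kappa}=\kappa$ since $\Add(\kappa,\lambda)\times\Add(\kappa,\lambda)$ is again of Cohen type, and the preservation of this property after forcing with $\U$ is exactly the second use of Easton noted above.
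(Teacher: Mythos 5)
Your proof is correct and follows the same route as the paper: reduce via Abraham's projection onto the product $\Add(\kappa,\lambda)^{V[\bar{G}]}\times\U$ (downward absoluteness of ``no collapse / no new branch / no new club'' across projections), and then assemble Facts~\ref{L:Easton}, \ref{silverslemma}, \ref{F:Square_cc}, \ref{F:St_cc}, and \ref{F:St_Closed_cof} factor by factor. The only point glossed over in your Items (2) and (3) — that when $\mu=\delta$ the ordinal $\mu$ ceases to be regular in $V[\bar G][\U]$, so Facts~\ref{F:Square_cc} and \ref{F:St_cc} must be invoked after re-indexing along a cofinal $\kappa^+$-sequence — is equally elided in the paper's own discussion (which speaks only of ``height of cofinality at least $\kappa^+$''), so this does not constitute a gap relative to the source.
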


\subsection{Large Cardinals and Lifting Embeddings}\label{largecardsandembs}

As mentioned in the introduction, large cardinals are necessary to obtain our results.

\begin{definition} An uncountable cardinal $\lambda$ is \emph{Mahlo} if it is inaccessible and the set of all regular cardinals below $\lambda$ is stationary in $\lambda$.
\end{definition}

\begin{definition} If $j:M \to N$ is an elementary embedding between transitive sets, we say that $\delta$ is its \emph{critical point} if $\delta$ is the least ordinal such that $j(\delta)>\delta$.\end{definition}

\begin{definition}\cite{CUMhandbook} An uncountable cardinal $\lambda$ is \emph{weakly compact} if for every transitive set $M$ of size $\lambda$ such that $\lambda\in M$ and ${^{<\lambda}} M\sub M$, there is an elementary embedding $j:M\then N$ with critical point $\lambda$, where $N$ is transitive of size $\lambda$ and $^{<\lambda}N\sub N$.
\end{definition}

The technical aspects of our results involve the extension of embeddings:

\begin{fact}\label{weakliftinglemma} Suppose $j: M \to N$ is an elementary embedding between transitive models of (enough of) $\ZFC$ and $\P \in M$. Let $G$ be $\P$-generic over $M$ and let $H$ be $j(\P)$-generic over $N$ such that $j[G] \subseteq H$. Then $j$ can be extended to $j^+:M[G] \to N[H]$ such that $j^+(G)=H$ and $j^+\rest M=j$.\end{fact}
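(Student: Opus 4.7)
The plan is to define $j^+$ on $M[G]$ via the standard $\P$-name construction: for each $\tau \in M^{\P}$, set
\[ j^+(\tau_G) := j(\tau)_H. \]
I then need to verify (i) well-definedness as a function on $M[G]$, (ii) elementarity of $j^+$, (iii) $j^+ \rest M = j$, and (iv) $j^+(G) = H$. All four claims will follow from elementarity of $j$ together with the forcing theorem applied in $M$ and $N$, with the crucial input being the hypothesis $j[G] \sub H$.

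The main (and only) substantive step is well-definedness, and this is where $j[G] \sub H$ is used. Suppose $\tau_G = \sigma_G$ for some $\tau, \sigma \in M^{\P}$. By the forcing theorem in $M$, there is some $p \in G$ with $p \Vdash^M_{\P} \tau = \sigma$. By elementarity of $j$, $j(p) \Vdash^N_{j(\P)} j(\tau) = j(\sigma)$. Since $p \in G$, $j(p) \in j[G] \sub H$, so $j(\tau)_H = j(\sigma)_H$, as required. Absent the assumption $j[G] \sub H$ there would be no way to transfer decisions made by $G$ on the $M$-side into decisions made by $H$ on the $N$-side, so this is the essential point of the argument.

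Elementarity is then routine: for a formula $\varphi$ and names $\tau_1,\ldots,\tau_n \in M^{\P}$, $M[G] \models \varphi(\tau_1{}_G,\ldots,\tau_n{}_G)$ iff there is $p \in G$ with $p \Vdash^M \varphi(\tau_1,\ldots,\tau_n)$, iff (by elementarity of $j$) $j(p) \Vdash^N \varphi(j(\tau_1),\ldots,j(\tau_n))$ with $j(p) \in H$, iff $N[H] \models \varphi(j(\tau_1)_H,\ldots,j(\tau_n)_H)$. For (iii), applied to $x \in M$ the construction gives $j^+(x) = j^+(\check{x}_G) = j(\check{x})_H = j(x)$, using that $j$ sends the check-name for $x$ in $M$ to the check-name for $j(x)$ in $N$. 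For (iv), apply $j^+$ to the canonical $\P$-name $\dot{G}$ for the generic filter: $j(\dot{G})$ is the canonical $j(\P)$-name for the $j(\P)$-generic in $N$, so $j^+(G) = j^+(\dot{G}_G) = j(\dot{G})_H = H$. There is no further obstacle; the content of the lemma is genuinely concentrated in the one-line well-definedness verification above.
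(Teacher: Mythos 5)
The paper states this as a Fact without proof (implicitly citing Cummings's Handbook chapter, as it does for the strong version in Fact~\ref{strongliftinglemma}), so there is no paper proof to compare against; your argument is the standard one and it is correct. One small point of precision: in the chain of biconditionals for elementarity, the step from ``there is $p \in G$ with $p \Vdash^M \varphi$'' to ``there is $q\in H$ with $q \Vdash^N \varphi(j(\vec\tau))$'' is only an implication as written --- the reverse does not come directly from $j[G]\subseteq H$, since $H$ may contain conditions not of the form $j(p)$. The correct way to close the loop is to apply the forward implication to $\neg\varphi$ as well (if $M[G]\not\models\varphi$ then some $p\in G$ forces $\neg\varphi$, so $j(p)\in H$ forces $\neg\varphi(j(\vec\tau))$, whence $N[H]\not\models\varphi(j(\vec\tau)_H)$). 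With that minor repair the proof is complete.
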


The notation $j^+$ is usually suppressed, and lifts are denoted $j$. We will also need a stronger version of this fact using master conditions:

\begin{definition} Suppose $j: M \to N$ is an elementary embedding between transitive models of (enough of) $\ZFC$ and $\P \in M$. We say that $q \in j(\P)$ is a \emph{strong master condition} for $j$ and $\P$ if for every dense $D \subseteq \P$ with $D\in M$ there is some $p \in D$ such that $q \le j(p)$. In particular, $q$ is a strong master condition if $G$ is $\P$-generic over $M$ and $q \le j(p)$ for all $p \in G$, i.e. if $q$ is a \emph{lower bound} for $j[G]$. \end{definition}

Observe that if $q$ is a strong master condition for $j$ and $\P$ in this sense, then $\{ p \in \P : q \le j(p) \}$ is $\P$-generic over $M$.

\begin{fact}\label{strongliftinglemma} \cite{CUMhandbook} Let $j: M \to N$ be an elementary embedding between transitive models of (enough of) $\ZFC$ with critical point $\kappa$, and let $\P \in M$. Suppose that $G$ is $\P$-generic over $M$ and that there is some $q \in j(\P)$ such that $q$ is a lower bound for $j[G]$. Then $M$ and $M[G]$ both have the same $<\kappa$-sequences of ordinals.\end{fact}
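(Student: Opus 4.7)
The plan is to show that $\P$ is ``$<\kappa$-distributive along $G$'' in $M$, from which the conclusion follows by genericity. First, I would observe that the hypothesis---$q$ being a lower bound for $j[G]$---already makes $q$ a strong master condition in the sense of the preceding definition: for any dense $D \in M$, pick $p \in D \cap G$ by genericity, so that $q \le j(p)$.

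Next, let $f \in M[G]$ with $f : \alpha \to \Ord$ and $\alpha < \kappa$, and let $\dot f \in M$ be a name for $f$. Work in $M$ with the open dense sets $D_\beta = \{p \in \P : p \text{ decides } \dot f(\beta)\}$ for $\beta < \alpha$, and set $E = \bigcap_{\beta < \alpha} D_\beta$. Any element of $E \cap G$ forces $\dot f$ to equal a specific ground-model function, so it suffices to prove $E \cap G \ne \emptyset$.

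The core step is to show $E$ is dense below every $p_0 \in G$. Fix such a $p_0$, so $q \le j(p_0)$. For each $\beta < \alpha$, the strong master property applied to $D_\beta$ produces $p_\beta \in D_\beta$ with $q \le j(p_\beta)$; since $D_\beta$ is downward-closed, so is $j(D_\beta)$, whence $q \in j(D_\beta)$. Because $\alpha < \mx{crit}(j)$, the indexing is fixed pointwise by $j$ and $j\bigl(\bigcap_\beta D_\beta\bigr) = \bigcap_\beta j(D_\beta)$, so $q$ witnesses in $N$ that some $r \le j(p_0)$ lies in $j(E)$. By elementarity, there is $r \le p_0$ in $M$ with $r \in E$. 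Genericity applied to the dense set $\{p \in \P : p \in E \text{ or no } r \le p \text{ lies in } E\}$, together with density of $E$ below every element of $G$, then furnishes a condition $p^* \in G \cap E$, which forces $\dot f = \check g$ for some $g \in M$; thus $f = g \in M$.

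The main subtlety, which I would verify carefully, is the passage from ``$q \le j(p_\beta) \in j(D_\beta)$'' to ``$q \in j(D_\beta)$''; this requires working with the downward-closed (open) version of the deciding sets, not an arbitrary choice of witnesses. The other small but essential point is that the intersection of $\alpha$-many open sets commutes with $j$, for which one uses $\alpha < \mx{crit}(j)$ and elementarity of $j$.
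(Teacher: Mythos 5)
Your proof is correct and is essentially the standard argument for this fact, which the paper states without proof as a citation to Cummings' handbook chapter. The chain of steps---a lower bound for $j[G]$ is a strong master condition, the deciding sets $D_\beta$ are open so $q\le j(p_\beta)$ gives $q\in j(D_\beta)$, the $\alpha$-fold intersection commutes with $j$ because $\alpha<\kappa$, and an elementarity argument below each $p_0 \in G$ combined with the dense set $\{p : p\in E \text{ or } p \text{ has no extension in } E\}$ yields $G\cap E\neq\emptyset$---is exactly the intended argument (your phrase ``density of $E$ below every element of $G$'' is a mild overstatement of what you actually proved, namely that each $p_0\in G$ has \emph{some} extension in $E$, but that weaker statement is all the dense-set trick needs, so the proof is sound).
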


\subsection{Club-Adding Iterations}

In this section, we define a standard tool for obtaining stationary reflection properties and state some useful facts.

\begin{definition} If $\lambda$ is a regular cardinal and $X \subset \lambda$, we define the \emph{club-adding poset} $\CU(X)$ as the poset of closed, bounded subsets $c \subset X$. If $c,d \in \CU(X)$, then $c \le d$ if and only if $c$ end-extends $d$, i.e. $d = c \cap (\max d + 1)$.\end{definition}

We define iterations $\seq{\P_\alpha,\dot \Q_\alpha}{\alpha < {\lambda^+}}$ as in the Handbook of Set Theory \cite{CUMhandbook} so that conditions have support of size $<\lambda$. For convenience, we will use:

\begin{definition}\label{def:SCAI} We say that $\seq{\P_\alpha,\dot \Q_\alpha}{\alpha < {\lambda^+}}$ is a \emph{standard club-adding iteration of length} $\lambda^+$ if $\P_\alpha \Vdash \text{``}\dot \Q_\alpha = \CU(\dot X), \dot X \subset \lambda$'' for all $\alpha < \lambda^+$ and the iteration has $<\lambda$-support.\end{definition}

\begin{fact}\label{iterationchaincondition} Suppose that $\lambda$ is regular, $\lambda^{<\lambda} = \lambda$, and that $\P := \seq{\P_\alpha,\dot \Q_\alpha}{\alpha < \lambda^+}$ is a standard club-adding iteration of length $\lambda^+$. Then $\P$ has the $\lambda^+$-chain condition.\end{fact}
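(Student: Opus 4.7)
The plan is to induct on $\alpha \le \lambda^+$, showing that $\P_\alpha$ has the $\lambda^+$-chain condition, reinforced with the auxiliary invariant $|\P_\alpha| \le \lambda$ for every $\alpha < \lambda^+$. The invariant trivializes the chain condition below stage $\lambda^+$, so the only substantive case is $\alpha = \lambda^+$. The invariant itself is maintained routinely: at successor stages $\dot\Q_\alpha = \CU(\dot X)$ is forced to have size at most $\lambda^{<\lambda} = \lambda$ (being a poset of closed bounded subsets of $\lambda$); and at limits $\alpha < \lambda^+$ there are at most $|\alpha|^{<\lambda} \le \lambda$ possible $<\lambda$-supports $s \sub \alpha$, with at most $\lambda$ conditions per support by the inductive size bound.

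For $\alpha = \lambda^+$, suppose toward contradiction that $\{p_i : i < \lambda^+\}$ is a putative antichain, and set $s_i := \supp{p_i}$. Since $\cof(\lambda^+) = \lambda^+ > \lambda$, each $s_i$ is bounded in $\lambda^+$. The $\Delta$-system lemma---whose hypothesis that $\mu^{<\lambda} < \lambda^+$ for every $\mu < \lambda^+$ is guaranteed by $\lambda^{<\lambda} = \lambda$---produces a subfamily of size $\lambda^+$ whose supports form a $\Delta$-system with root $r$, $|r| < \lambda$. Fix $\beta < \lambda^+$ with $r \sub \beta$. By the size bound $|\P_\beta| \le \lambda$, a further pigeonhole on $i \mapsto p_i \rest \beta$ yields a $\lambda^+$-subfamily on which $p_i \rest \beta = q$ is a single fixed condition in $\P_\beta$.

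The key observation is that $\supp{q} = r$. Indeed, $\supp{q} = \supp{p_i \rest \beta} = s_i \cap \beta$ for every $i$ in the thinned family, so these intersections are all equal; combined with the $\Delta$-system identity $s_i \cap s_j = r$, this forces $s_i \cap \beta = r$, and hence $s_i \setminus r \sub [\beta,\lambda^+)$, for every such $i$. For any two indices $i \ne j$ from this subfamily, the sets $s_i \setminus r$ and $s_j \setminus r$ are now disjoint subsets of $[\beta,\lambda^+)$, so the function equal to $q(\gamma)$ on $r$, to $p_i(\gamma)$ on $s_i \setminus r$, and to $p_j(\gamma)$ on $s_j \setminus r$ is a well-defined common extension of $p_i$ and $p_j$ in $\P$: below $\beta$ all three agree with $q$, and above $\beta$ the relevant support pieces are disjoint, so the restriction of this combined condition to any coordinate $\alpha$ extends the corresponding restriction of whichever of $p_i$, $p_j$ is non-trivial there and thus forces the required membership in $\dot\Q_\alpha$. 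This contradicts the antichain assumption.

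The main obstacle is purely organizational: carrying the auxiliary size invariant through all limit stages below $\lambda^+$ so that the pigeonhole on $p_i \rest \beta$ is available. Every other step is $\Delta$-system plus pigeonhole, with $\lambda^{<\lambda} = \lambda$ doing all of the heavy lifting by simultaneously bounding $|\P_\alpha|$ for $\alpha < \lambda^+$ and supplying the $\Delta$-system lemma at stage $\lambda^+$.
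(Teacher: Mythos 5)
The paper states this as a background Fact without proof (these iterations are defined ``as in the Handbook of Set Theory''), so there is no in-paper argument to compare against. Your proof is the standard $\Delta$-system argument for $<\lambda$-support iterations of $\lambda$-sized posets, and it is correct: the hypothesis $\lambda^{<\lambda}=\lambda$ does exactly the work you attribute to it (feeding the generalized $\Delta$-system lemma at $\lambda^+$ and keeping $|\P_\alpha|\le\lambda$ for $\alpha<\lambda^+$), the observation that $\supp{q}=r$ forces $s_i\setminus r\sub[\beta,\lambda^+)$ is the right pivot, and the amalgamated condition is a genuine lower bound once one checks by induction on coordinates that its restrictions extend those of $p_i$ and $p_j$, which you note.

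One point you brush past, worth flagging: the auxiliary invariant $|\P_\alpha|\le\lambda$ is not automatic from ``$\dot\Q_\gamma$ is forced to have size $\le\lambda$.'' Conditions at coordinate $\gamma$ are $\P_\gamma$-\emph{names}, and without further care there can be $2^\lambda$ inequivalent nice names for elements of $\dot\Q_\gamma$ even when $|\P_\gamma|\le\lambda$. The standard fix is to fix, at each successor stage, a $\lambda$-sized full family of names for elements of $\dot\Q_\gamma$ (possible since $\P_\gamma$ has size $\le\lambda$, is $\lambda^+$-cc, and forces $|\dot\Q_\gamma|\le\lambda$) and pass to the resulting dense copy of the iteration; one then counts supports and values per support exactly as you do. With that bookkeeping made explicit, the argument is complete.
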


One of the useful properties of standard club-adding iterations is that $\lambda$-sized sets show up at initial stages. We will eventually use this fact without comment:

\begin{fact}\label{nicenamestrick} If $\P_{\lambda^+}$ is a standard club-adding iteration of length $\lambda^+$ and $f: \lambda \to \Ord$ is defined in $V[\P_{\lambda^+}]$, then there is some $\alpha<\lambda^+$ such that $f \in V[\P_\alpha]$.\end{fact}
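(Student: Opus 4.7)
The plan is to run the standard nice-name argument, using the $\lambda^+$-chain condition from Fact \ref{iterationchaincondition} together with the fact that conditions in $\P_{\lambda^+}$ have support of size $<\lambda$. Fix a $\P_{\lambda^+}$-name $\dot f$ for $f$, and for each $\beta < \lambda$, let $\dot v_\beta$ be a nice name for $\dot f(\beta)$, i.e.\ a name of the form
\[
\dot v_\beta = \bigcup_{\gamma} \{\check \gamma\} \times A_{\beta,\gamma},
\]
where each $A_{\beta,\gamma}$ is an antichain in $\P_{\lambda^+}$ of conditions forcing $\dot f(\check\beta) = \check\gamma$. By $\lambda^+$-cc, each $A_{\beta,\gamma}$ has size at most $\lambda$; and since $f$ is forced to have range in the ordinals, only countably many (in fact at most $\lambda$-many) of the $A_{\beta,\gamma}$ can be nonempty for a given $\beta$.

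For each condition $p$ appearing in some $A_{\beta,\gamma}$, its support $\supp{p}$ is a subset of $\lambda^+$ of cardinality $<\lambda$. Taking the union of these supports over all $p \in A_{\beta,\gamma}$ and all relevant $\gamma$ gives, for each $\beta < \lambda$, a subset of $\lambda^+$ of size at most $\lambda$; since $\cf(\lambda^+) = \lambda^+ > \lambda$, this union is bounded below $\lambda^+$ by some ordinal $\alpha_\beta$. Now set
\[
\alpha \;=\; \sup_{\beta < \lambda} \alpha_\beta,
\]
and again use $\cf(\lambda^+) > \lambda$ to conclude that $\alpha < \lambda^+$.

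Every condition $p$ that appears in any of the antichains $A_{\beta,\gamma}$ then has $\supp{p} \subseteq \alpha$. Since in our $<\lambda$-support iteration a condition is completely determined by its restriction to its support, each such $p$ can be identified canonically with a condition in $\P_\alpha$; moreover, $\P_\alpha$ is (equivalent to) the regular suborder of $\P_{\lambda^+}$ given by restricting to coordinates below $\alpha$, so these antichains remain antichains in $\P_\alpha$ and decide the same statements about $\dot f(\check\beta)$. Thus $\dot f$ (via the $\dot v_\beta$) can be read off as a $\P_\alpha$-name $\dot f^*$, and if $G$ is $\P_{\lambda^+}$-generic with $G_\alpha = G \cap \P_\alpha$, then $f = \dot f^*_{G_\alpha} \in V[G_\alpha] = V[\P_\alpha]$, as desired.

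The only genuine subtlety I anticipate is the identification of $\P_\alpha$ with the suborder of $\P_{\lambda^+}$ consisting of conditions supported below $\alpha$: for $<\lambda$-support iterations with club-adding iterands (which are trivially well-behaved), this is standard and follows from the definitional setup of the iteration in \cite{CUMhandbook}, but it is the step one must cite in order to transfer the antichain structure downwards. Everything else is a direct counting argument driven by $\lambda^+$-cc plus $\cf(\lambda^+) > \lambda$.
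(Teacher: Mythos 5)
Your proof is correct and is the standard nice-name/support counting argument, which is exactly what the paper implicitly invokes here (the paper states this as a Fact with no proof, citing it as the routine folklore argument for $<\lambda$-support iterations with the $\lambda^+$-cc). The only blemish is the phrase ``only countably many'' of the $A_{\beta,\gamma}$ can be nonempty, which is a slip---the parenthetical ``at most $\lambda$-many'' is what you mean and what you actually use (it follows since $\bigcup_\gamma A_{\beta,\gamma}$ is itself an antichain, hence of size $\le\lambda$). One should also note, as you do, that the step where the antichains transfer to $\P_\alpha$ silently uses the hypothesis $\lambda^{<\lambda}=\lambda$ (needed for Fact~\ref{iterationchaincondition}); this holds throughout the paper's applications and is the standing assumption under which these Facts are read.
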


\section{Some Additional Techniques}\label{sec-techniques}

Our results follow classical constructions of Magidor and of Harrington and Shelah. The technical challenges involved in these sorts of constructions involve making sure that the quotient forcings used for lifting certain embeddings are well-behaved. Here, we outline some additional techniques that will be applied in the remaining sections.

\subsection{Absorption for Classical Mitchell Forcing}\label{absorptionsection}

We introduce a variant of the Absorption Lemma, which originally appeared in Solovay's construction of a model in which all sets of reals are Lebesgue-measurable \cite{S:m}. It shows that certain sets of ordinals can be ``absorbed'' into an extension by a Levy Collapse. It is also used in the constructions of Harrington and Shelah and of Magidor mentioned above. For our purposes, we need an Absorption Lemma that works with the classical variant of Mitchell Forcing.

\begin{definition}\cite{CUMhandbook} If $G$ is $\P$-generic over $V$, we say $G$ \emph{induces} a generic for $\Q$ if $V[G]$ contains a filter $H$ that is $\Q$-generic over $V$. Two posets $\P$ and $\Q$ are \emph{forcing-equivalent} if a generic for $\P$ induces a generic for $\Q$ and vice versa. We denote this $\P \simeq \Q$.\footnote{With straightforward modifications, we can use forcing equivalence defined by means of the isomorphism of the Boolean completions of the partial orders.}\end{definition}

\begin{Fact}\label{F:equivalent} \cite{CUMhandbook}
Let $\mu$ be a regular cardinal and assume $2^\mu=\mu^+$. Then all $\mu^+$-closed, nontrivial, separative forcings of cardinality $\mu^+$ are forcing-equivalent. Specifically, any such poset is forcing-equivalent to $\Add(\mu^+,1)$.\end{Fact}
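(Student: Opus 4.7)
The plan is to reduce the statement to the single claim that every $\mu^+$-closed, nontrivial, separative poset $\P$ of cardinality $\mu^+$ is forcing-equivalent to $\Add(\mu^+,1)$; transitivity of forcing-equivalence then yields the full statement. Because both $\P$ and $\Add(\mu^+,1)$ are separative, each densely embeds into its Boolean completion, so it suffices to build an order-isomorphism between a dense subset of $\P$ and a dense subset of $\Add(\mu^+,1)$.

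First I would record the elementary structural facts. The hypothesis $2^\mu = \mu^+$ yields $(\mu^+)^{<\mu^+} = \mu^+$, which gives $|\Add(\mu^+,1)| = \mu^+$ and also bounds the length of our recursion; moreover $\Add(\mu^+,1)$ is $\mu^+$-closed, separative, and atomless. Using nontriviality and $\mu^+$-closure, $\P$ has a dense atomless subset; replacing $\P$ with it, we may assume $\P$ itself is atomless.

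The main content is a back-and-forth transfinite recursion of length $\mu^+$. Fix enumerations $\P = \langle p_\alpha : \alpha < \mu^+\rangle$ and $\Add(\mu^+,1) = \langle q_\alpha : \alpha < \mu^+\rangle$, and construct an increasing sequence of order-isomorphisms $f_\alpha : A_\alpha \to B_\alpha$ between subsets of size $<\mu^+$. At a successor stage $\alpha+1$, I ensure $p_\alpha \in A_{\alpha+1}$ by choosing a refinement of $p_\alpha$ compatible with the current tree of committed elements and then using atomlessness on the $\Add(\mu^+,1)$ side to pick a matching image that realizes the same order and incompatibility relations; a symmetric move handles $q_\alpha$. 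At a limit stage $\beta$ of cofinality $\le \mu$, take the union of the $f_{\beta'}$ for $\beta'<\beta$ and use $\mu^+$-closure on both sides to pick matching lower bounds for each maximal decreasing chain of length $\mathrm{cf}(\beta)$ running through the $A_{\beta'}$'s, and add these pairs to $f_\beta$. The union $f := \bigcup_{\alpha < \mu^+} f_\alpha$ is then an order-isomorphism between dense subsets of $\P$ and $\Add(\mu^+,1)$, giving forcing-equivalence.

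The principal obstacle is the limit-stage bookkeeping: we must enumerate cofinally which chains will require lower bounds, match the chosen bounds on both sides under $f$, and guarantee that the resulting domain is actually dense in $\P$ (and symmetrically for the range). The $\mu^+$-closure supplies the required lower bounds, atomlessness provides enough flexibility to realize arbitrary incompatibility patterns on either side, and the cardinal arithmetic $(\mu^+)^{<\mu^+} = \mu^+$ keeps the whole back-and-forth within a recursion of length $\mu^+$. A standard diagonal enumeration of the pairs to be matched suffices to carry the construction through.
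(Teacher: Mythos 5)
The paper gives no proof of its own here --- it simply cites Cummings' Handbook chapter --- so I am comparing your proposal against the standard argument from that source. Your approach is essentially the standard one, and the construction you describe would work if carried out carefully, but I want to flag two points.

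First, a statement-level caveat: an order-isomorphism between dense subsets of two separative posets is not by itself enough to conclude $\ro(\P)\cong\ro(\Add(\mu^+,1))$; one also needs the map to preserve \emph{incompatibility}. Since $\Add(\mu^+,1)={}^{<\mu^+}2$ is a tree, in which compatibility coincides with comparability, any order-isomorphism onto a subset of it forces the domain inside $\P$ to be ``tree-like'' in the sense that pairwise compatible elements are comparable --- and then order-preservation does give compatibility-preservation. You clearly have this in mind (you refer to ``the current tree of committed elements''), but the reduction at the start should say that explicitly, since a raw order-isomorphism of dense subsets is too weak.

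Second, the back-and-forth is an unnecessary complication. Because the target $\Add(\mu^+,1)$ is itself exactly the tree ${}^{<\mu^+}2$, the cleaner (and standard) move is a \emph{one-sided} recursion: build $e\colon{}^{<\mu^+}2\to\P$ level by level so that $e$ is order-preserving, sends incomparable nodes to incompatible conditions (using atomlessness to split at successors and $\mu^+$-closure at limits of cofinality $\le\mu$), and --- via a bookkeeping enumeration $\langle p_\alpha:\alpha<\mu^+\rangle$ of $\P$ --- has dense image. The domain of $e$ is automatically all of $\Add(\mu^+,1)$, so there is no ``back'' direction to worry about, and no need to match up refinements of $q_\alpha$'s with suitable preimages in $\P$. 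Your back-and-forth would also succeed, but the back step (finding a $p\in\P$ realizing the exact comparability/incompatibility pattern of a refinement of $q_\alpha$ against $B_\alpha$) is the fiddliest part and is precisely what the forward-only tree embedding avoids. Either way, the cardinal arithmetic $(\mu^+)^{<\mu^+}=\mu^+$ (from $2^\mu=\mu^+$ and regularity of $\mu$) is what keeps the recursion length at $\mu^+$, as you note.
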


\begin{lemma}\label{absorption}(Absorption for Mitchell Forcing) Suppose $V$ is any transitive model of set theory, that $\bar G$ is $\M(\kappa,\delta)$-generic for an inaccessible cardinal $\delta<\lambda$, and that $\Q$ is a $\kappa^+$-closed, separative poset of cardinality $\delta$ in $V[\bar G]$. Then $\M(\kappa,\lambda)/\bar G \simeq \Q \times \M(\kappa,\lambda)/\bar G$ in $V[\bar G]$.\end{lemma}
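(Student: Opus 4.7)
The plan is to reduce, via Abraham's projection, to an absorption statement for the $\kappa^+$-closed factor $\U$, and then verify that using the bounded-support product structure together with Fact~\ref{F:equivalent}.

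First I would fix Abraham's projection $\pi : \Add(\kappa,\lambda) \times \U \to \M(\kappa,\lambda)$ with $\U$ the $\kappa^+$-closed factor, and write $\bar G_A$, $\bar G_U$ for the corresponding projections of $\bar G$. Then in $V[\bar G]$ the quotient $\M(\kappa,\lambda)/\bar G$ is densely a projection of $\Add(\kappa, \lambda \setminus \delta) \times (\U / \bar G_U)$. The target reduction is
\[ \U / \bar G_U \ \simeq\ \Q \times (\U / \bar G_U) \ \text{ in } V[\bar G]. \]
Granting this, I would take $K$, an $\M(\kappa,\lambda)/\bar G$-generic over $V[\bar G]$, lift it to a generic $(A^*, U^*)$ for the product, and use the equivalence to extract from $U^*$ a pair $(H, U^{**})$ generic for $\Q \times (\U / \bar G_U)$ over $V[\bar G][A^*]$. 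By Fact~\ref{L:Easton} (since $\Add(\kappa,\lambda\setminus\delta)$ is $\kappa^+$-cc and $\U / \bar G_U$ is $\kappa^+$-closed), the pair $(A^*, U^{**})$ is mutually product-generic over $V[\bar G][H]$, and projecting via $\pi$ yields an $\M(\kappa,\lambda)/\bar G$-generic $K'$ with $(H, K')$ being $\Q \times \M(\kappa,\lambda)/\bar G$-generic over $V[\bar G]$, and lying inside $V[\bar G][K]$. The reverse direction of the equivalence is the immediate projection.

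For the reduction claim, the key observation is that $\U / \bar G_U$ is isomorphic in $V[\bar G]$ to the bounded-support product $\U \rest [\delta, \lambda)$ of $\lambda$-many copies of $\Add(\kappa^+, 1)$ (supports of size $\leq \kappa$). Splitting
\[ \U \rest [\delta, \lambda) \ =\ \U \rest [\delta, \delta+\delta) \,\times\, \U \rest [\delta+\delta, \lambda) \]
and using an index-set bijection between $[\delta, \lambda)$ and $[\delta+\delta, \lambda)$ to get $\U \rest [\delta,\lambda) \cong \U \rest [\delta+\delta, \lambda)$, one obtains $\U / \bar G_U \simeq (\U \rest [\delta, \delta+\delta)) \times (\U / \bar G_U)$. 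The first factor is $\kappa^+$-closed, nontrivial, separative, and of cardinality $\delta = \kappa^{++}$ in $V[\bar G]$. Assuming $V[\bar G] \models 2^{\kappa^+} = \kappa^{++}$ (as holds under the standard cardinal arithmetic implicit in the paper's setup), Fact~\ref{F:equivalent} with $\mu = \kappa^+$ identifies this factor with $\Q$.

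The main obstacle will be the mutual-genericity bookkeeping after the swap of $U^*$ for $U^{**}$: one must verify that $(A^*, U^{**})$ is genuinely product-generic over $V[\bar G][H]$, not just generic in each coordinate separately. This should follow by separately checking that $A^*$ remains $\Add$-generic over $V[\bar G][H]$ (since $H \in V[\bar G][U^*]$ and $(A^*, U^*)$ is already mutually generic), and that $U^{**}$ is $\U / \bar G_U$-generic over $V[\bar G][A^*][H]$ (from $(H, U^{**})$ being $\Q \times (\U / \bar G_U)$-generic over $V[\bar G][A^*]$), then invoking Fact~\ref{L:Easton} in $V[\bar G][H]$.
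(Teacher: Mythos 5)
There is a genuine gap here, centered on your structural claim about the $\U$ part. The coordinates of Abraham's $\kappa^+$-closed factor are \emph{not} copies of $\Add(\kappa^+,1)$: the coordinate at $\beta$ is the space of $\Add(\kappa,\beta)$-names for conditions in $\Add(\kappa^+,1)$, a term-space forcing whose shape (and size) genuinely depends on $\beta$. Consequently, (i) the identification of $\U\rest[\delta,\lambda)$ with a $\le\kappa$-support product of copies of $\Add(\kappa^+,1)$ is false, and (ii) the index-set bijection between $[\delta,\lambda)$ and $[\delta+\delta,\lambda)$ does \emph{not} induce an isomorphism, since it is matching up non-isomorphic coordinate posets. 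This breaks the self-similarity step $\U/\bar G_U \simeq (\U\rest[\delta,\delta+\delta))\times(\U/\bar G_U)$ on which the whole reduction rests. (There are further issues: the notation $\bar G_U$ is not well defined, as $\bar G$ does not canonically project to a $\U$-generic; and the cardinal arithmetic $V[\bar G]\models 2^{\kappa^+}=\kappa^{++}$ that you invoke for Fact~\ref{F:equivalent} at $\mu=\kappa^+$ is not part of the hypotheses, since the lemma assumes only that $V$ is a transitive model of set theory.)

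The paper sidesteps all of this by peeling off a \emph{single} coordinate, namely coordinate $\delta$. There the $\Add(\kappa,\delta)$-names are fully evaluated by $\bar G$, so that factor is genuinely $\Add(\kappa^+,1)$ over $V[\bar G]$; this gives $\M/\bar G\simeq\Add(\kappa^+,1)\times\M^*$ via a direct dense embedding, no Abraham decomposition required. The absorption of $\Q$ is then done in one copy of $\Add(\kappa^+,1)$: split it as $\Add(\kappa^+,1)\times\Add(\kappa^+,1)$, note that the first copy collapses $\delta=2^\kappa$ to $\kappa^+$, so that in the intermediate model $2^\kappa=\kappa^+$ and $\Q$ has size $\kappa^+$, and apply Fact~\ref{F:equivalent} at $\mu=\kappa$. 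This uses no $\GCH$-style assumptions on $V[\bar G]$ and no self-similarity of a tail product. If you want to rescue your approach, you would at minimum need to (a) replace ``copies of $\Add(\kappa^+,1)$'' by the term forcings, (b) prove that the relevant term forcings are all forcing-equivalent and that a bounded-support product of pairwise forcing-equivalent $\kappa^+$-closed posets is forcing-equivalent to the product of the replacements, and (c) add a cardinal-arithmetic hypothesis; the paper's route avoids all three.
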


\begin{proof} First, we claim that in $V[\bar G]$, $\M/\bar G \simeq \Add(\kappa^+,1) \x \M^*$ where $\M^*=\set{(p,q)\in\M(\kappa,\lambda)/\bar{G}}{\delta\notin \dom{q}}$. The set $D(\M/\bar G) = \{(p,q) \in \M/\bar G:\delta \in \dom{q}\}$ is clearly dense in $\M/\bar G$. Define $i: D(\M/\bar G) \to \Add(\kappa^+,1)\x \M^*$ by $i :(p,q) \mapsto (q(\delta),(p,q\rest(\lambda\setminus\{\delta\})))$. It is routine to verify that $i$ is a dense embedding.

Next, we claim that in $V[\bar G]$, $\Q \times \Add(\kappa^+,1) \simeq \Add(\kappa^+,1)$. It is enough to show that if $h$ is $\Add(\kappa^+,1)$-generic over $V[\bar G]$, then in $V[\bar G \ast h]$ there is a filter $h'$ that is $\Q$-generic over $V[\bar G]$. Since $\Add(\kappa^+,1) \simeq \Add(\kappa^+,1) \times \Add(\kappa^+,1)$ (by Fact~\ref{F:equivalent}, although this can be shown directly), there are $h_0,h_1$ such that $h_0 \times h_1$ is $\Add(\kappa^+,1) \times \Add(\kappa^+,1)$-generic over $V[\bar G]$ and such that $V[\bar G \ast h] = V[\bar G \ast (h_0 \times h_1)]$. The Mitchell forcing $\M(\kappa,\delta)$ collapses $\delta$ to be $\kappa^{++}$, i.e. $V[\bar G] \models \text{``}\delta=\kappa^{++}$'', and forcing with $\Add(\kappa^+,1)$ collapses $\delta$ further to have cardinality $\kappa^+$, so in $V[\bar G*h_0]$, $\Q$ has size $\kappa^+$ and $2^\kappa = \kappa^+$. Moreover, $\Q$ is still $\kappa^+$-closed because $\Add(\kappa^+,1)$ is $\kappa^+$-closed. By Fact~\ref{F:equivalent}, it follows that $\Q \simeq \Add(\kappa^+,1)$ in $V[\bar G \ast h_0]$. Hence there is a filter $h'$ that is $\Q$-generic over $V[\bar G \ast h_0]$ (and hence $V[\bar G]$) such that $V[\bar G \ast h_0][h_1] = V[\bar G \ast h_0][h']$. In particular, $h' \in V[\bar G \ast h]$.

It follows from both claims that in $V[\bar G]$ we have $\M(\kappa,\lambda)/\bar G \simeq\Add(\kappa^+,1) \x \M^* \simeq (\Q \times \Add(\kappa^+,1)) \x \M^*\simeq \Q \times (\Add(\kappa^+,1) \x \M^*)
 \simeq \Q \times \M(\kappa,\lambda)/ \bar G$. \end{proof}

We will often need the next lemma alongside the Absorption Lemma:

\begin{lemma}\label{preservation} Suppose $V$ is any transitive model of set theory, $\delta$ is inaccessible, $\bar G$ is $\M(\kappa,\delta)$-generic, $\Q$ is a $\kappa^+$-closed separative poset of cardinality $\delta$ in $V[\bar G]$ (which does not collapse $\delta$), and $H$ is $\Q$-generic over $V[\bar G]$. Then for $\lambda>\delta$ and regular $\mu$ such that $\kappa^+ \le \mu \le \delta$, forcing with $(\M(\kappa,\lambda)/\bar G)^{V[\bar G]}$ over $V[\bar{G}][H]$ preserves $(\kappa^+)^{V[\bar G][H]}$ as well as stationary subsets of $\mu \cap \cof(\le \kappa)$ from $V[\bar G][H]$, and does not add cofinal branches to $\mu$-trees over $V[\bar G][H]$.\end{lemma}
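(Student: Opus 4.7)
The plan is to lift Abraham's projection analysis from $V[\bar G]$ to $V[\bar G][H]$. Recall from Section~\ref{Mitchell-exposition} that in $V[\bar G]$ there is a product $\mathbb{U}^* \times \Add(\kappa,\lambda)$---with $\mathbb{U}^*$ being $\kappa^+$-closed and $\Add(\kappa,\lambda)$ being square-$\kappa^+$-cc---which projects onto (a dense subset of) $\M(\kappa,\lambda)/\bar G$. The first task is to verify that this structure persists in $V[\bar G][H]$: the projection map remains a projection (since this property is upward absolute), and the factors retain their key properties. The closure of $\mathbb{U}^*$ is preserved because $\Q$, being $\kappa^+$-closed in $V[\bar G]$, adds no new $\kappa$-sequences. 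The square chain condition of $\Add(\kappa,\lambda)$ is preserved by applying Easton's Lemma (Fact~\ref{L:Easton}) to $\Q$ and to the $\kappa^+$-cc poset $\Add(\kappa,\lambda)^2$.

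Once this is lifted, any generic extension of $V[\bar G][H]$ by $(\M(\kappa,\lambda)/\bar G)^{V[\bar G]}$ sits inside a further generic extension by $\mathbb{U}^* \times \Add(\kappa,\lambda)$ over $V[\bar G][H]$. Each of the three preservation properties in the statement is downward absolute to submodels (a stationary set in the larger model remains stationary in the smaller, a branch-free tree in the larger model remains branch-free in the smaller, and cardinals are preserved downward from the larger model), so it is enough to verify them for the larger product forcing over $V[\bar G][H]$.

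The three verifications follow the template already used for classical Mitchell forcing. For $(\kappa^+)^{V[\bar G][H]}$-preservation, first note that $\Q$ does not collapse $\kappa^+$, so $(\kappa^+)^{V[\bar G][H]} = (\kappa^+)^{V[\bar G]}$; then Easton's Lemma applied to $\mathbb{U}^* \times \Add(\kappa,\lambda)$ keeps $\kappa^+$ a cardinal. For stationary sets, given $S \in V[\bar G][H]$ stationary in $\mu \cap \cof(\le \kappa)$, I would force first with the $\kappa^+$-closed $\mathbb{U}^*$---preserving $S$ by the natural $\mu$-analog of Fact~\ref{F:St_Closed_cof}---and then with $\Add(\kappa,\lambda)$, which remains $\kappa^+$-cc in the intermediate extension by Easton and so preserves $S$ by Fact~\ref{F:St_cc}. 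For a $\mu$-tree $T \in V[\bar G][H]$, observe that $2^\kappa = \delta$ in $V[\bar G][H]$, so the levels of $T$ have size $< \mu \le 2^\kappa$; Silver's Lemma (Fact~\ref{silverslemma}) then excludes new branches from the $\mathbb{U}^*$-step, and Fact~\ref{F:Square_cc} excludes them from the subsequent $\Add(\kappa,\lambda)$-step (still square-$\kappa^+$-cc by Easton).

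The main technical point is the initial lifting of the projection analysis, i.e., verifying that $\mathbb{U}^*$ remains $\kappa^+$-closed and $\Add(\kappa,\lambda)$ remains square-$\kappa^+$-cc in $V[\bar G][H]$. Once that is established, the three preservation statements reduce to the standard product preservation facts collected in Section~\ref{sec-background}.
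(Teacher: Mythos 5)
Your proposal is correct and takes essentially the same approach as the paper: the paper's (very brief) proof likewise observes that the Abraham-style projection analysis of $\M(\kappa,\lambda)/\bar G$ persists from $V[\bar G]$ to $V[\bar G][H]$ because $\Q$ is $\kappa^+$-closed, and then invokes the standard Mitchell preservation facts. You simply spell out the verification that $\mathbb{U}^*$ remains $\kappa^+$-closed and $\Add(\kappa,\lambda)$ remains square-$\kappa^+$-cc, and then redo the three standard preservation arguments; the paper delegates all of this to Fact~\ref{Mitchell-preservation-summary}.
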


\begin{proof} Working in $V[\bar G]$, $\M(\kappa,\lambda)/\bar G$ is the projection of a product of a $\kappa^+$-closed poset and $\Add(\kappa,\lambda)$, a square-$\kappa^+$-cc poset. This is still the case in $V[\bar G][H]$ because of the $\kappa^+$-closure of $\Q$. Therefore, $\M(\kappa,\lambda)/\bar G$ has the usual preservation properties of Mitchell Forcing (Fact~\ref{Mitchell-preservation-summary}) over $V[\bar G][H]$.\end{proof}

\subsection{A Guessing Variant of Mitchell Forcing}

Using ideas that go back to Abraham \cite{ABR:tree}, we will also form variants of the Mitchell forcing which employ a guessing function of the form $\ell:\lambda \to V_\lambda$. The purpose will be to deal with quotients for constructions of models in which we intend for approachability to fail. The fact that the posets guessed by $\ell$ can be arranged to appear in certain quotients of this forcing serves the role of the absorption lemma in these cases. Details on this forcing can be found, among other places, in the first author's thesis \cite{Gilton-thesis}.

\begin{definition}\label{def:myMitchell} Let $A$ denote the set of inaccessible cardinals below $\lambda$, let $A^*$ denote $A\setminus \lim(A)$, and let $\ell:\lam\to V_\lam$ be given. We define the poset $\M_\ell(\kappa,\lambda) \rest\beta$ by induction on $\beta\in A$, setting $\M_\ell(\kappa,\lambda):= \bigcup_{\beta \in A^*} \M_\ell(\kappa,\lambda) \rest\beta$. Conditions in $\M_\ell(\kappa,\lambda)\rest\beta$ consist of triples $(a,f,g)$ where:\

\begin{enumerate}
\item $a\in\Add(\kappa,\beta)$;

\item $f$ is a partial function with $\dom f \subseteq A^*\cap\beta$ so that $|\dom f|\leq\kappa$;

\item for each $\alpha\in\dom f$, $f(\alpha)$ is an $\Add(\kappa,\alpha)$-name for a condition in $\Col(\kappa^+,\alpha)$;

\item $g$ is a partial function with $\dom g \subseteq A\cap\beta$ so that $|\dom g|\leq\kappa$;

\item for all $\alpha\in\dom g$, if $\ell(\alpha)$ is an $(\M_\ell\rest\alpha)$-name for a $\kappa^+$-closed poset then $g(\alpha)$ is an $(\M_\ell\rest\alpha)$-name for an element of $\ell(\alpha)$. (Otherwise $g(\alpha)$ is trivial.)

\end{enumerate}

We say that $(a',f',g')\leq (a,f,g)$ if and only if the following hold:

\begin{enumerate}[(i)]

\item $a\subseteq a'$;

\item $\dom f \subseteq\dom {f'}$ and $\dom g\subseteq\dom {g'}$;
\item for all $\alpha\in\dom f$, $a'\rest\alpha\Vdash_{\Add(\kappa,\alpha)}f'(\alpha)\leq_{\Col(\kappa^+,\alpha)}f(\alpha)$;
\item for all $\alpha\in\dom g $, $(a',f',g')\rest\alpha\Vdash_{\M_\ell \rest \alpha}g'(\alpha)\leq_{\ell(\alpha)}g(\alpha)$.

\end{enumerate}\end{definition}

It will be shown that $\M_\ell(\kappa,\lambda)$ forces the failure of $\AP(\lambda)$. In fact, the failure of $\AP(\lambda)$ will persist even when we follow $\M_\ell(\kappa,\lambda)$ with certain iterations. We will make arrangements so that under certain embeddings $j:M \to N$, the right stage of the iteration appears in the quotient as desired. Then we will obtain the failure of $\AP(\lambda)$ because of the fact that, since $\lambda$ is an inaccessible limit of inaccessibles (i.e., in $A\setminus A^*$), the forcing $j(\M_\ell(\kappa,\lambda))$ does not collapse $\lambda$ until stage $\lambda^*:=\min(j(A)\backslash(\lambda+1))$, after the additional subsets of $\kappa$ from $\Add(\kappa,\lambda^*)$ are added.

The conclusions of Fact \ref{Mitchell-preservation-summary} also hold for these guessing variants. We will frequently encounter the following situation: suppose $\delta\in(\kappa,\lambda)$ is inaccessible and $\delta^*$ is the least inaccessible greater than $\delta$. Suppose in addition that $\ell(\delta)=\P$ for some $\kappa^+$-closed forcing (otherwise the guessing variant is simply adding Cohen subsets of $\kappa$). We then obtain a quotient of the form:

\begin{equation*} \M_\ell(\kappa,\lambda) \simeq \M_\ell(\kappa,\delta) \ast({\P} \times\Add(\kappa,\delta^*))\ast  \R.
\end{equation*}

If $G$ is $\M_\ell(\kappa,\delta)$-generic over $N$ and $H \times I$ is $\P \times \Add(\kappa,\delta^*)$-generic over $N[G]$, then we write the quotient $\R$ as $\N_{\delta^*}$, following the notation of \cite{8fold}. In this case, we will make important use of the following:

\begin{fact}\cite{8fold} Tails of the form $\N_{\delta^*}$ are projections of products of square-$\kappa^+$-cc and $\kappa^+$-closed forcings.
\end{fact}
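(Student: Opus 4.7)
The plan is to mimic Abraham's projection analysis for classical Mitchell forcing, carried out inside the intermediate model $V[G \ast (H \times I)]$ rather than in $V$. After trimming by the known generics $G$, $H$, and $I$, a typical condition in $\N_{\delta^*}$ can be identified with a triple $(a,f,g)$ where $a \in \Add(\kappa,[\delta^*,\lambda))$ and $f$, $g$ are as in Definition~\ref{def:myMitchell} but with supports restricted to the interval $[\delta^*,\lambda)$ (the data at earlier stages being fixed by the generics). I would then define in $V[G\ast (H \times I)]$ the auxiliary poset $\mathbb{U}^*$ whose conditions are pairs $(f,g)$ of this shape, ordered by stipulating that $(f',g') \leq (f,g)$ iff $\Vdash f'(\alpha) \leq f(\alpha)$ for all $\alpha \in \dom f$ and $\Vdash g'(\alpha) \leq g(\alpha)$ for all $\alpha \in \dom g$ (so the $a\!\rest\!\alpha$-dependence is dropped from the name interpretations).

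A standard pointwise greatest-lower-bound argument yields $\kappa^+$-closure of $\mathbb{U}^*$: the union of supports of a $\leq\!\kappa$-sized descending sequence has cardinality $\leq\!\kappa$, and the $\kappa^+$-closure of each $\Col(\kappa^+,\alpha)$ and of each (forced) $\ell(\alpha)$ furnishes coordinatewise lower bounds. The map $\pi : \Add(\kappa,[\delta^*,\lambda)) \times \mathbb{U}^* \to \N_{\delta^*}$ given by $\pi(a, (f,g)) = (a, f, g)$ is order-preserving, and the projection property is established in the standard Abraham-style manner: given $(a',f',g') \leq_{\N_{\delta^*}} (a,f,g)$, one replaces $f'$ and $g'$ by names $f''$ and $g''$ which, outside the cylinder below $a'$, agree with $f$ and $g$ respectively, so that they are unconditionally $\leq f(\alpha)$ and $\leq g(\alpha)$; then $(a', (f'', g''))$ is the desired refinement in the product. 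The Cohen part $\Add(\kappa,[\delta^*,\lambda))$ remains square-$\kappa^+$-cc in $V[G \ast (H \times I)]$: it is so in $V$ by $\kappa^{<\kappa}=\kappa$, and the passage $V \to V[G \ast (H \times I)]$ is itself a projection of a square-$\kappa^+$-cc forcing with a $\kappa^+$-closed forcing (by the preservation analysis of Section~\ref{Mitchell-exposition} for $\M_\ell(\kappa,\delta)$ combined with the $\kappa^+$-closure of $\P$), so Easton's Lemma (Fact~\ref{L:Easton}) preserves square-$\kappa^+$-cc through this intermediate extension.

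The principal obstacle is the nested name structure for $g(\alpha)$, which is an $\M_\ell\!\rest\!\alpha$-name rather than a pure $\Add(\kappa,\alpha)$-name; this blocks a one-shot term-forcing rewriting analogous to what works for $f(\alpha)$. I would resolve this by a simultaneous induction on $\alpha \in A \cap [\delta^*,\lambda)$: at each stage, the inductive hypothesis that $\N_{\delta^*}\!\rest\!\alpha$ is already a projection of square-$\kappa^+$-cc with $\kappa^+$-closed legitimizes the term-forcing reinterpretation of $g(\alpha)$ used in the argument above, closing the induction and delivering the analysis for $\N_{\delta^*}$ itself.
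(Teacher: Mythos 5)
The paper states this as a cited fact from the Eightfold Way paper rather than proving it, so there is no in-paper proof to compare against. Your term-forcing decomposition is the standard Abraham-style way to establish statements of this kind, and most of your argument is on track: the $\kappa^+$-closure of $\mathbb{U}^*$ via coordinatewise lower bounds (using fullness of names), the square-$\kappa^+$-cc of the Cohen factor over the intermediate model via Easton's Lemma, and the mixing argument for the projection property.

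The weak point is your final paragraph. The fact that $g(\alpha)$ is an $\M_\ell\rest\alpha$-name rather than a pure $\Add(\kappa,\alpha)$-name does not actually block the mixing: one mixes $g'(\alpha)$ with $g(\alpha)$ over (the relevant quotient of) $\M_\ell\rest\alpha$, using $(a',f',g')\rest\alpha$ as the mixing condition, exactly as one mixes $f'(\alpha)$ with $f(\alpha)$ over $\Add(\kappa,\alpha)$. This produces $g''(\alpha)$ with $\Vdash g''(\alpha)\le g(\alpha)$ unconditionally, which is precisely what membership in $\mathbb{U}^*$ requires. The induction on $\alpha$ that is genuinely needed is a smaller bookkeeping check: one verifies, by induction on $\alpha\in A\cap[\delta^*,\lambda)$, that $(a',f'',g'')\rest\alpha$ is below $(a',f',g')\rest\alpha$; this is what lets one read off $(a',f'',g'')\rest\alpha\Vdash g''(\alpha)\le g'(\alpha)$ from the mixing, and hence conclude $\pi(a',(f'',g''))\le(a',f',g')$. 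That bookkeeping induction does not invoke, and does not need, the hypothesis that $\N_{\delta^*}\rest\alpha$ is already a projection of a square-$\kappa^+$-cc and a $\kappa^+$-closed forcing, and it is unclear how such a hypothesis would concretely ``legitimize a term-forcing reinterpretation'' of $g(\alpha)$ as you suggest. Replace that paragraph with the direct mixing-plus-bookkeeping argument and the proof goes through.
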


We thus see that  the conclusions of Fact \ref{Mitchell-preservation-summary} holds in the case of the guessing variants of Mitchell forcing.

\subsection{A Laver-Like Guessing Sequence for Mahlo Cardinals}

In order to optimize our large cardinal assumptions when using the $\M_\ell(\kappa,\lambda)$ poset, we will use guessing principles analogous to the one introduced by Laver for supercompact cardinals \cite{Laver:i}. Our efforts will extend this framework to Mahlo cardinals, as explained in this section. First we will establish some context.

The following principle was introduced by Hamkins \cite{Hamkins:LaverDiamond} as a weakly compact analog to the original Laver diamond:

\begin{definition} Suppose that $\lambda$ is a weakly compact cardinal. We say that the \emph{weakly compact Laver diamond} holds at $\lambda$ if there is a function $\ell:\lambda \to V_\lambda$ satisfying the following: For any $A\in H(\lambda^+)$ and any transitive structure $M$ of size $\lambda$ with $A,\ell\in M$, there is a transitive set $N$ and an elementary embedding $j:M \to N$ with critical point $\lambda$ so that $j,M\in N$ and so that $j(\ell)(\lambda)=A$.\end{definition}

\begin{fact}\cite{Hamkins:LaverDiamond} If $\lambda$ is weakly compact, there is a forcing extension in which $\lambda$ remains weakly compact and such that the weakly compact Laver diamond holds at $\lambda$.\end{fact}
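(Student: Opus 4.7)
The plan is to build $\ell$ via a reverse Easton iteration $\P_\lambda = \langle \P_\alpha, \dot\Q_\alpha : \alpha < \lambda \rangle$ in $V$, with a ``lottery'' forcing at each inaccessible stage, and then to verify both parts of the conclusion by lifting a weakly compact embedding in $V$ while freely prescribing the lottery winner at stage $\lambda$ of the target. Concretely, take $\dot\Q_\alpha$ trivial except when $\alpha$ is inaccessible, in which case $\dot\Q_\alpha := \bigoplus_{x\in V_\alpha}\Add(\alpha,1)$ is the lottery sum whose conditions are $\mathbf{1}$ or pairs $(x,p)$ with $x\in V_\alpha$ and $p\in\Add(\alpha,1)$, ordered by $(x,p)\leq (x',p')$ iff $x=x' \et p\leq p'$. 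Let $G$ be $\P_\lambda$-generic over $V$ and, for inaccessible $\alpha<\lambda$, set $\ell(\alpha)$ equal to the stage-$\alpha$ lottery winner in $G$ (and $\emptyset$ otherwise).

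The first step is to show that $\lambda$ remains weakly compact in $V[G]$. Given any transitive $M^*\in V[G]$ of size $\lambda$ with ${}^{<\lambda}M^*\sub M^*$ and containing prescribed data, fix names, enlarge to a transitive $M\in V$ of size $\lambda$ containing $\P_\lambda$ and those names with ${}^{<\lambda}M\sub M$, and apply weak compactness of $\lambda$ in $V$ to obtain $j_0:M\to N$ with critical point $\lambda$. I would then lift to $j:M[G]\to N[G*H*K]$ by constructing in $V[G]$ a generic $H*K$ for the tail $\dot\Q_\lambda*\dot\P'$ of $j_0(\P_\lambda)$ above stage $\lambda$; this tail is $\lambda$-closed in $N[G]$ (with $\dot\P'$ in fact $\lambda^+$-closed in $N[G*H]$), while $N$ contains only $\lambda$-many dense subsets of it, so a standard $\lambda$-length diagonalization in $V[G]$ produces the required generic. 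Since the critical point is $\lambda$, we have $j_0[G]=G$ pointwise, so the lifting condition holds automatically.

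The second, more delicate step is the Laver condition itself. Given $A\in H(\lambda^+)^{V[G]}$ and transitive $M^*\in V[G]$ of size $\lambda$ with $A,\ell\in M^*$, fix $\P_\lambda$-names $\dot A, \dot M^*$, pass to a transitive $M\in V$ of size $\lambda$ with $\P_\lambda, \dot A, \dot M^*\in M$ and ${}^{<\lambda}M\sub M$, and apply weak compactness in $V$ to get $j_0:M\to N$ with critical point $\lambda$, arranging $j_0, M \in N$ by the standard coding that weak compactness permits for structures of size $\lambda$. The key move is to begin the diagonalization for $H*K$ below the initial condition $(A,\emptyset)\in\dot\Q_\lambda$: every extension of $(A,\emptyset)$ has first coordinate $A$, so the resulting $H$ has lottery winner $A$, while $(A,\emptyset)$ still has extensions meeting every dense subset of $\dot\Q_\lambda*\dot\P'$ in $N[G]$ and the $\lambda$-closure supplies lower bounds at limits of the diagonalization. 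The lift $j:M[G]\to N[G*H*K]$ then satisfies $j(\ell)(\lambda)=A$ by construction.

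The hard part is the simultaneous arrangement of genuine $N[G]$-genericity for $H$ and the prescription of the lottery winner as the given $A\in V[G]$. The lottery structure makes the two compatible: fixing the first coordinate below $(A,\emptyset)$ restricts $\dot\Q_\lambda$ to a dense sub-poset isomorphic to $\Add(\lambda,1)$, which still intersects every dense subset of $\dot\Q_\lambda$ in $N[G]$; and because $A$ is pre-chosen in $V[G]$ rather than read off the stage-$\lambda$ Cohen generic via some coding, no ``encoding'' of $A$ through genericity is required. The preservation of weak compactness in Step~1 is the kind of routine reverse-Easton bookkeeping (closure, chain condition, and diagonalization inside the target) that parallels the lifting arguments already used elsewhere in the paper.
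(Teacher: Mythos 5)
The paper cites this fact from Hamkins without proof, so there is no in-text argument to compare against; your overall strategy---a reverse Easton iteration placing a lottery at each inaccessible, reading off $\ell$ as the lottery winner, and verifying both conclusions by lifting a weakly compact embedding through the tail---is the standard route and is structurally sound.

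There is, however, a genuine gap in the choice of lottery index set. You take $\dot\Q_\alpha = \bigoplus_{x\in V_\alpha}\Add(\alpha,1)$, which forces $\ell(\alpha)\in V_\alpha$. But the diamond must guess arbitrary $A\in H(\lambda^+)$, and $H(\lambda^+)$ contains sets of rank $\geq\lambda$: the ordinal $\lambda$ itself already does, and so do the names $\dot\P_\alpha$ and $\dot{\bb{K}}_\alpha$ that this paper actually needs $\ell$ to guess in the proofs of Theorems~\ref{tp-csr-notap} and~\ref{csr-notsh}. By elementarity, $\ell(\alpha)\in V_\alpha$ gives $j(\ell)(\lambda)\in V_\lambda^{N[\cdots]}$, so your lottery can never realize $j(\ell)(\lambda)=A$ for such $A$; concretely, the pair $(A,\emptyset)$ below which you want to begin the diagonalization is not even a condition in the stage-$\lambda$ lottery of $j_0(\P_\lambda)$. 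The repair is to index the lottery at stage $\alpha$ by $H(\alpha^+)$ as computed in the stage-$\alpha$ extension. This still gives $\ell:\lambda\to V_\lambda$, since everything in $H(\alpha^+)$ has rank $<\alpha^+<\lambda$; it keeps $|\dot\Q_\alpha|=2^\alpha<\lambda$, so the Easton bookkeeping is unaffected; and it makes $(A,\emptyset)$ a legitimate initial condition whenever $A\in H(\lambda^+)^{N[G]}$, which the arrangement $j_0,M\in N$ together with $\dot A\in M$ supplies. With this change your lifting and diagonalization arguments go through as written.
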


We will use this fact in concert with $\M_\ell(\kappa,\lambda)$ below in the proofs of Theorems~\ref{tp-csr-notap} and \ref{csr-notsh}.

Now we will explain how we use embeddings with Mahlo cardinals.\footnote{The use of embeddings with Mahlo cardinals has been recently dealt with elsewhere \cite{HLN:small-embedding}.} Typically, we work with a transitive set $N$ modeling a rich fragment of $\ZFC$ which is closed under $\lambda^+$-sequences in $V$. If $M$ is an elementary substructure of $N$, let $\pi_M:M\then \bar{M}$ denote the transitive collapse. For $a\in M$ we denote $\pi_M(a)$ as $\bar{a}_M$ and if $M$ is clear from the context we simply write $\pi$ and $\bar{a}$, respectively.

\begin{definition}\label{def:rich}
We say that $M\prec N$ is \emph{rich} if the following hold: 
\bce[(i)]
\item $\lambda \in M$;
\item $\bar{\lambda}_M:=\pi_M(\lambda)$ is a subset of $M$ (i.e. $M \cap \lambda \in \lambda$);
\item $\bar \lambda_M$ is an inaccessible cardinal in $N$;
\item The size of $M$ is $\bar{\lambda}_M$;
\item $M$ is closed under $<\bar{\lambda}_M$ sequences and $\bar{\lambda}_M<\lambda$.
\ece 
\end{definition}

Note that if $M$ is rich, then $\pi_M^{-1}$ is an elementary embedding from $\bar M$ to $N$ which we will denote $j_M$. The critical point of $j_M$ is $\bar{\lambda}_M$ and $j_M(\bar{\lambda}_M)=\lambda$.  If the $M$ is clear from the context we just write $j$.

If $\lambda$ is Mahlo, then there are plenty of these models:

\begin{fact} \label{P:suitable}
Assume that $\lambda$ is Mahlo. For each $a \subset N$ with $|a|<\lambda$, there is a rich model $M$ such that $a \subset M$.
\end{fact}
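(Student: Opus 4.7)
The plan is to construct a continuous $\subseteq$-increasing chain $\langle M_\alpha : \alpha<\lambda\rangle$ of elementary substructures of $N$, each of size strictly less than $\lambda$, and then extract a single $M_\delta$ whose intersection with $\lambda$ is an inaccessible ordinal and which is internally closed enough to be rich.

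Fix a set of Skolem functions for $N$. Let $M_0$ be the Skolem hull in $N$ of $a\cup\{\lambda\}$; at each successor stage let $M_{\alpha+1}$ be the Skolem hull in $N$ of $M_\alpha\cup(\alpha+1)\cup{}^{\le|\alpha|}M_\alpha$; and take unions at limit stages. Since $\lambda$ is inaccessible (being Mahlo) and $|a|<\lambda$, an easy induction gives $|M_\alpha|<\lambda$ for every $\alpha<\lambda$, using the strong-limit bound $|M_\alpha|^{|\alpha|}<\lambda$ whenever both factors are below $\lambda$. Moreover $\alpha\subseteq M_\alpha$ for every $\alpha$.

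The function $f(\alpha):=\sup(M_\alpha\cap\lambda)$ is then continuous, non-decreasing, and satisfies $\alpha\le f(\alpha)<\lambda$, so by a standard iteration argument the set $C:=\{\alpha<\lambda : f(\alpha)=\alpha\}$ is a club in $\lambda$. Because $\lambda$ is Mahlo, the stationary set of inaccessible cardinals meets $C$ above $|a|$, so we may fix an inaccessible $\delta\in C$ with $\delta>|a|$, and we set $M:=M_\delta$.

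To verify that $M$ is rich in the sense of Definition~\ref{def:rich}: (i) $\lambda\in M_0\subseteq M$; (ii) $M\cap\lambda=\delta$ follows from $\delta\in C$ combined with $\delta\subseteq M$; (iv) $|M|=\delta$ since $|M_\alpha|\le 2^{|\alpha|}<\delta$ for every $\alpha<\delta$ and the chain has length $\delta$; (v) given a sequence of length $\gamma<\delta$ from $M$, the regularity of $\delta$ places all its entries in $M_\beta$ for some $\beta<\delta$, and after enlarging $\beta$ so that $\beta\ge\gamma$, the sequence is absorbed into the Skolem hull at stage $\beta+1$; and (iii) $\delta$ remains inaccessible in $N$ by standard downward absoluteness (regularity transfers directly, and $(2^\mu)^N\le(2^\mu)^V<\delta$ for $\mu<\delta$ gives strong limitness in $N$). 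The main subtle point is balancing the size bound $|M_\alpha|<\lambda$ against the closure requirements on $M$: it is precisely the inaccessibility of $\delta$, guaranteed by the Mahlo hypothesis, that reconciles the two.
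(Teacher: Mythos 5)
Your proof is correct and essentially reproduces the chain-of-elementary-submodels construction that the paper implicitly uses inside the proof of Lemma~\ref{mahlolaverfunction}, direction $(1)\Rightarrow(2)$. One cosmetic slip: the literal bound $|M_\alpha|\le 2^{|\alpha|}$ fails when $|a|$ is large and $\alpha$ small (and also since $|M_\alpha|^{|\alpha|}$ need not be $\le 2^{|\alpha|}$); what your induction actually establishes, using that $\delta>|a|$ is inaccessible, is just $|M_\alpha|<\delta$ for all $\alpha<\delta$, which is all that is needed.
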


We now recall Jensen's \emph{diamond principle} \cite{DEVbook}:

\begin{definition} If $\lambda$ is regular and $S \subset \lambda$ is stationary, then $\diamondsuit_\lambda(S)$ holds if and only if there is a sequence $\seq{X_\alpha}{\alpha \in S}$ such that for all $\alpha \in S$, $X_\alpha \subset \alpha$, and such that for all $Y \subset \lambda$, the set $\set{\alpha \in S}{Y \cap \alpha = X_\alpha}$ is stationary.\end{definition}

It turns out that the principle that we need is in fact equivalent to a diamond principle.

\begin{lemma}\label{mahlolaverfunction} The following are equivalent:

\begin{enumerate}

\item $\lambda$ is Mahlo and $\diamondsuit_\lambda(\textup{Reg})$ holds.

\item  For every transitive structure $N$ satisfying a rich fragment of $\ZFC$ that is closed under $\lambda^+$-sequences in $V$, there is a function $\ell : \lambda \to V_\lambda$ such that the following holds: For every $A \in N$ with $A \in H(\lambda^+)$ and any $a \subset N$ with $|a|<\lambda$, there is a rich $M \prec N$ with $a \cup \{\ell\} \subset M$ such that $\ell(\bar \lambda_M) = \pi_M(A)$.

\end{enumerate}\end{lemma}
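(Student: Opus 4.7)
The plan is to translate between a $\diamondsuit_\lambda(\Reg)$-sequence and the function $\ell$ via a canonical coding, with the collapse maps $\pi_M$ mediating between the two.

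For $(1)\Rightarrow(2)$, fix once and for all in $V$ a $\diamondsuit_\lambda(\Reg)$-sequence $\seq{X_\alpha}{\alpha\in\Reg\cap\lambda}$ and a G\"odel pairing on the ordinals. Define $\ell(\alpha)\in V_\lambda$ by decoding $X_\alpha$: regard $X_\alpha\sub\alpha$ as a binary relation on $\alpha$ via pairing, and if this relation is well-founded and extensional, let $\ell(\alpha)$ be the image of $0$ under its Mostowski collapse; otherwise set $\ell(\alpha)=\0$. Inaccessibility of $\lambda$ ensures $\ell(\alpha)\in V_\lambda$. Given the transitive model $N$, an element $A\in N\cap H(\lambda^+)$, and $a\sub N$ with $|a|<\lambda$, work inside $N$ to fix a bijection $\phi_A\colon\trcl{A}\to\mu$ with $\phi_A(A)=0$ for some $\mu\le\lambda$, and let $Y_A\sub\lambda$ be the code of $A$ obtained by pushing the membership relation on $\trcl{A}$ through $\phi_A$ and the pairing. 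Put $a'=a\cup\{\ell,A,\phi_A\}$. By Fact~\ref{P:suitable} together with a standard continuous chain construction of rich substructures in any prescribed club, the set $E=\{\bar\lambda_M : M\el N\text{ rich and }a'\sub M\}$ is stationary in $\lambda$, while $\diamondsuit_\lambda(\Reg)$ ensures that $T=\{\alpha\in\Reg\cap\lambda : X_\alpha=Y_A\cap\alpha\}$ is stationary. Choose a rich $M$ with $\bar\lambda_M\in E\cap T$. Because $Y_A\sub\lambda$ and $\pi_M$ fixes ordinals below $\bar\lambda_M$, one has $\pi_M(Y_A)=Y_A\cap\bar\lambda_M=X_{\bar\lambda_M}$; by elementarity this set is exactly the code of $\pi_M(A)$ under $\pi_M(\phi_A)$, so the hard-coded decoding yields $\ell(\bar\lambda_M)=\pi_M(A)$.

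For $(2)\Rightarrow(1)$, pick any sufficiently large transitive $N$ (for instance $H(\Theta)^V$ for a sufficiently large regular $\Theta$) and let $\ell$ be the function supplied by (2). To see that $\lambda$ is Mahlo, given a club $C\sub\lambda$ apply (2) with $A=C$ and $C\in a$; the resulting rich $M$ has $\bar\lambda_M$ inaccessible in $N$, hence inaccessible (and in particular regular) in $V$ because $N$ is $\lambda^+$-closed, and $\bar\lambda_M\in C$ by elementarity together with closedness of $C$. To derive $\diamondsuit_\lambda(\Reg)$, define $X_\alpha=\ell(\alpha)$ whenever $\ell(\alpha)\sub\alpha$, and $X_\alpha=\0$ otherwise. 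Given any $Y\sub\lambda$ and club $C\sub\lambda$, apply (2) with $A=Y$ and $\{Y,C\}\sub a$; the resulting rich $M$ satisfies $\pi_M(Y)=Y\cap\bar\lambda_M\sub\bar\lambda_M$ (since $Y$ is a set of ordinals below $\lambda$ and $\pi_M$ fixes ordinals below $\bar\lambda_M$), hence $X_{\bar\lambda_M}=\ell(\bar\lambda_M)=Y\cap\bar\lambda_M$ and $\bar\lambda_M\in C\cap\Reg$. Thus $\{\alpha\in\Reg\cap\lambda : Y\cap\alpha=X_\alpha\}$ meets every club, proving $\diamondsuit_\lambda(\Reg)$.

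The principal technical obstacle is in $(1)\Rightarrow(2)$: we must ensure that the decoding procedure hard-coded into $\ell$ commutes with $\pi_M$, so that the diamond guess $X_{\bar\lambda_M}=\pi_M(Y_A)$ really decodes to $\pi_M(A)$ rather than to some unrelated transitive set. This forces the use of absolute codings (G\"odel pairing, the Mostowski collapse, and a distinguished root marker $0$ for the set being coded) and the inclusion of $\phi_A$ in $a'$, so that $\pi_M(\phi_A)\in\bar M$ is the correct bijection from $\trcl{\pi_M(A)}$ to $\pi_M(\mu)$. The remaining ingredients---the stationarity of $E$ and the transfer of inaccessibility from $N$ to $V$---are standard for a Mahlo cardinal together with a $\lambda^+$-closed transitive model.
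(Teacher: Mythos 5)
Your proof follows the same overall strategy as the paper's: code $A$ by a subset of $\lambda$, read the code off a $\diamondsuit_\lambda(\Reg)$-sequence via a Mostowski decoding built into $\ell$, and land on a rich $M$ via a continuous chain of submodels. Your $(2)\Rightarrow(1)$ direction is fine and in fact a bit cleaner than the paper's: by putting the club $C$ into the side parameter $a$, you get the guessing set of $\langle X_\alpha\rangle$ to meet every club directly, so you obtain full $\diamondsuit_\lambda(\Reg)$ without appealing to the equivalence with the weakened form (the paper instead cites Devlin's Exercise 3A for that step).

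There is, however, a genuine gap in your $(1)\Rightarrow(2)$ argument. You define $E=\{\bar\lambda_M : M\prec N\text{ rich and }a'\subset M\}$, argue that $E$ is stationary, note that the guessing set $T=\{\alpha\in\Reg\cap\lambda : X_\alpha=Y_A\cap\alpha\}$ is stationary, and then ``choose a rich $M$ with $\bar\lambda_M\in E\cap T$.'' But two stationary subsets of $\lambda$ can be disjoint, so stationarity of $E$ and $T$ alone does not give you a point in $E\cap T$. What you actually need --- and what the paper's chain construction delivers --- is a \emph{club} $C\subseteq\lambda$ (the ordinals $\lambda_\xi=\sup(M_\xi\cap\lambda)$ along a suitably fast-growing, $\subseteq$-continuous chain $\langle M_\xi\rangle_{\xi<\lambda}$ with $a'\subset M_0$) with the property that for every \emph{regular} $\delta\in C$, the model $M_\delta$ is rich with $\bar\lambda_{M_\delta}=\delta$; in particular every regular $\delta\in C$ is automatically inaccessible because the chain's closure properties force $\delta$ to be a strong limit. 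Then $C\cap T$ is nonempty (indeed stationary) since $C$ is a club and $T$ is stationary, and any $\delta\in C\cap T$, being regular and in $C$, gives you the desired rich $M=M_\delta$ with $X_{\bar\lambda_M}=Y_A\cap\bar\lambda_M$. So the fix is to replace ``$E$ is stationary'' with the stronger (and true) statement that $E$ contains $C\cap\Reg$ for a club $C$ built from the chain; the weaker stationarity claim you wrote down is not enough to intersect with $T$.
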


\begin{proof} First we prove that \emph{(2)} implies \emph{(1)}. Let $\ell: \lambda \to V_\lambda$ be given. By choosing $A=C$ for any club, it must be the case that if $M$ witnesses the statement then $\bar \lambda_M \in C$ and so $\lambda$ must be Mahlo. Note that to show that $\diamondsuit_\lambda(\textup{Reg})$ holds it is enough to produce a sequence $\seq{X_\delta}{\delta \in \lambda \cap \textup{Reg}}$ satisfying the weakened condition that for any $A \subset \lambda$, there is some regular $\delta<\lambda$ such that $A \cap \delta = X_\delta$. (See Exercise 3A from Section 3 of Chapter III of \cite{DEVbook}.) For all $\delta \in \lambda \cap \textup{Reg}$, let $X_\delta:=\ell(\delta)$ if $\ell(\delta) \subseteq \delta$ and let $X_\delta=\emptyset$ otherwise. Then if $M \prec N$ is the witnessing rich model and $\delta:=M \cap \lambda$, it follows that $\ell(\delta)=\pi_M(A)=A \cap \delta \subseteq \delta$, so $X_\delta = A \cap \delta$.

Now we prove that \emph{(1)} implies \emph{(2)}. Let $\seq{X_\delta}{\delta \in \lambda \cap \textup{Reg}}$ be a $\diamondsuit_\lambda(\textup{Reg})$-sequence and let $\Gamma : \lambda \times \lambda \to \lambda$ be the G{\"o}del pairing function. Let $\ell: \lambda \to V_\lambda$ be defined as follows: If $\delta \in \lambda \cap \textup{Reg}$ and the inverse image of $X_\delta$ under $\Gamma$ is a well-founded extensional relation $E_\delta$ on $\delta$, then let $\ell(\delta)$ be the Mostowski collapse of $E_\delta$. Otherwise we can let $\ell(\delta) = \emptyset$.

Now we will show that $\ell$ works. Let $A \in H(\lambda^+)$ and assume without loss of generality that $|\trcl A| = \lambda$. Choose a bijection $F: \lambda \to \trcl A$, let $E_A \subset \lambda \times \lambda$ code the lattice of $\trcl A$ under $F$, and let $C_A$ be image of $E_A$ under $\Gamma$.

In order to properly use the $\diamondsuit_\lambda(\textup{Reg})$-sequence, we first define a $\sub$-increasing sequence $\seq{M_\xi}{\xi<\lambda}$ of elementary submodels of $N$ by induction on $\xi$. For $\xi=0$, let $M_0$ be an elementary submodel of $N$ of size $<\lambda$ such that $a \cup \{\lambda,\ell,A,F\} \subset M_0$. For $\xi$ limit take the union $\bigcup_{\eta<\xi}M_\eta$. If $\xi=\eta+1$, then take $M_\xi$ to be an elementary submodel of $N$ of size $<\lambda$ such that: $M_\eta^{<|M_\eta|}\sub M_\xi$, $M_\eta\in M_\xi$, and $\mbox{sup}(M_{\eta}\cap\lambda)\sub M_\xi$. For each $\xi$, let $\lambda_\xi$ denote $\mbox{sup}(M_{\xi}\cap\lambda)$. Then $\set{\lambda_\xi}{\xi<\lambda}$ is a closed unbounded subset of $\lambda$, and since $\lambda$ is Mahlo, there is an inaccessible $\xi$ such that $\xi=\lambda_\xi$. It is straightforward to verify that for any such $\xi$, $M_\xi = M$ is a rich submodel of $N$.

Using the fact that $\seq{X_\delta}{\delta \in \lambda \cap \textup{Reg}}$ is a $\diamondsuit_\lambda(\textup{Reg})$-sequence, we can find a rich $M \prec N$ containing $A,F,E_A,C_A$ such that $\delta := M \cap \lambda$ is inaccessible and $X_\delta = C_A \cap \delta$. By elementarity, $F \rest \delta \to (\trcl A)^M$ is a bijection, $E_A \cap \delta \times \delta$ codes the lattice of $(\trcl A)^M$, and $C_A \cap \delta$ is the image of $E_A$ under $\Gamma$. It follows that $\ell(\delta) = \pi(E_A \cap (\delta \times \delta)) = \pi_M(A)$.\end{proof}

This formulation of $\diamondsuit_\lambda(\textup{Reg})$ in terms of rich models will be used in the proofs of Theorems~\ref{gk} and \ref{sr-notsh} below.

\section{Results from Weakly Compact Cardinals}\label{sec-wc-results}

\subsection{Obtaining $\CSR(\kappa^{++}) \wedge \TP(\kappa^{++}) \wedge \AP(\kappa^{++})$}\label{sec-tp-csr-ap}

In this section we prove Theorem~\ref{tp-csr-ap}. Assume that $\kappa^{<\kappa}=\kappa$, $\lambda$ is weakly compact, and $\kappa<\lambda$. Let us denote the Mitchell forcing $\M(\kappa,\lambda)$ from Definition~\ref{Def:Mitchell} as $\M$. Without loss of generality, $2^\lambda = {\lambda^+}$.

Working in $V[\M]$, we define a standard club-adding iteration of length $\lambda^+$: $\P_{\lambda^+}:=\seq{\P_\alpha,\dot \Q_\alpha}{\alpha < {\lambda^+}}$. Let $F: {\lambda^+} \to {\lambda^+} \times {\lambda^+}$ be a bijection such that if $F(\alpha)=(\beta,\gamma)$ then $\beta \le \alpha$. By induction we define both the iteration and sequences $\seq{\dot S_\alpha}{\alpha<{\lambda^+}}$ and $\seq{\dot T_\alpha}{\alpha<{\lambda^+}}$. If $\P_\beta$ is defined, we set $\dot{S}_\alpha$ to be the $\gamma^{th}$ nice $\P_\beta$-name for a stationary subset of $\lam$, where $\alpha=F(\beta,\gamma)$, and we set $\dot T'_\alpha$ to be a canonically defined nice $\P_\beta$-name (using the mixing of $\P_\beta$-names) such that $\Vdash_{\P_\beta} \text{``}\dot T'_\alpha = \{ \rho \in \lambda \cap \cof(\kappa^+): \dot S_\alpha \text{ reflects in }\rho\}$''. Then we let $\dot T_\alpha$ be the canonical $\P_\beta$-name for $\dot T'_\alpha \cup (\lambda \cap \cof(\le \kappa))$. Since the supports at limit stages are given by Definition \ref{def:SCAI}, to complete the definition, it suffices to define $\P_{\beta+1}$ given $\P_\beta$: let $\alpha=F(\beta,\gamma)$, and let $\dot{\Q}_\beta$ be the $\P_\beta$-name for $\textup{CU}(\dot{S}_\alpha)$. Then define $\P_{\beta+1}=\P_\beta\ast\dot{\Q}_\alpha$

\begin{proposition} $\P_{\lambda^+}$ has the $\lambda^+$-chain condition.\end{proposition}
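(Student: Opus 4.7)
The plan is to reduce directly to Fact~\ref{iterationchaincondition}. By construction, $\P_{\lambda^+}$ meets Definition~\ref{def:SCAI}: each $\dot{\Q}_\beta$ is a $\P_\beta$-name for $\CU(\dot{S}_{F(\beta,\gamma)})$ with $\dot{S}_{F(\beta,\gamma)} \sub \lambda$, and the iteration uses ${<}\lambda$-support. It therefore suffices to verify in $V[\M]$ the two hypotheses of Fact~\ref{iterationchaincondition}, namely that $\lambda$ is regular and that $\lambda^{<\lambda} = \lambda$.

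The first is immediate from Section~\ref{Mitchell-exposition}: $\M = \M(\kappa,\lambda)$ is $\lambda$-Knaster, so $\lambda$ remains regular in $V[\M]$. For the second, in $V[\M]$ we have $\lambda = \kappa^{++}$, so $\lambda^{<\lambda} = \lambda$ reduces to showing $2^{\kappa^+} \le \lambda$. I would obtain this by counting nice $\M$-names in $V$: any nice name for a subset of $\kappa^+$ is coded by a $\kappa^+$-sequence of antichains of $\M$. Since $\M$ has the $\lambda$-cc with $|\M| = \lambda$, and $\lambda$ is weakly compact, hence strongly inaccessible, in $V$ with $\kappa^+ < \lambda$, the number of such sequences is at most $\lambda^{\kappa^+} = \lambda$. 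Hence $V[\M] \models 2^{\kappa^+} \le \lambda$, from which $\lambda^{<\lambda} = \lambda$ follows.

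With both hypotheses in hand, Fact~\ref{iterationchaincondition} delivers the $\lambda^+$-chain condition. There is no substantive obstacle here beyond the routine nice-name count ensuring $2^{\kappa^+} \le \lambda$ in the Mitchell extension; the rest is invocation of an already stated general fact about standard club-adding iterations.
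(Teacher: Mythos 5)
Your proposal is correct and is essentially the argument the paper leaves implicit: the Proposition is stated without proof precisely because it is an immediate instance of Fact~\ref{iterationchaincondition} once one checks that in $V[\M]$ the cardinal $\lambda=\kappa^{++}$ is regular (from $\lambda$-Knasterness of $\M$) and that $\lambda^{<\lambda}=\lambda$ (from the routine nice-name count giving $2^{\kappa^+}\le\lambda$, combined with Hausdorff's formula). Your fill-in of those two hypotheses is accurate, so there is nothing more to add.
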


Thus $\P_{\lambda^+}$ preserves cardinals $\ge \lambda^+$. We also get some closure for $\P_\alpha$, $\alpha \le \lambda^+$ which we will need in order to apply the Absorption Lemma.

\begin{proposition}\label{closure} For all $\alpha \le \lambda^+$, $\P_\alpha$ is $\kappa^+$-closed.\end{proposition}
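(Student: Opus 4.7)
The plan is to reduce the claim to two standard assertions: \emph{(a)} each factor $\dot{\Q}_\beta$ is forced to be $\kappa^+$-closed, and \emph{(b)} the $<\lambda$-supported iteration of $\kappa^+$-closed posets is itself $\kappa^+$-closed, under the background assumption that $\lambda$ is regular (which holds since $\lambda$ is weakly compact, hence inaccessible). Together these give the result by induction on $\alpha \le \lambda^+$.

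For \emph{(a)}, the essential input is the padding built into $\dot T_\alpha$. By construction, $\dot T_\alpha$ is forced to contain $\lambda \cap \cof(\le \kappa)$, and the factor at stage $\beta$ adds a club through (a variant of) $\dot T_\alpha$. Given a decreasing sequence $\langle c_\xi : \xi < \eta \rangle$ of conditions in $\CU(\dot T_\alpha)$ with $\eta \le \kappa$, set $c := \bigcup_{\xi<\eta} c_\xi$ and $\rho := \sup c$. Then $c$ is a bounded subset of $\dot T_\alpha$ whose supremum satisfies $\cf(\rho) \le \eta \le \kappa$, so $\rho \in \lambda \cap \cof(\le \kappa) \subseteq \dot T_\alpha$. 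Hence $c \cup \{\rho\}$ is a closed bounded subset of $\dot T_\alpha$ end-extending every $c_\xi$, which is the required lower bound.

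For \emph{(b)}, I would induct on $\alpha \le \lambda^+$. At successor stages $\alpha = \beta + 1$, closure of $\P_{\beta+1} = \P_\beta \ast \dot{\Q}_\beta$ follows from \emph{(a)} together with the standard two-step iteration fact. At a limit $\alpha$, given a decreasing sequence $\langle p_\xi : \xi < \eta \rangle$ in $\P_\alpha$ with $\eta \le \kappa$, let $s := \bigcup_{\xi<\eta} \supp{p_\xi}$; since each $|\supp{p_\xi}| < \lambda$, $\eta \le \kappa < \lambda$, and $\lambda$ is regular, we have $|s| < \lambda$. For each $\gamma \in s$ the induction hypothesis gives that $\P_\gamma$ is $\kappa^+$-closed, and in $V^{\P_\gamma}$ the poset $\dot{\Q}_\gamma$ is $\kappa^+$-closed by \emph{(a)}; using mixing, we can choose a $\P_\gamma$-name $\dot q_\gamma$ forced below $p_\xi(\gamma)$ for every $\xi < \eta$. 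Assembling these coordinatewise produces a condition $p \in \P_\alpha$ with $\supp{p} \subseteq s$ that is a lower bound of the sequence. No substantive obstacle is expected: the $\cof(\le \kappa)$ padding in $\dot T_\alpha$ is precisely what buys $\kappa^+$-closure of the individual factors, and the regularity of $\lambda$ together with $<\lambda$-support propagates closure through the iteration by the routine coordinate-wise mixing at limit stages.
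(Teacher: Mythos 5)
Your proof is correct and follows essentially the same route as the paper's: establish $\kappa^+$-closure of each factor by noting that the padding $\lambda\cap\cof(\le\kappa)\subseteq\dot T_\beta$ guarantees the supremum of a $\le\kappa$-length decreasing sequence lands in $\dot T_\beta$, then propagate closure through the $<\lambda$-support iteration using $\kappa^+<\lambda$. The only difference is that you spell out the coordinate-wise mixing argument at limit stages, which the paper compresses into the single remark that ``the supports of the conditions are sufficiently large.''
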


\begin{proof} For all $\beta<\alpha$ we claim that $\Vdash_{\P_\beta} \text{``}\textup{CU}(\dot T_\beta)$ is $\kappa^+$-closed'': if $\seq{c_\xi}{\xi<\tau} \subset \textup{CU}(T_\beta)$ is strictly decreasing and $\tau<\kappa^+$, then $\sup \set{\max c_\xi}{\xi<\tau}$ has cofinality $\cf(\tau)\le\tau<\kappa^+$, and so $\bigcup_{\xi<\tau}c_\xi \cup \{\sup\set{\max c_\xi}{\xi<\tau} \}$ is vacuously a condition in $\textup{CU}(T_\beta)$. Hence the iteration $\P_\alpha$ is $\kappa^+$-closed since $\kappa^+ < \lambda$ and the supports of the conditions are sufficiently large. \end{proof}

\begin{lemma}\label{mainlifting}  Fix $G$, an $\M$-generic over $V$. Suppose that $\alpha < {\lambda^+}$ and that $M$ is a $\lambda$-sized transitive model such that $\M \ast \P_\alpha \in M$. Suppose also that $N$ and $j:M \to N$ witness the weak compactness of $\lambda$. Then the following are true:

\begin{enumerate}
\item\label{mainlifting-embedding} Let  $H_\alpha$ be any $\P_\alpha$-generic over $V[G]$. There is $\tilde G$, a $j(\M)$-generic over $V$ and $\tilde H_\alpha$, a $j(\P_\alpha)$-generic over $V[\tilde G]$ such that $N[G \ast H_\alpha] \subset N[\tilde G \ast \tilde H_\alpha]$ and such that $V[\tilde G \ast \tilde H_\alpha]$ defines a lift $j:M[G \ast H_\alpha] \to N[\tilde G \ast \tilde H_\alpha]$. Moreover, the extension of $N[\tilde G \ast \tilde H_\alpha]$ over $N[G \ast H_\alpha]$ does not add cofinal branches to $\lambda$-trees from $N[G \ast H_\alpha]$.

\item\label{mainlifting-distributivity} $M[G] \models \text{``}\P_\alpha$ is $\lambda$-distributive''.

\end{enumerate}\end{lemma}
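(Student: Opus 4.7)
\emph{Plan.} The strategy is to lift $j$ through $\M \ast \dot\P_\alpha$ in two stages and then deduce the distributivity in Part~(2) from the resulting lift. For the first stage, since $j$ has critical point $\lambda$, the forcing $j(\M) = \M(\kappa, j(\lambda))^N$ projects naturally onto $\M$ in $N$, factoring as $j(\M) \simeq \M \ast \dot\R$. Working inside a sufficiently large generic extension of $V[G \ast H_\alpha]$ (obtained if necessary by forcing with $\R$ as interpreted in $V[G]$, which has the preservation properties of Mitchell tails by Fact~\ref{Mitchell-preservation-summary}), I would produce $\tilde G_1$ generic for $\R$ over $V[G]$ and set $\tilde G := G \ast \tilde G_1$. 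Then $\tilde G$ is $j(\M)$-generic over $V$, and Fact~\ref{weakliftinglemma} yields the lift $j \colon M[G] \to N[\tilde G]$.

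\emph{Lifting through $\P_\alpha$.} Next, I would build $\tilde H_\beta$ recursively for $\beta \le \alpha$, generic for $j(\P_\beta)$ over $V[\tilde G]$ and satisfying $j[H_\beta] \subseteq \tilde H_\beta$. The central step is at a successor $\beta + 1$, where I lift through $\dot\Q_\beta = \CU(\dot T_\beta)$ using the master condition $c^* := c_\beta \cup \{\lambda\}$, where $c_\beta$ is the club added at stage $\beta$. This $c^*$ is a condition in $j(\CU(T_\beta)) = \CU(j(T_\beta))^{N[\tilde G \ast \tilde H_\beta]}$ precisely when $\lambda \in j(T_\beta)$; since $j(\M)$ collapses $\lambda$ to have cofinality $\kappa^+$ in $N[\tilde G]$, the question reduces to whether $S_\beta$ remains stationary at $\lambda$ in $N[\tilde G \ast \tilde H_\beta]$. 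Limit stages are handled via $<\lambda$-support and Proposition~\ref{closure}. Finally, the no-new-cofinal-branches clause of Part~(1) follows by applying Facts~\ref{silverslemma} and~\ref{F:Square_cc} to the projection-of-product structure of the combined quotient $\R \ast (j(\P_\alpha)/H_\alpha)$.

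\emph{Part~(2).} Distributivity is derived from the lift. Fix a $\P_\alpha$-name $\dot f \in M[G]$ for $f \colon \delta \to \Ord$ with $\delta < \lambda$, realize it by a generic $H_\alpha$ over $V[G]$, and apply Part~(1) to obtain $j \colon M[G \ast H_\alpha] \to N[\tilde G \ast \tilde H_\alpha]$. Since $j$ fixes ordinals below $\lambda$, the function $f$ lies in $N[\tilde G \ast \tilde H_\alpha]$; the no-new-cofinal-branches conclusion of Part~(1), applied to a suitable $\lambda$-tree coding of $\dot f$, then forces $f$ already to lie in $M[G]$, as required.

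\emph{Main obstacle.} The principal hurdle is the master-condition verification: ensuring at each successor stage $\beta < \alpha$ that $S_\beta$ is stationary at $\lambda$ in $N[\tilde G \ast \tilde H_\beta]$. This requires stratifying the passage $V[G] \rightsquigarrow V[\tilde G \ast \tilde H_\beta]$ into pieces each of which preserves stationary subsets of $\lambda \cap \cof(\le\kappa)$: the Mitchell quotient $\R$ is handled by Lemma~\ref{preservation}, while each preceding CU-stage adds a club through a set $T_\gamma \supseteq \lambda \cap \cof(\le\kappa) \supseteq S_\beta$. By arranging the bookkeeping function $F$ carefully, the stationarity of each $S_\beta$ can be preserved through the time at which it is used in the recursion, which is where the interaction between the iteration, the Mitchell quotient, and the appeal to weak compactness becomes most delicate.
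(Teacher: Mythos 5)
Your overall plan (factor $j(\M)$, lift in stages via a master condition built from the clubs $C_\beta$, deduce distributivity from the lift) is the right shape, but there are two genuine gaps that the paper's own argument is specifically designed to close.

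\textbf{First, and most important: you never use the Absorption Lemma, and without it your $\tilde G$ does not contain $H_\alpha$.} You force with the quotient $\R \simeq j(\M)/G$ over $V[G*H_\alpha]$ and set $\tilde G := G * \tilde G_1$. By mutual genericity, $N[\tilde G] = N[G][\tilde G_1]$ does \emph{not} contain $H_\alpha$, and hence does not contain the clubs $C_\beta$ that define your master conditions $c_\beta \cup \{\lambda\}$. The paper's crucial move is Lemma~\ref{absorption}: since $\P_\alpha$ is $\kappa^+$-closed of size $\lambda$, one has $j(\M)/G \times \P_\alpha \simeq j(\M)/G$ in $V[G]$, so that the $\P_\alpha$-generic can be \emph{absorbed} into a $j(\M)/G$-generic $G''$ with $V[G][G''] = V[G][H_\alpha][G']$; consequently $H_\alpha \in N[\tilde G]$, the master condition $q \in N[\tilde G]$, and the stationarity arguments for $S_\beta$ go through stage by stage (stationarity in $N[G][H_\alpha]$, then in $N[\tilde G]$ via Lemma~\ref{preservation}). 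Your plan bypasses this and thereby loses control of both the master condition and the conclusion $N[G*H_\alpha] \subseteq N[\tilde G * \tilde H_\alpha]$. Relatedly, the closing remark that ``arranging the bookkeeping function $F$ carefully'' will keep the $S_\beta$'s stationary has no purchase: $F$ is fixed once and for all so that every name appears, and the stationarity preservation must come from the structure of the quotient (exactly what Lemma~\ref{absorption} plus Lemma~\ref{preservation} provide), not from reordering the iteration.

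\textbf{Second, the derivation of Part~(2) is wrong.} You try to deduce $\lambda$-distributivity from the no-new-branches clause of Part~(1) ``applied to a suitable $\lambda$-tree coding of $\dot f$.'' But a function $f\colon \delta \to \Ord$ with $\delta < \lambda$ cannot be coded as a cofinal branch of a \emph{$\lambda$-tree}; that clause is irrelevant here, and in any case it would only localize $f$ to $N[G*H_\alpha]$, not to $M[G]$. The correct mechanism, which the paper uses, is Fact~\ref{strongliftinglemma}: once one has a single condition $q \in j(\P_\alpha)$ that is a lower bound for all of $j[H_\alpha]$, that fact immediately gives $M[G]^{<\lambda} = M[G*H_\alpha]^{<\lambda}$ (on ordinal sequences), which is exactly $\lambda$-distributivity of $\P_\alpha$ over $M[G]$. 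This also explains why the paper builds one global master condition $q$ for all of $j[H_\alpha]$ rather than lifting $\P_\alpha$ stage by stage: the one-shot lower bound is precisely what feeds Fact~\ref{strongliftinglemma}.
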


\begin{proof} We argue for any $\alpha<\lambda^+$ and prove (\ref{mainlifting-embedding}) and (\ref{mainlifting-distributivity}) for $\alpha$ at the same time. Specifically, we prove (\ref{mainlifting-embedding}) by constructing a lift in two steps, and part of the proof will show that $M[G][H_\alpha]^{<\lambda} \subset M[G]$, so (\ref{mainlifting-distributivity}) will follow because $H_\alpha$ is arbitrary.

First, we lift the embedding to $M[G]$. Now work in $V[G]$. Since $\P_\alpha$ is $\kappa^+$-closed by Lemma~\ref{closure} and has size $\lambda$, the Absorption Lemma for Mitchell Forcing implies that $j(\M)/G \times \P_\alpha \simeq j(\M)/G$. Let $G'$ be a $j(\M)/G$-generic over $V[G][H_\alpha]$; then there is a $j(\M)/G$-generic $G''$ over $V[G]$ such that $V[G][H_\alpha][G']=V[G][G'']$. Let us denote $\tilde G=G\times G''$.

Since $j$ is the identity below $\lambda$ and conditions in $\M(\kappa,\lambda)$ are bounded below $\lambda$, $j[G]=G \subset G\times G''=\tilde G$, and so we can apply Fact~\ref{weakliftinglemma} to lift $j:M[G] \to N[\tilde G]$. Now we perform the second step of the lift to extend $j$ to have domain $M[G][H_\alpha]$.

\begin{Claim}\label{mainlifting-mastercondition} $N[\tilde G]$ contains a strong master condition for $H_\alpha$ with respect to $j:M[G] \to N[\tilde{G}]$.\end{Claim}

\begin{proof}[Proof of Claim]  We argue, working in $N[\tilde G]$, that there is $q \in j(\P_\alpha)$ that is a lower bound for $j[H_\alpha]$: Let $\seq{C_\beta}{\beta<\alpha}$ be the sequence of clubs added by $H_\alpha$, where $C_\beta$ is added at the $(\beta+1)^\text{st}$ step of the iteration. Observe that the $C_\beta$'s are closed and unbounded in $\lambda$, the latter because $\lambda \cap \cof(\le\!\kappa)$ is unbounded in $\lambda$. Let $\dom q = j[\alpha]$ and for $\beta<\alpha$, let $q(j(\beta)) = C_\beta \cup \{\lambda\}$, noting that $q\in N[\tilde G]$ since $j$ and $H_\alpha$ are in $N[\tilde G]$. It will be immediate that $q$ is a lower bound of $j[H_\alpha]$ once we verify that for $\beta<\alpha$, $q\rest j(\beta)\Vdash^{N[\tilde{G}]}_{j(\P_\beta)} \text{``}C_\beta \cup \{\lambda\}\text{ is a condition in }j(\CU(\dot T_\beta))$''.

First note that for all $\beta<\lambda$, $q\rest j(\beta) \Vdash^{N[\tilde{G}]}_{j(\P_\beta)}$ ``$C_\beta$ is a closed unbounded set in $j(\dot{T}_\beta)\cap\lambda$'' since  $q\rest j(\beta) \Vdash^{N[\tilde{G}]}_{j(\P_\beta)}$ ``$j(\dot{T}_\beta)\cap\lambda=T_\beta$; therefore to verify that $q$ is a condition in $j(\P_\alpha)$ it suffices to show that that $q\rest j(\beta)\Vdash^{N[\tilde{G}]}_{j(\P_\beta)} \text{``}j(\dot S_\beta) \cap \lambda \text{ is stationary in }\lambda$''. 

Since $q\rest j(\beta)\Vdash^{N[\tilde{G}]}_{j(\P_\beta)} $``$j(\dot S_\beta) \cap \lambda = S_\beta$, it remains to argue that $N[\tilde G] \models \text{``}S_\beta$ is stationary after forcing with $j(\P_\beta)$''. First note that $S_\beta$ is still stationary in $N[G][H_\alpha]$ by Fact~\ref{F:St_Closed_cof} because $N[G][H_\alpha]$ is a generic extension of $N[G][H_\beta]$ by a $\kappa^+$-closed forcing.\footnote{We did not assume distributivity of $\P_\beta$ and hence preservation of $\lambda$ by induction, but one can observe that the $\kappa^+$-closure would preserve stationarity anyway if $\lambda$ were collapsed to have cofinality $\kappa^+$.} Second, $S_\beta$ is stationary after forcing with $(j(\M)/G)^{N[G]}$ (by Lemma~\ref{preservation}), so it is stationary in $N[G][H_\alpha][G']=N[\tilde G]$. Furthermore, by elementarity of the embedding $j:M[G] \to N[\tilde G]$, we have $N[\tilde G] \models \text{``}j(\P_\beta)$ is $\kappa^+$-closed'', so $ N[\tilde G] \models \text{``}j(\P_\beta)$ preserves the stationarity of $S_\beta$'' by Fact~\ref{F:St_Closed_cof}.\end{proof}

Let $\tilde H_\alpha$ be any $j(\P_\alpha)$-generic over $N[\tilde G]$ that contains the master condition $q$ from Claim~\ref{mainlifting-mastercondition}. This tells us that $j[H_\alpha] \subset \tilde H_\alpha$, that we can extend to $j:M[G][ H_\alpha] \to N[\tilde G][ \tilde H_\alpha]$ by Fact~\ref{weakliftinglemma}, and that $M[G]$ and $M[G][H_\alpha]$ have the same $<\lambda$-sequences of ordinals by Fact~\ref{strongliftinglemma}. Hence we have (\ref{mainlifting-distributivity}) for $\alpha$, as explained above.

To finish the proof it suffices to show that cofinal branches are not added to $\lambda$-trees in the extension of $N[\tilde G \ast \tilde H_\alpha]$ over $N[G \ast H_\alpha]$. If $T$ is a $\lambda$-tree in $N[G][H_\alpha]$, then there are no new cofinal branches in $N[G][H_\alpha][G']=N[\tilde G]$ by Lemma~\ref{preservation}. Note that in $N[\tilde G]$, $\lambda$ is an ordinal of cofinality $\kappa^+$, and therefore the tree $T$ has height of cofinality $\kappa^+$ and levels of size $\kappa^+$. However, Fact \ref{silverslemma} still applies since $2^\kappa>\kappa^+$ in $N[\tilde G]$ and $j(\P_\alpha)$ is $\kappa^+$-closed in $N[\tilde G]$ by elementarity, and we are finished.
\end{proof}

\begin{lemma}\label{wcdist} $V[\M] \models \text{``}\P_{\lambda^+}$ is $\lambda$-distributive''.\end{lemma}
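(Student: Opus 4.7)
The plan is to reduce the $\lambda$-distributivity of the full iteration $\P_{\lambda^+}$ to that of its bounded initial segments $\P_\alpha$, $\alpha<\lambda^+$, and then to extract the latter from Lemma~\ref{mainlifting}(\ref{mainlifting-distributivity}) through an elementary-submodel reflection argument exploiting the weak compactness of $\lambda$. First I would let $\dot f\in V[\M]$ be a $\P_{\lambda^+}$-name forced by some $p$ to be a function from $\mu<\lambda$ into the ordinals. Padding with zeros to extend the domain to $\lambda$ and invoking Fact~\ref{nicenamestrick}, one obtains $\alpha<\lambda^+$ for which the resulting function lies in $V[\M\ast\P_\alpha]$. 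Thus it suffices to show that every $\P_\alpha$ with $\alpha<\lambda^+$ is $\lambda$-distributive in $V[\M]$; fix such an $\alpha$ and regard $\dot f$ as a $\P_\alpha$-name.

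The core of the argument is reflection. Stepping back to $V$, fix $\M$-names $\ddot f, \ddot p$ for $\dot f$ and $p$ together with the canonical $\M$-name $\dot{\P_\alpha}$ for the iteration $\P_\alpha$. Using that $\lambda$ is (in particular) inaccessible, I would pick a transitive $M\in V$ of size $\lambda$ with ${}^{<\lambda}M\subset M$ such that $\{\M\ast\dot{\P_\alpha}, \ddot f, \ddot p, \mu\}\subset M$, together with an embedding $j:M\to N$ witnessing the weak compactness of $\lambda$. Let $G$ be $\M$-generic over $V$. Then $M[G]$ is a transitive submodel of $V[G]=V[\M]$ containing $\P_\alpha$, $\dot f$, and $p$, and Lemma~\ref{mainlifting}(\ref{mainlifting-distributivity}) directly yields
\[
M[G] \models \text{``$\P_\alpha$ is $\lambda$-distributive.''}
\]
Consequently there are $q\le p$ in $\P_\alpha$ and $g:\mu\to\Ord$ with $g\in M[G]$ such that $M[G]\models q\Vdash_{\P_\alpha}\dot f=\check g$.

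To conclude, this forcing fact transfers upward: any $\P_\alpha$-generic $H_\alpha$ over $V[G]$ containing $q$ is automatically generic over the smaller model $M[G]$, so $(\dot f)_{H_\alpha}=g\in M[G]\subseteq V[G]$, and therefore $V[G]\models q\Vdash_{\P_\alpha}\dot f=\check g$. Since $p$ and $\dot f$ were arbitrary, $\P_\alpha$ is $\lambda$-distributive in $V[\M]$, and by the initial reduction so is $\P_{\lambda^+}$.

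The principal technical obstacle has already been discharged in Lemma~\ref{mainlifting}---specifically, the construction of a strong master condition for $j[H_\alpha]$ lifting $j$ through $\P_\alpha$ while preserving the stationarity of each $S_\beta$. The only subtlety specific to the present lemma is that one must insert an $\M$-name for $\dot f$ into $M$, rather than $\dot f$ itself, so that $\dot f$ is recovered in $M[G]$ after the generic $G$ is introduced; the closure ${}^{<\lambda}M\subseteq M$ and the weak compactness of $\lambda$ guarantee that such an $M$ can always be chosen.
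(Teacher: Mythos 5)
Your proposal is correct and follows the same route as the paper: reduce to initial segments $\P_\alpha$ via Fact~\ref{nicenamestrick}, then reflect a putative counterexample into a $\lambda$-sized transitive model to which Lemma~\ref{mainlifting}(\ref{mainlifting-distributivity}) applies, and transfer the resulting forcing statement back up. You have merely spelled out the reflection and upward-transfer details (and the small point about inserting an $\M$-name for $\dot f$ into $M$) that the paper's terse proof leaves implicit.
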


\begin{proof} By Fact~\ref{nicenamestrick} it is enough to show that $V[\M] \models \text{``}\P_\alpha$ is $\lambda$-distributive'' for all $\alpha < \lambda^+$. Moreover, by choosing a supposed counterexample $\dot f$, it is enough to prove that $M[\M] \models \text{``}\P_\alpha$ is $\lambda$-distributive'' for a $\lambda$-sized transitive $M$ with $\dot f$ and $\P_\alpha$ in  $M$, which follows from Lemma~\ref{mainlifting}(\ref{mainlifting-distributivity}).\end{proof}

\begin{lemma} $V[\M \ast \P_{\lambda^+}] \models \CSR(\lambda)$. \end{lemma}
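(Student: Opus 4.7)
The plan is to show that every stationary $S \subseteq \lambda \cap \cof(\le\kappa)$ in $V[\M \ast \P_{\lambda^+}]$ admits a witnessing club produced by the iteration. First, by Fact~\ref{nicenamestrick} there is some $\beta < \lambda^+$ with $S \in V[\M \ast \P_\beta]$; since any club of $\lambda$ in $V[\M \ast \P_\beta]$ remains a club in the full extension, $S$ is still stationary at stage $\beta$. By the design of the bookkeeping $F$, we can find some $\alpha \ge \beta$ such that $\dot S_\alpha$ is a $\P_\beta$-name for $S$; then at stage $\alpha$ the iteration forces with $\CU(\dot T_\alpha)$ to add a club $C \subseteq T_\alpha = T'_\alpha \cup (\lambda \cap \cof(\le\kappa))$ of $\lambda$ in $V[\M \ast \P_{\alpha+1}]$. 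The fact that $\lambda \cap \cof(\le\kappa)$ is unbounded in $\lambda$ and closed under suprema of cofinality $\le\kappa$ makes $T_\alpha$ a valid target for club-adding.

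Next I would verify that $C$ remains a club of $\lambda$ in $V[\M \ast \P_{\lambda^+}]$: by Lemma~\ref{wcdist}, $\P_{\lambda^+}$ is $\lambda$-distributive over $V[\M]$, so $\lambda$ is preserved as a regular cardinal and $C$, being unbounded of order type $\lambda$, remains a club. Fix any $\rho \in C$ with $\cf^{V[\M \ast \P_{\lambda^+}]}(\rho) = \kappa^+$. A further consequence of $\lambda$-distributivity (since $\rho < \lambda$) is that all bounded subsets of $\lambda$ and all cofinalities below $\lambda$ agree across the chain $V[\M] \subseteq V[\M \ast \P_\beta] \subseteq V[\M \ast \P_{\lambda^+}]$; in particular $\cf^{V[\M \ast \P_\beta]}(\rho) = \kappa^+$. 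Since $\rho \in C \subseteq T_\alpha$ and $\rho \notin \cof(\le\kappa)$, the definition of $T'_\alpha$ gives that $S \cap \rho$ is stationary in $\rho$ in $V[\M \ast \P_\beta]$.

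The main obstacle is transferring the reflection witnessed at stage $\beta$ up to $V[\M \ast \P_{\lambda^+}]$: we must show $S \cap \rho$ remains stationary in $\rho$ in the final model. For this I would again invoke the $\lambda$-distributivity of $\P_{\lambda^+}$ from Lemma~\ref{wcdist}. Since no new bounded subsets of $\lambda$ are added, every club of $\rho$ in $V[\M \ast \P_{\lambda^+}]$ already lies in $V[\M \ast \P_\beta]$, so $S \cap \rho$ is stationary in $\rho$ in the final extension as well. Assembling the pieces, every $\rho \in C$ of cofinality $\kappa^+$ in $V[\M \ast \P_{\lambda^+}]$ is a reflection point of $S$, and $C$ witnesses $\CSR(\lambda)$ for $S$.
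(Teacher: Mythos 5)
Your proof is correct and takes essentially the same approach as the paper: locate the stage $\alpha$ at which the bookkeeping produces $\dot S_\alpha = \dot S$, let the club $C \subseteq T_\alpha$ added at that stage be the witness, and use the $\lambda$-distributivity of $\P_{\lambda^+}$ (Lemma~\ref{wcdist}) to transfer the reflection to the final model. Your version is slightly more explicit about why $S \cap \rho$ remains stationary (no new clubs of $\rho < \lambda$ appear across the chain of models), but this is exactly the point the paper's appeal to distributivity is making.
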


\begin{proof} If $\dot S$ is a $\P_{\lambda^+}$-name for a stationary subset of $\lambda \cap \cof(\le\! \kappa)$, then Fact~\ref{nicenamestrick} implies that without loss of generality, there is some $\alpha' < \lambda^+$ such that $\dot S$ is a $\P_{\alpha'}$-name. By construction, it follows that for some $\beta,\gamma<\lambda^+$, $\dot S$ is the $\gamma^{th}$ $\P_\beta$-name for such a stationary set, so $\dot S = \dot S_\alpha$ for some $\alpha < \lambda^+$. As observed in the proof of Claim~\ref{mainlifting-mastercondition}, the generic club added by the next step of the iteration is unbounded, so $V[\M \ast \P_{\alpha+1}] \models$ ``There is a club $C \subset \lambda$ such that every $\rho \in C \cap \cof(\kappa^+)$ is a reflection point of $S$''. Then $V[\M \ast \P_{\lambda^+}]$ still satisfies this statement by distributivity of $\P_{\lambda^+}$ over $V[\M]$, and hence distributivity of the quotient over $V[\M \ast \P_{\alpha+1}]$.\end{proof}

\begin{lemma}\label{wc-tp} $V[\M \ast \P_{\lambda^+}] \models \TP(\lambda)$.\end{lemma}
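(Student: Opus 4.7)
The plan is to argue by contradiction: suppose $T$ is a $\lambda$-Aronszajn tree in $V[G \ast H_{\lambda^+}]$. First, since $\P_{\lambda^+}$ is $\lambda^+$-cc (Fact~\ref{iterationchaincondition}), I would use Fact~\ref{nicenamestrick} to fix $\alpha<\lambda^+$ with $T\in V[G \ast H_\alpha]$, and let $\dot T$ be a nice $\M \ast \P_\alpha$-name of size $\lambda$ for $T$. Then I would choose a transitive $M \in V$ of size $\lambda$, closed under $<\lambda$-sequences, containing $\M$, $\P_\alpha$, and $\dot T$, and apply the weak compactness of $\lambda$ to obtain an elementary embedding $j : M \to N$ with critical point $\lambda$, $N$ transitive of size $\lambda$ closed under $<\lambda$-sequences. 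Using the standard strengthening that arranges $M \in N$ (hence $M \sub N$ by transitivity of $N$), I would ensure $\dot T \in N$ and therefore $T \in N[G \ast H_\alpha]$.

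Next I would apply Lemma~\ref{mainlifting}(\ref{mainlifting-embedding}) to lift $j$ to $j: M[G \ast H_\alpha] \to N[\tilde G \ast \tilde H_\alpha]$. In $N[\tilde G \ast \tilde H_\alpha]$, the object $j(T)$ is a $j(\lambda)$-tree, whose $\lambda$-th level is nonempty since $\lambda<j(\lambda)$. Since $\lambda$ is the critical point of $j$ and each level $T_\xi$ ($\xi<\lambda$) is a bounded subset of $\lambda$ (hence fixed by $j$), elementarity gives that the $\xi$-th level of $j(T)$ equals $T_\xi$ for each $\xi < \lambda$. Picking any node $t$ at level $\lambda$ of $j(T)$, the collection $b := \{ s \in T : s <_{j(T)} t \}$ is then a cofinal branch of $T$ living in $N[\tilde G \ast \tilde H_\alpha]$.

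The final step is to push $b$ down to $V[G \ast H_{\lambda^+}]$. Since $T \in N[G \ast H_\alpha]$ is a $\lambda$-tree, the moreover clause of Lemma~\ref{mainlifting}(\ref{mainlifting-embedding}) yields that no cofinal branch of $T$ is added in the passage from $N[G \ast H_\alpha]$ to $N[\tilde G \ast \tilde H_\alpha]$. Hence $b \in N[G \ast H_\alpha] \sub V[G \ast H_{\lambda^+}]$, contradicting the Aronszajn assumption and completing the proof. The main obstacle I anticipate is the initial technical arrangement ensuring $T \in N[G \ast H_\alpha]$ (as opposed to merely $T \in M[G \ast H_\alpha]$), so that the no-new-branches clause of Lemma~\ref{mainlifting} applies to $T$ itself. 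This is a delicate but standard feature of weak-compactness branch arguments; as a fallback, if the embedding cannot be chosen to satisfy $M \in N$, one can argue directly via Lemma~\ref{preservation} together with the $\kappa^+$-closure of $j(\P_\alpha)$ (by elementarity) that no cofinal branch of $T$ can be added over $V[G \ast H_\alpha]$ by the combined forcings leading to $V[\tilde G \ast \tilde H_\alpha]$.
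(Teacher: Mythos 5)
Your proof is correct and follows essentially the same route as the paper's: use the $\lambda^+$-cc to find $\alpha$ with $T\in V[G\ast H_\alpha]$, apply Lemma~\ref{mainlifting} to lift $j$ to $M[G\ast H_\alpha]\to N[\tilde G\ast\tilde H_\alpha]$, extract a cofinal branch from $j(T)_\lambda$ using $\mathrm{crit}(j)=\lambda$, and invoke the ``moreover'' clause to conclude the branch already lies in $N[G\ast H_\alpha]\sub V[G\ast H_{\lambda^+}]$. Your explicit attention to arranging $T\in N[G\ast H_\alpha]$ (via the strengthening $M\in N$) addresses a detail the paper leaves implicit, and your fallback of applying Lemma~\ref{preservation} and $\kappa^+$-closure of $j(\P_\alpha)$ directly over $V[G\ast H_\alpha]$ would also work, since it just repeats the branch-preservation argument from the proof of Lemma~\ref{mainlifting} over the larger ground model.
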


\begin{proof} By the $\lambda^+$-chain condition, the name for any supposed $\lambda$-Aronszajn tree $T$ in $V[\M \ast \P_{\lambda^+}]$ would be contained in $V[\M\ast\P_\alpha]$ for some $\alpha<\lambda^+$, and so $T$ would be contained in $M[\M \ast \P_\alpha]$ for some $\lambda$-sized transitive model $M$. Let $j:M \to N$ witnesses weak compactness. By Lemma~\ref{mainlifting}, if we fix generics $G$ and $H_\alpha$, we can lift the embedding to $j:M[G][ H_\alpha] \to N[\tilde  G][ \tilde H_\alpha]$. So $N[\tilde G][ \tilde H_\alpha] \models \text{``}j(T)$ is a $j(\lambda)$-tree'' by elementarity, and any element $t \in j(T)_\lambda$ defines a cofinal branch $\{s \in T: s \le_{j(T)} t \}$ in $T$. Because of the preservation properties in Lemma~\ref{mainlifting}(\ref{mainlifting-embedding}), this branch is already contained in $N[G][ H_\alpha]$, so it is contained in $V[G][ H_\alpha]$.\end{proof}

\subsection{Obtaining $\CSR(\kappa^{++}) \wedge \TP(\kappa^{++}) \wedge \neg\AP(\kappa^{++})$}\label{sec-tp-csr-notap}

For this section we work in a model $V$ in which $\lambda$ is weakly compact and carries a weakly compact Laver diamond. Fix a witnessing function $\ell:\lambda \to V_\lambda$. Let $\M:=\M_\ell(\kappa,\lambda)$ be the guessing version of the Mitchell forcing from Definition~\ref{def:myMitchell} defined in terms of this $\ell$.

We will also assume that $\kappa<\lambda$ is regular and that $\kappa^{<\kappa}=\kappa$. Let $\P$ be the club-adding iteration defined as in Section~\ref{sec-tp-csr-ap}. As before, $\P$ is $\lambda^+$-cc. Our main task is to show that $\P$ is $\lambda$-distributive while showing that we can lift embeddings as necessary.

\begin{lemma}\label{mainlifting-notap}  Fix $G$, an $\M$-generic over $V$. Suppose that $\alpha < {\lambda^+}$ and that $M$ is a $\lambda$-sized transitive model such that $\M \ast \P_\alpha \in M$. Suppose also that $N$ and $j:M \to N$ witness weak compactness of $\lambda$, and moreover that $j(\ell)(\lambda) = \dot{\P}_\alpha$. Then the following hold:

\begin{enumerate}
\item Let $H_\alpha$ be any $\P_\alpha$-generic over $V[G]$. There are $\tilde G$, a $j(\M)$-generic over $V$, and $\tilde H_\alpha$, a $j(\P_\alpha)$-generic over $V[\tilde G]$, such that $N[G \ast H_\alpha] \subset N[\tilde G \ast \tilde H_\alpha]$ and such that $V[\tilde G \ast \tilde H_\alpha]$ defines a lift $j:M[G \ast H_\alpha] \to N[\tilde G \ast \tilde H_\alpha]$. Moreover, the extension of $N[\tilde G \ast \tilde H_\alpha]$ over $N[G \ast H_\alpha]$ does not add cofinal branches to $\lambda$-trees from $N[G \ast H_\alpha]$.

\item $M[G] \models \text{``}\P_\alpha$ is $\lambda$-distributive''.

\end{enumerate}\end{lemma}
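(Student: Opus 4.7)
The plan is to mirror the proof of Lemma~\ref{mainlifting}, replacing the use of the Absorption Lemma with the Laver-diamond-guided design of $\M = \M_\ell(\kappa,\lambda)$. The whole point of assuming $j(\ell)(\lambda) = \dot{\P}_\alpha$ is that, by construction of the guessing forcing, the $\lambda^{\text{th}}$ coordinate of $j(\M)$ is a name for a condition in $\P_\alpha$. More precisely, if we set $\lambda^* := \min(j(A^*)\setminus(\lambda+1))$, then in $N$ we obtain a factorization
\[
j(\M) \;\simeq\; \M \ast \bigl(\dot{\P}_\alpha \times \dot{\Add}(\kappa,\lambda^*)\bigr) \ast \dot{\N}_{\lambda^*},
\]
exactly as recalled at the end of Section~\ref{Mitchell-exposition}. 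So given $G$ and $H_\alpha$ as in the hypothesis, we can first choose in $V[G\ast H_\alpha]$ a generic $K$ for $\Add(\kappa,\lambda^*)$ over $V[G\ast H_\alpha]$, and then a generic $L$ for $\dot{\N}_{\lambda^*}$ interpreted by $G\ast (H_\alpha\times K)$; the resulting filter $\tilde G$ is $j(\M)$-generic over $V$ and satisfies $V[\tilde G] \supseteq V[G\ast H_\alpha]$.

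Because $j$ is the identity on conditions bounded below $\lambda$ and $M \models |\M|=\lambda$, we have $j[G]=G \subseteq \tilde G$, so Fact~\ref{weakliftinglemma} lifts $j$ to $j:M[G]\to N[\tilde G]$. Now we build a strong master condition $q \in j(\P_\alpha)$ in $N[\tilde G]$ for $H_\alpha$ in exactly the same way as Claim~\ref{mainlifting-mastercondition}: setting $\dom q = j[\alpha]$ and $q(j(\beta))=C_\beta\cup\{\lambda\}$ where $C_\beta$ is the generic club added at stage $\beta+1$. The verification that $q \rest j(\beta) \Vdash_{j(\P_\beta)}^{N[\tilde G]} \text{``}C_\beta\cup\{\lambda\}$ is a condition'' reduces, by elementarity and $j(\dot T_\beta)\cap\lambda = T_\beta$, to showing that $S_\beta$ remains stationary in $N[\tilde G]$. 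This uses the same three-step chain as before: $S_\beta$ survives $\P_\alpha$ by Fact~\ref{F:St_Closed_cof}, and it survives the remaining factors $\Add(\kappa,\lambda^*) \ast \dot{\N}_{\lambda^*}$ because these tails of the guessing variant are still projections of products of a square-$\kappa^+$-cc forcing and a $\kappa^+$-closed forcing (the analog of Fact~\ref{Mitchell-preservation-summary} for $\M_\ell$ noted in Section~3.2). Finally $j(\P_\beta)$ preserves stationarity of $S_\beta$ in $N[\tilde G]$ by $\kappa^+$-closure of $j(\P_\beta)$ (transferred via elementarity).

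Taking $\tilde H_\alpha$ any $j(\P_\alpha)$-generic over $N[\tilde G]$ containing $q$ yields $j[H_\alpha]\subseteq \tilde H_\alpha$, so Fact~\ref{weakliftinglemma} lifts $j$ to $j:M[G\ast H_\alpha]\to N[\tilde G \ast \tilde H_\alpha]$, and Fact~\ref{strongliftinglemma} applied to this lift gives that $M[G][H_\alpha]$ has the same ${<}\lambda$-sequences of ordinals as $M[G]$. Since $H_\alpha$ was arbitrary, this yields $M[G]\models\text{``}\P_\alpha$ is $\lambda$-distributive'', i.e.\ item (2). For the branch-preservation clause of (1), a $\lambda$-tree $T \in N[G\ast H_\alpha]$ acquires no new branches when passing to $N[\tilde G] = N[G][H_\alpha][K][L]$: the square-$\kappa^+$-cc/$\kappa^+$-closed projection structure of the tail $\Add(\kappa,\lambda^*)\ast \dot{\N}_{\lambda^*}$ together with $2^\kappa > \kappa^+$ lets us apply Facts~\ref{silverslemma} and~\ref{F:Square_cc} (as in Lemma~\ref{preservation}), and the final step from $N[\tilde G]$ to $N[\tilde G \ast \tilde H_\alpha]$ is by $\kappa^+$-closure of $j(\P_\alpha)$ plus Fact~\ref{silverslemma}.

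The main obstacle is really conceptual rather than technical: one has to see that the role played by the Absorption Lemma in Section~\ref{sec-tp-csr-ap}, namely making $\P_\alpha$ appear as a factor of $j(\M)/G$, is played here, automatically and at a prescribed stage, by the guessing slot $j(\ell)(\lambda) = \dot{\P}_\alpha$ built into $\M_\ell$. Once this factorization is in place, all remaining verifications---preservation of stationarity needed for the master condition, distributivity from Fact~\ref{strongliftinglemma}, and absence of new cofinal branches---go through exactly as in the classical-Mitchell case, using the analog of Fact~\ref{Mitchell-preservation-summary} for the guessing variant.
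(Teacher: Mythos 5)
Your proposal is correct and follows essentially the same route as the paper: use $j(\ell)(\lambda) = \dot{\P}_\alpha$ to factor $j(\M)$ as $\M \ast (\dot{\P}_\alpha \times \Add(\kappa,\lambda^*)) \ast \dot{\N}_{\lambda^*}$, build $\tilde G$ by adjoining generics for the tail factors, lift via master conditions after verifying stationarity of the $S_\beta$'s through the projection structure, and finish with Facts~\ref{F:Square_cc} and~\ref{silverslemma} for branch non-addition. The paper itself presents these steps as a diff against Lemma~\ref{mainlifting}, whereas you lay them out explicitly, and you group $\Add(\kappa,\lambda^*) \ast \dot{\N}_{\lambda^*}$ as a single tail with the $\kappa^+$-cc/$\kappa^+$-closed projection structure, where the paper treats the $I$-step (via Fact~\ref{F:St_cc}) and the $G'$-step separately---but these are cosmetic differences and the substance matches.
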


\begin{proof} Instead of going through the whole argument, we will explain here how the proof of Lemma~\ref{mainlifting} can be modified for our present purposes. The crux is as follows: Starting from $V[G][H_\alpha]$, we must find a $j(\M)$-generic $\tilde G$ such that $V[G][H_\alpha][I][G']=V[\tilde{G}]$ where $I$ is $\Add(\kappa,\lambda^*)$-generic over $V[G][H_\alpha]$ ($\lambda^*$ being the least inaccessible in $N$ above $\lambda$) and $G'$ is $j(\M)/(G *(H_\alpha \times I))$-generic over $V[G][H_\alpha][I]$. However, instead of using the Absorption Lemma, we use the guessing properties of $\ell$.

To see how we can obtain such an $G'$, first observe that $\dot{\P}_\alpha=j(\ell)(\lambda)$ is a $j(\M)\rest\lambda=\M$-name for a $\kappa^+$-closed poset. Thus,  $j(\M)$ is isomorphic to a dense subset of
$$
j(\M)\rest\lambda^*\ast  \dot{\N}_{\lambda^*}\cong \M\ast(\dot{\P}_\alpha\times\Add(\kappa,\lambda^*))\ast  \dot{\N}_{\lambda^*}.
$$ 

Hence we can choose any $I$ that is $\Add(\kappa,\lambda^*)$-generic over $V[G][H_\alpha]$ and any $G'$ that is $\N_{\lambda^*}$-generic over $V[G][H_\alpha][I]$, and we let $\tilde G := G \ast (H_\alpha \times I) \ast G'$.

\begin{Claim}\label{mainlifting-notap-mastercondition} $N[\tilde G]$ contains a strong master condition for $H_\alpha$ with respect to $j:M[G] \to N[\tilde G]$.\end{Claim}

As in Claim~\ref{mainlifting-mastercondition}, the main point is to inductively show that for $\beta<\alpha$, the $S_\beta$'s remain stationary after forcing with the master condition over $N[\tilde G]$. The argument here is almost identical, except that we now require an application of Fact \ref{F:St_cc} to show that $S_\beta$ remains stationary in the extension from $N[G][H_\alpha]$ to $N[G][H_\alpha \times I]$. Then the $G'$, since it is again a projection of a product of a $\kappa^+$-cc and $\kappa^+$-closed poset, acts analogously to the $G'$ from the proof of Lemma~\ref{mainlifting}. The last step using the $\kappa^+$-closure of $j(\P_\alpha)$ is also the same.

Moving forward, we choose $\tilde H_\alpha$ to be a generic containing the master condition to define the extension $N[\tilde G \ast \tilde H_\alpha] = N[G][H_\alpha][I][G'][\tilde H_\alpha]$. The last way in which this argument differs from that for Lemma~\ref{mainlifting} is in showing that the extension from $N[G \ast H_\alpha]$ to $N[\tilde G \ast \tilde H_\alpha]$ does not add cofinal branches to $\lambda$-trees. In our present case we must again pay attention to the extension $N[G \ast H_\alpha][I]$ over $N[G \ast H_\alpha]$, so we use an extra application of Fact \ref{F:Square_cc}.\end{proof}

It follows from our lifting lemma that we have the key properties for our model $V[\M \ast \P]$. The arguments for the following differ from those in Section~\ref{sec-tp-csr-ap} only insofar as we must use the weakly compact Laver diamond to \emph{specifically} choose a weakly compact embedding $j:M \to N$ such that $j(\ell)(\lambda) = \dot \P_\alpha$.

\begin{lemma} $V[\M] \models $``$\P$ is $\lambda$-distributive''.\end{lemma}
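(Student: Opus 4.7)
The plan is to mirror the structure of Lemma~\ref{wcdist} from the previous section, but to replace the unconstrained choice of the weakly compact embedding by a choice coming from the weakly compact Laver diamond, so that the guessing hypothesis $j(\ell)(\lambda)=\dot{\P}_\alpha$ of Lemma~\ref{mainlifting-notap} can be satisfied. Concretely, by Fact~\ref{nicenamestrick} and the $\lambda^+$-chain condition of $\P$, it suffices to show that $V[\M]\models$``$\P_\alpha$ is $\lambda$-distributive'' for each $\alpha<\lambda^+$; the full iteration then inherits distributivity since any alleged $<\lambda$-sequence of ordinals added by $\P$ would already appear at some bounded stage.

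Fix $\alpha<\lambda^+$ and suppose toward a contradiction that in $V[\M]$ there were a condition in $\P_\alpha$ forcing a new function $\dot f\colon\mu\to \mathrm{Ord}$ with $\mu<\lambda$. Working in $V$, let $M$ be a transitive model of a sufficiently large fragment of $\ZFC$ of size $\lambda$, closed under $<\lambda$-sequences in $V$, containing $\M$, the name $\dot{\P}_\alpha$, and (a name for) the counterexample $\dot f$, as well as the Laver function $\ell$ and any other relevant parameters. By the weakly compact Laver diamond, there is a transitive $N$ and an elementary embedding $j\colon M\to N$ with critical point $\lambda$ such that $N$ is of size $\lambda$ and closed under $<\lambda$-sequences and, crucially, $j(\ell)(\lambda)=\dot{\P}_\alpha$. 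This is precisely the guessing hypothesis required to invoke Lemma~\ref{mainlifting-notap}.

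Applying Lemma~\ref{mainlifting-notap}(2) to this $j$ and this $\alpha$ yields $M[G]\models$``$\P_\alpha$ is $\lambda$-distributive'', where $G$ is the given $\M$-generic. Since $\dot f$ was chosen in $M$, its interpretation over $M[G]$ cannot be a new function of length $<\lambda$, and by elementarity (or simply by absoluteness of distributivity-violating sequences between $M[G]$ and $V[G]$ once the relevant names are already in $M$) the same holds in $V[G]$, contradicting the assumption on $\dot f$. Hence each $\P_\alpha$ is $\lambda$-distributive over $V[\M]$, and so is $\P$.

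The only nontrivial step is the selection of the embedding $j$ so that $j(\ell)(\lambda)$ equals the specific name $\dot{\P}_\alpha$; everything else is bookkeeping built on Lemma~\ref{mainlifting-notap}. I expect no obstacle beyond verifying that the parameters used to build the counterexample $\dot f$ (together with $\ell$, $\M$, and $\dot{\P}_\alpha$) can all be packaged into a single $A\in H(\lambda^+)$ to which the Laver diamond is applied, which is routine given the size and closure of $M$.
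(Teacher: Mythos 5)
Your proposal is correct and takes essentially the same route as the paper: reduce via Fact~\ref{nicenamestrick} and the $\lambda^+$-chain condition to a single $\alpha<\lambda^+$, then use the weakly compact Laver diamond to pick $j:M\to N$ with $j(\ell)(\lambda)=\dot\P_\alpha$ so that Lemma~\ref{mainlifting-notap}(2) applies. One small caveat on your closing paragraph: the set $A$ fed to the Laver diamond must be exactly $\dot\P_\alpha$ (so that $j(\ell)(\lambda)=\dot\P_\alpha$ as the hypothesis of Lemma~\ref{mainlifting-notap} requires), while the remaining parameters $\dot f$, $\ell$, $\M$ are simply required to lie in $M$ --- packaging them all into $A$ would spoil the guessing hypothesis --- but your second paragraph already handles this correctly.
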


\begin{lemma} $V[\M \ast \P] \models \CSR(\kappa^{++})$.\end{lemma}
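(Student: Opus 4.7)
The plan is to mimic the proof of the analogous $\CSR$ lemma from Section~\ref{sec-tp-csr-ap}, swapping Lemma~\ref{mainlifting-notap} in for Lemma~\ref{mainlifting} and using the weakly compact Laver diamond to steer our embeddings so that they guess the correct stage of $\P$.

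First I would establish that $V[\M] \models $ ``$\P$ is $\lambda$-distributive'', as an analog of Lemma~\ref{wcdist}. By Fact~\ref{nicenamestrick}, it suffices to show that $\P_\alpha$ is $\lambda$-distributive for each $\alpha < \lambda^+$. Reflecting a putative counterexample $\dot f$ into a $\lambda$-sized transitive $M$ with $M^{<\lambda} \subseteq M$ containing $\M$, $\P_\alpha$, $\ell$, and $\dot f$, it is enough to show $M[G] \models $ ``$\P_\alpha$ is $\lambda$-distributive'' for an $\M$-generic $G$ over $V$. The new ingredient beyond Section~\ref{sec-tp-csr-ap} is the use of the weakly compact Laver diamond: since $\P_\alpha$ has cardinality at most $\lambda$ (by the $\lambda^+$-cc and the $<\lambda$-supports), $\dot \P_\alpha$ can be taken as a canonical nice $\M$-name of size $\lambda$, hence an element of $H(\lambda^+)$. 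The Laver diamond then produces an embedding $j:M \to N$ with critical point $\lambda$ such that $j(\ell)(\lambda) = \dot{\P}_\alpha$, which is precisely the hypothesis required to invoke Lemma~\ref{mainlifting-notap}(2); hence $M[G] \models $ ``$\P_\alpha$ is $\lambda$-distributive''.

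Next, given a $\P$-name $\dot S$ for a stationary subset of $\lambda \cap \cof(\le\kappa)$, the $\lambda^+$-chain condition of $\P$ together with Fact~\ref{nicenamestrick} ensures that $\dot S$ is a nice $\P_\beta$-name for some $\beta < \lambda^+$. By the bookkeeping bijection $F$ used in the construction of the iteration, there is some $\alpha < \lambda^+$ with $F(\alpha) = (\beta, \gamma)$ for which $\dot S = \dot S_\alpha$. At stage $\alpha$ the iteration forces with $\CU(\dot T_\alpha)$, where $\dot T_\alpha = \dot T'_\alpha \cup (\lambda \cap \cof(\le\kappa))$ and $\dot T'_\alpha$ is the set of $\cof(\kappa^+)$-points at which $\dot S_\alpha$ reflects. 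The resulting generic club $C \subseteq \lambda$ therefore has the property that every $\rho \in C \cap \cof(\kappa^+)$ lies in $\dot T'_\alpha$ and is hence a reflection point of $S$.

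Finally, I would argue that both the clubhood of $C$ and the stationarity of $S$ survive passage to $V[\M \ast \P]$. The former follows from $\lambda$-distributivity of the tail $\P/\P_{\alpha+1}$ over $V[\M \ast \P_{\alpha+1}]$, a direct consequence of Step~1 together with the $\lambda^+$-cc quotient analysis. The latter follows from the fact that the tail is $\kappa^+$-closed (by the argument of Proposition~\ref{closure}, which carries over verbatim since $\P$ is defined identically here) and hence preserves stationary subsets of $\lambda \cap \cof(\le\kappa)$ by Fact~\ref{F:St_Closed_cof}. The main obstacle is verifying Step~1: one must confirm both that $\dot{\P}_\alpha$ is an element of $H(\lambda^+)$ so that the Laver diamond applies, and that Lemma~\ref{mainlifting-notap}(2) genuinely delivers distributivity from the hypothesis $j(\ell)(\lambda) = \dot{\P}_\alpha$. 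Once distributivity is in hand, the remainder of the argument parallels the corresponding lemma in Section~\ref{sec-tp-csr-ap} essentially verbatim.
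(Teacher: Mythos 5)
Your proposal is correct and follows the same route the paper takes: the paper explicitly says the only change from Section~\ref{sec-tp-csr-ap} is that one must use the weakly compact Laver diamond to choose the embedding $j:M\to N$ so that $j(\ell)(\lambda)=\dot\P_\alpha$, which is exactly the pivot you identify for invoking Lemma~\ref{mainlifting-notap} in place of Lemma~\ref{mainlifting}. One small remark: your final appeal to Fact~\ref{F:St_Closed_cof} and $\kappa^+$-closure for preserving the stationarity of $S$ (as a subset of $\lambda$) is redundant---what the argument actually needs to persist is, for each $\rho\in C\cap\cof(\kappa^+)$, the stationarity of $S\cap\rho$ in $\rho$, and this already follows from the $\lambda$-distributivity you establish (since no new subsets of any $\rho<\lambda$ are added), which is all the paper uses.
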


\begin{lemma} $V[\M \ast \P] \models \TP(\kappa^{++})$.\end{lemma}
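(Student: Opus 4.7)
The plan is to follow the proof of Lemma~\ref{wc-tp} essentially verbatim, with one crucial adjustment: the embedding supplied by weak compactness must now be chosen with the aid of the weakly compact Laver diamond $\ell$ so that the relevant tail of the iteration appears in $j(\ell)(\lambda)$, which is exactly what is needed to invoke Lemma~\ref{mainlifting-notap}. Suppose toward a contradiction that $T$ is a $\lambda$-Aronszajn tree in $V[\M \ast \P]$. By the $\lambda^+$-chain condition of $\P$ together with Fact~\ref{nicenamestrick}, a name for $T$ lies in $V[\M \ast \P_\alpha]$ for some $\alpha < \lambda^+$, and so $T \in M[G \ast H_\alpha]$ for some $\lambda$-sized transitive model $M$ containing $\M \ast \P_\alpha$, $\ell$, and the relevant name.

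Next, I would apply the weakly compact Laver diamond to the $H(\lambda^+)$-element $\dot \P_\alpha$ in order to obtain a witness $j : M \to N$ to the weak compactness of $\lambda$ satisfying $j(\ell)(\lambda) = \dot \P_\alpha$. This matches the hypothesis of Lemma~\ref{mainlifting-notap}, so I can invoke it to produce a $j(\M)$-generic $\tilde G$ over $V$ and a $j(\P_\alpha)$-generic $\tilde H_\alpha$ over $V[\tilde G]$ yielding a lift $j : M[G \ast H_\alpha] \to N[\tilde G \ast \tilde H_\alpha]$, together with the preservation information that the extension from $N[G \ast H_\alpha]$ to $N[\tilde G \ast \tilde H_\alpha]$ adds no cofinal branches to $\lambda$-trees in $N[G \ast H_\alpha]$.

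Finally, by elementarity, $j(T)$ is a $j(\lambda)$-tree in $N[\tilde G \ast \tilde H_\alpha]$, and since the critical point of $j$ is $\lambda$ and $T$ has height $\lambda$, we have $j(T)\rest\lambda = T$. Picking any node $t$ on level $\lambda$ of $j(T)$, the set $\{s \in T : s <_{j(T)} t\}$ is a cofinal branch through $T$ that lives in $N[\tilde G \ast \tilde H_\alpha]$; the preservation clause then forces this branch to lie already in $N[G \ast H_\alpha] \subseteq V[\M \ast \P]$, contradicting that $T$ is Aronszajn. Since the work of adapting the branch-preservation and master-condition arguments to the guessing variant $\M_\ell(\kappa,\lambda)$ is already encapsulated in Lemma~\ref{mainlifting-notap}, the only genuinely new ingredient compared with the proof of Lemma~\ref{wc-tp} is the invocation of the Laver diamond to pin down the embedding, and no substantial obstacle arises here.
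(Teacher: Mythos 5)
Your proposal is correct and follows exactly the approach the paper intends: repeat the branch-lifting argument of Lemma~\ref{wc-tp}, but use the weakly compact Laver diamond to select the embedding $j:M\to N$ so that $j(\ell)(\lambda)=\dot\P_\alpha$, thereby satisfying the hypothesis of Lemma~\ref{mainlifting-notap}. (One small slip of wording: what is guessed at $\lambda$ is the initial segment $\dot\P_\alpha$ rather than a ``tail'' of the iteration, as you in fact write correctly a sentence later.)
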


It remains to obtain $\neg \AP(\kappa^{++})$. To do so, we will use the lifting lemma and mimic the proof from \cite{8fold}.

\begin{lemma}\label{wc-notap} $V[\M \ast \P] \models \neg \AP(\kappa^{++})$.\end{lemma}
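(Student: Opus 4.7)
The plan is to argue by contradiction. Suppose $V[\M \ast \P] \models \AP(\kappa^{++})$ is witnessed by a sequence $\vec a = \langle a_\alpha : \alpha < \lambda \rangle$ of bounded subsets of $\lambda$ together with a club $C \subset \lambda$ of points approachable with respect to $\vec a$. By the $\lambda^+$-chain condition of $\P$ (Fact~\ref{iterationchaincondition}), fix nice names for $\vec a$ and $C$ in $V[G \ast H_\alpha]$ for some $\alpha < \lambda^+$, and pick a $\lambda$-sized transitive $M \in V$ containing these names and $\dot \P_\alpha$. Apply the weakly compact Laver diamond to select $j : M \to N$ witnessing weak compactness of $\lambda$ with the additional property $j(\ell)(\lambda) = \dot \P_\alpha$; then Lemma~\ref{mainlifting-notap} applies and lifts $j$ to $j : M[G \ast H_\alpha] \to N[\tilde G \ast \tilde H_\alpha]$, together with the factorization $V[G \ast H_\alpha] \subseteq V[G \ast H_\alpha][I] \subseteq N[\tilde G \ast \tilde H_\alpha]$, where $I$ is the $\Add(\kappa, \lambda^*)$-generic produced there.

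Since $C$ is unbounded in $\lambda$ and $j(C)$ is closed, $\lambda \in j(C)$; and since $j(\M_\ell)$ collapses $\lambda$ to have cardinality $\kappa^+$ (the collapse happens at stage $\lambda^*$), we have $\cf^{N[\tilde G \ast \tilde H_\alpha]}(\lambda) = \kappa^+$. By elementarity, $\lambda$ is approachable with respect to $j(\vec a)$; since $j \restriction \lambda$ is the identity, so that $j(\vec a)(\eta) = a_\eta$ for $\eta < \lambda$, we obtain a cofinal $A \subset \lambda$ of order type $\kappa^+$ such that every proper initial segment $A \cap \beta$, for $\beta < \lambda$, lies in $\vec a$.

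I derive a contradiction by showing $A \in V[G \ast H_\alpha]$, which is impossible because $\cf^{V[G \ast H_\alpha]}(\lambda) = \lambda = \kappa^{++} \ne \kappa^+$. Consider the tree $T \in V[G \ast H_\alpha]$ whose underlying set is $\vec a$, ordered by proper end-extension. Since each $x \in \vec a$ is a bounded subset of $\kappa^{++}$ we have $\ot(x) \leq \kappa^+$, so $T$ has height at most $\kappa^+$ with levels of size at most $\lambda$. Enumerating $A$ increasingly as $\langle \beta_\xi : \xi < \kappa^+ \rangle$, the sequence $\langle A \cap \beta_\xi : \xi < \kappa^+ \rangle$ is a cofinal branch through $T$ in $N[\tilde G \ast \tilde H_\alpha]$. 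The step from $V[G \ast H_\alpha]$ to $V[G \ast H_\alpha][I]$ is by $\Add(\kappa, \lambda^*)$, which is square-$\kappa^+$-cc, and hence adds no cofinal branch to $T$ by Fact~\ref{F:Square_cc}. In $V[G \ast H_\alpha][I]$ we have $2^\kappa = \lambda^* > \lambda$, so the levels of $T$ are strictly less than $2^\kappa$; the remaining extension to $N[\tilde G \ast \tilde H_\alpha]$ is (a projection of) a product of a square-$\kappa^+$-cc forcing and a $\kappa^+$-closed forcing (the latter including $j(\P_\alpha)$, which is $\kappa^+$-closed by elementarity applied to Proposition~\ref{closure}), so Facts~\ref{silverslemma} and~\ref{F:Square_cc} together yield that no new cofinal branch of $T$ is added. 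Therefore the branch lies in $V[G \ast H_\alpha]$, and thus $A = \bigcup_{\xi<\kappa^+}(A \cap \beta_\xi)$ lies in $V[G \ast H_\alpha]$, giving the contradiction.

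The main obstacle is the careful level-size bookkeeping across the factorization: in $V[G \ast H_\alpha]$ itself the levels of $T$ have size $\lambda = 2^\kappa$, so Silver's Lemma cannot be applied directly; one must first traverse the $\Add(\kappa, \lambda^*)$ step (handled purely by square-chain-condition preservation) to inflate $2^\kappa$ to $\lambda^*$ and thereby bring the levels of $T$ strictly below $2^\kappa$ before Silver's Lemma can dispatch the remaining $\kappa^+$-closed parts of the tail. This specific interplay between the two preservation facts at different stages of the factorization is what makes the argument work and is what distinguishes this proof from the absorption-based proof in Section~\ref{sec-tp-csr-ap}.
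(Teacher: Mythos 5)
Your proof follows the paper's structure closely: lift the embedding using Lemma~\ref{mainlifting-notap}, deduce that $\lambda\in j(C)$ is approachable with respect to $j(\vec a)$ (hence $\vec a$), and then push the resulting short witness back through the factorization to contradict regularity of $\lambda$. But there is a genuine gap in how you set up the tree.

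You write ``Since each $x\in\vec a$ is a bounded subset of $\kappa^{++}$ we have $\ot(x)\le\kappa^+$.'' This is false: a bounded subset of $\kappa^{++}$ can have any order type below $\kappa^{++}$ (e.g.\ $\kappa^++1$). The sequence $\vec a$ is not assumed to consist of short sets, only of bounded ones. Consequently the tree $T$ on $\vec a$ ordered by end-extension may have height up to $\kappa^{++}$, and then the sequence $\langle A\cap\beta_\xi:\xi<\kappa^+\rangle$ is merely a chain of length $\kappa^+$, not a \emph{cofinal} branch of $T$. Both Fact~\ref{F:Square_cc} and Fact~\ref{silverslemma} concern cofinal branches, so neither applies as written. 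The fix is easy: restrict the tree to $\{x\in\vec a:\ot(x)<\kappa^+\}$, or better, use the tree $U=(\lambda^{<\kappa^+})^{V[G\ast H_\alpha]}$ of bounded increasing functions as the paper does, which has height exactly $\kappa^+$ by construction. Either way the witness of approachability for $\lambda$ gives a genuinely cofinal branch of the resulting $\kappa^+$-tree.

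A second, more minor point: you describe the extension from $V[G\ast H_\alpha][I]$ to $N[\tilde G\ast\tilde H_\alpha]$ as ``(a projection of) a product of a square-$\kappa^+$-cc forcing and a $\kappa^+$-closed forcing (the latter including $j(\P_\alpha)$).'' The forcing $j(\P_\alpha)$ lives over $N[\tilde G]$, not over $N[G\ast H_\alpha][I]$, so the whole two-step extension $\N_{\lambda^*}\ast j(\P_\alpha)$ is not literally such a projection. The paper instead first uses the $j(\lambda)$-distributivity of $j(\P_\alpha)$ to land the branch in $N[\tilde G]$, and only then appeals to the projection structure of $\N_{\lambda^*}$. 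Your reasoning reaches the correct conclusion because the two steps compose, but the single-projection phrasing is inaccurate. Finally, your extra application of Fact~\ref{F:Square_cc} to cross the $\Add(\kappa,\lambda^*)$ step and place the branch all the way down in $V[G\ast H_\alpha]$ is valid but unnecessary: the paper stops at $V[G\ast H_\alpha\ast I]$, where $\lambda$ is already regular (the Cohen forcing is $\kappa^+$-cc), and that already yields the contradiction.
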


\begin{proof} Suppose for contradiction that $\AP(\kappa^{++})$ holds in some extension by $\M\ast\dot{\P}$. We may fix an enumeration $\vec{a}=\la a_i:i<\lambda\ra \in V[\M \ast \dot{\P}_{\lambda^+}]$ of bounded subsets of $\lambda$ and find a club $C\subseteq\lambda$ in $V[\M \ast \dot{\P}_{\lambda^+}]$ so that every $\gamma\in C$ is approachable with regard to $\vec{a}$. By Fact~\ref{nicenamestrick}, there is some $\alpha<\lambda^+$ such that $\vec{a}, C \in V[\M \ast \dot{\P}_\alpha]$.

Next, we will make use of Lemma~\ref{mainlifting-notap}. Let $M$ be a transitive $\lambda$-sized model containing $\P_\alpha, \ell, \vec{a}$, and $C$. Since $\ell$ witnesses the weakly compact Laver diamond for $\lambda$, we find $N$ such that $j:M \to N$ witnesses weak compactness of $\lambda$ and $j(\ell)(\lambda) = \dot{\P}_\alpha$. Use Lemma~\ref{mainlifting-notap} to obtain an embedding $j:M[G\ast H_\alpha]\to N[\tilde G\ast \tilde H_\alpha]$ for some $N$, where $G\ast H_\alpha$ is $\M\ast\dot{\P}_\alpha$-generic over $V$. Note that $j(C)\cap\lambda=C$ and that $j(\vec{a})\rest\lambda=\vec{a}$. Consequently, $\lambda$ is a limit point of $j(C)$ and hence a member of $j(C)$, and by definition of $j(C)$, we have that $\lambda$ is approachable with respect to $\vec{a}$ in $N[\tilde G\ast \tilde H_\alpha]$. Since $N[\tilde G \ast \tilde{H}_\alpha] \models $``$\cf(\lambda)=\kappa^+$'', we find a club $e\subseteq\lambda$ of order-type $\kappa^+$ witnessing this. By definition, we then have that every initial segment of $e$ is in the range of $\vec{a}$, and hence every initial segment of $e$ is a member of $M[G\ast H_\alpha]$.

Let $U$ denote the tree $(\lambda^{<\kappa^+})^{V[G\ast H_\alpha]}$, and observe that $U$ has width $\lambda^\kappa=\lambda$. By the remarks in the previous paragraph, we see that $e$ gives a cofinal branch through $U$. Moreover, $e\in N[\tilde G \ast {\tilde H}_\alpha]$. Note that $V[G \ast H_\alpha] \models $``$\lambda$ is regular'', and this remains true in $V[G \ast H_\alpha \ast I]$ where $I$ is the $\Add(\kappa,\lambda^*)$-generic from Lemma~\ref{mainlifting-notap}. Therefore, if we show that $e \in V[G \ast H_\alpha \ast I]$, we obtain a contradiction since the order-type of $e$ is $\kappa^+$.

Now we argue that $e \in V[G \ast H_\alpha \ast I]$. By the $j(\lambda)$-distributivity of $j(\P_\alpha)$, we conclude that $e \in N[\tilde G]$. However, with the same notation as in Lemma~\ref{mainlifting-notap}, we see that in the model $N[G \ast H_\alpha\ast I] \subset N[G \ast H_\alpha\ast I \ast H'] = N[\tilde G]$, we have that $2^\kappa>\lambda$, meaning that $2^\kappa$ is greater than the width of $U$. Hence adjoining $H'$ does not add a branch through $U$ because in $N[G \ast H_\alpha\ast I]$,  $\N_{\lambda^*}$ is a projection of a product of a $\kappa^+$-closed forcing and a square-$\kappa^+$-cc forcing. Therefore $e$ is a member of $V[G\ast H_\alpha \ast I]$, a contradiction.\end{proof}

\subsection{Obtaining $\CSR(\kappa^{++}) \wedge \neg\SH(\kappa^{++}) \wedge \neg \AP(\kappa^{++})$}

Now we will prove Theorem~\ref{csr-notsh}. Starting from a model in which $\lambda$ is weakly compact and which contains an $\ell$ witnessing the weakly compact Laver diamond, we will force with the Mitchell-type guessing poset $\M$ from Definition~\ref{def:myMitchell}, then with Kunen's poset $\S$ for adding a $\lambda$-Suslin tree, and finally with the Magidor-style iteration $\P$ for obtaining $\CSR(\kappa^{++})$. The technical challenge will be to show not only that $\P$ is $\lambda$-distributive, but also that if $T$ is the generic tree added by $\S$, then $T$ remains Suslin after forcing with $\P$. Since $\P$ will not be $\lambda$-closed, an appeal to Easton's Lemma will not work. Instead, the proofs of both facts will be done simultaneously by an inductive argument on the length of $\P$ (like the ones in Sections \ref{sec-tp-csr-ap} and \ref{sec-tp-csr-notap}) in which we build a master condition to show that a weakly compact embedding can be lifted. Here the added complexity will consist of the fact that the lifting argument and the argument that $T$ remains Suslin will rely on one another. In particular, building a master condition will require us to know that the image of $T$ under the embedding has a cofinal branch, and this will be achieved by forcing to deliberately add such a branch.

Before commencing with the proof, we review some standard facts about Kunen's forcing to add a Suslin tree by initial segments \cite{KUNENsat}.

\begin{definition} Let $\S$ consist of all trees $t\subseteq\,^{<\lambda}2$ of successor height $\alpha+1$, for some $\alpha<\lambda$, which satisfy the following:
\begin{enumerate}
\item $t$ is a normal tree, and all levels of $t$ have size $<\lambda$;
\item $t$ is homogeneous.
\end{enumerate}
The ordering on $\S$ is end-extension.
\end{definition}

We recall that $t$ is \emph{homogeneous} if for any $s\in t$, the subtree of $t$ above $s$ equals $t$. In precise notation, $t$ is homogeneous if and only if for all $s\in t$, $t_s=t$, where $t_s=\{ u:s^\frown u\in t\}$. Note that this is equivalent to saying that for any two sequences $s,u$, we have that $s^\frown u\in t$ if and only if both $s$ and $u$ are in $t$.

We recall the following standard fact about this forcing:

\begin{fact}\label{lemma:branchextend} Suppose that $\gamma<\lambda$ is a limit, that $t$ is a normal, homogeneous $\gamma$-tree with all levels of size $<\lambda$, and that $b\subseteq t$ is a cofinal branch. Then there exists a condition $t'\in\S$ of height $\gamma+1$ extending $t$ so that $b \in t'$.
\end{fact}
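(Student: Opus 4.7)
The plan is to build $t'$ by appending a single new top level of length $\gamma$ consisting of cofinal branches through $t$, one of which is $b$, and to use homogeneity of $t$ to ensure the top level has the correct closure properties. Concretely, for each node $s\in t$ of length $\beta<\gamma$, define $b_s$ to be the cofinal branch of $t$ obtained by shifting $b$ past $s$: set $b_s\rest\beta=s$ and $b_s(\beta+\xi)=b(\xi)$ for $\beta+\xi<\gamma$. Every initial segment of $b_s$ of length exceeding $\beta$ has the form $s\frown(b\rest\xi)$, which lies in $t$ by homogeneity since $b\rest\xi\in t_s=t$; initial segments of length at most $\beta$ lie in $t$ trivially. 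Hence each $b_s$ is a cofinal branch of $t$, and in particular $b_\emptyset=b$. One then sets $t':=t\cup\{b_s:s\in t\}$, placing each $b_s$ as a node at the new top level $\gamma$.

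To check $t'\in\S$, first note that normality at the top level is immediate, since each $s\in t$ extends upward to $b_s$; the levels of $t'$ below $\gamma$ are unchanged from $t$; and the top level has size at most $|t|<\lambda$, using that $\gamma<\lambda$, each level of $t$ has size $<\lambda$, and $\lambda$ is regular. The substantive verification is homogeneity of $t'$. Fix $s'\in t$ of length $\beta'<\gamma$ and consider $(t')_{s'}=\{u:s'\frown u\in t'\}$. Below the top this equals $t_{s'}$, which equals $t$ by homogeneity of $t$. At the top, the sequences $u$ with $s'\frown u=b_s$ for some $s\in t$ are precisely the tail shifts of the $b_s$'s past $s'$, and by the definition of $b_s$ these constitute exactly $\{b_r:r\in t\}$ under the identification $t_{s'}=t$. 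Hence $(t')_{s'}=t'$, and $b\in t'$ by taking $s=\emptyset$.

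The main obstacle I expect is the bookkeeping in the homogeneity verification at the top level: one must handle ordinal concatenation carefully and show that the family $\{b_s:s\in t\}$ is closed under the ``tail past $s'$'' shift in a way that matches the top level of $t'$. This essentially reduces to the closure of $t$ under the shift map $u\mapsto s'\frown u$, which is precisely the homogeneity assumption on $t$.
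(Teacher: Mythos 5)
There is a genuine gap: the top level you construct is too small, and the resulting tree is in general not homogeneous. You place at level $\gamma$ only the branches $b_s = s^\frown b$, i.e.\ a prefix $s$ followed by $b$ read from its beginning, whereas homogeneity requires this family to be closed under passing to tails. Concretely, let $s'$ be a proper initial segment of $b$ of length $\beta'>0$. Since $b = b_\emptyset$ lies at the top of $t'$, the subtree $(t')_{s'}$ must contain the tail $b\setminus\beta'$ at its top level; but $b\setminus\beta'$ has the form $r^\frown b$ for some $r\in t$ only when $b$ is eventually periodic, so in general $b\setminus\beta'\notin\{b_s : s\in t\}$, and hence $(t')_{s'}\ne t'$. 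This is exactly where your homogeneity check breaks down: when $\lh(s)<\lh(s')=\beta'$ and $s'\setminus\lh(s)$ is an initial segment of $b$, the tail of $b_s$ past $s'$ is a proper tail $b\setminus\eta$ of $b$, not one of the $b_r$'s, contradicting your claim that these tails ``constitute exactly $\{b_r : r\in t\}$.''

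The correct top level is $\{s^\frown(b\setminus\alpha) : s\in t,\ \alpha<\gamma\}$, using all tails of $b$ rather than only $b=b\setminus 0$; this is the paper's construction. This larger family is closed under tail-shifting: if $s'$ is below $s^\frown(b\setminus\alpha)$, the tail past $s'$ is either $(s\setminus\lh(s'))^\frown(b\setminus\alpha)$ when $\lh(s')\le\lh(s)$, or $b\setminus(\alpha+\eta)$ for the appropriate $\eta<\gamma$ when $\lh(s')>\lh(s)$, and both are again of the required form. Once you enlarge the top level accordingly, the rest of your argument (normality, level sizes, and the two-case homogeneity verification) goes through essentially as you wrote it.
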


More precisely, we define
$$
t':=t\cup\{ s^\frown(b\setminus\alpha):s\in t\wedge \alpha<\gamma\},
$$
where for each $\alpha<\gamma$, $b \setminus \alpha$ denotes the unique $u$ so that $(b\rest\alpha)^\frown u=b$, i.e., the tail segment of $b$ above $\alpha$. It is straightforward to check that $t'$ is a condition in $\S$ which extends $t$. The condition $t'$ constructed in the proof of the above lemma will be known as \emph{the minimal extension of $t$ by $b$}. 

A crucial tool in analyzing $\S$ is to work with the two-step forcing of $\S$ followed by forcing with the generic tree. We will use $\dot{T}$ to denote the $\S$-name for the generic tree.

\begin{fact} Let $\dot{\T}$ be the $\S$-name for the forcing $(\dot{T},<_{\dot{T}})$. Then:

\begin{enumerate}
\item $\S$ is $\lambda$-strategically closed and hence $\lambda$-distributive.
\item $\S\ast\dot{\T}$ has a dense, $\lambda$-closed subset.
\item $\S\ast\dot{\T}$ is forcing equivalent to $\Add(\lambda,1)$.
\end{enumerate}
\end{fact}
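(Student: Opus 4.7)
The plan is to handle the three parts in order, with the main technical work for $\lambda$-strategic closure in part (1), and parts (2) and (3) following more directly.

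For part (1), I would describe a winning strategy for Player II in the two-player game of length $<\lambda$. At successor rounds, Player II extends Player I's condition by adding a single level via the usual splitting-and-normality construction. The key issue is limit rounds: given a limit ordinal $\gamma$ of cofinality $\tau<\lambda$ and a descending sequence $\langle t_i : i<\gamma\rangle$ of prior plays, the union $t^* := \bigcup_{i<\gamma} t_i$ is a normal homogeneous tree of height $\gamma$ with all levels of size $<\lambda$. Since $\lambda$ is inaccessible, $|t^*|<\lambda$. Player II's strategy is to maintain along the way a distinguished cofinal branch $b$ through what has been built so far (obtained by her own previous choices), and to play at stage $\gamma$ the minimal extension of $t^*$ by $b$ as provided by Fact \ref{lemma:branchextend}. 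This extension adds a top level consisting of sequences of the form $s^\frown(b\setminus\alpha)$ for $s\in t^*$ of height $\alpha$; it has size $\le|t^*|<\lambda$ and inherits normality and homogeneity from $t^*$. Thus the resulting tree is a valid condition in $\S$ extending every $t_i$. Since this strategy succeeds for arbitrary game length $<\lambda$, $\S$ is $\lambda$-strategically closed, and in particular $\lambda$-distributive.

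For part (2), I would identify the dense subset
\[
D := \{(t,\check s) \in \S\ast\dot\T : s \text{ is a top-level node of } t\}.
\]
Density: given $(t,\dot x)\in\S\ast\dot\T$, use $\lambda$-distributivity from part (1) together with the fact that $t$ forces $\dot x$ to lie in $\dot T$ to strengthen $t$ to a condition $t'$ which decides $\dot x = \check s$ for some $s$, and further strengthen $t'$ so that $s$ appears on its top level. For $\lambda$-closure, let $\eta<\lambda$ and let $\langle (t_i,\check s_i) : i<\eta\rangle$ be a strictly descending sequence in $D$. The $s_i$ form a cofinal branch $b$ through $t^* := \bigcup_{i<\eta} t_i$, which is a normal homogeneous tree of height $\sup_{i<\eta}\mathrm{ht}(t_i)$, a limit ordinal $<\lambda$ with levels of size $<\lambda$. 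Apply Fact \ref{lemma:branchextend} to produce the minimal extension $t' \in \S$ whose top level includes the node corresponding to $b$; then $(t', \check b)$ lies in $D$ and is a lower bound for the sequence.

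For part (3), note that $|D| = \lambda$ using $\lambda^{<\lambda}=\lambda$ (inaccessibility of $\lambda$), and $\S\ast\dot\T$ is nontrivial and separative since the generic branch is a new subset of $\lambda$. Thus $D$ is a nontrivial, separative, $\lambda$-closed poset of size $\lambda$. By the standard generalization of Fact \ref{F:equivalent} to inaccessible $\lambda$ (any two such posets of size $\lambda$, with $\lambda^{<\lambda}=\lambda$, are forcing-equivalent), $D$ is forcing-equivalent to $\mathrm{Add}(\lambda,1)$, and hence so is $\S\ast\dot\T$.

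The main obstacle is verifying at limit rounds in part (1) that the minimal extension by a chosen branch really does produce a condition of $\S$, i.e.\ that homogeneity and normality are preserved and that the top level has size $<\lambda$; this is where the inaccessibility of $\lambda$ and the explicit construction from Fact \ref{lemma:branchextend} are essential. Once the closure/distributivity machinery of part (1) is in hand, parts (2) and (3) are routine, with the density argument for $D$ in (2) being the only place a separate small verification is needed.
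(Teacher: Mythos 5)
The paper does not supply a proof of this statement; it is quoted as a standard fact about Kunen's forcing (cf.\ \cite{KUNENsat}), so there is no ``paper proof'' to compare against. Your argument is the standard one, and it is essentially correct, but a few points need tightening.

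The most substantive issue is your repeated appeal to the \emph{inaccessibility} of $\lambda$ (``Since $\lambda$ is inaccessible, $|t^*|<\lambda$''; ``using $\lambda^{<\lambda}=\lambda$ (inaccessibility of $\lambda$)''). In the paper, $\S$ is defined in $V[\M]$, where $\lambda=\kappa^{++}$, so $\lambda$ is not inaccessible there. The hypotheses you actually need are that $\lambda$ is regular (so the sum of $<\lambda$ sets each of size $<\lambda$ has size $<\lambda$) and that $\lambda^{<\lambda}=\lambda$; the latter holds in $V[\M]$ by the usual Mitchell analysis ($2^{\kappa}=2^{\kappa^+}=\lambda$), but not because $\lambda$ is inaccessible. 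Your cardinality bounds on $t^*$, $\S$, and $D$ all go through under these corrected hypotheses.

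Two smaller slips. In the density argument for $D$ in part (2), you cannot ``strengthen $t'$ so that $s$ appears on its top level'' --- strengthening only raises the top level. What you want is: once $t''\le t$ decides $\dot x=\check s$ (with $s$ on some level of $t''$), pick a top-level node $s'$ of $t''$ extending $s$ (which exists by normality) and observe that $(t'',\check s')\le(t,\dot x)$ and lies in $D$. Also, $\lambda$-strategic closure concerns the game of length $\lambda$, not ``$<\lambda$''; your strategy does succeed at every stage $<\lambda$, which is exactly what is needed, so the conclusion is right even though the phrasing is off. Finally, in part (3) your justification ``nontrivial and separative since the generic branch is a new subset of $\lambda$'' conflates two things; nontriviality follows from the new generic object, while separativity of $D$ (or of its separative quotient) is a structural check (two incompatible top-level extensions always exist, giving atomlessness). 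With those adjustments, the appeal to the Kripke/folklore generalization of Fact~\ref{F:equivalent} --- that any nontrivial separative $\lambda$-closed poset of size $\lambda$ with $\lambda^{<\lambda}=\lambda$ is forcing-equivalent to $\Add(\lambda,1)$ --- is exactly right.
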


Now let us commence with the proof. Let $\kappa,\lambda$ be as in Sections \ref{sec-tp-csr-ap} and \ref{sec-tp-csr-notap}, let $\M:=\M_\ell(\kappa,\lambda)$ be the guessing variant of the Mitchell forcing, let $\S$ be Kunen's forcing for adding a $\lambda$-Suslin tree as defined in $V[\M]$, and let $\P$ be the Magidor-style iteration from Sections \ref{sec-tp-csr-ap} and \ref{sec-tp-csr-notap}, this time defined in $V[\M \ast \S]$. Then $V[\M \ast \S \ast \P]$ will be our intended model for Theorem~\ref{csr-notsh}.

\begin{proposition} $V[\M] \models $``$\bb{K}_\alpha:=\S \ast(\dot{\T}\times\dot{\P}_\alpha)$ has a $\kappa^+$-closed, dense subset''.\end{proposition}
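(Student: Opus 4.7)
The plan is to rewrite $\bb{K}_\alpha$ as the two-step iteration $(\S \ast \dot{\T}) \ast \dot{\P}_\alpha$, which is legitimate because $\dot{\P}_\alpha$ is already defined in $V[\M \ast \S]$ and does not depend on the $\dot{\T}$-generic. Let $D_0 \sub \S \ast \dot{\T}$ be the dense $\lambda$-closed subset supplied by item (2) of the Fact above. I claim that
\[
D := \{(r, \dot p) \in (\S \ast \dot{\T}) \ast \dot{\P}_\alpha : r \in D_0\}
\]
witnesses the statement. Density of $D$ is immediate: given any $(r, \dot p)$, strengthen $r$ to some $r' \in D_0$ by density of $D_0$; then $(r', \dot p) \in D$ extends $(r, \dot p)$.

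The key preliminary step is to show that $\dot{\P}_\alpha$ is forced by $\S \ast \dot{\T}$ to be $\kappa^+$-closed, not merely by $\S$ alone (the latter being immediate from the argument of Proposition \ref{closure}). I would first observe that $\dot{\T}$ is $\lambda$-distributive over $V[\M \ast \S]$: any $<\lambda$-sequence of ordinals in $V[\M \ast \S \ast \T]$ lies in $V[\M]$ by the $\lambda$-distributivity of $\S \ast \dot{\T}$ over $V[\M]$, and hence in $V[\M \ast \S]$. Fixing in $V[\M \ast \S]$ an injection $e$ of $\P_\alpha$ into the ordinals, any decreasing $\tau$-sequence in $\P_\alpha$ with $\tau < \kappa^+$ appearing in $V[\M \ast \S \ast \T]$ becomes, via composition with $e$, a $\tau$-sequence of ordinals in that model; by distributivity the composite lies in $V[\M \ast \S]$, and hence so does the original sequence, which therefore has a lower bound in $V[\M \ast \S]$ by the $\kappa^+$-closure of $\P_\alpha$ there.

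For $\kappa^+$-closure of $D$, take a decreasing sequence $\seq{(r_i, \dot p_i)}{i < \tau}$ in $D$ with $\tau < \kappa^+$. By $\lambda$-closure of $D_0$, fix $r \in D_0$ with $r \leq r_i$ for all $i$. Then $r$ forces $\seq{\dot p_i}{i < \tau}$ to be decreasing in the forced $\kappa^+$-closed poset $\dot{\P}_\alpha$, and so forces a lower bound to exist. A standard mixing argument---using a maximal antichain below $r$ whose elements each decide a specific lower bound---then produces an $\S \ast \dot{\T}$-name $\dot p$ with $r \Vdash \dot p \leq \dot p_i$ for all $i$, and $(r, \dot p) \in D$ is the desired lower bound. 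The main point requiring care is the preservation of the $\kappa^+$-closure of $\P_\alpha$ after the additional $\dot{\T}$-forcing; the density and mixing steps are routine.
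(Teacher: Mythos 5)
Your proof is correct and follows the same route as the paper's: reassociate $\bb{K}_\alpha$ as $(\S \ast \dot{\T}) \ast \dot{\P}_\alpha$, use the $\lambda$-closed dense subset of $\S \ast \dot{\T}$, and use that $\dot{\P}_\alpha$ remains $\kappa^+$-closed since $\dot{\T}$ is distributive over $V[\M\ast\S]$. You merely spell out a few steps the paper leaves implicit (the distributivity of $\dot{\T}$ and the mixing argument for the lower bound), both of which are fine.
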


We will abuse notation and use $\bb{K}_\alpha$ to also denote this dense subset.

\begin{proof} We know that in $V[\M]$, $\S\ast\dot{\T}$ is $\kappa^+$-closed (in fact, $\lambda$-closed) on a dense subset. Furthermore, $\T$ is $\kappa^+$-distributive after forcing with $\S$, so $\P_\alpha$, which is $\kappa^+$-closed after forcing with $\S$, remains $\kappa^+$-closed after forcing with $\S\ast\dot{\T}$. Thus $\bb{K}_\alpha \simeq \S\ast\dot{\T}\ast\dot{\P}_\alpha$ is $\kappa^+$-closed on a dense subset.\end{proof}

As in Sections \ref{sec-tp-csr-ap} and \ref{sec-tp-csr-notap}, all cardinals $\leq\kappa^+$ are easily seen to be preserved. Furthermore, since $\lambda^{<\lambda}=\lambda$ in the extension by $\M$, and since $\S$ is $\lambda$-distributive in the $\M$-extension, the equation $\lambda^{<\lambda}=\lambda$ still holds after forcing with $\S$. Consequently, we may see that $\P$ is $\lambda^+$-cc. Thus all cardinals $\mu\geq\lambda^+$ are preserved. Furthermore, once we know that $\lambda$ is preserved, the $\lambda^+$-cc of $\P$ implies that we may catch our tail and achieve a model in which $\CSR(\lambda)$ holds.

The technical core of the theorem consists of simultaneously showing that $\P$ is $\lambda$-distributive, that $T$ is Suslin in $V[\M \ast \S \ast \P]$, and that we can lift certain weakly compact embeddings.

\begin{lemma}\label{mainlifting-wcnotsh}  Fix $G \ast T$, an $\M \ast \S$-generic over $V$. Suppose that $\alpha < {\lambda^+}$ and that $M$ is a $\lambda$-sized transitive model such that $\ell, \M \ast \S \ast \P_\alpha \in M$. Suppose also that $N$ and $j:M \to N$ witness weak compactness of $\lambda$ and that $j(\ell)(\lambda)=\dot{\bb{K}}_\alpha$. Then the following are true:

\begin{enumerate}
\item\label{mainlifting-wcnotsh-embedding} Let  $H_\alpha$ be any $\P_\alpha$-generic over $V[G \ast T]$. There is a $j(\M \ast \S \ast \P_\alpha)$-generic $\tilde G \ast \tilde T \ast \tilde H_\alpha$ over $V$ such that $N[G \ast T \ast H_\alpha] \subset N[\tilde G \ast \tilde T \ast \tilde H_\alpha]$ and such that $V[\tilde G \ast \tilde T \ast \tilde H_\alpha]$ defines a lift $j:M[G \ast T \ast H_\alpha] \to N[\tilde G \ast \tilde T \ast \tilde H_\alpha]$.

\item\label{mainlifting-wcnotsh-distributivity} $M[G][T] \models \text{``}\P_\alpha$ is $\lambda$-distributive''.

\item\label{mainlifting-wcnotsh-suslinpreservation} $M[G][T] \models $``$T$ remains Suslin after forcing with $\P_\alpha$''.

\end{enumerate}\end{lemma}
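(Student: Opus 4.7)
The plan is to prove (1), (2), and (3) simultaneously by induction on $\alpha < \lambda^+$, with (2) and (3) read off from the successful construction of the lift in (1) and with the inductive hypothesis used both for the stationarity-preservation obligations and for the Suslin-tree analysis at stage $\alpha$. The template is that of Lemmas~\ref{mainlifting} and~\ref{mainlifting-notap}, now supplemented by the Suslin-tree factor $\S$ and the additional obligation (3).

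For part (1), the guessing hypothesis $j(\ell)(\lambda) = \dot{\bb{K}}_\alpha$ yields the factorization
\begin{equation*}
j(\M) \simeq \M \ast (\dot{\bb{K}}_\alpha \times \dot{\Add}(\kappa,\lambda^*)) \ast \dot{\N}_{\lambda^*},
\end{equation*}
where $\lambda^* = \min(j(A)\setminus(\lambda+1))$. Starting from $G \ast T \ast H_\alpha$, we first adjoin $b$, a $\T$-generic branch through $T$ over $V[G \ast T \ast H_\alpha]$; Easton's Lemma (Fact~\ref{L:Easton}) permits this because, over $V[G \ast T]$, $\P_\alpha$ is $\kappa^+$-closed and $\T$ has a $\kappa^+$-closed dense subset, so $(b, H_\alpha)$ becomes mutually $\T \times \P_\alpha$-generic over $V[G \ast T]$. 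We then choose $I$, which is $\Add(\kappa,\lambda^*)$-generic over $V[G \ast T \ast H_\alpha \ast b]$, followed by $G'$, which is $\N_{\lambda^*}$-generic, and set $\tilde G := G \ast ((T \ast (b, H_\alpha)) \times I) \ast G'$. Since $j \rest V_\lambda = \mathrm{id}$, we have $j[G] = G \subseteq \tilde G$, so Fact~\ref{weakliftinglemma} produces a first lift $j : M[G] \to N[\tilde G]$.

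We next lift through $\S$. Let $T'$ denote the minimal extension of $T$ by $b$ furnished by Fact~\ref{lemma:branchextend}. Since $\S$-conditions are bounded below $\lambda$ and $j \rest V_\lambda = \mathrm{id}$, $T'$ is a lower bound for $j[G_\S] = G_\S$ in $j(\S)$; any $j(\S)$-generic $\tilde T$ over $N[\tilde G]$ containing $T'$ then extends the lift to $j : M[G \ast T] \to N[\tilde G \ast \tilde T]$. The master condition $q \in j(\P_\alpha)$ is built as in Claim~\ref{mainlifting-mastercondition}: $\dom q = j[\alpha]$ and $q(j(\beta)) = C_\beta \cup \{\lambda\}$. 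Verifying that $q$ is a condition reduces, inductively, to checking that each $S_\beta$ remains stationary in $N[\tilde G \ast \tilde T]$ after a tail of $j(\P_\beta)$; this is handled by tracking $S_\beta$ through the sequence $\P_\alpha/\P_\beta$, $\T$ (via $b$), $\Add(\kappa,\lambda^*)$ (via $I$), $\N_{\lambda^*}$ (via $G'$), $j(\S)$ (via $\tilde T$), and a tail of $j(\P_\beta)$. Each factor is $\kappa^+$-closed, square-$\kappa^+$-cc, or a projection of such a product, so Facts~\ref{F:St_cc} and~\ref{F:St_Closed_cof} apply just as in Lemmas~\ref{mainlifting} and~\ref{mainlifting-notap}. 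Picking $\tilde H_\alpha$ generic below $q$ completes (1), and (2) is then immediate from Fact~\ref{strongliftinglemma}, since $q$ is a lower bound for $j[H_\alpha]$.

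The main obstacle is (3). For contradiction, take $M$ to contain a $\P_\alpha$-name $\dot A$ for a maximal antichain of $T$, and suppose $A := \dot A^{H_\alpha}$ has size $\lambda$. Running the construction above yields $j : M[G \ast T \ast H_\alpha] \to N[\tilde G \ast \tilde T \ast \tilde H_\alpha]$, and by elementarity $j(A)$ is a maximal antichain in $\tilde T = j(T)$; since $j \rest T = \mathrm{id}$, we have $j(A) \cap T = A$. The plan is to exploit the tight description of the level-$\lambda$ nodes of $T' \subseteq \tilde T$, which are precisely the homogeneous shifts $\{s \conc (b \setminus |s|) : s \in T\}$ of the single $\T$-generic branch $b$. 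By maximality of $j(A)$, each such shift is comparable with an element of $j(A)$, and a careful accounting---using the homogeneity of the Kunen tree, the antichain property of $j(A)$, and the genericity of $b$ over a submodel containing $\dot A$---derives a contradiction with $|A| = \lambda$. Coordinating the lifting, the master condition $T'$, the genericity of $b$, and the homogeneity of $T$ in this step is the technical heart of the proof.
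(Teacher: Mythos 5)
Your outline for parts (1) and (2) tracks the paper's proof closely: the factorization of $j(\M)$ via the guessing function, the choice of $B$, $I$, $G'$ to build $\tilde G$, the minimal extension $t^*$ of $T$ by $B$ as a lower bound for lifting through $\S$, and then the master condition argument with the stationarity-preservation verifications carried through the various factors. (One cosmetic point: what ensures that $(B, H_\alpha)$ is mutually $\T \times \P_\alpha$-generic is the product lemma, not Easton's Lemma; Easton's Lemma is only about chain conditions and distributivity, not about manufacturing mutual generics.)

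Part (3) is where the substance of the lemma lies, and your sketch has a genuine gap. You say: since $j(A)$ is a maximal antichain in $\tilde T$, each level-$\lambda$ node of $t^*$ is \emph{comparable} with some $a \in j(A)$, and then ``a careful accounting'' should produce a contradiction. But comparability gives you either $a \le t$ or $t \le a$; in the second case $a$ lives at a level $\ge \lambda$, which is exactly the possibility you are trying to rule out (that is, that $j(A) \ne A$), so appealing to the maximality of $j(A)$ is circular at precisely this point. The paper's argument works in the opposite direction and is not by contradiction: one shows directly, by a density argument in $V[G \ast T]$, that for every $s \in T$ and $\gamma < \lambda$ there is some $\eta > \gamma$ such that $s \conc (B \rest [\gamma,\eta))$ already extends an element of the \emph{ground-model} antichain $A$. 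The density set $D_{s,\gamma} \subseteq \T \times \P_\alpha$ is dense because of the homogeneity of $T$ (to slide $s$ and tails of $u$ around) together with the fact that $\dot A$ is forced by $\P_\alpha$ to be a maximal antichain in $T$. Genericity of $B$ (more precisely of $B \times H_\alpha$) over $V[G\ast T]$ then delivers the conclusion for every $s$ and $\gamma$, so every level-$\lambda$ node of $t^*$ extends an element of $A$, hence $A$ is already maximal in $\tilde T$, hence $j(A) = A$ is bounded, and by elementarity $A$ is bounded in $T$. Your ingredient list (homogeneity, genericity of $b$) is right, but the structure you propose around them — starting from $j(A)$'s maximality rather than establishing $A$'s maximality in $\tilde T$ directly — does not close, and ``a careful accounting'' is hiding the actual density argument that makes the proof go.
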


\begin{proof} Assuming (\ref{mainlifting-wcnotsh-suslinpreservation}) for $\beta<\alpha$, we prove (\ref{mainlifting-wcnotsh-embedding}) and (\ref{mainlifting-wcnotsh-distributivity}) for $\alpha$ at the same time. Specifically, part of the proof will show that $M[G][T][H_\alpha]^{<\lambda} \subset M[G][T]$, so (\ref{mainlifting-wcnotsh-distributivity}) follows because $H_\alpha$ is arbitrary. Then we will use the type of lift constructed in (\ref{mainlifting-wcnotsh-embedding}) for $\alpha$ to prove (\ref{mainlifting-wcnotsh-suslinpreservation}) for $\alpha$.

\begin{proof}[Proof of (\ref{mainlifting-wcnotsh-embedding}) and (\ref{mainlifting-wcnotsh-distributivity})] We will lift the embedding $j:M \to N$ in three steps.

Our first step is to lift $j:M \to N$ to have domain $M[G]$. Since $j(\ell)(\lambda)=\dot{\bb{K}}_\alpha$ is a $j(\M)\rest\kappa=\M$-name for a $\kappa^+$-closed poset, there is a dense subset of $j(\M)$ giving us the forcing-equivalence

$$
j(\M)\simeq \M\ast(\dot{\bb{K}}_\alpha \times\Add(\kappa,\lambda^*))\ast\dot{\bb{N}}_{\lambda^*},
$$

\noindent where $\lambda^*$ is the least $N$-inaccessible above $\lambda$. Fix a $(\T\times\P_\alpha)$-generic filter $B\times H_\alpha$ over $V[G\ast T]$. Here $B$ denotes the generic branch through $T$. Let $I$ be $\Add(\kappa,\lambda^*)$-generic over $V[G \ast T \ast (B \times H_\alpha)]$, so that $T \ast (B \times H_\alpha \times I)$ is $\bb{K}_\alpha\ast\Add(\kappa,\lambda^*)$-generic over $V[G]$. Let $G'$ be $\N_{\lambda^*}$-generic over $V[G \ast T \ast ((B \times H_\alpha) \times I)]$. Since $j[G]=G\subseteq \tilde G$, we may lift $j$ to an elementary embedding $j:M[G]\to N[\tilde G]$ working in $N[G \ast T \ast ((B \times H_\alpha) \times I) \ast G']=N[j(G)]=N[\tilde G]$.

The next step is to lift the embedding $j$ to have domain $M[G \ast T]$. We claim that there is a condition $t^*\in j(\S)$ so that $T\subseteq t^*$. Indeed, $T$ is a normal, homogeneous $\lambda$-tree in $N[\tilde G]$ all of whose levels have size $<j(\lambda)$. Since $B$ is a cofinal branch through $T$ in $N[\tilde G]$, we may find, by Fact \ref{lemma:branchextend}, a condition in $j(\S)$ which extends $T$. Set $t^*\in j(\S)$ to be the minimal extension of $T$ by $B$. Let $\tilde T$ be $j(\S)$-generic over $V[\tilde G]$ so that $t^*\in \tilde T$. Since $T=j[T]$ and $T$ is an initial segment of $t^*$, we may extend $j$ to an elementary embedding $j:M[G\ast T]\to N[\tilde G\ast \tilde T]$.

The third step in the construction of the embedding is to show that we can lift the embedding $j$ to have domain $M[G \ast T \ast H_\alpha]$.

\begin{Claim} $N[\tilde G \ast \tilde T]$ contains a strong master condition for $H_\alpha$ with respect to $j:M[G \ast T] \to N[\tilde G \ast \tilde T]$.\end{Claim}

\begin{proof}[Proof of Claim] This is analogous to the proof of Claims \ref{mainlifting-mastercondition} and \ref{mainlifting-notap-mastercondition}; instead of going through all the details, we will point to the differences. Once again, the main point is to prove that if $\beta<\alpha$, then $S_\beta$ remains stationary in $N[\tilde G\ast \tilde T]$. By the inductive assumption that (\ref{mainlifting-wcnotsh-suslinpreservation}) holds for $\beta<\alpha$, $T$ is still Suslin after forcing with $\P_\beta$, and hence $\T$ is still $\lambda$-cc in $V[G\ast T\ast H_\beta]$. Thus $S_\beta$ is stationary in $V[G\ast T\ast (B\times H_\beta)]$ (here $H_\beta:=H_\alpha\cap\P_\beta$). The tail of the forcing $\P_\alpha$ from stage $\beta$ onwards is still $\kappa^+$-closed after forcing with $\T$, since $\T$ is $\lambda$-distributive. Since $\kappa^{<\kappa}=\kappa$ in $V[G\ast T\ast H_\beta]$ and $S_\beta$ consists of points of cofinality $\leq\kappa$, $S_\beta$ is stationary in $V[G\ast T \ast B \ast H_\alpha \ast I]$ and hence in $N[G\ast T \ast B \ast H_\alpha \ast I]$ too. The argument that $S_\beta$ remains stationary in $N[\tilde G]$ (after adjoining $G'$) is analogous to the previous ones. Finally, we use the $j(\lambda)$-distributivity of $j(\S)$ to show that $S_\beta$ remains stationary in $N[\tilde G \ast \tilde T]$.\end{proof}

As in the proof of Lemma~\ref{mainlifting}, we choose $\tilde H_\alpha$ to contain the master condition to complete the lift and to obtain $M[G \ast T \ast H_\alpha]^{<\lambda} \subset M[G \ast T]$ and hence (\ref{mainlifting-wcnotsh-distributivity}).\end{proof}

\begin{proof}[Proof of (\ref{mainlifting-wcnotsh-suslinpreservation})]

Suppose $A \subset T$ is a maximal antichain with $A \in V[G \ast T \ast H_\alpha]$, and find a model $M$ such that $A \in M[G \ast T \ast H_\alpha]$. Using the lift $j:M[G \ast T \ast H_\alpha] \to N[\tilde G \ast \tilde T \ast \tilde H_\alpha]$, we will show that $j(A)$ is bounded in $\tilde T$. It will then follow by elementarity that $A$ is bounded in $T$.

Now let $\dot A$ be a $\P_\alpha$-name for $A$ in $V[G][ T]$ such that $\dot A \in M[G][T]$. Observe that by the elementarity of $j$, $j(A)\rest\lambda=A$; in particular, $A$ is a member of $N[\tilde G\ast \tilde T\ast \tilde H_\alpha]$. Recall that $t^*$ is the minimal extension of $T$ by the branch $B$. We will show that every node of $t^*$ at level $\lambda$ extends some element of $A$. This shows that $A$ is in fact a maximal antichain in $\tilde T$, and hence $j(A)=A$, since $j(A)\supseteq A$ is also a maximal antichain in $\tilde T$.

We recall that $t^*:=T \cup \{ s^\frown (B\setminus\gamma):s\in T\wedge\gamma<\lambda \}.$
By definition of $t^*$, to show that every element of $t^*$ on level $\lambda$ extends some element of $A$, it suffices to show that the following holds:

\begin{enumerate}
\item[$(*)$] for each $s\in T$ and $\gamma<\lambda$, there is some $\eta>\gamma$ so that $s^\frown (B\rest[\gamma,\eta))$ extends an element of $A$.
\end{enumerate} 

By upwards absoluteness, it suffices to verify that  $(*)$ holds in $V[G\ast T\ast (B\times H_\alpha)]$; this will use a density argument in $V[G \ast T]$ and the genericity of the branch $B$. We will now work over the model $V[G\ast T]$ to argue that for each $s\in T$ and $\gamma<\lambda$, the set
$$
\{ (u,r)\in\T\times\P_\alpha:(\exists\eta>\gamma)\; (u,r)\Vdash s^\frown(\dot{B}\rest[\gamma,\eta))\text{ extends a node in }\dot{A}\}
$$
denoted $D_{s,\gamma}$, is dense in $\T\times\P_\alpha$.

So fix some condition $(u,r)$ in $\T\times\P_\alpha$ as well as a node $s\in T$ and an ordinal $\gamma<\lambda$. Extend $u$ if necessary so that lh$(u)\geq\gamma$, noting that $u\Vdash_{\T}\dot{B}\rest\gamma=u\rest\gamma$. Set $\bar{u}:=u\rest[\gamma,\text{lh}(u))$ (it is possible that this is the empty sequence). Since $u\in\T$, $u$ is an element of $T$. By the homogeneity of $T$, $\bar{u}\in T$. As $s\in T$, we have by another use of homogeneity that $s^\frown\bar{u}\in T$. Recalling that $\dot{A}$ is forced by $\P_\alpha$ to be a maximal antichain in $T$, we may extend $r$ to a condition $r^*$ in $\P_\alpha$ and find a (possibly trivial) extension $s^\frown\bar{u}^\frown\bar{u}^*$ of $s^\frown\bar{u}$ in $T$ so that $r^*\Vdash s^\frown\bar{u}^\frown\bar{u}^*$ extends an element of $\dot{A}$. Now set $u^*:=u^\frown \bar{u}^*=(u\rest\gamma)^\frown\bar{u}^\frown\bar{u}^*$, and let $\eta$ be the length of $u^*$. We observe that $u^*\Vdash\dot{B}\rest[\gamma,\eta)=\bar{u}^\frown\bar{u}^*$. From this it now follows that $(u^*,r^*)$ forces that $s^\frown(\dot{B}\rest[\gamma,\eta))$ extends an element of $\dot{A}$, which completes the proof that $D_{s,\gamma}$ is dense. This in turn shows that $(*)$ holds in $V[G\ast T\ast(B\times H_\alpha)]$.

In summary, since $(*)$ holds, we conclude that every node in $t^*$ on level $\lambda$ extends an element of $A$. As shown earlier, we now see that $j(A)=A$, and hence $j(A)$ is bounded in $\tilde T$. By the elementarity of $j$, $A$ is bounded in $T$, completing the proof that $T$ is Suslin after forcing with $\P_\alpha$.\end{proof}

This completes the proof of Lemma~\ref{mainlifting-wcnotsh}.\end{proof}

\begin{lemma} $V[\M \ast \S] \models $``$\P_{\lambda^+}$ is $\lambda$-distributive.\end{lemma}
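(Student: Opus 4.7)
The plan is to mirror the proofs of Lemma~\ref{wcdist} and the analogous distributivity statement in Section~\ref{sec-tp-csr-notap}, using the technical work already contained in Lemma~\ref{mainlifting-wcnotsh}(\ref{mainlifting-wcnotsh-distributivity}).

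First, by Fact~\ref{nicenamestrick} (applied in $V[\M\ast\S]$, using that $\P_{\lambda^+}$ has the $\lambda^+$-chain condition and $\lambda^{<\lambda}=\lambda$ persists through $\S$), it suffices to prove that for each $\alpha<\lambda^+$, $V[\M\ast\S]\models$``$\P_\alpha$ is $\lambda$-distributive''. Suppose toward a contradiction that some $p \in \P_\alpha$ forces that $\dot f$ is a new function from some $\mu<\lambda$ into the ordinals. We then want to find a small transitive model $M$ that reflects this situation and in which the lifting argument from Lemma~\ref{mainlifting-wcnotsh} applies.

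Second, I would choose a $\lambda$-sized transitive model $M\models\ZFC^-$ with $\{\ell, \M\ast\dot\S\ast\dot\P_\alpha, \dot f, p, \mu\}\subset M$ and $M^{<\lambda}\subset M$. Because $\ell$ witnesses the weakly compact Laver diamond and $\dot{\bb K}_\alpha$ admits a canonical name in $H(\lambda^+)^V$ (its underlying poset has a dense $\kappa^+$-closed subset of size $\lambda$ and all relevant names are absorbed into the $\lambda^+$-cc iteration), we obtain a transitive $N$ and an elementary embedding $j:M\to N$ with critical point $\lambda$ such that $j(\ell)(\lambda)=\dot{\bb K}_\alpha$. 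Fixing an $\M\ast\dot\S$-generic $G\ast T$ over $V$ containing $p$ and any $\P_\alpha$-generic $H_\alpha$ over $V[G\ast T]$, the hypotheses of Lemma~\ref{mainlifting-wcnotsh} are satisfied.

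Third, Lemma~\ref{mainlifting-wcnotsh}(\ref{mainlifting-wcnotsh-distributivity}) immediately yields $M[G\ast T]\models$``$\P_\alpha$ is $\lambda$-distributive''. Consequently the interpretation $\dot f^{H_\alpha}$ lies in $M[G\ast T]\subseteq V[G\ast T]=V[\M\ast\S]$, contradicting our assumption that $p$ forced $\dot f$ to be new. Hence no such $\dot f$ exists, and $\P_\alpha$ is $\lambda$-distributive in $V[\M\ast\S]$, completing the proof.

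The only subtle point—and thus the main thing to verify—is that the weakly compact Laver diamond $\ell$ in $V$ can be made to guess the specific name $\dot{\bb K}_\alpha$ under some $j:M\to N$; this uses that $\dot{\bb K}_\alpha$ (equivalently, $\dot\S\ast(\dot\T\times\dot\P_\alpha)$) has a nice name in $H(\lambda^+)^V$, which follows from $\lambda^{<\lambda}=\lambda$ in $V$, the $\lambda^+$-Knaster property of $\M$, and the fact that the name $\dot\P_\alpha$ built in $V[\M\ast\S]$ can itself be represented by an $\M\ast\dot\S$-name of hereditary size $\lambda$. Everything else is bookkeeping that reduces the global distributivity statement to the already-established local one.
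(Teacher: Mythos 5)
Your proposal is correct and follows essentially the same route as the paper: reduce via Fact~\ref{nicenamestrick} and the $\lambda^+$-cc of $\P_{\lambda^+}$ to showing each $\P_\alpha$ is $\lambda$-distributive in $V[\M\ast\S]$, then reflect a putative counterexample into a $\lambda$-sized transitive $M$ and invoke Lemma~\ref{mainlifting-wcnotsh}(\ref{mainlifting-wcnotsh-distributivity}) for an embedding chosen via the weakly compact Laver diamond so that $j(\ell)(\lambda)=\dot{\bb{K}}_\alpha$. The one small slip is cosmetic: $\M$ is $\lambda$-Knaster rather than $\lambda^+$-Knaster, but that is exactly what you need (together with $\lambda^{<\lambda}=\lambda$) for $\dot{\bb{K}}_\alpha$ to have a name in $H(\lambda^+)$.
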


\begin{proof} As in Section~\ref{sec-tp-csr-ap}, this uses the $\lambda^+$-cc of the full iteration $\P$ together with the lifting lemma. The difference is that we use the definition of $\ell$ to specifically use a transitive $N$ and an elementary $j:M\to N$ with critical point $\lambda$ so that $j,M\in N$ and so that $j(\ell)(\lambda)=\dot{\bb{K}}_\alpha$.\end{proof}

Similarly, we have:

\begin{lemma} $V[\M \ast \S \ast \P] \models$ ``$T$ is a $\lambda$-Suslin tree'', i.e. $V[\M \ast \S \ast \P] \models \neg \SH(\lambda)$.\end{lemma}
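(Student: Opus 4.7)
The plan is to reduce the claim to Lemma~\ref{mainlifting-wcnotsh}(\ref{mainlifting-wcnotsh-suslinpreservation}), which has already established that $T$ remains Suslin after forcing with any bounded initial segment $\P_\alpha$, for $\alpha<\lambda^+$; all that remains is to promote this to the full iteration $\P = \P_{\lambda^+}$ via a reflection argument in the spirit of the preceding $\lambda$-distributivity lemma. The $\lambda^+$-cc of $\P$ together with Fact~\ref{nicenamestrick} is exactly the tool for this.

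Concretely, I would argue by contradiction. Suppose $A \subseteq T$ is an antichain of size $\lambda$ in $V[\M \ast \S \ast \P]$. Since $|T| = \lambda$, the set $A$ can be coded by a function $\lambda \to \Ord$, and so Fact~\ref{nicenamestrick} places $A$ inside $V[\M \ast \S \ast \P_\alpha]$ for some $\alpha < \lambda^+$; fix a corresponding $\M \ast \S \ast \P_\alpha$-name $\dot A$ for $A$. Next, I would select a $\lambda$-sized transitive model $M$ containing $\ell$, the poset $\M \ast \S \ast \P_\alpha$, the name $\dot A$, and whatever auxiliary data are needed in Lemma~\ref{mainlifting-wcnotsh}. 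The weakly compact Laver diamond then supplies an elementary embedding $j: M \to N$ witnessing the weak compactness of $\lambda$ with $j(\ell)(\lambda) = \dot{\bb{K}}_\alpha$, which is precisely the hypothesis demanded by Lemma~\ref{mainlifting-wcnotsh}.

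Applying Lemma~\ref{mainlifting-wcnotsh}(\ref{mainlifting-wcnotsh-suslinpreservation}) to this $M, N, j$ yields that, in $M[G \ast T]$, the tree $T$ remains Suslin after forcing with $\P_\alpha$. In particular, since $\dot A \in M$, the interpretation $A = \dot A^{G \ast T \ast H_\alpha}$ lies in $M[G \ast T \ast H_\alpha]$ and has cardinality strictly less than $\lambda$ there. Any bijection witnessing this persists in the outer model $V[\M \ast \S \ast \P]$, and since $\lambda$ remains a cardinal throughout, one concludes $|A|^{V[\M \ast \S \ast \P]} < \lambda$, contradicting the choice of $A$. The real substance has already been absorbed into Lemma~\ref{mainlifting-wcnotsh}; the only points I would double-check are that the coding step genuinely places $A$ into some intermediate $V[\M \ast \S \ast \P_\alpha]$ (which follows from the $<\lambda$-support and $\lambda^+$-chain condition of $\P$ via Fact~\ref{nicenamestrick}) and that the chosen $M$ is closed under the auxiliary operations required by the hypotheses of Lemma~\ref{mainlifting-wcnotsh}, which is a routine closure argument.
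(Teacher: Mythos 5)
Your proposal is correct and matches the paper's intended argument: the paper proves the boundedly-indexed version as Lemma~\ref{mainlifting-wcnotsh}(\ref{mainlifting-wcnotsh-suslinpreservation}) and then derives the full statement by exactly this combination of the $\lambda^+$-cc of $\P$ (via Fact~\ref{nicenamestrick}) with a choice of $M$ containing the relevant name and a weakly compact embedding supplied by the Laver diamond satisfying $j(\ell)(\lambda)=\dot{\bb{K}}_\alpha$. Your coding and absoluteness observations at the end are the right sanity checks and all go through.
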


\begin{lemma} $V[\M \ast \S \ast \P] \models \CSR(\lambda)$.\end{lemma}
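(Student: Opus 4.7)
My plan is to mirror the $\CSR$ arguments at the ends of Sections~\ref{sec-tp-csr-ap} and \ref{sec-tp-csr-notap}, invoking the bookkeeping used in the definition of $\P$ together with the $\lambda$-distributivity of $\P$ over $V[\M \ast \S]$ that the previous lemma has already delivered.

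First, I will take an arbitrary $\P$-name $\dot S$ for a stationary subset of $\lambda \cap \cof(\leq \kappa)$. Since $\P$ has the $\lambda^+$-chain condition---this uses Fact~\ref{iterationchaincondition} together with the fact that $\lambda^{<\lambda}=\lambda$ survives into $V[\M\ast\S]$ because $\S$ is $\lambda$-distributive---Fact~\ref{nicenamestrick} allows me to assume that $\dot S$ is a $\P_{\alpha'}$-name for some $\alpha'<\lambda^+$. By the bookkeeping function $F$ used to define $\P$, I can then locate $\alpha \geq \alpha'$ with $\dot S = \dot S_\alpha$, so the $(\alpha+1)^{\text{st}}$ stage of $\P$ forces with $\CU(\dot T_\alpha)$, where $\dot T_\alpha$ names $\dot T'_\alpha \cup (\lambda \cap \cof(\leq\kappa))$ and $\dot T'_\alpha$ names the cofinality-$\kappa^+$ reflection points of $\dot S_\alpha$. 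I will then check that the generic club $C$ added at this stage is unbounded in $\lambda$; this is immediate since $\lambda\cap\cof(\leq\kappa)\subseteq T_\alpha$ is unbounded, via exactly the observation used in the proof of Claim~\ref{mainlifting-mastercondition}. Because $C \subseteq T_\alpha$, every $\rho \in C \cap \cof(\kappa^+)$ lies in $T'_\alpha$ and is therefore a reflection point of $S_\alpha$, so $V[\M \ast \S \ast \P_{\alpha+1}]$ already satisfies that $C$ witnesses $\CSR$ at $S_\alpha$.

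The only remaining step, and the one I expect to be the single substantive point, is transferring this conclusion to the full model $V[\M \ast \S \ast \P]$. For this I will rely on the $\lambda$-distributivity of $\P$ over $V[\M \ast \S]$: any bounded subset of $\lambda$ in $V[\M \ast \S \ast \P]$ already lies in $V[\M \ast \S]$, and hence in $V[\M \ast \S \ast \P_{\alpha+1}]$. Thus $C$ remains a club in the final extension, and for any $\rho \in C \cap \cof(\kappa^+)$, any purported club $D \subseteq \rho$ disjoint from $S_\alpha$ would have order type $\kappa^+ < \lambda$ and hence would already have been present in $V[\M \ast \S \ast \P_{\alpha+1}]$, contradicting the reflection of $S_\alpha$ at $\rho$ witnessed there. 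I do not anticipate a serious obstacle at this final step---the technical core has been absorbed into the earlier lifting lemma (Lemma~\ref{mainlifting-wcnotsh})---so what remains is essentially bookkeeping plus distributivity.
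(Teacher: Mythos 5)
Your proposal is correct and is essentially the argument the paper intends: the paper states this lemma without proof, deferring to the $\CSR$ argument in Section~\ref{sec-tp-csr-ap}, and your proof is a faithful adaptation of that argument (bookkeeping to locate $\dot S = \dot S_\alpha$, unboundedness of the generic club at stage $\alpha+1$, and then transfer via $\lambda$-distributivity of $\P$ over $V[\M \ast \S]$), including the needed check that $\lambda^{<\lambda}=\lambda$ persists after forcing with $\S$ so that $\P$ is $\lambda^+$-cc.
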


Finally, we note that we have the property which was our reason for using Mitchell forcing to begin with:

\begin{lemma}\label{notsh-notap} $V[\M \ast \S \ast \P] \models \neg \AP(\kappa^{++})$.\end{lemma}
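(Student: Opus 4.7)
The plan is to mirror the proof of Lemma~\ref{wc-notap}, substituting Lemma~\ref{mainlifting-wcnotsh} for Lemma~\ref{mainlifting-notap} and handling the two additional forcings $\S$ and $\T$ introduced by Kunen's construction. Assume for contradiction that $V[\M * \S * \P] \models \AP(\kappa^{++})$, witnessed by an enumeration $\vec{a} = \langle a_i : i < \lambda \rangle$ of bounded subsets of $\lambda$ and a club $C \subseteq \lambda$ of points approachable with respect to $\vec{a}$. The $\lambda^+$-chain condition of $\P$ together with Fact~\ref{nicenamestrick} lets me fix $\alpha < \lambda^+$ with $\vec{a}, C \in V[\M * \S * \P_\alpha]$. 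I then select a transitive $\lambda$-sized $M$ containing $\ell, \M * \S * \P_\alpha, \vec{a}$, and $C$, and, using the weakly compact Laver diamond, obtain $j : M \to N$ witnessing weak compactness with $j(\ell)(\lambda) = \dot{\bb{K}}_\alpha$. Applying Lemma~\ref{mainlifting-wcnotsh} lifts this to $j : M[G * T * H_\alpha] \to N[\tilde{G} * \tilde{T} * \tilde{H}_\alpha]$ for fixed generics over $V$, with $\tilde{G} = G * T * (B \times H_\alpha \times I) * G'$ in the notation of that lemma.

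Since $\lambda \in j(C)$ and $j(\vec{a}) \rest \lambda = \vec{a}$, elementarity makes $\lambda$ approachable with respect to $j(\vec{a})$ in $N[\tilde{G} * \tilde{T} * \tilde{H}_\alpha]$, where $\cf(\lambda) = \kappa^+$; this yields a club $e \subseteq \lambda$ of order type $\kappa^+$ whose initial segments all lie in $M[G * T * H_\alpha]$. Thus $e$ is a cofinal branch through the tree $U := (\lambda^{<\kappa^+})^{V[G * T * H_\alpha]}$, which has levels of size $\lambda$ (using $\lambda^{<\lambda} = \lambda$ in this model). The goal is to push $e$ into a ``small'' model in which $\lambda$ is still regular, yielding a contradiction.

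I push $e$ down in three stages. First, by the $j(\lambda)$-distributivity of $j(\P_\alpha)$ (via elementarity and Lemma~\ref{mainlifting-wcnotsh}(2)), $e \in N[\tilde{G} * \tilde{T}]$. Second, by the $j(\lambda)$-distributivity of $j(\S)$, $e \in N[\tilde{G}]$. Third, in $N[G * T * B * H_\alpha * I]$ the quotient $\N_{\lambda^*}$ is a projection of a product of a $\kappa^+$-closed and a square-$\kappa^+$-cc forcing, and $2^\kappa \geq \lambda^*$ there exceeds the width $\lambda$ of $U$; so Facts~\ref{silverslemma} and \ref{F:Square_cc} together with Easton's Lemma imply $G'$ adds no cofinal branch through $U$, placing $e \in V[G * T * H_\alpha * I * B]$.

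The step I expect to be the main obstacle is confirming that $\lambda$ is still regular in $V[G * T * H_\alpha * I * B]$, since, unlike in the argument of Lemma~\ref{wc-notap}, the generic branch $B$ through $T$ is genuinely adjoined here. The key observation is that $T$ is Suslin in $V[G * T * H_\alpha]$ by Lemma~\ref{mainlifting-wcnotsh}(3), so $\T$ is $\lambda$-cc there, while $\Add(\kappa, \lambda^*)$ is $\kappa^+$-Knaster and hence $\lambda$-Knaster; the standard fact that the product of a $\lambda$-Knaster forcing and a $\lambda$-cc forcing is $\lambda$-cc then makes $\T \times \Add(\kappa, \lambda^*)$ $\lambda$-cc over $V[G * T * H_\alpha]$. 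Consequently $\lambda$ is regular in $V[G * T * H_\alpha][I \times B]$, contradicting the existence of the cofinal sequence $e$ of order type $\kappa^+ < \lambda$ there.
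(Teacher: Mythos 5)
Your proof is correct and follows essentially the same route as the paper's, which is itself just a sketch pointing back to Lemma~\ref{wc-notap} with the necessary substitutions. The paper leaves the regularity of $\lambda$ in $N[G][T][B][H_\alpha][I]$ unexplained; you spell it out via the $\lambda$-cc of $\T$ (from Lemma~\ref{mainlifting-wcnotsh}(3)) and the Knaster property of $\Add(\kappa,\lambda^*)$, which is a welcome elaboration — though the phrase ``$\kappa^+$-Knaster hence $\lambda$-Knaster'' is not a valid inference in general, and you should instead note directly that $\Add(\kappa,\lambda^*)$ is $\lambda$-Knaster by a $\Delta$-system argument using $\kappa^{<\kappa}=\kappa$ and $(\kappa^+)^{<\kappa}=\kappa^+$ in $V[G*T*H_\alpha]$.
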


\begin{proof}  This is very similar to the analogous lemma for Theorem~\ref{tp-csr-notap}. We find the relevant $\vec a$ and $C$ in $M[G][T][H_\alpha]$ for some $\alpha<\lambda^+$ and lift a weakly compact embedding to $j:M[G][T][H_\alpha] \to N[\tilde G][\tilde T][\tilde H_\alpha]$. We can show that for the relevant tree $U$, we have a cofinal branch $e \in N[\tilde G][\tilde T][\tilde H_\alpha]$. Then we use the $j(\lambda)$-distributivity of $j(\S\ast\dot{\P}_\alpha)$ to show that $e \in N[\tilde G]=N[G][T][B][H_\alpha][I][G']$. Since $\N_{\lambda^*}$ is a projection of a product of a $\kappa^+$-closed poset with a square-$\kappa^+$-cc poset and $N[G][T][B][H_\alpha][I] \models$``$2^\kappa > \lambda$'', we see that $e \in N[G][T][H_\alpha][B][I]$. Then because $N[G][T][B][H_\alpha][I] \models $``$\lambda$ is regular'', we have a contradiction.\end{proof}

We obviously cannot make an argument for $\TP(\kappa^{++})$ work since the model has $\kappa^{++}$-Suslin trees. However, since the argument for $\neg \AP(\kappa^{++})$ deals with cofinal branches of trees, it is worth pointing out where exactly the argument for $\TP(\kappa^{++})$ will break down. Even though we could take a $\kappa^{++}$ tree $T \in M[G][T][H]$ and argue that it has a cofinal branch $b \in N[\tilde G][\tilde T][\tilde H]$, the forcing that adjoins $B$ makes it impossible to argue that $b \in N[G][T][H]$. By contrast, the argument for $\neg \AP(\kappa^{++})$ does not require us to show that the cofinal branch of interest is in $N[G][T][H]$.

\section{Results from Mahlo Cardinals}\label{sec-mahlo-results}

\subsection{Obtaining $\SR(\kappa^{++}) \wedge \wTP(\kappa^{++}) \wedge \AP(\kappa^{++})$}\label{stat-refl-sec}

In this section we will prove Theorem~\ref{wtp-sr-ap} using methods somewhat similar to our proof of Theorem~\ref{tp-csr-ap}. The reader should observe that while many of the steps are similar, the arrangement has non-trivial differences due to the nature of the embeddings we use for Mahlo cardinals.

We assume that $\kappa$ is regular with $\kappa^{<\kappa}=\kappa$, $\lambda$ is Mahlo, and $\kappa<\lambda$. We denote the poset $\M(\kappa,\lambda)$ from Definition~\ref{Def:Mitchell} as $\M$. Once more, $V[\M] \models \text{``}\kappa^{++} = \lambda$''. Again, we work in $V[\M]$ to define a standard club-adding iteration $\P_{\lambda^+}:=\seq{\P_\alpha,\dot \Q_\alpha}{\alpha < {\lambda^+}}$. As in the construction for Theorem~\ref{tp-csr-ap}, we define both the iteration and a sequence $\seq{\dot S_\alpha}{\alpha<{\lambda^+}}$ so that for each $\alpha$, there are $\beta$ and $\gamma$ so that $\dot S_\alpha$ will be the $\gamma^\text{th}$ $\P_\beta$-name for a subset of $\lambda \cap \cof(\le\!\kappa)$ that has no reflection points of cofinality $\kappa^+$. Our iteration will be defined so that for all $\alpha<{\lambda^+}$, $\P_{\alpha+1} = \P_\alpha \ast \dot{\Q}_\alpha = \P_\alpha \ast \textup{CU}(\lambda \setminus \dot{S}_\alpha)$.

\begin{proposition} $\P_{\lambda^+}$ has the $\lambda^+$-chain condition. \end{proposition}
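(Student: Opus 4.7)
The plan is to reduce this to Fact~\ref{iterationchaincondition}, since by construction $\P_{\lambda^+}$ is a standard club-adding iteration of length $\lambda^+$ in $V[\M]$ in the sense of Definition~\ref{def:SCAI}. Thus the only thing to verify is the hypothesis $\lambda^{<\lambda} = \lambda$ in the ground model $V[\M]$ of the iteration.

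First, I would note that since $\lambda$ is Mahlo in $V$, it is inaccessible in $V$, so $\lambda^{<\lambda} = \lambda$ holds in $V$. Next, using that $\kappa^{<\kappa} = \kappa$, the Mitchell poset $\M = \M(\kappa,\lambda)$ has size $\lambda$, is $\lambda$-Knaster (so in particular $\lambda$-cc), and preserves $\lambda$ with $V[\M] \models \lambda = \kappa^{++}$. By a standard nice-name counting argument, for every cardinal $\mu < \lambda$ we have $(2^\mu)^{V[\M]} \leq (\lambda^\mu)^V = \lambda$, the latter equality using the inaccessibility of $\lambda$ in $V$. Hence $\lambda^{<\lambda} = \lambda$ holds in $V[\M]$.

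With this in hand, Fact~\ref{iterationchaincondition} applies directly to $\P_{\lambda^+}$ in $V[\M]$, yielding the $\lambda^+$-chain condition. I do not expect any real obstacle here; this proposition is essentially a bookkeeping step that mirrors the corresponding unlabelled proposition in Section~\ref{sec-tp-csr-ap}, and the only substantive ingredient beyond citing Fact~\ref{iterationchaincondition} is the cardinal arithmetic computation $\lambda^{<\lambda}=\lambda$ in $V[\M]$, which in turn rests solely on the inaccessibility of $\lambda$ in $V$ together with the chain condition and size of $\M$.
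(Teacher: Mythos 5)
Your proof is correct and follows the approach the paper clearly intends: the proposition is stated without proof precisely because it is a direct application of Fact~\ref{iterationchaincondition}, and the only thing to check is the hypothesis $\lambda^{<\lambda}=\lambda$ in $V[\M]$, which you verify via the standard nice-name counting argument using that $\lambda$ is inaccessible in $V$ and $\M$ is a $\lambda$-cc poset of size $\lambda$ (the final inference from $2^{\mu}\le\lambda$ for all $\mu<\lambda$ to $\lambda^{<\lambda}=\lambda$ also tacitly uses that $\lambda$ is regular in $V[\M]$, which holds since $\lambda=\kappa^{++}$ there, so it goes through).
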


Work in $V$ and let $N$ be a transitive set modeling a rich fragment of $\ZFC$ which is closed under $\lambda^+$-sequences in $V$. Here $N$ will be $H_\Theta$ for $\Theta\ge (2^{\lambda^+})^+$. (Hence $\M(\kappa,\lambda) \ast\P_{\lambda^+}$ is in $N$.) Recall the concept of rich submodels and the associated notation that was introduced in Section~\ref{mahlolaverfunction}.

\begin{lemma}\label{mainlifting2} 
For all $\alpha < {\lambda^+}$ and every rich $M$ with $\alpha\in M$, the following hold:

\begin{enumerate}

\item\label{mainlifting2-lift} For any $\bar \M\ast\bar{\P}_\alpha$-generic $\bar G\ast\bar H$ over $N$ there is an $\M\ast\P_\alpha$-generic $G*H$ over $N$ such that in $N[G*H]$ we can lift the elementary embedding $j:\bar M\then N$ to $j:\bar M[\bar G][\bar H]\then N[G][H]$.

\item\label{mainlifting2-distributivity} $\Vdash^N_\M \text{``}\P_\alpha$ is $\lambda$-distributive''.
\item\label{mainlifting2-stationarity} For any $\bar \M\ast\bar{\P}_\alpha$-generic\footnote{Note that $\bar{\P}_\alpha$ is an iteration of length $\bar{\alpha}$.} $\bar G\ast\bar H$ over $N$, $\bar {S}_\alpha$ is non-stationary in $N[\bar G][\bar H]$.
\end{enumerate}
\end{lemma}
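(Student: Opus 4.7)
The plan is to prove (1), (2), and (3) by simultaneous induction on $\alpha < \lambda^+$, in the spirit of Lemma~\ref{mainlifting}, replacing the weakly compact embedding with the rich submodel framework of Fact~\ref{P:suitable}. For (1), given a $\bar{\M} \ast \bar{\P}_\alpha$-generic $\bar G \ast \bar H$ over $N$, I would first absorb $\bar H$ into a full $\M$-generic. By the inductive (2) applied in $\bar M$, $\bar{\P}_\alpha$ is $\kappa^+$-closed of cardinality $\bar\lambda$ in $N[\bar G]$, so the Absorption Lemma \ref{absorption} applied over $N$ at the inaccessible $\bar\lambda$ yields $\M/\bar G \simeq \bar{\P}_\alpha \times \M/\bar G$ in $N[\bar G]$; a standard rearrangement produces an $\M$-generic $G$ over $N$ with $\bar G \ast \bar H$ absorbed. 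Since conditions in $\bar\M$ are bounded below $\bar\lambda = \mathrm{crit}(j)$ and $j$ is the identity there, $j[\bar G] = \bar G \subseteq G$, and Fact~\ref{weakliftinglemma} lifts $j$ to $\bar M[\bar G] \to N[G]$. Next, I build a strong master condition $q \in j(\bar{\P}_\alpha) = \P_\alpha$ for $\bar H$: enumerating the generic clubs added by $\bar H$ as $\seq{\bar C_{\bar\beta}}{\bar\beta < \bar\alpha}$, set $\mathrm{dom}(q) = j[\bar\alpha]$ and $q(j(\bar\beta)) := \bar C_{\bar\beta} \cup \{\bar\lambda\}$. A $\P_\alpha$-generic $H$ over $N[G]$ containing $q$ then yields the full lift $j \colon \bar M[\bar G][\bar H] \to N[G][H]$.

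The technical crux is verifying that $q$ is a condition: for each $\bar\beta < \bar\alpha$, one needs $q \upharpoonright j(\bar\beta) \Vdash \bar C_{\bar\beta} \cup \{\bar\lambda\} \in \CU(\lambda \setminus S_{j(\bar\beta)})$. Disjointness below $\bar\lambda$ follows from the genericity of $\bar C_{\bar\beta}$ together with the elementarity relation $\bar S_{\bar\beta} = S_{j(\bar\beta)} \cap \bar\lambda$. That $\bar\lambda \notin S_{j(\bar\beta)}$ holds because $S_{j(\bar\beta)} \subseteq \lambda \cap \cof(\leq\kappa)$, while $\bar\lambda$ is collapsed by $\M$ to cardinality $\kappa^+$ with cofinality $\kappa^+$. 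Closure of $\bar C_{\bar\beta} \cup \{\bar\lambda\}$ requires $\bar C_{\bar\beta}$ to be unbounded in $\bar\lambda$, which is provided by the inductive (3): once $\bar S_{\bar\alpha'}$ is non-stationary for $\bar\alpha' < \bar\alpha$, the poset $\CU(\bar\lambda \setminus \bar S_{\bar\alpha'})$ adds an unbounded generic club.

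Part (2) follows by the standard strong master condition argument: given a $\P_\alpha$-name $\dot f$ in $N[G]$ for a function $\lambda^- \to \Ord$ with $\lambda^- < \lambda$, pick a rich $M$ with $\dot f, \P_\alpha, \lambda^- \in M$, so $\lambda^- < \bar\lambda_M$. Fact~\ref{strongliftinglemma} applied to $q$ as a lower bound for $j[\bar H]$ gives that $\bar M[\bar G]$ and $\bar M[\bar G][\bar H]$ share all $<\bar\lambda$-sequences of ordinals; hence $\bar f := (\pi_M(\dot f))_{\bar H} \in \bar M[\bar G]$, and by elementarity $f = j(\bar f) \in N[G]$. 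Part (3) is obtained from the choice of $\dot S_\alpha$ (as a name forced to be a subset of $\lambda \cap \cof(\leq\kappa)$ without reflection at cofinality $\kappa^+$) together with the lift from (1), transferring the fact that $\dot S_\alpha$ admits a club disjoint from it back down to $\bar S_\alpha$ in $N[\bar G][\bar H]$. The main obstacle is the intertwined induction---the master condition verification in (1) relies on (3) at earlier stages, while (3) is itself established via the lifting apparatus---together with careful tracking of the cofinalities of $\bar\lambda$ through the successive forcings.
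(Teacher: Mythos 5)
There is a genuine gap in your proof of (1), at the very point where you invoke the Absorption Lemma. You write that ``by the inductive (2) applied in $\bar M$, $\bar{\P}_\alpha$ is $\kappa^+$-closed of cardinality $\bar\lambda$ in $N[\bar G]$.'' But (2) asserts only $\lambda$-distributivity, which is not closure, and --- crucially --- the club-adding iteration in the Mahlo setting is \emph{not} automatically $\kappa^+$-closed, in contrast to the weakly compact construction of Section~\ref{sec-tp-csr-ap}. There the iterands are $\CU(\dot T_\alpha)$ with $\dot T_\alpha \supseteq \lambda\cap\cof(\le\kappa)$, so the supremum of any short decreasing sequence of conditions is automatically in the target set and Proposition~\ref{closure} applies. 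Here the iterands are $\CU(\lambda\setminus\dot S_\alpha)$ with $\dot S_\alpha\subseteq\lambda\cap\cof(\le\kappa)$ stationary, so the supremum of a decreasing $\tau$-sequence of conditions (with $\tau\le\kappa$) has cofinality $\le\kappa$ and may well land in $S_\alpha$; the naive union-plus-sup need not be a condition. Neither $\lambda$-distributivity nor anything else you cite rules this out.

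The paper's proof of (1) handles exactly this by establishing, before anything else, the Claim that $N[\bar G]\models$ ``$\bar\P_\alpha$ has a $\bar\lambda$-closed dense subset'', and this Claim uses the inductive hypothesis (3) for all $\beta\in M\cap\alpha$ in an essential way: since each $\bar S_\beta$ is non-stationary in $N[\bar G][\bar H_\beta]$, one fixes names $\dot D_{\bar\beta}$ for clubs disjoint from $\dot{\bar S}_\beta$ and shows that the set of conditions $p$ with $p\rest\bar\beta\Vdash\max p(\bar\beta)\in\dot D_{\bar\beta}$ for all $\bar\beta<\bar\alpha$ is dense and $\bar\lambda$-closed. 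Only with that closed dense subset in hand does Lemma~\ref{absorption} apply. You do use (3) later, but only in the wrong place --- when checking that the master condition is a condition (and there the relevant point is that $\bar\lambda\notin S_\beta$ because $\cf(\bar\lambda)=\kappa^+$ in $N[G]$, not that $\bar C_{\bar\beta}$ is unbounded). The missing piece, and the core of the intertwined induction, is that (3) at earlier stages must feed into the construction of a closed dense subset of $\bar\P_\alpha$ \emph{before} you can absorb it into $\M/\bar G$.
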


\begin{proof} 

The proof is by induction on $\alpha<\lambda^+$. Let us assume that the lemma holds for all $\beta<\alpha$, and we will prove that it holds for $\alpha$. In particular, we show that (\ref{mainlifting2-lift}) holds at $\alpha$ using that (\ref{mainlifting2-stationarity}) holds for all $\beta<\alpha$, (\ref{mainlifting2-distributivity}) is an easy consequence of the proof of (\ref{mainlifting2-lift}), and finally, (\ref{mainlifting2-stationarity}) follows from (\ref{mainlifting2-lift}) and (\ref{mainlifting2-distributivity}) for $\alpha$. 

Let $M$ be a rich model with $\alpha\in M$.

\begin{proof}[Proof of (\ref{mainlifting2-lift})] First note that $\bar \M=\M(\kappa,\bar\lambda)$ since $M$ is closed under $<\bar\lambda$-sequences and the conditions in $\M(\kappa,\bar\lambda)$ are bounded below $\bar{\lambda}$. Let $\bar G\ast\bar H$ be an $\bar\M\ast\bar{\P}_\alpha$-generic filter over $N$.

\begin{Claim}\label{mainlifting2-denseclosedset} 
$N[\bar G] \models \text{``}\bar \P_\alpha$ has a $\bar \lambda$-closed dense subset''.
\end{Claim}

\begin{proof} 
By (\ref{mainlifting2-stationarity}) for all $\beta \in M \cap \alpha$, working in $N[\bar G]$ we can find $\bar \P_\beta$-names $\dot D_{\bar \beta}$ for club subsets of $\bar \lambda$ such that $\Vdash^{N[\bar{G}]}_{\bar \P_\beta} \dot D_{\bar \beta} \cap \dot{\bar{S}}_\beta = \emptyset$. It is straightforward to argue that $\mathcal{D} := \set {p \in \bar \P_\alpha}{\forall \bar \beta<\bar \alpha, p \rest \bar \beta \Vdash^{N[\bar{G}]}_{\bar{\P}_\beta} \max p(\bar \beta) \in \dot D_{\bar \beta}}$ is dense in $\bar{\P}_\alpha$ and $\bar \lambda$-closed.
\end{proof}

Work in $N[\bar G]$. Since $\bar\P_\alpha$ has size $\bar\lambda$, it holds that $\M/\bar G  \simeq  \M/\bar G \times \mathcal D \simeq \M/\bar G \times \bar \P_\alpha$ by the previous claim and Lemma \ref{absorption}. Let $G'$ be an $\M/\bar G$-generic over $N[\bar G][\bar H]$; then $N[\bar G][\bar H][G']$ can be written as $N[G]$, for some  $\M$-generic $G$ over $N$.

Now, working in $N[G]$, we lift the elementary embedding $j:\bar M\then N$ to $j:\bar M[\bar G]\then N[G]$. We know that $j$ is the identity below $\bar \lambda$ and the conditions in $\M(\kappa,\bar\lambda)$ are bounded in $\bar\lambda$, and so $j[\bar G]=\bar G \sub G$, and we can lift by Fact \ref{weakliftinglemma}.

To lift the elementary embedding further we will find a strong master condition $q$ for $j$ and $\bar{\P}_\alpha$. Work in $N[G]$: Let $\seq{C_{\bar\beta}}{\bar \beta<\bar{\alpha}}$ be the sequence of club sets in $\bar{\lambda}$ added by $\bar{H}$, noting that this sequence is in $N[G]$ since $\bar{H}$ is. Let us define $q$ such that $\dom{q}=j[\bar{\alpha}]$ and for $\beta\in j[\bar{\alpha}]$, $q(\beta)=C_{\bar\beta} \cup\{\bar{\lambda}\}$. Since we ensured $\bar{H} \in N[G]$, $q$ is an element of $N[G]$.

It suffices to verify that $q$ is a condition in $\P_\alpha$ since it is by definition below every condition in $j[\bar{H}]$. This follows as $\bar{\lambda}$ is an ordinal of cofinality $\kappa^+$ below $\lambda$ in $N[G]$ and therefore for each $\bar\beta<\bar\alpha$, $C_{\bar\beta}\cup\{\bar{\lambda}\}$ is a closed bounded subset of $\lambda\setminus{S_{\beta}}$.

Let $H$ be a $\P_\alpha$-generic over $N[G]$ which contains $q$; then $j[\bar H]\sub H$ and we can lift the elementary embedding $j:\bar M[\bar G]\then N[G]$ to $j:\bar M[\bar G][\bar H]\then N[G][H]$ by Fact \ref{weakliftinglemma}.
\end{proof}

\begin{proof}[Proof of  (\ref{mainlifting2-distributivity})]
By elementarity, it is enough to show that ${\bar\M}$ over $\bar M$ forces that $\bar {\P}_\alpha$ is $\bar \lambda$-distributive. Assume for a contradiction that there is a condition $\bar m$ which forces that $\bar{\P}_\alpha$ is not $\bar\lambda$-distributive. Let $\bar G$ be an $\bar\M$-generic over $N$ which contains $\bar m$. It means that in $\bar M[\bar G]$ it holds that $\bar{\P}_\alpha$ is not $\bar\lambda$-distributive, therefore there is a condition $\bar p\in \bar{\P}_\alpha$ and a $\bar{\P}_\alpha$-name $\dot{x}$ such that $\bar p \Vdash \text{``}\dot{x}$  is a new sequence of ordinals of length less than $\bar\lambda$''. Let $\bar H$ be a $\bar{\P}_\alpha$-generic over $N[\bar G]$ which contains $\bar{p}$: then $x = \dot{x}^{\bar{H}}$ is a sequence of ordinals of length less than $\bar\lambda$ which is in $\bar M[\bar G][\bar H]$ but not in $\bar M[\bar G]$.

However, by the proof of (\ref{mainlifting2-lift})  we can lift the embedding $j$ from $\bar M[\bar G]$ to $N[G]$ for some $G$, and moreover there is in $N[G]$ a condition $q\in j(\bar{\P}_\alpha)=\P_\alpha$ below $j[\bar H]$. It follows that $\bar M[\bar G]$ and $\bar M[\bar G][\bar H]$ must have the same sequences of ordinals of length less than $\bar\lambda$ by Fact \ref{strongliftinglemma}, which is a contradiction.
\end{proof}

\begin{proof}[Proof of  (\ref{mainlifting2-stationarity})]
Let $\bar G\ast\bar H$ be an $\bar\M\ast\bar \P_\alpha$-generic over $N$, and let us lift the elementary embedding $j:\bar M\then N$ as in the proof of (\ref{mainlifting2-lift}) to $j:\bar M[\bar G][\bar H]\then N[G][H]=N[\bar G][\bar H][G'][H]$ where $G*H$ is an $\M\ast \P_\alpha$-generic over $N$ and $G'$ is an $\M/\bar G$-generic over $N[\bar G][\bar H]$.

First note that $j(\bar S_\alpha)=S_\alpha$ and $j(\bar S_\alpha)\cap\bar\lambda=\bar S_\alpha$ since $j$ is the identity below $\bar\lambda$. Therefore $\bar S_\alpha$ must be non-stationary in $N[\bar G][\bar H][G'][H]$ since $S_\alpha$ is non-reflecting and $\bar\lambda$ is an ordinal of cofinality $\kappa^+$ in $N[\bar G][\bar H][G'][H]$. Therefore it is enough to show that $\M/\bar G*\P_\alpha$ (for which $G' *H$ is generic) does not destroy stationary subsets of $\bar\lambda\cap \cof (\le\!\kappa)$ over $N[\bar G][\bar H]$, and hence $\bar S_\alpha$ is non-stationary in $N[\bar G][\bar H]$ as well.

By (\ref{mainlifting2-distributivity}), $\P_\alpha$ is $\lambda$-distributive over $N[\bar G][\bar H][G']$. Since $\lambda>\bar \lambda$ and $\bar{S}_\alpha\sub\bar\lambda$, $\bar S_\alpha$ is non-stationary in $N[\bar G][\bar H][G']$. By Lemma \ref{preservation}, $\M/\bar G$ does not destroy stationary subsets of $\bar\lambda\cap \cof(\le\!\kappa)$ over $N[\bar G][\bar H]$. Therefore $\bar S_\alpha$ is already non-stationary in $N[\bar G][\bar H]$.
\end{proof}

This finishes the whole proof of Lemma \ref{mainlifting2}.
\end{proof}

\begin{lemma} 
$V[\M] \models$ ``$\P_{\lambda^+}$ is $\lambda$-distributive''.
\end{lemma}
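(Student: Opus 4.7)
The plan is to reduce the distributivity of the full iteration $\P_{\lambda^+}$ to the distributivity of each initial segment $\P_\alpha$ for $\alpha < \lambda^+$, and then appeal to Lemma~\ref{mainlifting2}(\ref{mainlifting2-distributivity}) after transferring the statement from $N$ to $V$.

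First, I would assume for contradiction that there is a $\P_{\lambda^+}$-name $\dot f$ and a condition forcing $\dot f \colon \check\gamma \to \Ord$ with $\gamma < \lambda$, such that $\dot f$ is not forced to lie in $V[\M]$. Invoking the $\lambda^+$-chain condition of $\P_{\lambda^+}$ (Fact~\ref{iterationchaincondition}) together with the nice-names analysis that underlies Fact~\ref{nicenamestrick}, one may take $\dot f$ to be a nice name for a sequence of length $<\lambda$ whose support lies below some $\alpha < \lambda^+$; so $\dot f$ is effectively a $\P_\alpha$-name over $V[\M]$. Thus it suffices to show that $\P_\alpha$ is $\lambda$-distributive over $V[\M]$ for every $\alpha < \lambda^+$.

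Next I would use that $N = H_\Theta$ with $\Theta \geq (2^{\lambda^+})^+$, so in particular $H_{\lambda^+}^V = H_{\lambda^+}^N$. Since both $\M$ and $\P_\alpha$ have size at most $\lambda$, a standard nice-names computation yields $H_{\lambda^+}^{V[\M]} = H_{\lambda^+}^{N[\M]}$ and $H_{\lambda^+}^{V[\M \ast \P_\alpha]} = H_{\lambda^+}^{N[\M \ast \P_\alpha]}$. Because $\lambda$-distributivity of $\P_\alpha$ is witnessed entirely by objects in $H_{\lambda^+}$ (a name $\dot f$ for a short sequence, together with the collection of extensions of the relevant condition and the ground-model functions they might decide), its failure is absolute between $V[\M]$ and $N[\M]$. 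Hence the counterexample $\dot f$ from the first step transfers to a failure of $\lambda$-distributivity of $\P_\alpha$ over $N[\M]$.

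Finally I would pick any rich $M \prec N$ with $\alpha \in M$, which is available by Fact~\ref{P:suitable} (applied with $a = \{\alpha\}$, using that $\lambda$ is Mahlo). Lemma~\ref{mainlifting2}(\ref{mainlifting2-distributivity}) then gives $\Vdash^N_\M$ ``$\P_\alpha$ is $\lambda$-distributive,'' directly contradicting the transferred failure. The only step requiring genuine care is the absoluteness of $H_{\lambda^+}$ between the $V$ and $N$ extensions, but this is routine bookkeeping with nice names of hereditary size $\leq\lambda$; beyond that, the argument is just an application of the chain condition and the previous lemma, and closely parallels the proof of Lemma~\ref{wcdist} in the weakly compact case.
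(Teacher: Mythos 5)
Your argument is correct and takes essentially the same route as the paper: reduce to $\P_\alpha$ for $\alpha<\lambda^+$ via the $\lambda^+$-c.c.\ and Fact~\ref{nicenamestrick}, observe that a purported counterexample $\dot f$ (and all relevant forcing data) lies in $N=H_\Theta$, and then appeal to Lemma~\ref{mainlifting2}(\ref{mainlifting2-distributivity}). The $H_{\lambda^+}$-absoluteness step you spell out is the correct justification for transferring the (non-)failure of $\lambda$-distributivity between $V[\M]$ and $N[\M]$; the paper leaves this implicit and simply refers back to the analogous Lemma~\ref{wcdist}. One small point: your explicit invocation of Fact~\ref{P:suitable} to produce a rich $M$ is redundant here, since the conclusion of Lemma~\ref{mainlifting2}(\ref{mainlifting2-distributivity}) does not mention $M$---the existence of a rich model containing $\alpha$ is used internally in the proof of that lemma, not something the user needs to supply.
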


\begin{proof} As in the proof of Lemma~\ref{wcdist} we could choose $N$ to contain a supposed counterexample $\dot f$, so this follows from Lemma~\ref{mainlifting2}(\ref{mainlifting2-distributivity}).
\end{proof}

\begin{proposition} $V[\M \ast \P_{\lambda^+}] \models \SR(\lambda)$.\end{proposition}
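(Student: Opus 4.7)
The plan is to argue by contradiction, using the $\lambda^+$-chain condition to pull a potential counterexample down to an intermediate stage, and then use the bookkeeping together with the $\kappa^+$-closure of the iteration to rule out both possible cases.

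First, suppose $S \in V[\M \ast \P_{\lambda^+}]$ is a stationary subset of $\lambda \cap \cof(\le \kappa)$. By the $\lambda^+$-chain condition together with Fact~\ref{nicenamestrick}, there is $\alpha^* < \lambda^+$ with $S \in V[\M \ast \P_{\alpha^*}]$. Since clubs of $\lambda$ in the intermediate model remain clubs in the final model, $S$ is stationary in $V[\M \ast \P_{\alpha^*}]$ as well. I would then split into two cases according to whether $S$ has a reflection point of cofinality $\kappa^+$ in $V[\M \ast \P_{\alpha^*}]$.

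If $S$ has no such reflection point in $V[\M \ast \P_{\alpha^*}]$, I would argue for a contradiction via the bookkeeping. After conditioning a nice $\P_{\alpha^*}$-name for $S$ below a suitable condition in the generic (and setting it equal to $\emptyset$ elsewhere, which vacuously has no reflection points of cofinality $\kappa^+$), we obtain a $\P_{\alpha^*}$-name $\dot{S}'$ globally forced to be a subset of $\lambda \cap \cof(\le\kappa)$ with no reflection points of cofinality $\kappa^+$, and whose evaluation is $S$. By the enumeration, $\dot{S}' = \dot{S}_\alpha$ for some $\alpha \geq \alpha^*$, and then at stage $\alpha+1$ of the iteration $\dot{Q}_\alpha = \textup{CU}(\lambda \setminus \dot{S}_\alpha)$ adds a club disjoint from $S$, contradicting the stationarity of $S$ in the final model.

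Otherwise, let $\rho < \lambda$ be a reflection point for $S$ of cofinality $\kappa^+$ in $V[\M \ast \P_{\alpha^*}]$, so that $S \cap \rho$ is stationary in $\rho$ there. I would then show $\rho$ remains a reflection point in $V[\M \ast \P_{\lambda^+}]$. The tail $\P_{\lambda^+}/G_{\alpha^*}$ is $\kappa^+$-closed, inherited from the $\kappa^+$-closure of the full iteration (Proposition~\ref{closure}); in particular, $\cf(\rho) = \kappa^+$ is preserved. A standard chain-of-conditions argument (the variant of Fact~\ref{F:St_Closed_cof} where $\lambda = \kappa^{++}$ is replaced by an arbitrary ordinal $\rho$ of cofinality $\kappa^+$) then shows that $\kappa^+$-closed forcing preserves stationary subsets of $\rho \cap \cof(\le \kappa)$. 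Thus $S \cap \rho$ remains stationary in $\rho$ in the final model, so $\rho$ witnesses reflection of $S$.

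The main subtlety is the mild generalization of Fact~\ref{F:St_Closed_cof} needed in the last step: while that fact is stated for stationary subsets of $\kappa^{++} \cap \cof(\le \kappa)$, the same argument (build a descending $\kappa^+$-chain of conditions forcing an increasing cofinal sequence into a given name for a club, and use stationarity of the $S$-part to hit an $S$-point at a limit of cofinality $\le \kappa$) yields preservation of stationary subsets of $\rho \cap \cof(\le\kappa)$ for any $\rho$ of cofinality $\kappa^+$. This is the only non-routine point; the bookkeeping half of the argument is by now a standard Magidor-style catch-your-tail computation.
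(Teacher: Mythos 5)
Your Case 1 --- find a stage $\alpha^*$ with $S \in V[\M\ast\P_{\alpha^*}]$, condition a name for $S$ to obtain a $\P_{\alpha^*}$-name globally forced to have no reflection points of cofinality $\kappa^+$, match it with some $\dot S_\alpha$ for $\alpha \geq \alpha^*$ via the bookkeeping, and derive a contradiction from the club added at stage $\alpha+1$ --- is correct, and is the substance of the paper's ``by construction and Fact~\ref{nicenamestrick}.'' The problem is in Case 2, specifically your appeal to Proposition~\ref{closure}. That proposition belongs to the weakly compact construction of Section~\ref{sec-tp-csr-ap}, where the iterands are $\CU(\dot T_\beta)$ with $T_\beta \supseteq \lambda\cap\cof(\le\kappa)$: the proof of closure crucially uses that the supremum of a short descending chain of conditions has cofinality $\le\kappa$ and is therefore automatically in $T_\beta$. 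In Section~\ref{stat-refl-sec} the iterands are instead $\CU(\lambda\setminus\dot S_\alpha)$ with $S_\alpha \subseteq \lambda\cap\cof(\le\kappa)$, so that supremum can land \emph{inside} $S_\alpha$, and the chain then has no lower bound. These posets are not $\kappa^+$-closed as abstract partial orders, and the paper does not claim they are; when it needs closure (in Claim~\ref{mainlifting2-denseclosedset}) it passes to a dense subset whose conditions' endpoints track clubs $\dot D_{\bar\beta}$ disjoint from $\dot{\bar S}_\beta$, which in turn relies on the non-stationarity of $\bar S_\beta$ supplied by clause~(\ref{mainlifting2-stationarity}) of Lemma~\ref{mainlifting2} for $\beta<\alpha$.

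Fortunately Case 2 needs no closure argument at all, and repairing it brings you in line with what the paper actually does. Since $\P_{\lambda^+}$ is $\lambda$-distributive over $V[\M]$ (the lemma just before this proposition), the models $V[\M\ast\P_{\alpha^*}]$ and $V[\M\ast\P_{\lambda^+}]$ have the same bounded subsets of $\lambda$, hence the same clubs of any $\rho<\lambda$; therefore for $\rho<\lambda$ of cofinality $\kappa^+$, the statement ``$S\cap\rho$ is stationary in $\rho$'' has the same truth value in both models. So a reflection point at stage $\alpha^*$ persists for free, and in fact the two cases merge: $S$ fails to reflect in the final model if and only if it fails to reflect at stage $\alpha^*$, so one can argue directly by contradiction using only your Case 1. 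This $\lambda$-distributivity observation is what the paper's terse ``As $\P_{\lambda^+}$ is $\lambda$-distributive \ldots'' is pointing at.
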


\begin{proof} As $\P_{\lambda^+}$ is $\lambda$-distributive and we have preservation of $\lambda$, this follows by construction and from Fact~\ref{nicenamestrick}.\end{proof}

\begin{lemma} 
$V[\M \ast \P_{\lambda^+}] \models \AP(\lambda)$.
\end{lemma}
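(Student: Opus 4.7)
The plan is to appeal to the downward-absoluteness property of approachability (the Fact stated in Section~\ref{sec-definitions}) together with the Mitchell forcing's built-in ability to force $\AP(\lambda)$. Concretely, by the Fact quoted in Section~\ref{Mitchell-exposition}, we already have $V[\M] \models \AP(\lambda)$. So it suffices to verify that $\lambda$ remains regular in $V[\M \ast \P_{\lambda^+}]$, since then the preservation fact for $\AP(\lambda)$ gives the conclusion immediately.

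To see that $\lambda$ is regular in the final extension, first note that $\P_{\lambda^+}$ is $\lambda^+$-cc in $V[\M]$ by the $\lambda^+$-chain condition already established, so $\lambda^+$ is preserved; in particular $\lambda$ is not collapsed to a smaller cardinal. Second, by the immediately preceding lemma, $V[\M] \models$ ``$\P_{\lambda^+}$ is $\lambda$-distributive'', which means $\P_{\lambda^+}$ adds no new sequences of ordinals of length less than $\lambda$. Hence any cofinal sequence into $\lambda$ in the extension already lies in $V[\M]$, and since $\lambda$ is regular there, $\lambda$ is regular in $V[\M \ast \P_{\lambda^+}]$ as well.

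Combining these observations with the stated preservation Fact (with $W = V[\M]$ and $W' = V[\M \ast \P_{\lambda^+}]$) yields $V[\M \ast \P_{\lambda^+}] \models \AP(\lambda)$.

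The argument is essentially immediate from the previously established facts, so there is no serious obstacle here; the only thing worth double-checking is that the $\lambda$-distributivity lemma is applied correctly, i.e.\ that it genuinely gives regularity of $\lambda$ (as opposed to just cardinality), which follows because no short cofinal sequences are added.
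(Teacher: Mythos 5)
Your proposal is correct and matches the paper's proof, which is a one-liner citing exactly these two facts ($V[\M]\models\AP(\lambda)$ and the $\lambda$-distributivity of $\P_{\lambda^+}$); you have simply spelled out the intermediate step that $\lambda$ remains regular, which is implicit in the paper's version. One small terminological quibble: the cited Fact is about \emph{upward} persistence of $\AP(\lambda)$ to extensions preserving regularity, not a ``downward-absoluteness'' property, but the application itself is exactly right.
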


\begin{proof} 
This follows from $V[\M] \models \AP(\lambda)$ and the fact that $\P_{\lambda^+}$ is $\lambda$-distributive.
\end{proof}

\begin{lemma}
\label{Mahlo-wtp} $V[\M \ast \P_{\lambda^+}] \models \wTP(\lambda)$.
\end{lemma}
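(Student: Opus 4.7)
I argue by contradiction, following a template analogous to the tree property proof in Lemma~\ref{wc-tp} but adapted to the Mahlo embeddings provided by Lemma~\ref{mainlifting2}. Suppose $V[\M \ast \P_{\lambda^+}]$ contains a special $\lambda$-Aronszajn tree $T$ with specializing function $f : T \to \kappa^+$. By the $\lambda^+$-chain condition (Fact~\ref{iterationchaincondition}) together with Fact~\ref{nicenamestrick}, fix $\alpha<\lambda^+$ and $\M\ast\P_\alpha$-names $\dot T,\dot f\in V$ for $T$ and $f$. Choose a rich $M\prec N$ containing $\dot T,\dot f,\alpha$, let $\bar\lambda=M\cap\lambda$, and apply Lemma~\ref{mainlifting2}(\ref{mainlifting2-lift}) to obtain a $\bar\M\ast\bar{\P}_\alpha$-generic $\bar G\ast\bar H$ over $N$, an $\M\ast\P_\alpha$-generic $G\ast H$ over $N$, and a lift $j:\bar M[\bar G\ast\bar H]\to N[G\ast H]$. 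Write $\bar T$ and $\bar f$ for the interpretations of $\pi_M(\dot T)$ and $\pi_M(\dot f)$ by $\bar G\ast\bar H$. By elementarity, $\bar T$ is a special $\bar\lambda$-Aronszajn tree with $\bar f:\bar T\to\kappa^+$ in $\bar M[\bar G\ast\bar H]$, and $j(\bar T)$ is a $\lambda$-tree in $N[G\ast H]$ with $j(\bar T)\rest\bar\lambda=\bar T$ (since $j$ is the identity below $\bar\lambda$). Any node of $j(\bar T)$ at level $\bar\lambda$ then determines a cofinal branch $b\subseteq\bar T$ in $N[G\ast H]$.

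The crux is to push $b$ down to $N[\bar G\ast\bar H]$. Factor the extension as $N[\bar G\ast\bar H]\subseteq N[G]=N[\bar G\ast\bar H][G']\subseteq N[G\ast H]$, where $G'$ is $\M/\bar G$-generic over $N[\bar G\ast\bar H]$ and $H$ is $\P_\alpha$-generic over $N[G]$. Claim~\ref{mainlifting2-denseclosedset} provides a $\bar\lambda$-closed dense subset of $\bar\P_\alpha$ in $N[\bar G]$, so $\bar\P_\alpha$ is $\bar\lambda$-distributive and in particular does not add subsets of $\kappa$; hence $2^\kappa=\bar\lambda$ and $\bar\lambda$ is regular in $N[\bar G\ast\bar H]$. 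Over $N[\bar G\ast\bar H]$, the quotient $\M/\bar G$ is (up to dense subset) a projection of a product of a $\kappa^+$-closed and a square-$\kappa^+$-cc forcing, and since $\bar T$ has levels of size $\le\kappa^+<\bar\lambda=2^\kappa$ with regular height $\bar\lambda$, Fact~\ref{silverslemma} and Fact~\ref{F:Square_cc} together ensure that no new cofinal branch of $\bar T$ is added in passing to $N[G]$. Similarly, $\P_\alpha$ is $\kappa^+$-closed in $N[G]$ by the canonical lower bound construction from Proposition~\ref{closure}, and in $N[G]$ the height $\bar\lambda$ has cofinality $\kappa^+$ while the levels of $\bar T$ have size $\le\kappa^+<\lambda=2^\kappa$; a second application of Fact~\ref{silverslemma} then shows $\P_\alpha$ adds no cofinal branch of $\bar T$ in passing to $N[G\ast H]$. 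Combining these two steps, $b\in N[\bar G\ast\bar H]$.

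Now the contradiction is immediate: inside $N[\bar G\ast\bar H]$, $\bar\lambda$ is a regular cardinal of size $\kappa^{++}$, so the cofinal branch $b$ meets every level of $\bar T$ and hence has cardinality $\bar\lambda=\kappa^{++}$; but $\bar f\rest b$ is an injection of $b$ into $\kappa^+$, forcing $|b|\le\kappa^+$. The main obstacle is the second preservation step, where one must carefully verify that $2^\kappa=\bar\lambda$ and that $\bar\lambda$ is still regular in $N[\bar G\ast\bar H]$ in order to apply both Silver's Lemma and the square-$\kappa^+$-cc preservation fact to the quotient $\M/\bar G$; the rest of the argument then mirrors the now-standard lifting scheme used throughout the paper.
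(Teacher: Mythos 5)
Your proof is correct and follows essentially the same route as the paper's: fix names in a rich $M \prec N$, lift $j$ via Lemma~\ref{mainlifting2}(\ref{mainlifting2-lift}) to $\bar M[\bar G\ast\bar H]\to N[G\ast H]=N[\bar G\ast\bar H][G'][H]$, observe that $j(\bar T)\rest\bar\lambda=\bar T$ yields a cofinal branch $b$ of $\bar T$ in $N[G\ast H]$, then push $b$ down through the two-step factorization $N[\bar G\ast\bar H]\subseteq N[G]\subseteq N[G\ast H]$ and derive the contradiction via the specializing function. The one step where you deviate is in showing $b\in N[G]$: you invoke the $\kappa^+$-closure of $\P_\alpha$ together with Silver's Lemma (Fact~\ref{silverslemma}), which requires tracking the cofinality of $\bar\lambda$ and the value of $2^\kappa$ in $N[G]$; the paper uses instead the $\lambda$-distributivity of $\P_\alpha$ over $N[G]$ already established in Lemma~\ref{mainlifting2}(\ref{mainlifting2-distributivity}), from which $b\in N[G]$ is immediate since $b$ has length $\bar\lambda<\lambda$. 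Both are valid; the distributivity argument is the cleaner of the two since it requires no bookkeeping of level sizes or $2^\kappa$. For the other step, your unwinding of the projection analysis over $N[\bar G\ast\bar H]$ is correct and is exactly the content of Lemma~\ref{preservation}, which the paper simply cites.
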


\begin{proof} 
Assume for contradiction that $\M*\P_{\lambda^+}$ adds a special $\lambda$-Aronszajn tree. By Fact \ref{nicenamestrick} there is $\alpha<\lambda^{+}$ such that the tree and a specializing function for it are already added  by $\M*\P_\alpha$. Let us fix this $\alpha$. Let $\dot{T}$ be an $\M*\P_\alpha$-name for a special $\lambda$-Aronszajn tree, $\dot{f}$ a name for a specialization function for $\dot{T}$ and $(m,p) \in \M* \P_\alpha$ a condition which forces these facts. Following the notation in Definition \ref{def:rich}, let $\Theta$ be sufficiently large with $N=H_\Theta$, and let $M$ be a rich model in $N$ such that $\dot{T}$, $\dot{f}$, $(m,p)$ and $\alpha$ are in $M$.

Let $\bar G\ast\bar H$ be a $\bar \M\ast\bar{\P}_\alpha$-generic over $N$ which contains $(\bar m,\bar p)$. As in the proof of Lemma \ref{mainlifting2}(\ref{mainlifting2-lift}), lift the embedding $j$ from $\bar{M}[\bar{G}][\bar{H}]$ to $N[G][H]=N[\bar G][\bar H][G'][H]$, where $G*H$ is $\M\ast \P_\alpha$-generic over $N$ and $G'$ is $\M/\bar G$-generic over $N[\bar G][\bar H]$. In $N[G][H]$, $T = \dot{T}^{G*H}$ is a special $\lambda$-Aronszajn tree with the specialization function $f = \dot{f}^{G*H}$ since $j((\bar m,\bar p))=(m,p)\in G\ast H$. As we assumed that $\dot{T}$ and $\dot{f}$ are in $M$, there are $\bar{T}$ and $\bar{f}$ in $\bar{N}[\bar{G}][\bar{H}]$ such that $j(\bar{T})=T$ and $j(\bar{f})=f$. Moreover, as $j$ is the identity below $\bar{\lambda}$ we have $T\rest\bar{\lambda}=\bar{T}$ and $f\rest\bar{\lambda}=\bar{f}$. Note that $\bar{T}$ has a cofinal branch $b$ in $N[G][H]$ since $T\rest\bar{\lambda}=\bar{T}$. We will show that this branch is already in $N[\bar{G}][\bar{H}]$, and this will be a contradiction as $\bar{f}\rest b$ (modulo some bijection between $\bar{\lambda}$ and $b$) collapses $\bar{\lambda}$ to $\kappa^+$.  This will contradict the fact that $\bar{\M}*\bar{\P}_{\alpha}$ preserves $\bar{\lambda}$.

First, note that $b$ cannot be added by $\P_{\alpha}$ as $\P_{\alpha}$ is $\lambda$-distributive ($\lambda>\bar\lambda$) over $N[G]=N[\bar G][\bar H][G']$ and $b$ has length $\bar{\lambda}$.  To finish the proof it suffices to show that $\M/\bar{G}$ over $N[\bar{G}][\bar{H}]$ cannot add $b$, and this follows from Lemma \ref{preservation} since $\bar\P_\alpha$ has a $\bar\lambda$-closed dense subset. 
\end{proof}

\subsection{A Guessing Argument for $\SR(\kappa^{++}) \wedge \neg\AP(\kappa^{++})$ from a Mahlo Cardinal}

Now we are in a position to perform a new proof of $\SR(\kappa^{++}) \wedge \neg \AP(\kappa^{++})$ from a Mahlo cardinal. We will explain it in terms of the modifications that must be made to the proof of Theorem~\ref{wtp-sr-ap}.

Let $\kappa<\lambda$ again be a regular cardinal such that $\kappa^{<\kappa}=\kappa$. We will assume both $V \models \text{``}\lambda$ is Mahlo'' and that $V \models \diamondsuit_\lambda(\textup{Reg})$, which follows from the consistency of a Mahlo cardinal---for example, in $L$.  Let $\ell:\lambda \to V_\lambda$ witness statement (2) from Proposition~\ref{mahlolaverfunction}. Let $\M=\M_\ell(\kappa,\lambda)$ be the guessing poset from Definition~\ref{def:myMitchell}, defined in terms of $\ell$. Let $\P_{\lambda^+}$ and $N$ be as in the construction for Theorem~\ref{wtp-sr-ap}.

\begin{lemma}\label{mainlifting2-notap} 
For all $\alpha < {\lambda^+}$ and every rich $M$ with $\alpha\in M$ and $\ell(\bar \lambda_M)=\pi_M(\P_\alpha)$, the following hold:

\begin{enumerate}

\item\label{mainlifting2-notap-lift} For any $\bar \M\ast\bar{\P}_\alpha$-generic $\bar G\ast\bar H$ over $N$ there is an $\M\ast\P_\alpha$-generic $G*H$ over $N$ such that in $N[G*H]$ we can lift the elementary embedding $j:\bar M\then N$ to $j:\bar M[\bar G][\bar H]\then N[G][H]$.

\item\label{mainlifting2-notap-distributivity} $\Vdash^N_\M \text{``}\P_\alpha$ is $\lambda$-distributive''.
\item\label{mainlifting2-notap-stationarity} For any $\bar \M\ast\bar{\P}_\alpha$-generic $\bar G\ast\bar H$ over $N$, $\bar {S}_\alpha$ is non-stationary in $N[\bar G][\bar H]$.
\end{enumerate}
\end{lemma}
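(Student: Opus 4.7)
The plan is to mirror the proof of Lemma~\ref{mainlifting2}, proceeding by induction on $\alpha<\lambda^+$, with the crucial change that absorption via Lemma~\ref{absorption} is replaced by the factorization provided by the guessing function $\ell$. Fix $\alpha<\lambda^+$ and assume the three conclusions hold for all $\beta<\alpha$. As in the proof of Claim~\ref{mainlifting2-denseclosedset}, the inductive use of (\ref{mainlifting2-notap-stationarity}) shows that $\bar \P_\alpha$ has a $\bar\lambda$-closed dense subset in $N[\bar G]$. Let $M$ be a rich submodel with $\alpha\in M$ and $\ell(\bar\lambda_M)=\pi_M(\P_\alpha)$, so that in $N$ the guessing variant $\M = j(\bar\M)$ factors as
\[
\M \;\simeq\; \bar\M \ast \bigl(\dot{\bar\P}_\alpha\times \Add(\kappa,\bar\lambda^*)\bigr)\ast \dot{\N}_{\bar\lambda^*},
\]
where $\bar\lambda^*$ is the least inaccessible of $N$ above $\bar\lambda$. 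This is the analog, in the Mahlo setting, of the factorization used to prove Lemma~\ref{mainlifting-notap}.

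For (\ref{mainlifting2-notap-lift}), start with a $\bar\M\ast\bar\P_\alpha$-generic $\bar G \ast \bar H$ over $N$, and choose an $\Add(\kappa,\bar\lambda^*)$-generic $I$ over $N[\bar G][\bar H]$ followed by an $\N_{\bar\lambda^*}$-generic $G'$ over $N[\bar G][\bar H][I]$; setting $G := \bar G \ast (\bar H\times I)\ast G'$ produces an $\M$-generic over $N$ that extends $j[\bar G]=\bar G$, so $j$ lifts to $j:\bar M[\bar G]\to N[G]$ by Fact~\ref{weakliftinglemma}. The master condition for $\bar\P_\alpha$ is constructed verbatim as in the proof of Lemma~\ref{mainlifting2}: since $\bar H \in N[G]$ and $\cf^{N[G]}(\bar\lambda)=\kappa^+$, the function $q$ with $\dom q = j[\bar\alpha]$ and $q(j(\bar\beta))=C_{\bar\beta}\cup\{\bar\lambda\}$ lies in $N[G]$ and is a lower bound of $j[\bar H]$. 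Taking a $\P_\alpha$-generic $H$ over $N[G]$ containing $q$ yields the desired lift $j:\bar M[\bar G][\bar H]\to N[G][H]$.

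Conclusion (\ref{mainlifting2-notap-distributivity}) follows from (\ref{mainlifting2-notap-lift}) exactly as in the proof of Lemma~\ref{mainlifting2}(\ref{mainlifting2-distributivity}): any putative violation of $\bar\lambda$-distributivity of $\bar\P_\alpha$ over $\bar M[\bar G]$ would give, via Fact~\ref{strongliftinglemma}, a new sequence of ordinals of length $<\bar\lambda$, contradicting the master condition. For (\ref{mainlifting2-notap-stationarity}), as in the original argument, lift $j$ through the full extension $N[G][H]$; since $S_\alpha$ is non-reflecting in the lifted extension and $\cf^{N[G][H]}(\bar\lambda)=\kappa^+$, the set $\bar S_\alpha = j(\bar S_\alpha)\cap\bar\lambda$ is non-stationary there. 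We then peel back the three factors $\N_{\bar\lambda^*}$, $\Add(\kappa,\bar\lambda^*)$, and $\P_\alpha$ and conclude that $\bar S_\alpha$ is already non-stationary in $N[\bar G][\bar H]$, using (\ref{mainlifting2-notap-distributivity}) for the $\P_\alpha$-step, Lemma~\ref{preservation} (adapted to the guessing quotient) for $\N_{\bar\lambda^*}$, and Facts~\ref{L:Easton}(\ref{L:Easton-cc}) and \ref{F:St_cc} for the intermediate Cohen factor $\Add(\kappa,\bar\lambda^*)$.

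The only real novelty compared with Lemma~\ref{mainlifting2} is the extra intermediate model $N[\bar G][\bar H][I]$ sitting between $N[\bar G][\bar H]$ and $N[G]$; this is also the place I expect the main technical care to be needed. In the original argument, absorption delivered the quotient $\M/\bar G$ as a single block that was amenable to Lemma~\ref{preservation}. Here, stationary preservation must be tracked through the square-$\kappa^+$-cc factor $\Add(\kappa,\bar\lambda^*)$ and the subsequent $\N_{\bar\lambda^*}$, which is itself a projection of a product of a $\kappa^+$-closed and a square-$\kappa^+$-cc forcing. Once this tracking is in place the inductive step closes, and running the induction delivers the lemma.
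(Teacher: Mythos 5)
Your proposal follows essentially the same route as the paper's own (very terse) proof: both reduce to Lemma~\ref{mainlifting2}, replacing the absorption step by the guessing factorization $\M/\bar G \simeq (\bar{\P}_\alpha \times\Add(\kappa,\lambda^*)) \ast  \dot{\N}_{\lambda^*}$, and both note that the only genuinely new point is preserving stationarity (and branches, where needed) through the extra Cohen and $\N_{\lambda^*}$ factors via Facts~\ref{F:Square_cc}/\ref{F:St_cc} and the projection analysis. Your account of the master-condition step, the distributivity argument, and the peeling-back argument for nonstationarity of $\bar S_\alpha$ all match the intended modification; no gap.
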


\begin{proof} The proof is very similar to that of Lemma~\ref{mainlifting2}; the only difference is in the proof that (\ref{mainlifting2-lift}) holds at $\alpha$ assuming that (\ref{mainlifting2-stationarity}) holds for all $\beta<\alpha$. Again, we let $\bar G\ast\bar H$ be an $\bar\M\ast\bar{\P}_\alpha$-generic over $N$.

Once we have established that $N[\bar G] \models $``$\bar \P_\alpha$ has a $\bar \lambda$-closed dense subset'', we can use the guessing property of $\ell$ to conclude that we have the forcing equivalence $\M/\bar G \simeq (\bar{\P}_\alpha \times\Add(\kappa,\lambda^*)) \ast  \dot{\N}_{\lambda^*}$ where $\lambda^*$ is the next inaccessible after $\bar \lambda$. Let $\bar H \times I$ be $\bar \P_\alpha \times \Add(\kappa,\lambda^*)$-generic over $N[\bar G]$ and let $G'$ be $\N_{\lambda^*}$-generic over $N[\bar G][\bar H \times I]$. Therefore, it follows that we can express $N[G]=N[\bar G][\bar H \times I][G']$ where $G$ is $\M$-generic over $N$. Adjoining $I \ast G'$ similarly preserves stationary sets and does not add branches to the relevant trees. The rest of the proof proceeds as before.\end{proof}

As before we find that $V[\M] \models$``$\P_{\lambda^+}$ is $\lambda$-distributive'' and $V[\M \ast \P_{\lambda^+}] \models \SR(\kappa^{++})$.

\begin{lemma} $V[\M \ast \P_{\lambda^+}] \models \neg \AP(\kappa^{++})$.\end{lemma}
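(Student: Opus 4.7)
The plan is to adapt the proof of Lemma~\ref{wc-notap} to the Mahlo setting, replacing weak compactness with the diamond-based machinery of Lemma~\ref{mahlolaverfunction} and using Lemma~\ref{mainlifting2-notap} in place of Lemma~\ref{mainlifting-notap}. Suppose toward a contradiction that $\AP(\kappa^{++})$ holds in $V[\M \ast \P_{\lambda^+}]$, and fix a witnessing enumeration $\vec a = \langle a_i : i < \lambda\rangle$ of bounded subsets of $\lambda$ together with a club $C \subseteq \lambda$ of points approachable with respect to $\vec a$. By the $\lambda^+$-cc of $\P_{\lambda^+}$ and Fact~\ref{nicenamestrick}, we may fix $\alpha < \lambda^+$ and $\M \ast \P_\alpha$-names $\dot{\vec a}, \dot C$ for these objects.

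First I would set up the rich submodel: take $N = H_\Theta$ for sufficiently large $\Theta$, and apply Lemma~\ref{mahlolaverfunction} to obtain a rich $M \prec N$ with $\{\alpha, \dot{\vec a}, \dot C, \P_\alpha\} \subset M$ and $\ell(\bar\lambda_M) = \pi_M(\P_\alpha)$. Fix a $\bar\M \ast \bar\P_\alpha$-generic $\bar G \ast \bar H$ over $N$. By Lemma~\ref{mainlifting2-notap}(1), there is an $\M \ast \P_\alpha$-generic $G \ast H$ over $N$ and a lift $j : \bar M[\bar G][\bar H] \to N[G][H]$; inspecting the proof, $N[G][H]$ factors as $N[\bar G][\bar H \times I][G'][H]$, where $I$ is $\Add(\kappa, \lambda^*)$-generic and $G'$ is $\N_{\lambda^*}$-generic for $\lambda^*$ the least inaccessible of $N$ above $\bar\lambda_M$.

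Next I would use elementarity to transfer approachability down. In $\bar M[\bar G][\bar H]$, the interpretations $\bar{\vec a}$ and $\bar C$ of $\pi_M(\dot{\vec a})$ and $\pi_M(\dot C)$ witness $\AP(\bar\lambda_M)$. Since $j$ is the identity below $\bar\lambda_M$, $j(\bar C) \cap \bar\lambda_M = \bar C$ is unbounded and $j(\bar{\vec a}) \upharpoonright \bar\lambda_M = \bar{\vec a}$, so $\bar\lambda_M$ is a limit point of (hence lies in) the club $j(\bar C)$ and is therefore approachable with respect to $j(\bar{\vec a})$ in $N[G][H]$. Since $N[G][H] \models \cf(\bar\lambda_M) = \kappa^+$, we extract a cofinal $e \subseteq \bar\lambda_M$ of order type $\kappa^+$ whose proper initial segments all appear in the range of $\bar{\vec a}$, and hence lie in $\bar M[\bar G][\bar H]$. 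In particular, $e$ defines a cofinal branch through the tree $U := (\bar\lambda_M^{<\kappa^+})^{N[\bar G][\bar H \times I]}$, whose width is $\bar\lambda_M^\kappa = \bar\lambda_M$.

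The contradiction will come from tracing $e$ back to $N[\bar G][\bar H \times I]$. By the $\lambda$-distributivity of $\P_\alpha$ over $N[G]$ (and since $\kappa^+ < \lambda$), we get $e \in N[\bar G][\bar H \times I][G']$. Moreover, $\N_{\lambda^*}$ is a projection of a product of a $\kappa^+$-closed and a square-$\kappa^+$-cc forcing, while $I$ ensures $2^\kappa \geq \lambda^* > \bar\lambda_M$ in $N[\bar G][\bar H \times I]$, so Facts~\ref{silverslemma} and \ref{F:Square_cc} imply $G'$ adds no cofinal branch to $U$; hence $e \in N[\bar G][\bar H \times I]$. But $\bar\M$ forces $\bar\lambda_M = \kappa^{++}$, and neither $\bar H$ (by distributivity) nor the $\kappa^+$-cc forcing $I$ collapses this, so $\bar\lambda_M$ is regular in $N[\bar G][\bar H \times I]$, contradicting $\cf(\bar\lambda_M) = \kappa^+$ as witnessed by $e$. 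The main obstacle, already present in Lemmas~\ref{wc-notap} and \ref{notsh-notap}, is the careful branch-avoidance analysis for the tail quotient $\N_{\lambda^*}$; but the projection decomposition built into the guessing variant of Mitchell forcing makes this work here exactly as in the weakly compact case.
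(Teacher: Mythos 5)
Your argument follows the paper's proof essentially step for step, replacing the weakly compact lift of Lemma~\ref{mainlifting-notap} with the Mahlo lift of Lemma~\ref{mainlifting2-notap} and using the $\diamondsuit_\lambda(\textup{Reg})$-based guessing of Lemma~\ref{mahlolaverfunction} to arrange $\ell(\bar\lambda_M) = \pi_M(\P_\alpha)$. The overall structure --- extracting the branch $e$, descending through the quotient decomposition $N[G][H] = N[\bar G][\bar H \times I][G'][H]$ via distributivity and branch preservation, and then contradicting the regularity of $\bar\lambda_M$ in $N[\bar G][\bar H \times I]$ --- is exactly the paper's.

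There is, however, one slip that as written breaks the branch-preservation step. You define $U := (\bar\lambda_M^{<\kappa^+})^{N[\bar G][\bar H \times I]}$ but then claim its width is $\bar\lambda_M^\kappa = \bar\lambda_M$. That computation is valid only in $N[\bar G][\bar H]$, where $2^\kappa = \bar\lambda_M$; in $N[\bar G][\bar H \times I]$ one has $2^\kappa = \lambda^* > \bar\lambda_M$, so the full tree of $<\kappa^+$-sequences into $\bar\lambda_M$ computed there has a level of size $\geq 2^\kappa = \lambda^*$ (the Cohen forcing at $\kappa$ adds new such sequences), and Fact~\ref{silverslemma} cannot be applied to $G'$. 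The fix is what the paper actually does: define $U$ in $N[\bar G][\bar H]$, note that $e$ is a branch of this $U$ since its proper initial segments lie in $\bar M[\bar G][\bar H] \subseteq N[\bar G][\bar H]$, and observe that in $N[\bar G][\bar H \times I]$ the (ground-model) tree $U$ has levels of size $\leq \bar\lambda_M < 2^\kappa = \lambda^*$, so Facts~\ref{silverslemma} and \ref{F:Square_cc} do apply to the quotient $\N_{\lambda^*}$. A secondary, more cosmetic omission: you should fix a condition $(m,p) \in \M \ast \P_\alpha$ forcing that $\dot{\vec a}, \dot C$ witness $\AP(\lambda)$, include $(m,p)$ among the parameters placed in $M$, and require $\bar G \ast \bar H$ to contain $(\bar m, \bar p)$; otherwise elementarity gives no guarantee that $\bar{\vec a}, \bar C$ actually witness $\AP(\bar\lambda_M)$ in $\bar M[\bar G][\bar H]$, which your transfer step silently assumes.
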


\begin{proof} This argument will be almost the same as the one from Lemma~\ref{wc-notap}. We suppose for contradiction that $V[\M \ast \P_{\lambda^+}] \models \AP(\kappa^{++})$, which means that this will be witnessed in $N[\M \ast \P_\alpha]$ for some $\alpha<\lambda^+$ with respect to an enumeration $\vec{a}=\seq{a_i}{i<\lambda}$ of all bounded subsets of $\lambda$ in that model. Find $(m,p) \in \M \ast \P_\alpha$ forcing this, let $M$ be a rich model containing these parameters, and let $\bar G \ast \bar H$ be an $\bar \M \ast \bar \P_\alpha$-generic containing $(\bar m,\bar p)$. We will use a lift $j:\bar M[\bar G][\bar H] \to N[G][H]$. Let $C$ be a club of points that are approachable with respect to $\vec a$, and note that $\bar \lambda \in C$. Let $e$ be a short club witnessing the approachability of the point $\bar \lambda$, and consider the tree $U:=({\bar \lambda}^{< \kappa^+})^{N[\bar G][\bar H]}$. We see that $U$ has a cofinal branch of length $\kappa^+$ given by $e$ which is in $N[\bar G][\bar H][I]$ because of the properties of the extension by $H'$, and this in turn leads us to a contradiction because $N[\bar G][\bar H][I] \models \text{``}\bar \lambda = \kappa^{++}$''.\end{proof}

\subsection{Obtaining $\SR(\kappa^{++}) \wedge \neg \SH(\kappa^{++}) \wedge \neg \AP(\kappa^{++})$}

The construction for Theorem~\ref{sr-notsh} combines ideas from Theorems~\ref{gk} and \ref{csr-notsh}. Assume again that $V \models$``$\lambda$ is Mahlo and $\diamondsuit_\lambda(\textup{Reg})$ holds'', and let $\ell:\lambda \to V_\lambda$ witness the alternate formulation of this diamond from Proposition~\ref{mahlolaverfunction}. Let $\kappa<\lambda$ be regular with $\kappa^{<\kappa} = \kappa$, let $\M = \M_\ell(\kappa,\lambda)$, let $\S$ be Kunen's forcing for adding a $\kappa^{++}$-Suslin tree as defined in $V[\M]$, and let $\P$ be the club-adding iteration from Theorems~\ref{wtp-sr-ap} and \ref{gk} defined in terms of an enumeration $\seq{\dot{S}_\alpha}{\alpha<\lambda^+}$. Our intended model will then be $V[\M \ast \S \ast \P]$. To this end, we let $N=H_\Theta$ for a large enough $\Theta$ to prove that the extension has the needed properties.

The main task will be to establish a lifting lemma. We again use the notation in which, working in $V[\M]$, we define $\bb{K}_\alpha = \S \ast (\T \x \P_\alpha)$.

\begin{lemma}\label{mainlifting-mahlonotsh} For all $\alpha < \lambda^+$ and every rich $M \prec N$ with $\alpha \in M$ and $\ell(\bar \lambda_M) = \pi_M(\bb{K}_\alpha)$, the following hold:

\begin{enumerate}
\item\label{mainlifting-mahlonotsh-embedding} For any $\bar \M \ast \bar \S \ast \bar \P_\alpha$-generic $\bar G \ast \bar T \ast \bar H$ over $N$ there is an $\M \ast \S \ast \P_\alpha$-generic $G \ast T \ast H$ such that in $N[G \ast T \ast H]$ we can lift the elementary embedding $j:\bar M \to N$ to $j:\bar M[\bar G][\bar T][\bar H] \to N[G][T][H]$.
\item\label{mainlifting-mahlonotsh-distributivity} $\Vdash_{\M \ast \S}^N$``$\P_\alpha$ is $\lambda$-distributive''.
\item\label{mainlifting-mahlonotsh-suslinpreservation} $\Vdash_{\M \ast \S}^N$``$\P_\alpha$ forces that the tree added by $\S$ remains Suslin''.
\item\label{mainlifting-mahlonotsh-stationarity} For any $\bar \M \ast \bar \S \ast \bar \P_\alpha$-generic $\bar G \ast \bar T \ast \bar H$ over $N$, $\bar S_\alpha$ is non-stationary in $N[\bar G][\bar T][\bar H]$.
\end{enumerate}
\end{lemma}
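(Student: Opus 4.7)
The proof proceeds by simultaneous induction on $\alpha<\lambda^+$, adapting the arguments of Lemma \ref{mainlifting2-notap} (for the guessing factorization and stationarity) and Lemma \ref{mainlifting-wcnotsh} (for the three-step lift through $\M \ast \S \ast \P_\alpha$ and the Suslin-preservation density argument). Assuming (1)--(4) hold for all $\beta<\alpha$, the plan is to establish (1), then derive (2) directly from the construction, then use (1) and (2) together to get (3) and (4).

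For part (\ref{mainlifting-mahlonotsh-embedding}), I would lift $j:\bar M \to N$ in three stages. First, using the inductive (\ref{mainlifting-mahlonotsh-stationarity}) at each $\bar\beta<\bar\alpha$, I construct a $\bar\lambda$-closed dense subset $\mathcal D \subseteq \bar \P_\alpha$ exactly as in Claim \ref{mainlifting2-denseclosedset}, then observe that since $\bar \T$ is $\bar\lambda$-distributive over $\bar M[\bar G]$, $\bar{\bb K}_\alpha = \bar \S \ast (\dot{\bar \T}\times \dot{\bar \P}_\alpha)$ has a $\kappa^+$-closed dense subset in $\bar M[\bar G]$. Invoking the hypothesis $\ell(\bar\lambda)=\pi_M(\bb K_\alpha)=\bar{\bb K}_\alpha$, the guessing-variant factorization gives
\[
\M \;\simeq\; \bar\M \ast (\dot{\bar{\bb K}}_\alpha \times \Add(\kappa,\bar\lambda^*)) \ast \dot\N_{\bar\lambda^*},
\]
where $\bar\lambda^*$ is the next inaccessible above $\bar\lambda$ in $N$. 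Choose $I$ that is $\Add(\kappa,\bar\lambda^*)$-generic and $G'$ that is $\N_{\bar\lambda^*}$-generic to produce an $\M$-generic $G$ with $N[G] = N[\bar G \ast \bar T \ast (\bar B\times \bar H) \ast I \ast G']$ (where $\bar B$ is the branch through $\bar T$ coming from the generic for $\dot{\bar \T}$), and lift to $j:\bar M[\bar G]\to N[G]$. Second, by Fact \ref{lemma:branchextend}, form the minimal extension $t^*\in j(\bar\S)=\S$ of $\bar T$ by $\bar B$, and pick $\S$-generic $T \ni t^*$ over $N[G]$ to lift to $j:\bar M[\bar G][\bar T]\to N[G][T]$. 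Third, build the master condition $q$ with $\dom q = j[\bar\alpha]$ and $q(j(\bar\beta))=C_{\bar\beta}\cup\{\bar\lambda\}$, where $\langle C_{\bar\beta}:\bar\beta<\bar\alpha\rangle$ are the generic clubs from $\bar H$; verifying that $q$ is a condition amounts to checking $\bar\lambda\notin S_{j(\bar\beta)}$, which holds because $S_{j(\bar\beta)}\subseteq \cof(\leq\kappa)$ while $\bar\lambda$ has cofinality $\kappa^+$ in $N[G][T]$. The stationarity of each $S_{j(\bar\beta)}$ in $N[G][T]$ is shown by the same chain of preservation results as in Claim \ref{mainlifting-notap-mastercondition}—using the inductive (\ref{mainlifting-mahlonotsh-suslinpreservation}) to know $\bar T$ stays Suslin and therefore $\bar \T$ stays $\bar\lambda$-cc through $\bar \P_\beta$, then Fact \ref{F:St_Closed_cof} through $\bar \P_\alpha$, Fact \ref{F:St_cc} for the Cohen piece $I$, the preservation lemma for $\N_{\bar\lambda^*}$, and finally $j(\lambda)$-distributivity of $j(\bar\S)=\S$. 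Part (\ref{mainlifting-mahlonotsh-distributivity}) is then a corollary of Fact \ref{strongliftinglemma} applied to the strong master condition $q$, and (\ref{mainlifting-mahlonotsh-stationarity}) follows exactly as in Lemma \ref{mainlifting2-notap}(\ref{mainlifting2-notap-stationarity}): $\bar S_\alpha$ is non-stationary in $N[G][T][H]$ because $S_\alpha=j(\bar S_\alpha)$ is non-reflecting and $\bar\lambda\in\cof(\kappa^+)$ there, and the intermediate forcings $\S$, $\P_\alpha$, $\Add(\kappa,\bar\lambda^*)$, $\N_{\bar\lambda^*}$ do not destroy stationary subsets of $\bar\lambda\cap\cof(\leq\kappa)$.

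For part (\ref{mainlifting-mahlonotsh-suslinpreservation}), I would argue by contradiction: if some name $\dot A \in N$ and condition $(m,\dot t,\dot p)\in \M\ast\S\ast\P_\alpha$ forces $\dot A$ to be a maximal antichain of size $\geq\lambda$ in $T$, find a rich $M\prec N$ containing $\dot A, (m,\dot t,\dot p)$, and satisfying $\ell(\bar\lambda)=\pi_M(\bb K_\alpha)$ via Lemma \ref{mahlolaverfunction}. Taking $\bar G\ast\bar T\ast \bar H$ containing $\pi_M(m,\dot t,\dot p)$ and lifting as above, $\bar A := \pi_M(\dot A)^{\bar G\bar T\bar H}$ is a maximal antichain in $\bar T$ in $\bar M[\bar G][\bar T][\bar H]$ and $j(\bar A)=A$ is a maximal antichain in $T$ in $N[G][T][H]$. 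The density argument in $\bar M[\bar G \ast \bar T]$ about $\bar\T\times\bar\P_\alpha$—using homogeneity of $\bar T$ and maximality of $\bar A$ exactly as for condition $(*)$ in the proof of Lemma \ref{mainlifting-wcnotsh}(\ref{mainlifting-wcnotsh-suslinpreservation})—shows that every node of $t^*$ at level $\bar\lambda$ extends a node in $\bar A$, hence $\bar A$ is maximal in $T$. Since $\bar A \subseteq j(\bar A)=A$ and both are maximal antichains in $T$, we must have $\bar A = A$; but $\bar A$ consists of nodes at levels $<\bar\lambda$ and the $<\bar\lambda$-many levels of $T$ each have size $<\lambda$, so $|A|<\lambda$, contradicting the choice of $\dot A$.

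The main obstacle will be in the stationarity bookkeeping for the master-condition verification: unlike in Lemma \ref{mainlifting2-notap}, here we must propagate preservation of each $S_{j(\bar\beta)}$ through the additional tree forcing $\bar\S$, the branch $\bar B$ through $\bar T$, and finally through $\S$ at the end, and for this the inductive (\ref{mainlifting-mahlonotsh-suslinpreservation}) must be available at every $\beta<\alpha$. A second subtle point, inherited from the weakly compact case, is that the Suslin-preservation density argument must be carried out in $\bar M[\bar G\ast\bar T]$ with full knowledge that $\bar{\dot A}$ is still a maximal antichain after $\bar\P_\alpha$; this is precisely why (\ref{mainlifting-mahlonotsh-suslinpreservation}) at earlier stages underwrites the master-condition argument, and the simultaneous inductive structure is essential.
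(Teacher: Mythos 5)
Your proposal follows essentially the same route as the paper: the same simultaneous induction, the same three-stage lift (guessing factorization through $\bar{\bb K}_\alpha$, minimal tree extension $t^*$ via the branch $\bar B$, master condition via the generic clubs), the same density argument for Suslin preservation, and the same backward stationarity analysis for part~(4). The proofs of (2) and (4) and the contradiction in (3) match the paper's in substance.

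One small point worth flagging: in the master-condition verification for part~(1), you correctly observe that being a condition reduces to $\bar\lambda \notin S_{j(\bar\beta)}$, which is immediate from $\cf(\bar\lambda) = \kappa^+$ in $N[G \ast T]$ together with $S_{j(\bar\beta)} \subseteq \cof(\leq\kappa)$; this is all the paper uses. Your subsequent paragraph establishing that each $S_{j(\bar\beta)}$ remains \emph{stationary} in $N[G][T]$ (invoking the inductive item~(3), Facts~\ref{F:St_Closed_cof}, \ref{F:St_cc}, etc.) is a carryover from the weakly compact constructions (Lemmas~\ref{mainlifting}, \ref{mainlifting-notap}, \ref{mainlifting-wcnotsh}), where the iteration adds clubs through reflection sets $\dot T_\beta$ and hence the master-condition check requires $\lambda$ to be a reflection point. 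In the Mahlo constructions the iteration adds clubs in the \emph{complement} of $S_\beta$, so stationarity plays no role in verifying the master condition; the paper accordingly does not include this argument, and the inductive item~(3) is never used in proving~(1). Your extra argument is not incorrect, but it is not needed and slightly obscures the genuinely different shape of the Mahlo-case master-condition check.
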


\begin{proof} We will prove that (\ref{mainlifting-mahlonotsh-embedding}) holds for $\alpha$ assuming (\ref{mainlifting-mahlonotsh-stationarity}) for $\beta<\alpha$. Then (\ref{mainlifting-mahlonotsh-distributivity}) and (\ref{mainlifting-mahlonotsh-suslinpreservation}) will both be consequences of (\ref{mainlifting-mahlonotsh-embedding}) for $\alpha$. Finally, we use (\ref{mainlifting-mahlonotsh-embedding}) and (\ref{mainlifting-mahlonotsh-distributivity}) to prove (\ref{mainlifting-mahlonotsh-stationarity}) for $\alpha$. 

\begin{proof}[Proof of (\ref{mainlifting-mahlonotsh-embedding})] We have $\bar \M = \M_\ell(\kappa,\bar \lambda)$ by the closure properties of $M$. Let $\bar G \ast \bar T \ast \bar H$ be $\bar \M \ast \bar \S \ast \bar \P_\alpha$-generic over $N$.

We start by working in $N[\bar G]$, so that we can lift to $j:\bar M[\bar G] \to N[G]$.

\begin{Claim} $N[\bar G] \models$``$\bar{\mathbb{K}}_\alpha$ has a $\bar \lambda$-closed dense subset.\end{Claim}

\begin{proof}[Proof of Claim] By elementarity, $\bar \S \ast \bar \T$ has a $\bar \lambda$-closed dense subset $D(\bar \S \ast \bar \T)$. Using (\ref{mainlifting-mahlonotsh-stationarity}) for $\beta \in M \cap \alpha$, we find $\bar \S \ast (\bar \T \times \bar \P_\beta)$-names $\dot D_{\bar \beta}$ for clubs in $\bar \lambda$ avoiding $S_{\bar \beta}$. Then we can see that

$$\{(s,t,p): (s,t) \in D(\bar \S \ast \bar \T) \wedge \forall \beta \in M \cap \alpha, (s,t,p \rest \bar \beta) \Vdash_{\bar \S \ast (\bar \T \times \bar \P_\beta)}^{N[\bar G]} \max p(\bar \beta) \in \dot D_{\bar \beta} \}$$

\noindent is $\bar \lambda$-closed and dense in $\bar{\bb{K}}_\alpha$.\end{proof}

We again abuse notation, letting $\bar{\bb{K}}_\alpha$ denote this dense subset. Then in $N[\bar G]$ we have the equivalence $\M/\bar G \simeq (\bar{\bb{K}}_\alpha \times \Add(\kappa,\lambda^*)) \ast \dot {\N}_{\lambda^*}$, where $\lambda^*$ is the least inaccessible above $\bar \lambda$. Next let $\bar B$ be a generic branch through $\bar T$ and $I$ an $\Add(\kappa,\lambda^*)$-generic over $N[\bar G \ast (\bar T \ast (\bar B \times \bar H)]$ so that $G:=((\bar T \ast (\bar B \times \bar H)) \times I) \ast G'$ is $\M$-generic over $N$. Then we can lift the embedding to $j:\bar M[\bar G] \to N[G]$.

To lift $j$ to have domain $\bar M[\bar G \ast \bar T]$, we choose $t^*$ to be the minimal extension of $\bar T$ by $\bar B$ (as in the proof of Lemma~\ref{mainlifting-wcnotsh}(\ref{mainlifting-wcnotsh-embedding})) and let $T$ be $j(\S)$-generic over $N[G]$ such that $t^* \in T$. Then we lift to $j:\bar M[\bar G \ast \bar T] \to N[G \ast T]$.

Lifting $j$ to have domain $\bar M[\bar G \ast \bar T \ast \bar H]$ proceeds as in the proof of Lemma~\ref{mainlifting2}(\ref{mainlifting2-lift}): We find a strong master condition $q$ for $j$ and $\bar \P_\alpha$, and we are able to verify that it is in fact a condition from the fact that $N[G \ast T] \models$``$\cf(\bar \lambda) = \kappa^+$''.\end{proof}

\begin{proof}[Proof of (\ref{mainlifting-mahlonotsh-distributivity})] This is exactly as in the proof of Lemma~\ref{mainlifting2}(\ref{mainlifting2-distributivity}).\end{proof}

\begin{proof}[Proof of (\ref{mainlifting-mahlonotsh-suslinpreservation})] We argue as in the proof of Lemma~\ref{mainlifting-wcnotsh}(\ref{mainlifting-wcnotsh-suslinpreservation}), making the necessary adjustments to suit our present setting.

Suppose for a contradiction that there is some $(s,p) \in \S \ast \P_\alpha$ forcing over $N[\bar G]$ that the generic tree added by $\S$ is not Suslin as witnessed by an unbounded antichain with an $\S \ast \P_\alpha$-name $\dot A$. Find a rich model $M \prec N$ containing $(s,p)$ and $\dot A$. Let $\bar T \ast \bar H$ be an $\bar \S \ast \bar \P_\alpha$-generic containing $(\bar s,\bar p)$ and use (\ref{mainlifting-mahlonotsh-embedding}) to obtain a lift $j:\bar M[\bar G \ast \bar T \ast \bar H] \to N[G \ast T \ast H]$ where $G \ast T \ast H$ is the relevant $\M \ast \S \ast \P_\alpha$-generic. In particular, we are using that $T$ contains $t^\ast$, the minimal extension of $\bar T$ by $\bar B$ used in the proof. Observe that, as in the proof of Lemma~\ref{mainlifting-wcnotsh}(\ref{mainlifting-wcnotsh-suslinpreservation}), every node of $t^*$ at level $\bar \lambda$ extends an element of $\bar A$ (essentially the same argument works). Hence $\bar A$ is a maximal antichain in $T$ and hence $\bar A = A$, meaning that $A$ is bounded in $T$. This is a contradiction of our assumptions about the name $\dot A$ and the information forced by $(s,p)$.\end{proof}

\begin{proof}[Proof of (\ref{mainlifting-mahlonotsh-stationarity})] This is as in the proof of Lemma~\ref{mainlifting2}(\ref{mainlifting2-notap-stationarity}), but with an extra application of Fact~\ref{F:St_cc} to explain that adjoining the $\Add(\kappa,\lambda^*)$-generic $I$ does not destroy stationary subsets of $\bar \lambda \cap \cof(\le \kappa)$.
\end{proof}

This completes the proof of Lemma~\ref{mainlifting-mahlonotsh}.\end{proof}

We find that $V[\M \ast \S \ast \P] \models \SR(\kappa^{++}), \neg\AP(\kappa^{++})$ as in previous arguments.

\section{Obtaining Large $2^\kappa$}\label{sec-large-cont}

Now we will obtain our previous results with large $2^\kappa$. This will be done by forcing $\Add(\kappa,\mu)$ for arbitrarily large $\mu$ of cofinality $>\kappa$. Although this might seem easy, we are faced with two problems: the preservation of the stationary reflection properties, and the preservation of the properties pertaining to trees. The first problem is dealt with using some preservation theorems of the third author. The second requires us to use the embeddings constructed in the previous sections and work around the fact that extra copies of Cohen forcing cannot be absorbed into the quotients of Mitchell forcing (neither using the actual absorption lemma nor using the guessing trick with $\M_\ell(\kappa,\lambda)$).

In several instances we will make use of the fact that we can reinterpret certain iterations as products:

\begin{observation}\label{product-trick} If $\P \ast \Q$ is a two-step iteration where $\P$ is $\delta$-distributive, conditions in $\Q$ are forced to have size $<\delta$, and $\P$ does not change the definition of $\Q$, then we have the forcing-equivalence $\P \ast \Q \simeq \P \times \Q$. In particular, if $\P$ is $\delta$-distributive, then $\P \ast \Add(\kappa,\beta) \simeq \P \times \Add(\kappa,\beta)$ for any $\kappa \le \delta$ and any $\beta$.\end{observation}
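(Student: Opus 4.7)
The plan is to exhibit the natural order-preserving map
$i : \P \times \Q \to \P \ast \dot{\Q}$, $(p,q) \mapsto (p,\check{q})$,
as a dense embedding, from which the forcing equivalence $\P \ast \dot{\Q} \simeq \P \times \Q$ follows. Order-preservation is immediate from the definition of the iteration ordering; all the content lies in verifying density of the image in $\P \ast \dot{\Q}$.

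For density, I would fix an arbitrary condition $(p,\dot{q}) \in \P \ast \dot{\Q}$ and argue as follows. Because $\P$ is assumed not to change the definition of $\Q$, we may regard $\Q$ as a fixed poset with the same underlying set and order in $V$ and in any $\P$-generic extension. Now $p$ forces $\dot{q}$ to be an element of this fixed set $\Q$, and $\dot{q}$ is forced to code an object of size $<\delta$. Since $\P$ is $\delta$-distributive, it adds no new sequences of length $<\delta$ of ground-model objects; in particular $\dot{q}$, modulo some $p' \le_{\P} p$, must be decided to be some specific ground-model condition $q \in \Q$, so $p' \Vdash_{\P} \dot{q} = \check{q}$. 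Then $(p',\check{q}) = i(p',q)$ extends $(p,\dot{q})$ in $\P \ast \dot{\Q}$, proving density.

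The special case for $\Add(\kappa,\beta)$ with $\kappa \le \delta$ reduces immediately to the general statement: the poset $\Add(\kappa,\beta)$ is defined by an absolute formula (partial functions from $\beta$ to $2$ of size $<\kappa$, ordered by extension), so its interpretation does not depend on the $\P$-extension, and its conditions have size $<\kappa \le \delta$.

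I do not anticipate any real obstacle; the only subtlety is making precise what it means for $\P$ not to change the definition of $\Q$. The intended reading is that $\Q$ is given by a formula whose interpretation is absolute between $V$ and $V[\P]$, so that the ground-model poset and the iterand coincide as partial orders. Once that is spelled out, the argument is just the standard observation that $\delta$-distributive forcing cannot introduce a genuinely new element of a fixed ground-model poset whose conditions are of size $<\delta$.
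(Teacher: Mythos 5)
The paper states this as an unproved observation, so there is no in-paper proof to compare against; the dense-embedding argument you describe is indeed the standard route. There are, however, two places that deserve a closer look.

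First, the logical role of the two hypotheses is muddled. You declare up front that ``$\P$ does not change the definition of $\Q$'' is to be read as saying that the underlying set and order of $\Q$ are literally the same in $V$ and in any $\P$-extension --- and then you nonetheless invoke $\delta$-distributivity to decide a name $\dot q$ for a condition. But if the underlying set were already known to be fixed, density of the map $(p,q)\mapsto(p,\check q)$ follows immediately without any distributivity: given $(p,\dot q)$ with $p\Vdash\dot q\in\check\Q$, densely many $p'\le p$ decide $\dot q$ to be some $\check q$, just because $\Q$ is a ground-model set. The distributivity hypothesis would then be vacuous. The correct reading is the weaker one: only the \emph{defining formula} for $\Q$ is unchanged, so \emph{a priori} $\Q^{V[\P]}$ could contain new conditions. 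It is then precisely the conjunction of $\delta$-distributivity with the bound on the size of conditions that forces $\Q^{V[\P]}=\Q^{V}$: a condition of size $<\delta$ consisting of ground-model objects can be coded as a $<\delta$-sequence of ordinals, and $\delta$-distributive forcing adds no such new sequences. You should separate these two steps --- first establish $\dot\Q$ is forced to equal $\check\Q$ via distributivity, then run the (now trivial) density argument.

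Second, a dense embedding must also \emph{preserve} incompatibility, not merely be order-preserving with dense image, and you do not address this. If $(p_1,q_1)\perp(p_2,q_2)$ in $\P\times\Q$ but $p_1\not\perp p_2$, then $q_1\perp q_2$ in $\Q$, and one must check that $(p_1,\check q_1)\perp(p_2,\check q_2)$ in $\P\ast\dot\Q$. Supposing otherwise, some $p'\le p_1,p_2$ forces that $\check q_1$ and $\check q_2$ have a common extension in $\dot\Q=\check\Q$; but ``$\exists r\in\check\Q\ (r\le q_1 \wedge r\le q_2)$'' is absolute between $V$ and $V[\P]$ once $\dot\Q$ is forced to equal $\check\Q$, so $q_1\not\perp q_2$ in $V$, a contradiction. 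This is routine but should be said. With these two points repaired the argument is correct and is the intended one.
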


We also need a stronger branch-preserving lemma that allows us to deal with trees that have been added by $\kappa^+$-cc forcings:

\begin{fact}\label{F:ccc_Closed}\cite{UNGER:1}
Let $\kappa$ be regular, and let $\delta$ be an ordinal of cofinality at least $\kappa^+$. Assume that $\P$ and $\Q$ are forcing notions such that $\P$ is $\kappa^{+}$-cc and $\Q$ is $\kappa^{+}$-closed. If $T$ is a tree in $V[\P]$ of height $\delta$ and levels of size $<2^\kappa$, then forcing with $\Q$ over $V[\P]$ does not add cofinal branches to $T$.\end{fact}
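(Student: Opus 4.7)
The plan is to argue by contradiction. Suppose some $q^*\in\Q$ and some $\Q$-name $\dot b\in V[\P]$ together force that $\dot b$ is a cofinal branch of $T$ that does not lie in $V[\P]$. Since $\Q\in V$, I would reinterpret $\dot b$ as a $\P\times\Q$-name in $V$ and work in $V$ with the product $\P\times\Q$. The strategy is to construct a splitting tree of $\Q$-conditions of height $\kappa+1$ below $q^*$ whose $2^\kappa$ branches yield $2^\kappa$-many distinct nodes on a common level of $T$, contradicting the hypothesis that levels of $T$ have size less than $2^\kappa$.

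First, fix once and for all a level $\beta^*<\delta$ with $\cf(\beta^*)>\kappa$; such $\beta^*$ exists because $\cf(\delta)\geq\kappa^+$. Recursively build $\{q_s:s\in 2^{\leq\kappa}\}\subseteq\Q$ together with, for each $s$, an ordinal $\alpha_s<\beta^*$ and a $\P$-name $\dot t_s$ for a node of $\dot T$ on level $\alpha_s$, satisfying: (a) $q_{s\smallfrown i}\leq_\Q q_s$; (b) $(1_\P,q_s)\Vdash_{\P\times\Q}\dot t_s\in\dot b$; (c) at a splitting step, $1_\P$ forces $\dot t_{s\smallfrown 0}\neq\dot t_{s\smallfrown 1}$ on a common level above $\alpha_s$; (d) at a limit $s$ of length $\leq\kappa$, take $q_s$ to be a lower bound in $\Q$ of the sequence $\{q_{s\restriction\xi}:\xi<\mathrm{lh}(s)\}$, which is available because the sequence lies in $V$ and $\Q$ is $\kappa^+$-closed in $V$. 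The split at each stage is justified by the fact that if no such split existed below $(1_\P,q_s)$, the value of $\dot b$ at every level above $\alpha_s$ would be forced $\P$-namewise by $(1_\P,q_s)$, reducing $\dot b$ (below $q_s$) to a $\P$-name and contradicting the choice of $\dot b$. Here the $\kappa^+$-cc of $\P$ is essential: it lets me encode candidate values of $\dot b(\alpha)$ as $\P$-names defined on maximal antichains of size $\leq\kappa$, so that the split at stage $s$ can be witnessed by a pair of $\Q$-conditions under $1_\P$ rather than some specific $\P$-condition.

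Once the tree is built, for each $f\in 2^\kappa$ take $q_f\in\Q$ a lower bound of $\{q_{f\restriction\xi}:\xi<\kappa\}$. Working in $V[\P]$, the set $A_f:=\{s\in T_{\beta^*}:s\supseteq t_{f\restriction\xi}\text{ for all }\xi<\kappa\}$ is nonempty, since otherwise $(1_\P,q_f)$ would force $\dot b(\beta^*)$ into the empty set. Pick $\P$-names $\dot r_f$ for members of $A_f$. For distinct $f\neq g$, by splitting clause (c) the nodes $\dot r_f,\dot r_g$ extend forced-incomparable nodes, hence are themselves forced to be distinct. Passing to a $\P$-generic $G$, the $2^\kappa$-many nodes $(\dot r_f)^G$ are distinct members of $T_{\beta^*}$ in $V[\P]$, contradicting $|T_{\beta^*}|<2^\kappa$.

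The main obstacle is the asymmetry between the two hypotheses: neither $\P$ nor $\Q$ passes its preservation property cleanly into the other's extension, and in particular $\Q$ is merely $\kappa^+$-distributive (not $\kappa^+$-closed) in $V[\P]$, so the naive strategy of performing the tree construction inside $V[\P]$ founders at limit stages because $V[\P]$-built decreasing $\Q$-sequences need not admit lower bounds. The key trick is to perform the entire construction in $V$, where $\Q$'s closure genuinely applies, and to absorb the $\P$-side of the splitting argument into $\P$-names on maximal antichains whose size is controlled by the $\kappa^+$-cc of $\P$; this is the combinatorial crux of combining the two preservation mechanisms.
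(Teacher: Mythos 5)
Your high-level strategy is the right one and does match the standard approach: the entire construction has to be carried out in $V$ (where $\Q$ is genuinely $\kappa^+$-closed), with the $\kappa^+$-cc of $\P$ used to ``smear'' the $\P$-side of each decision over a small maximal antichain. The limit stages and the use of Silver-style splitting are also as they should be. The issue is with the justification of the successor (splitting) step.

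The claim that ``if no such split existed below $(1_\P,q_s)$, the value of $\dot b$ at every level above $\alpha_s$ would be forced $\P$-namewise by $(1_\P,q_s)$'' is not a valid deduction. Negating your clause (c) gives only: for every $q_0,q_1\le q_s$ and $\P$-names $\dot t_0,\dot t_1$ with $(1_\P,q_i)\Vdash\dot t_i\in\dot b$, it is \emph{not} the case that $1_\P\Vdash\dot t_0\neq\dot t_1$, i.e.\ \emph{some} $p\in\P$ forces $\dot t_0=\dot t_1$. That is much weaker than $1_\P\Vdash\dot t_0=\dot t_1$. Different pairs $(q_0,q_1)$ can be reconciled by different, mutually incompatible $p$'s, so one cannot conclude that any $\P$-name $\dot t_\alpha$ with $(1_\P,q_s)\Vdash\dot b(\alpha)=\dot t_\alpha$ exists at all, let alone that $\dot b$ becomes a $\P$-name below $q_s$. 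The correct argument for the splitting step is not by contraposition but by direct construction: fix a maximal antichain $A=\{p_\xi:\xi<\kappa\}$ of $\P$ (of size $\le\kappa$ by the cc), and build decreasing $\Q$-sequences $\langle q^0_\xi\rangle,\langle q^1_\xi\rangle$ of length $\kappa$ below $q_s$ (taking lower bounds at limits in $V$, where $\kappa^+$-closure of $\Q$ is available) such that at stage $\xi$ the pair $(p_\xi,q^0_{\xi+1}),(p_\xi,q^1_{\xi+1})$ forces distinct ``values'' of the branch at some level; this local split below $p_\xi$ comes directly from the assumption that $\dot b$ is new, applied over any generic containing $p_\xi$. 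Mixing the $\kappa$-many partial decisions over $A$ then yields $\P$-names $\dot t_{s\smallfrown 0},\dot t_{s\smallfrown 1}$ with $1_\P\Vdash\dot t_{s\smallfrown 0}\neq\dot t_{s\smallfrown 1}$. Your remark about ``encoding candidate values as $\P$-names on maximal antichains'' gestures toward exactly this, but it has to replace, not merely supplement, the incorrect contrapositive.

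A secondary problem is fixing $\beta^*$ at the outset and then demanding $\alpha_s<\beta^*$ throughout the construction. The level at which the first available split below a given $q_s$ occurs is not controllable from below: it is entirely possible that $(1_\P,q_s)$ reduces $\dot b\restriction\beta^*$ to a $\P$-name while $\dot b$ is still new overall, in which case every split below $q_s$ lies above $\beta^*$. Instead, $\beta^*$ must be chosen \emph{after} the tree of conditions is built, as a supremum of the levels actually used; one then needs that supremum to stay below $\delta$, which requires either aligning the splitting levels so that only $\kappa$-many appear (rather than potentially $2^{<\kappa}$-many), or imposing $\kappa^{<\kappa}=\kappa$ — the latter holds in every application in this paper, so it is harmless, but it should be made explicit if used.

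Finally, a small precision point: in clause (b) you write $(1_\P,q_s)\Vdash\dot t_s\in\dot b$ and treat $\dot t_s$ as a single $\P$-name for a node of $\dot T$. Since $\dot T\in V[\P]$, its nodes are not elements of $V$, so ``deciding $\dot b(\alpha)$'' must throughout mean deciding it as a $\P$-name, and the mixing just described is exactly what produces such a name under $1_\P$; this should be said explicitly, since otherwise it is unclear in what sense $(1_\P,q_s)$ decides anything.
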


We continue in the first subsection with general results about the preservation of approachability. Afterwards, we have subsections which address large $2^\kappa$  in the context of our weakly compact results and then in the context of our Mahlo results.

\subsection{Preservation of the Failure of Approachability}

For preserving the failure of approachability, we use a result from \cite{GK:a} that $\neg \AP(\kappa^{++})$ is preserved under all $\kappa$-centered forcings. Recall that $\Q$ is \emph{$\kappa$-centered} for a regular $\kappa$ if $\Q$ can be written as the union of a family $\set{\Q_\alpha \sub \Q}{\alpha <\kappa}$ such that for every $\alpha<\kappa$ and every $p,q \in \Q_\alpha$ there exists $r \in \Q_\alpha$ with $r \le p,q$.

\begin{fact}\cite{GK:a}\label{th:AP}
Assume $\neg \AP(\kappa^{++})$ holds and $\Q$ is $\kappa$-centered. Then the forcing $\Q$ forces $\neg \AP(\kappa^{++})$.
\end{fact}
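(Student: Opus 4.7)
The plan is to prove the contrapositive. Assume $\Q=\bigcup_{\alpha<\kappa}\Q_\alpha$ is $\kappa$-centered and, toward a contradiction, some condition in $\Q$ forces $\AP(\kappa^{++})$ via names $\dot{\vec a}=\seq{\dot a_i}{i<\kappa^{++}}$ for an enumeration of bounded subsets of $\kappa^{++}$ and $\dot C$ for a club of approachable points. By strengthening, we may assume that $1_{\Q}$ forces this. The goal will be to manufacture an $\AP(\kappa^{++})$-witness in $V$, contradicting the hypothesis.

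The crucial observation that exploits $\kappa$-centeredness is a functionality property: for each triple $(\alpha,i,\beta)\in\kappa\x\kappa^{++}\x\kappa^{++}$, there is \emph{at most one} set $c^{\alpha}_{i,\beta}\sub\beta$ in $V$ such that some $p\in\Q_\alpha$ forces $\dot a_i\cap\check\beta=\check c^{\alpha}_{i,\beta}$; otherwise two conditions in $\Q_\alpha$ would force contradictory information and hence be incompatible, violating centeredness of $\Q_\alpha$. I would then assemble these $c^{\alpha}_{i,\beta}$ into a $V$-sequence $\vec b$ of bounded subsets of $\kappa^{++}$ via a bijection $\kappa\x\kappa^{++}\x\kappa^{++}\to\kappa^{++}$ chosen so that, on a $V$-club of $\gamma<\kappa^{++}$, every triple $(\alpha,i,\beta)$ with $\alpha<\kappa$ and $i,\beta<\gamma$ is enumerated by an index below $\gamma$. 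Verifying that $\vec b$ witnesses $\AP(\kappa^{++})$ \emph{in $V[G]$} is then straightforward: for $\gamma$ in $\dot C^G$ intersected with the enumeration-club, any $V[G]$-witness $A$ for approachability of $\gamma$ w.r.t.\ $\vec a$ is also a witness w.r.t.\ $\vec b$, since each initial segment $A\cap\beta=a_i$ for some $i<\gamma$, and $a_i$ itself equals $c^{\alpha}_{i,\delta}$ for some $\alpha<\kappa$ and $\delta$ bounding $a_i$, which appears in $\vec b$ at an index $<\gamma$.

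The main obstacle is the final step: transferring the conclusion "$\vec b\in V$ witnesses $\AP$ in $V[G]$" back to "$\vec b$ witnesses $\AP$ in $V$". The witness $A\in V[G]$ has all of its initial segments $A\cap\beta$ in $V$ (each being some $c^{\alpha}_{i,\beta}$), but the function $\beta\mapsto A\cap\beta$ a priori lives only in $V[G]$, so $A$ itself need not be in $V$. My plan to overcome this is to view the possible $V$-initial-segments $\{c^{\alpha}_{i,\beta}:\alpha<\kappa,i<\gamma\}$ for each $\beta<\gamma$ as the $\beta$-th level of a tree $U_\gamma\in V$ of height $\cf(\gamma)$ and levels of size $\le|\gamma|\cdot\kappa$, and to argue that a cofinal branch of $U_\gamma$ in $V[G]$ (which is exactly what $A$ provides) must lie in $V$. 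Since $\Q$ is $\kappa$-centered it is in particular $\kappa^+$-cc, and one would aim to apply a branch-preservation result (in the spirit of Fact~\ref{F:Square_cc} or Fact~\ref{F:ccc_Closed}) to rule out a new cofinal branch, given the hypothesis $\kappa^{<\kappa}=\kappa$ and suitable cardinal arithmetic. Making this last step go through at every cofinality $\cf(\gamma)\le\kappa^+$ simultaneously is the technical heart of the argument and the hardest part of the plan.
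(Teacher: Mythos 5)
The paper does not give a proof of this Fact; it is cited directly from Gitik--Krueger \cite{GK:a}, so there is no in-paper argument to compare against. I will therefore assess your proposal on its own.

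The centrale problem lies in the sentence claiming that ``$a_i$ itself equals $c^{\alpha}_{i,\delta}$ for some $\alpha<\kappa$ and $\delta$ bounding $a_i$,'' and in the later restatement that ``the witness $A\in V[G]$ has all of its initial segments $A\cap\beta$ in $V$.'' Neither claim is justified, and both are false in general. The set $a_i=\dot a_i^G$ is just a bounded subset of $\kappa^{++}$ in $V[G]$, and a $\kappa$-centered forcing is perfectly capable of adding genuinely new such sets: $\Add(\kappa,1)$ is $\kappa$-centered when $\kappa^{<\kappa}=\kappa$ and adds a new subset of $\kappa$. So there is no reason for the initial segments $A\cap\beta=a_i$ to be ground-model sets. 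Moreover, $c^{\alpha}_{i,\delta}$ by definition lies in $V$ and exists only when some single condition in $\Q_\alpha$ decides the entire set $\dot a_i\cap\check\delta$, which is $|\delta|$-much information; no such condition need exist, and even if one does, its decision agrees with $a_i$ only if that condition is in $G$. Your functionality observation (uniqueness of $c^{\alpha}_{i,\beta}$ via centeredness) is correct, but it does not yield existence of the relevant decisions, and so it does not give $a_i=c^{\alpha}_{i,\delta}$. This gap propagates: the tree $U_\gamma$ consists entirely of $V$-sets, so the branch you intend to use ($A$ itself, via its initial segments) is not a branch through $U_\gamma$, and the preservation step never gets off the ground.

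Even if the branch issue were resolved, the final preservation step needs more care than you indicate. For $\gamma$ with $\cf(\gamma)\le\kappa$ a $\kappa$-centered forcing adds new short sequences freely and no branch lemma is available; those $\gamma$ should instead be dispatched using $\kappa^{++}\cap\cof(\le\kappa)\in I[\kappa^{++}]$, as noted in the footnote to Fact~\ref{F:St_Closed_cof}. For $\cf(\gamma)=\kappa^{+}$, the relevant $U_\gamma$ has height $\kappa^{+}$ and levels of size up to $\kappa^{+}$, so it is not a $\kappa^{+}$-tree in the sense typically required for the square-cc branch lemma; $\kappa$-centered does give square-$\kappa^{+}$-cc (the product of $\kappa$-centered with itself is $\kappa$-centered), but one would have to argue that the branch lemma applies to this width, or replace the tree with a narrower one. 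These issues are secondary to the main gap, but you flagged this step as the ``hardest part,'' and I agree that it requires a substantially different idea than what is sketched.
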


\begin{lemma}\label{Cohen:centered}
Suppose $\kappa^{<\kappa} = \kappa$ and $\nu \le 2^\kappa$. Then $\Add(\kappa,\nu)$ is $\kappa$-centered.
\end{lemma}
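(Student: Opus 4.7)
The plan is to partition $\Add(\kappa,\nu)$ into $\kappa$-many pieces of pairwise-compatible conditions with common lower bounds, by exploiting the injection of $\nu$ into $2^\kappa$ afforded by $\nu \leq 2^\kappa$. First, I would fix an injection $e : \nu \to {}^\kappa 2$. For each condition $p \in \Add(\kappa,\nu)$, writing $\supp{p}$ for $\{\alpha : \exists \xi, (\xi,\alpha) \in \dom{p}\}$, one has $|\supp{p}|<\kappa$, and the labels $\{e(\alpha) : \alpha \in \supp{p}\}$ are pairwise distinct. The regularity of $\kappa$ then yields a least $\beta_p < \kappa$ such that the map $\alpha \mapsto e(\alpha)\rest\beta_p$ is injective on $\supp{p}$.

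I would then associate to each $p$ the type
\[ \mx{type}(p) := (\beta_p, \pi_p), \]
where $\pi_p$ is the function sending each $e(\alpha)\rest\beta_p$ (for $\alpha \in \supp{p}$) to the column $p(\cdot,\alpha) := \{(\xi, p(\xi,\alpha)) : (\xi,\alpha) \in \dom{p}\}$, and set $\Q_t := \{p \in \Add(\kappa,\nu) : \mx{type}(p) = t\}$. A counting argument using $\kappa^{<\kappa} = \kappa$ shows there are $\kappa$-many possible types: $\beta_p$ gives $\kappa$ choices; $|{}^{<\kappa}2| = 2^{<\kappa} = \kappa$; the number of partial functions $\kappa \to 2$ of size $<\kappa$ is $\kappa$; and so the number of partial functions of domain-size $<\kappa$ from a $\kappa$-sized set to a $\kappa$-sized set is $\kappa^{<\kappa} = \kappa$.

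The key step is then verifying centeredness of each $\Q_t$. If $p,q \in \Q_t$ with $t = (\beta,\pi)$ and $(\xi,\alpha) \in \dom{p} \cap \dom{q}$, then $\alpha \in \supp{p} \cap \supp{q}$, and
\[ p(\cdot,\alpha) = \pi(e(\alpha)\rest\beta) = q(\cdot,\alpha), \]
so in particular $p(\xi,\alpha) = q(\xi,\alpha)$. Hence $p \cup q$ is a function and therefore a condition in $\Add(\kappa,\nu)$ which serves as a common lower bound of $p$ and $q$; the same argument applied termwise shows that any $<\kappa$-sized subcollection of $\Q_t$ has a common lower bound given by its union. The main conceptual content of the proof is the use of the injection $e$ to collapse the otherwise-unwieldy support labels (living a priori in $\nu$) down to $<\kappa$-sized binary sequences, after which $\kappa^{<\kappa} = \kappa$ handles the counting; the only minor technical point is the existence of $\beta_p < \kappa$, which rests on the regularity of $\kappa$ together with the distinctness of the labels $e(\alpha)$ on the $<\kappa$-sized set $\supp{p}$.
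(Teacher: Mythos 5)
Your proof is correct, but it takes a genuinely different route from the paper. The paper's argument uses an \emph{independent family} $\{M(\alpha,i):\alpha<\nu, i<2\}$ of partitions of $\kappa$ (whose existence is the $\ZFC$ fact that there are independent families of size $2^\kappa$); it then defines, for each $\alpha<\kappa$, a \emph{total} function $F_\alpha:\nu\to 2$ determined by $\alpha$'s membership pattern, and lets $P_\alpha$ consist of all conditions compatible with $F_\alpha$. (The paper also silently re-presents $\Add(\kappa,\nu)$ as partial functions from $\nu$ to $2$ of size $<\kappa$, which is isomorphic to the usual matrix presentation once $\nu\ge\kappa$.) Your approach instead codes the ``type'' of a condition using an injection $e:\nu\to{}^\kappa 2$: by regularity of $\kappa$, some initial segment length $\beta_p<\kappa$ separates the $<\kappa$-sized support, and then $\kappa^{<\kappa}=\kappa$ bounds the number of $(\beta,\pi)$-types. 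The two approaches both distill the hypothesis $\nu\le 2^\kappa$ into the ability to attach $\kappa$-many ``labels'' to conditions; the paper's labels are the $F_\alpha$'s and yours are the types. Both are valid; the independent-family trick yields directed pieces automatically, while the type-counting method is arguably more elementary and transparent about where each cardinal-arithmetic hypothesis enters.

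One small technical point worth noting: the paper's stated definition of $\kappa$-centered requires that the common lower bound $r$ of $p,q\in\Q_\alpha$ lie \emph{inside} $\Q_\alpha$. The paper's $P_\alpha$'s are closed under unions, so they satisfy this. Your $\Q_t$'s are not: if $p,q\in\Q_t$ with $t=(\beta,\pi)$, then $p\cup q$ is a condition and a common lower bound, but $\beta_{p\cup q}$ may exceed $\beta$ (two distinct $\alpha\in\supp{p}\setminus\supp{q}$, $\alpha'\in\supp{q}\setminus\supp{p}$ may share the same restriction $e(\alpha)\rest\beta=e(\alpha')\rest\beta$), so $p\cup q$ need not belong to $\Q_t$. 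This is easily repaired: since any two members of $\Q_t$ agree on their common domain, $\bigcup\Q_t$ is a (possibly large) partial function on $\nu\times\kappa$; extend it arbitrarily to a total $F_t:\nu\times\kappa\to 2$ and replace $\Q_t$ by $\Q'_t:=\{p:p\subseteq F_t\}$, which is directed and contains $\Q_t$. With that one-line patch your decomposition matches the paper's definition, and the rest of your argument (including the stronger observation that any $<\kappa$-sized subset of a type class has a union that is a condition) is sound.
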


\begin{proof}
 We give a sketch of proof for the benefit of the reader. By a standard $\ZFC$ argument, for every $\nu \le 2^\kappa$, there is a system $\set{M(\alpha,i)}{\alpha < \nu, i <2}$ of subsets of $\kappa$ which forms an independent system. Namely for each $\alpha<\nu$, $M(\alpha,0)$ and $M(\alpha,1)$ form a partition of $\kappa$ into two infinite sets and that for every function $f: x \to 2$ where $x \sub \nu$ has size $<\!\kappa$, the intersection of $\set{M(\alpha,f(\alpha))}{\alpha \in x}$ has size $\kappa$. Let us view $\Add(\kappa,\nu)$ (equivalently) as functions from subsets of $\nu$ of size $<\!\kappa$ to 2. For each $\alpha < \kappa$, define $F_\alpha: \nu \to 2$ by $F_\alpha(\beta) = i$, for the unique $i$ with $\alpha \in M(\beta,i)$. Let $P_\alpha = \set{p \in \Add(\kappa,\nu)}{p \mbox{ compatible with }F_\alpha}$. Each $P_\alpha$ is clearly centered. Let us check that $\bigcup_{\alpha<\kappa}P_\alpha = \Add(\kappa,\nu)$. For $p \in \Add(\kappa,\nu)$ with domain $J$ of size $<\!\kappa$, the intersection $X_p = \bigcap\set{M(\beta,p(\beta))}{\beta \in J}$ is non-empty. For any $\alpha \in X_p$, $p$ is compatible with $F_\alpha$: clearly, $F_\alpha(\beta) = p(\beta)$ for any $\beta\in J$. It follows that $p \in P_\alpha$, and we are done.
\end{proof}

\begin{proposition}\label{AP-trick} 
Assume $\kappa^{<\kappa}=\kappa$ and $\mu \ge \kappa^{++}$. If $\Add(\kappa,\mu)$ forces $\AP(\kappa^{++})$, then so does $\Add(\kappa,\kappa^{++})$.
\end{proposition}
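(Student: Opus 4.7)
The plan is to prove the contrapositive: assuming that some $\Add(\kappa,\mu)$-generic $G$ satisfies $V[G] \models \AP(\kappa^{++})$, I will show that every $\Add(\kappa,\kappa^{++})$-generic over $V$ produces a model of $\AP(\kappa^{++})$.

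Fix in $V[G]$ a sequence $\vec a = \seq{a_\alpha}{\alpha < \kappa^{++}}$ of bounded subsets of $\kappa^{++}$ and a club $C \subseteq \kappa^{++}$ witnessing $\AP(\kappa^{++})$; in addition, for each $\gamma \in C$ pick a witness $A_\gamma \subseteq \gamma$ of order type $\cf(\gamma) \le \kappa^+$ such that every proper initial segment of $A_\gamma$ equals some $a_\alpha$ for $\alpha < \gamma$. My goal is to locate a set $I \subseteq \mu$ of cardinality exactly $\kappa^{++}$ with $\vec a, C$, and every $A_\gamma$ (for $\gamma \in C$) all in $V[G \rest I]$. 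Once I have such an $I$, the model $V[G \rest I]$ internally witnesses $\AP(\kappa^{++})$ with $\vec a$ and $C$, and since $|I| = \kappa^{++}$ the poset $\Add(\kappa,I)$ is isomorphic to $\Add(\kappa,\kappa^{++})$ via any bijection $I \cong \kappa^{++}$; thus $V[G \rest I]$ is an $\Add(\kappa,\kappa^{++})$-generic extension of $V$ that models $\AP(\kappa^{++})$, and weak homogeneity of $\Add(\kappa,\kappa^{++})$ then upgrades this to the desired forcing statement.

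To produce $I$, I would invoke the $\kappa^+$-cc of $\Add(\kappa,\mu)$ (a consequence of $\kappa^{<\kappa} = \kappa$) and count coordinates used by nice names. Since conditions have support of size $<\kappa$ and maximal antichains have size $\le \kappa^+$, a nice name for a subset of some ordinal $\alpha$ uses at most $|\alpha| \cdot \kappa^+ \cdot \kappa = \max(|\alpha|,\kappa^+)$ coordinates of $\mu$. This gives nice names for $\vec a$ and $C$ using at most $\kappa^{++}$ coordinates, and for each $A_\gamma \subseteq \gamma < \kappa^{++}$ a nice name using at most $\kappa^+$ coordinates; summing over $|C| = \kappa^{++}$ many $\gamma$, the total is $\kappa^{++} \cdot \kappa^+ = \kappa^{++}$. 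Taking the union of these supports (and enlarging if necessary) yields $I \subseteq \mu$ of cardinality exactly $\kappa^{++}$ such that $\vec a, C, \{A_\gamma\}_{\gamma\in C} \subseteq V[G \rest I]$.

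The one subtlety to watch out for is the direction of absoluteness. Approachability of $\gamma$ with respect to $\vec a$ is a $\Sigma_1$ statement over the parameters, hence upward but not automatically downward absolute; the argument succeeds precisely because I explicitly drag along each witness $A_\gamma$, reducing the verification in $V[G \rest I]$ to the $\Delta_0$ claim that proper initial segments of $A_\gamma$ appear in $\vec a \rest \gamma$. I do not expect additional obstacles: note in particular that I need each $A_\gamma$ individually in $V[G \rest I]$ but not the map $\gamma \mapsto A_\gamma$ as a single sequence, which keeps the coordinate count at $\kappa^{++}$ rather than threatening to blow up.
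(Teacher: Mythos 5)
Your proof is correct and follows the same strategy as the paper: fix the witnessing objects in a generic extension, choose nice names, observe that (by $\kappa^+$-cc and small supports) they involve at most $\kappa^{++}$ coordinates of $\Add(\kappa,\mu)$, restrict to those coordinates, and use a bijection with $\kappa^{++}$ to transfer. Two cosmetic slips worth noting: what you describe is the direct implication rather than the contrapositive, and in a $\kappa^+$-cc forcing maximal antichains have size at most $\kappa$ (not $\kappa^+$), though your cardinal arithmetic is unaffected; also, dragging along the whole map $\gamma\mapsto A_\gamma$ (as the paper does) would cost no more than $\kappa^{++}$ coordinates anyway, so the worry in your last paragraph is harmless but unnecessary.
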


\begin{proof} 
Assume that $\Add(\kappa,\mu)$ forces $\AP(\kappa^{++})$.
The approachability of $\kappa^{++}$ is witnessed by a sequence of bounded subsets of $\kappa^{++}$, $\vec{a}=\seq{a_\alpha}{\alpha<\kappa^{++}}$,  a closed unbounded set $C\sub\kappa^{++}$ and by a sequence $A=\seq{A_\alpha}{\alpha\in C\mbox{ and } A_\alpha\sub \alpha}$, where each $A_\alpha$ is cofinal in $\alpha$. Since all these objects have size $\kappa^{++}$ and $\Add(\kappa,\mu)$ is $\kappa^+$-cc, we can fix $\Add(\kappa,\mu)$-nice names $\dot{\vec{a}}$, $\dot{C}$, and $\dot{A}$ for $\vec{a}$, $C$ and $A$, respectively, each of size $\kappa^ {++}$.

Let $B\sub\mu$ be the set of  coordinates in the Cohen forcing which appear in the names $\dot{\vec{a}}$, $\dot{C}$ or $\dot{A}$; $B$ has size at most $\kappa^{++}$. Then the Cohen forcing with coordinates in $B$ forces $\AP(\kappa^{++})$, and hence also $\Add(\kappa,\kappa^{++})$ forces it because any bijection between $B$ and $\kappa^{++}$ determines an isomorphism between $\Add({\kappa},B)$ and $\Add({\kappa},\kappa^{++})$.
\end{proof}

\begin{remark}
The use of the preservation argument makes the following proofs shorter, but this is not necessary. Arguments based on a quotient analysis may be used for failure of $\AP(\kappa^{++})$ as well, with the proofs being analogous to those of Lemmas \ref{tp-large-cont} and \ref{wtp-large-cont} below.
\end{remark}

\subsection{The Weakly Compact Results with Large $2^\kappa$}

In this subsection we prove Theorem~\ref{wc-large-cont}, or rather, we prove Theorems~\ref{tp-csr-ap}, \ref{tp-csr-notap}, and \ref{csr-notsh} for arbitrarily large $\mu$. Assuming that $\cf(\mu)>\kappa$, we will even show that it is enough to force with $\Add(\kappa,\mu)$ over any of these models to obtain the desired result. Naturally, it is immediate that $2^\kappa = \mu$ in the extension by $\Add(\kappa,\mu)$.

For preserving club stationary reflection at $\kappa^{++}$ in each of these cases, we use a recent result of the third author (see \cite{HS:u}):

\begin{fact}\label{th:csr}
Suppose $\lambda = \kappa^{++}$, $\kappa^{<\kappa} = \kappa$ and $\CSR(\lambda)$ holds. Then $\Add(\kappa,\mu)$ preserves $\CSR(\lambda)$ for any $\mu$.
\end{fact}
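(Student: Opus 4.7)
The plan is to split the argument into a nice-name reduction to the case $\mu=\lambda$, and then a direct preservation argument for $\Add(\kappa,\lambda)$, which is the heart of the matter.

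For the reduction, let $G$ be $\Add(\kappa,\mu)$-generic and let $T\in V[G]$ be stationary in $\lambda\cap\cof(\le\kappa)$, with nice name $\dot T$. Since $\kappa^{<\kappa}=\kappa$, the poset $\Add(\kappa,\mu)$ is $\kappa^+$-Knaster, so $\dot T$ can be taken to use an antichain of size $\le\kappa$ per ordinal below $\lambda$, with each condition having support of size $<\kappa$; hence the total support $A\subseteq\mu$ of $\dot T$ has size $\le\lambda\cdot\kappa=\lambda$. Then $T\in V[G\rest A]$, and $\Add(\kappa,\mu)\cong\Add(\kappa,A)\times\Add(\kappa,\mu\setminus A)$. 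Granting $\CSR(\lambda)$ in $V[G\rest A]$ (the main step below), pick a witnessing club $C\subseteq\lambda$ there. The complementary factor $\Add(\kappa,\mu\setminus A)$ is $\kappa^+$-Knaster and $\kappa$-closed over $V[G\rest A]$, so $\lambda$ is preserved, $C$ remains a club, cofinalities $=\kappa^+$ of elements of $C$ are preserved, and stationarity of $T\cap\alpha$ for $\alpha\in C\cap\cof(\kappa^+)$ is preserved by the standard fact that $\kappa^+$-cc forcing preserves stationary subsets of ordinals of cofinality $\ge\kappa^+$. Thus $C$ still witnesses reflection of $T$ in $V[G]$, and the reduction to $\mu=\lambda$ is complete.

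The main step is to prove that $\Add(\kappa,\lambda)$ preserves $\CSR(\lambda)$. I would argue by contradiction: suppose the generic extension $V[H]$ contains a stationary $T\subseteq\lambda\cap\cof(\le\kappa)$ together with a stationary $B\subseteq\lambda\cap\cof(\kappa^+)$ of non-reflection points, witnessed by avoiding clubs $D_\beta\subseteq\beta$ for $\beta\in B$. Using $\kappa^+$-Knaster and the $<\kappa$-support of Cohen conditions, produce a club $E\subseteq\lambda$ in $V[H]$ of closure points: for $\alpha\in E$, both $T\cap\alpha$ and the sequence $\langle D_\beta:\beta\in B\cap\alpha\rangle$ are already determined by the restriction of $H$ to coordinates below $\alpha$. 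The goal is then to extract from this continuity-type data a ground-model stationary pair $(T^*,B^*)$ in $V$ witnessing the failure of $\CSR(\lambda)$ in $V$, contradicting the hypothesis.

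The principal obstacle is precisely this last extraction step. Since new stationary subsets of $\lambda\cap\cof(\le\kappa)$ can appear freely in a Cohen extension, there is no naive pullback of $T$ to $V$; one must combine the $\kappa^+$-Knaster structure of the nice names for $T$ and for the $D_\beta$ with ground-model $\CSR(\lambda)$ in a delicate way to manufacture the required $T^*$ and $B^*$. This combinatorial heart of the argument is precisely the content of the preservation theorem of the third author in \cite{HS:u}, to which the detailed argument is deferred.
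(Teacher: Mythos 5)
The paper itself gives no proof of this statement: it is introduced with the sentence ``we use a recent result of the third author (see \cite{HS:u})'' and is invoked as an external Fact, so there is no in-paper argument to compare your proposal against. Your reduction to $\mu=\lambda$ via the support of a nice name for $T$ is correct, and your auxiliary observations are accurate --- the complementary Cohen factor $\Add(\kappa,\mu\setminus A)$ is $\kappa^+$-Knaster over $V[G\rest A]$, so it preserves $\lambda=\kappa^{++}$, preserves cofinality $\kappa^+$, keeps $C$ a club, and preserves stationarity of $T\cap\alpha$ at points $\alpha$ of cofinality $\kappa^+$ (pull back along a normal cofinal map $\kappa^+\to\alpha$ and use that $\kappa^+$-cc forcing preserves stationary subsets of $\kappa^+$). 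However, the reduction is superfluous if one simply invokes \cite{HS:u}, whose statement already covers arbitrary $\mu$. For the main step you set up a plausible contradiction framework, but you then explicitly defer the core extraction --- converting a forced failure of $\CSR(\lambda)$ into a ground-model one --- to \cite{HS:u}. As a result, the net mathematical content of your proposal is ``reduce to $\mu=\lambda$, then cite,'' which is not a new proof and in particular does not discharge the step you yourself flag as the heart of the matter. That is not an error, but it does mean your proposal does not go beyond the paper's own citation.
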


Hence we have that for any of these models $W$, $W[\Add(\kappa,\mu)] \models \CSR(\lambda)$ as an immediate corollary. Furthermore, since a product of a $\nu$-Knaster forcing with a $\nu$-cc forcing is $\nu$-cc, one can observe that forcings with the $\nu$-Knaster property do not add $\nu$-sized antichains to $\nu$-Suslin trees. Therefore, if $W$ is the model from Theorem~\ref{csr-notsh}, then $W[\Add(\kappa,\mu)] \models \neg \SH(\kappa^{++})$.

\begin{lemma}\label{notap-large-cont} If $W$ is either the model from Theorem~\ref{tp-csr-notap} or Theorem~\ref{csr-notsh}, then $W[\Add(\kappa,\mu)] \models \neg \AP(\kappa^{++})$.\end{lemma}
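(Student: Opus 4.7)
The plan is to reduce to the case $\mu = \kappa^{++}$ via Proposition~\ref{AP-trick}, and then apply the preservation theorem for $\neg\AP(\kappa^{++})$ under $\kappa$-centered forcings (Fact~\ref{th:AP}). In $W$, the failure of $\AP(\kappa^{++})$ is already established (by Lemma~\ref{wc-notap} in the case of Theorem~\ref{tp-csr-notap} and Lemma~\ref{notsh-notap} in the case of Theorem~\ref{csr-notsh}), while $\kappa^{<\kappa}=\kappa$ is preserved through the construction and $2^\kappa = \kappa^{++}$ holds by design of the Mitchell-type forcing $\M$ together with the fact that $\S$ is $\lambda$-distributive and $\P$ is $\lambda$-distributive over $V[\M \ast \S]$. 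Moreover, since $\Add(\kappa,\mu)$ is $\kappa$-closed and $\kappa^+$-cc in $W$, the value of $\kappa^{++}$ is the same in $W$ and $W[\Add(\kappa,\mu)]$.

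First, I would apply Lemma~\ref{Cohen:centered} inside $W$ with $\nu = \kappa^{++} = 2^\kappa$ to conclude that $\Add(\kappa,\kappa^{++})$ is $\kappa$-centered in $W$. Fact~\ref{th:AP} then gives $W[\Add(\kappa,\kappa^{++})] \models \neg \AP(\kappa^{++})$. In particular, $\Add(\kappa,\kappa^{++})$ does not force $\AP(\kappa^{++})$ over $W$.

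Next, I would invoke the contrapositive of Proposition~\ref{AP-trick}. Suppose, toward a contradiction, that $W[\Add(\kappa,\mu)] \models \AP(\kappa^{++})$. By the homogeneity of $\Add(\kappa,\mu)$, this means $\Add(\kappa,\mu)$ forces $\AP(\kappa^{++})$ over $W$. Since the hypotheses $\kappa^{<\kappa}=\kappa$ and $\mu \geq \kappa^{++}$ are satisfied in $W$, Proposition~\ref{AP-trick} would then imply that $\Add(\kappa,\kappa^{++})$ also forces $\AP(\kappa^{++})$ over $W$, contradicting the previous paragraph.

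I do not expect any real obstacle: the argument is essentially a two-line combination of results that have already been assembled at the start of Section~\ref{sec-large-cont}. The only thing worth double-checking is that the cardinal arithmetic of $W$ (specifically $\kappa^{<\kappa}=\kappa$ and $2^\kappa = \kappa^{++}$) lines up with the hypotheses of Lemma~\ref{Cohen:centered} and Proposition~\ref{AP-trick}, and that the preservation of $\kappa^{++}$ by $\Add(\kappa,\mu)$ makes the statement $\neg\AP(\kappa^{++})$ in $W[\Add(\kappa,\mu)]$ refer to the intended cardinal.
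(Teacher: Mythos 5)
Your proof is correct and follows essentially the same route as the paper's one-line proof, which cites exactly the same three ingredients: the Gitik--Krueger preservation theorem (Fact~\ref{th:AP}), Lemma~\ref{Cohen:centered}, and Proposition~\ref{AP-trick}. Your fuller write-out, including the verification of $2^\kappa=\kappa^{++}$ and $\kappa^{<\kappa}=\kappa$ in $W$ and the use of homogeneity to pass from ``some condition forces $\AP(\kappa^{++})$'' to ``$\Add(\kappa,\mu)$ forces $\AP(\kappa^{++})$,'' is a faithful unpacking of what the paper leaves implicit.
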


\begin{proof} 
This is an immediate consequence of the Gitik and Krueger preservation result and Lemma \ref{Cohen:centered} and Proposition \ref{AP-trick}.
\end{proof}

This means that it suffices to verify that $\TP(\lambda)$ still holds after forcing with $\Add(\kappa,\mu)$ over the models of Theorems~\ref{tp-csr-ap} and \ref{tp-csr-notap}. This requires a more involved approach as we do not have any general preservation lemma for the tree property.

Next, we show that we can actually consider extensions by $\Add(\kappa,\lambda)$ instead of $\Add(\kappa,\mu)$.

\begin{proposition}\label{a-tree-trick} Working over a ground model $W$, if $\P_{\lambda^+}$ is a standard club-adding iteration and there is a $\lambda$-Aronszajn tree $T \in W[\P_{\lambda^+} \ast \Add(\kappa,\mu)]$, then $T \in W[\P_\alpha \ast \Add(\kappa,\lambda)]$ for some $\alpha<\lambda^+$.
\end{proposition}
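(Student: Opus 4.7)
The plan is to carry out a standard nice-name analysis on the full two-step forcing, bounding the supports of a name for $T$ on both the iteration side and the Cohen side, and then re-indexing the Cohen coordinates via a bijection in $W$.

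First I would use Observation~\ref{product-trick} to recast $\P_{\lambda^+}\ast\Add(\kappa,\mu)$ as the product $\P_{\lambda^+}\times\Add(\kappa,\mu)$. In the applications of this proposition, $\P_{\lambda^+}$ is $\kappa^+$-closed (as in Proposition~\ref{closure}), so in particular it is $\kappa^+$-distributive and does not change the definition of $\Add(\kappa,\mu)$, while conditions in $\Add(\kappa,\mu)$ have size $<\kappa$. The same $\kappa^+$-closure also preserves $\kappa^{<\kappa}=\kappa$ into $W[\P_{\lambda^+}]$, so that $\Add(\kappa,\mu)$ remains $\kappa^+$-cc there. Together with the $\lambda^+$-cc of $\P_{\lambda^+}$ from Fact~\ref{iterationchaincondition}, this makes the product $\P_{\lambda^+}\times\Add(\kappa,\mu)$ into a $\lambda^+$-cc forcing.

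Next I would code $T$ as a subset of $\lambda$ and take a nice $\P_{\lambda^+}\times\Add(\kappa,\mu)$-name $\dot T$ for this subset. By the $\lambda^+$-cc, each of the $\lambda$ antichains appearing in $\dot T$ (one per ordinal whose membership must be decided) has size $\leq\lambda$, so $\dot T$ mentions at most $\lambda\cdot\lambda=\lambda$ pairs $(\check\gamma,(p,q))$. Every $p\in\P_{\lambda^+}$ has iteration-support of size $<\lambda$, and every $q\in\Add(\kappa,\mu)$ has Cohen-support of size $<\kappa$. The union of all the iteration-supports is therefore a subset of $\lambda^+$ of size $\leq\lambda$ and, since $\cf(\lambda^+)=\lambda^+>\lambda$, is bounded by some $\alpha<\lambda^+$; the union of all the Cohen-supports is a set $A\subseteq\mu$ of size $\leq\lambda$.

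It follows that $\dot T$ is naturally a $\P_\alpha\times\Add(\kappa,A)$-name, so $T\in W[\P_\alpha\times\Add(\kappa,A)]$. Padding $A$ to a set of size exactly $\lambda$ if necessary, any bijection $A\leftrightarrow\lambda$ in $W$ induces an isomorphism $\Add(\kappa,A)\cong\Add(\kappa,\lambda)$ in $W$, under which $W[\P_\alpha\times\Add(\kappa,A)]$ coincides with $W[\P_\alpha\ast\Add(\kappa,\lambda)]$ for the corresponding generic filter. No step of this argument is particularly deep: the only places requiring some care are the verification of the $\lambda^+$-cc of the product (which is just the closure-plus-chain-condition analysis above) and the support bookkeeping; the concluding rearrangement of Cohen coordinates is routine.
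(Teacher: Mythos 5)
Your proof is correct and follows essentially the same route as the paper's: take a $\lambda$-sized nice name via the $\lambda^+$-cc, bound the iteration coordinate below some $\alpha<\lambda^+$ and the Cohen coordinate inside a $\lambda$-sized set $A$, and re-index $A$ to $\lambda$ by a bijection in $W$. The only cosmetic difference is that you pass through the product $\P_{\lambda^+}\times\Add(\kappa,\mu)$ explicitly via Observation~\ref{product-trick}, whereas the paper keeps the iteration notation $\P_\alpha\ast\Add(\kappa,\mu)$ throughout; this is the same argument either way.
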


\begin{proof} If $T$ is a $\lambda$-Aronszajn tree in $W[\P_{\lambda^+} \ast \Add(\kappa,\mu)]$ then since $\P_{\lambda^+} \ast \Add(\kappa,\mu)$ is $\lambda^+$-cc, $T$ has a $\P_{\lambda^+} \ast \Add(\kappa,\mu)$-nice name $\dot{T}$ which has size $\lambda$. Because the iteration is $\lambda^+$-cc, we can apply a mild generalization of Fact~\ref{nicenamestrick} to find $\alpha < \lambda^+$ such that $\dot T$ is a $\P_\alpha \ast \Add(\kappa,\mu)$-name. Moreover, there is some $A\sub\mu$ of cardinality $\lambda$ such that $\P_\alpha \ast \Add(\kappa,A)$ determines the support of conditions in $\dot{T}$. Any bijection between $A$ and $\lambda$ gives an isomorphism between $\Add(\kappa,A)$ and $\Add(\kappa,\lambda)$, so we can assume $\dot T$ is a $\P_\alpha \ast \Add(\kappa,\lambda)$-name.\end{proof}

\begin{lemma}\label{tp-large-cont} If $W$ is either the model from Theorem \ref{tp-csr-ap} or Theorem \ref{tp-csr-notap}, then $W[\Add(\kappa,\mu)] \models \TP(\lambda)$.\end{lemma}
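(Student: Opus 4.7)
My plan is to adapt the lifting argument of Lemma~\ref{mainlifting} (or Lemma~\ref{mainlifting-notap} in the Theorem~\ref{tp-csr-notap} case) to incorporate an extra layer of Cohen forcing. By Proposition~\ref{a-tree-trick} and Observation~\ref{product-trick} (using that $\P_\alpha$ is $\kappa^+$-closed over $V[\M]$, so that $\P_\alpha \ast \dot{\mathrm{Add}}(\kappa,\lambda) \simeq \P_\alpha \times \Add(\kappa,\lambda)$), any putative $\lambda$-Aronszajn tree in $W[\Add(\kappa,\mu)]$ lies in $V[\M \ast \P_\alpha \ast \Add(\kappa,\lambda)]$ for some $\alpha < \lambda^+$. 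Fix generics $G, H_\alpha, K$ for $\M, \P_\alpha, \Add(\kappa,\lambda)$, respectively. Pick a transitive $M$ of size $\lambda$ containing a name $\dot T$ for the tree together with $\M, \P_\alpha, \Add(\kappa,\lambda)$ (and $\ell$ in the guessing case), and apply weak compactness (with the side condition $j(\ell)(\lambda)=\dot\P_\alpha$ in the guessing case) to obtain $j:M\to N$.

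I will lift $j$ in three stages. The first two are essentially as in the proof of Lemma~\ref{mainlifting}/\ref{mainlifting-notap}: absorption or the $\ell$-guessing trick produces $\tilde G$ lifting $j$ to $M[G]$, and a master condition built from the generic clubs produces $\tilde H_\alpha$ lifting $j$ to $M[G \ast H_\alpha]$. The only adjustment in the master-condition argument to accommodate $K$ is one extra invocation of Fact~\ref{F:St_cc} to see that each $S_\beta$ remains stationary after forcing $K$, which is immediate since $K$ is $\kappa^+$-cc. For the third stage, since $j\!\restriction\!\Add(\kappa,\lambda)$ is the identity, the product decomposition $j(\Add(\kappa,\lambda)) \cong \Add(\kappa,\lambda) \times \Add(\kappa,[\lambda,j(\lambda)))$ inside $N[\tilde G \ast \tilde H_\alpha]$ allows me to set $\tilde K = K \times K''$, where $K''$ is a $\Add(\kappa,[\lambda,j(\lambda)))$-generic over $N[\tilde G \ast \tilde H_\alpha \ast K]$. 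This requires that $K$ be $\Add(\kappa,\lambda)^{N[\tilde G \ast \tilde H_\alpha]}$-generic, which I arrange by choosing the auxiliary generics $G'$ (for the Mitchell quotient) and $\tilde H_\alpha$ to be generic over $V[G \ast H_\alpha \ast K]$ rather than merely over $V[G \ast H_\alpha]$; this is possible by Easton's Lemma, since $K$ is $\kappa^+$-cc while both $G'$ and $\tilde H_\alpha$ arise from (projections of) $\kappa^+$-closed forcings, giving the required mutual genericity with $K$.

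The resulting lift $j:M[G\ast H_\alpha \ast K] \to N[\tilde G \ast \tilde H_\alpha \ast \tilde K]$ makes $j(T)$ a $j(\lambda)$-tree, and any node of $j(T)$ at level $\lambda$ yields a cofinal branch $b$ of $T$ in the target model. To derive a contradiction I show $b \in V[G \ast H_\alpha \ast K]$, splitting the extension into two pieces. Passing from $N[G \ast H_\alpha \ast K]$ to $N[\tilde G \ast \tilde H_\alpha \ast K]$: the forcing is the Mitchell-style quotient, which over the $\kappa^+$-cc extension by $K$ remains (a projection of) a $\kappa^+$-closed poset with a square-$\kappa^+$-cc poset by Easton, and since $T$ has levels of size $<\lambda = 2^\kappa$ in this model, Fact~\ref{silverslemma} and Fact~\ref{F:Square_cc} together preclude new cofinal branches. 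Passing from there to $N[\tilde G \ast \tilde H_\alpha \ast \tilde K]$ adds $K''$, coming from the $\kappa^+$-Knaster (hence square-$\kappa^+$-cc, hence square-$\lambda$-cc) forcing $\Add(\kappa,[\lambda,j(\lambda)))$, so Fact~\ref{F:Square_cc} again rules out new branches. Hence $b \in V[G \ast H_\alpha \ast K]$, contradicting the Aronszajn assumption. The chief technical hurdle is the genericity management for $K$ in the third stage of the lift, resolved by the Easton-style mutual-genericity argument above.
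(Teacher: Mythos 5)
Your overall strategy (three-stage lift $G$, then $H_\alpha$, then $K$, followed by tracing the branch back) matches the paper's, and the initial reduction via Proposition~\ref{a-tree-trick} and the choice to take $G'$ and $\tilde H_\alpha$ generic over $V[G\ast H_\alpha \ast K]$ are both correct and are exactly what the paper does. However, the branch-tracing step contains a genuine gap, and two of your justifications are misstated.

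The key gap is in the second step of your branch-tracing, where you pass from $N[G\ast H_\alpha\ast K]$ to $N[\tilde G\ast\tilde H_\alpha\ast K]$ in one move, describing the intervening forcing as ``the Mitchell-style quotient'' and claiming it is a projection of a $\kappa^+$-closed poset with a square-$\kappa^+$-cc poset. That forcing is actually $j(\M)/G\ast j(\P_\alpha)$. The factor $j(\M)/G$ does have the Abraham projection structure, but $j(\P_\alpha)$ is a separate club-adding iteration which is only $\kappa^+$-closed \emph{over} $N[\tilde G]$ (after the full Mitchell quotient, including its Cohen component, has been forced); it is not a piece of the Mitchell quotient and does not live in the ground model $N[G][H][K]$. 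Consequently there is no evident way to realize $j(\M)/G\ast j(\P_\alpha)$ as a projection of a $\kappa^+$-cc $\times$ $\kappa^+$-closed product over $N[G][H][K]$, and this is precisely why the paper does not attempt it. The paper instead removes $\tilde H_\alpha$ \emph{first}, working over $N[\tilde G]$ where (by Observation~\ref{product-trick}) $j(\P_\alpha)\ast\Add(\kappa,j(\lambda))$ is a genuine product of a $\kappa^+$-closed poset and a $\kappa^+$-cc poset, so that Fact~\ref{F:ccc_Closed} gives $b\in N[\tilde G][\tilde K]$; only then is $K'$ removed (Fact~\ref{F:Square_cc}) and, finally, $G'$, the pure Mitchell quotient, via its Abraham projection. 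Your decomposition can be repaired by inserting this same intermediate peel-off of $\tilde H_\alpha$ over $N[\tilde G]$, but as written the step does not follow.

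Two smaller issues: your appeal to Easton's Lemma for the mutual genericity of $K$ asserts that $G'$ ``arises from (a projection of) a $\kappa^+$-closed forcing,'' but $j(\M)/G$ is only $\kappa$-closed, not $\kappa^+$-closed (it adds subsets of $\kappa$); the correct reason $K$ remains $\Add(\kappa,\lambda)$-generic over $N[\tilde G\ast\tilde H_\alpha]$ is simply the product lemma together with the $\kappa$-distributivity of $j(\M)/G\ast j(\P_\alpha)$, which ensures the poset $\Add(\kappa,\lambda)$ is not redefined. Also, your claimed ``extra invocation of Fact~\ref{F:St_cc}'' in the master-condition argument is superfluous: $K$ is lifted \emph{after} $\tilde H_\alpha$, so it plays no role in verifying that the master condition for $j(\P_\alpha)$ is a condition; the master-condition argument from Lemma~\ref{mainlifting} (resp.\ \ref{mainlifting-notap}) applies verbatim.
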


\begin{proof} We perform the proof in two cases, the second by indicating the changes that need to be made to the first.

\emph{Case 1:} $W$ is the model of $\CSR(\kappa^{++}) \wedge \TP(\kappa^{++}) \wedge \AP(\kappa^{++})$ from Theorem \ref{tp-csr-ap}.

By Proposition~\ref{a-tree-trick} it is enough to show that for any $\alpha<\lambda^+$, $V[\M\ast\P_\alpha\ast\Add(\kappa,\lambda)]$ satisfies the desired conclusion. So fix $\alpha<\lambda^+$, and let $G\ast H\ast K$ be $\M\ast\P_\alpha\ast\Add(\kappa,\lambda)$-generic over $V$. Suppose $T$ is a $\lambda$-tree in $M[G ][ H ][ K]$ and let $j:M \to N$ witness weak compactness of $\lambda$. We will show that there is a forcing extension of $N[G ][ H ][ K]$ that contains a cofinal branch $b$ of $T$, and then that $b$ is actually already in $N[G ][ H ][ K]$, from which it will follow that $b \in V[G ][ H ][ K]$.

First we will find the forcing extension containing the branch. By Lemma~\ref{mainlifting} we know that we can lift the elementary embedding to $j:M[G][H]\to N[G][H][G'][\tilde H]$ where $G'$ is  $j(\M)/G$-generic over $V[G][H][K]$ and $\tilde H$ is $j(\P_\alpha)$-generic over the model $V[G][H][K][G']$. Note that there is a $j(\M)$-generic $\tilde G$ over $V$ such that $V[\tilde G]=V[G][H][G']$ by Lemma \ref{absorption}. We need to lift this embedding to have domain $M[G \ast H \ast K]$. The poset $\Add(\kappa,j(\lambda))$ factors as a product $\Add(\kappa,\lambda) \times \Add (\kappa,A)$ where $A =[\lambda,j(\lambda))$. Let $K'$ be $\Add (\kappa,A)$-generic over $N[\tilde G ][\tilde H][K]$. Because $j[K]= K \subset K \times K'$, since $j$ is the identity below $\lambda$, we can lift this to an embedding $j:M[G][H][K] \to N[\tilde G][\tilde H][\tilde K]$ where $\tilde K := K \times K'$. By the classical argument (as in Lemma~\ref{wc-tp}), $T$ has a cofinal branch $b$ in $N[\tilde G][\tilde H][\tilde K]$.

It remains to show that $b \in N[G][H][K]$. First we show that it is already in $N[\tilde G][\tilde K]$. Because $j(\P_\alpha)$ is $\kappa^+$-closed over $N[\tilde G]$ and $\Add(\kappa,j(\lambda))$ has conditions of size $<\kappa$ and a definition that is not changed by $j(\P_\alpha)$, Observation~\ref{product-trick} tells us that $j(\P_\alpha) \ast \Add(\kappa,j(\lambda))$ can be interpreted as a product $j(\P_\alpha) \times \Add(\kappa,j(\lambda))$ over $N[\tilde G]$. Note that in $N[\tilde G]$, $\lambda$ is an ordinal of cofinality $\kappa^+$, and therefore the tree $T$ now has height of cofinality $\kappa^+$ and levels of size at most $\kappa^+$ (in $N[\tilde G]$ but also in $N[\tilde G][\tilde K]$ since $\Add(\kappa,j(\lambda))$ preserves cardinals over $N[\tilde G]$). Since $N[\tilde G] \models \text{``}2^\kappa >\kappa^+$'' and $\Add(\kappa,j(\lambda)) \times j(\P_\alpha)$ is a product of a $\kappa^+$-cc poset with a $\kappa^+$-closed poset over $N[\tilde G]$, Fact~\ref{F:ccc_Closed} implies that $b \in N[\tilde G][ \tilde K]$.

Rewriting $\tilde G$ and $\tilde K$, we now have $b \in N[G][H][G'][K][ K']$. Since $\Add(\kappa,A)$ is square-$\kappa^+$-cc, Fact~\ref{F:Square_cc} implies that $b \in N[G][H][G'][K]$. Because $j(\M)/G$ is $\kappa$-closed, we apply Observation~\ref{product-trick} to write $N[G][H][G' ][K]$ as $N[G][H][K][G']$. Working in $N[G]$, $j(\M)/G$ is a projection of a product of $\Add(\kappa,j(\lambda))$ (a $\kappa^+$-cc forcing) and a $\kappa^+$-closed forcing $\U$. This means that in $N[G][H]$, $\Add(\kappa,A) \times j(\M)/G$ is the projection of $\Add(\kappa,j(\lambda)) \times \U$ where $\U$ is still $\kappa^+$-closed. Hence, by a combination of Fact~\ref{F:ccc_Closed} (applied to $\U$) and Fact~\ref{F:Square_cc} (applied to the copy of $\Add(\kappa,j(\lambda))$ projecting onto $j(\M)/G$) it follows that $b \in N[G][H][K]$. 

\emph{Case 2:} $W$ is the model of $\CSR(\kappa^{++}) \wedge \TP(\kappa^{++}) \wedge \neg \AP(\kappa^{++})$ from Theorem \ref{tp-csr-notap}.

Other than the fact that we are using the lift from Lemma~\ref{mainlifting-notap}, the argument is virtually identical to that of Case 1 until the last paragraph. Then we find that the branch $b \in N[G][H][I][G'][K][K']$, and we want to show that $b \in N[G][H][K]$. Showing that $b \in N[G][H \times I][G'][K]$ follows from an application of Fact~\ref{F:Square_cc}. Then we work in $N[G][H \times I]$ to use the fact that $\N_{\lambda^*}$ is $\kappa$-closed to rewrite $ N[G][H \times I][G'][K] = N[G][H \times I][K][G']$. Since $\N_{\lambda^*}$ is a projection of a product of a $\kappa^+$-closed forcing with a square-$\kappa^+$-cc forcing, we see that $\Add(\kappa,\lambda) \times \N_{\lambda^*}$ is a projection of a $\kappa^+$-closed forcing with a square-$\kappa^+$-cc forcing, so we find that $b \in N[G][H \times I][K]$. This model be written as $N[G][H][K][I]$, so we apply Fact~\ref{F:Square_cc} a final time conclude that $b \in N[G][H][K]$.\end{proof}

\subsection{The Mahlo Results with Large $2^\kappa$}

In this section we prove Theorem~\ref{mahlo-large-cont}, i.e. that the conclusions we derived for the models from Theorems~\ref{wtp-sr-ap}, \ref{gk}, and \ref{sr-notsh} still hold after forcing over these models with $\Add(\kappa,\mu)$ where $\mu$ is an arbitrarily large cardinal of cofinality greater than $\kappa$.

We use another result of the third author (see \cite{HS:u}):

\begin{fact}\label{th:sr}
Suppose $\nu$ is a regular cardinal, $\SR(\nu^+)$ holds and $\Q$ is $\nu$-cc. Then $\Q$ preserves $\SR(\nu^+)$ . 
\end{fact}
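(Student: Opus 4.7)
My plan is: for each condition $q \le p$ forcing $\dot{S}$ stationary (where $p$ is any such condition), pull $\dot{S}$ back to a stationary set $T_q := \{\alpha < \nu^+ : \exists r \le q,\ r \Vdash \alpha \in \dot{S}\}$ in $V$, apply $\SR(\nu^+)$ in $V$ to get a reflection point $\beta_q$ of $V$-cofinality $\nu$ (still of cofinality $\nu$ in $V[\Q]$ by $\nu$-cc), and then use a chain-condition density argument to show that some $q' \le q$ forces $\beta_q$ to be a reflection point of $\dot{S}$ in $V[\Q]$. A density argument over $q \le p$ then completes the proof.

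The preliminary step is routine. $T_q$ is stationary in $V$ because any $V$-club $C \subseteq \nu^+$ remains a club in $V[\Q]$ (by $\nu$-cc), so $\dot{S}^G \cap C \ne \emptyset$ in any generic $V[G]$ with $q \in G$, and any element of the intersection belongs to $T_q$. The inclusion $T_q \subseteq \cof^V(<\nu)$ follows because $\nu$-cc preserves cofinalities $\ge \nu$. So $\SR(\nu^+)$ in $V$ produces $\beta_q$ with $T_q \cap \beta_q$ stationary in $\beta_q$ and $\cof^V(\beta_q) = \nu$.

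The main step is showing some $q' \le q$ forces $\dot{S} \cap \check{\beta}_q$ stationary. Suppose not; then $D := \{q'' \le q : q'' \Vdash \dot{S} \cap \check{\beta}_q \text{ is non-stationary}\}$ is dense below $q$. For each $q'' \in D$, fix a name $\dot{C}_{q''}$ that $q''$ forces to be a club in $\beta_q$ disjoint from $\dot{S}$. Let $A \subseteq D$ be a maximal antichain; by $\nu$-cc, $|A| < \nu$, and density of $D$ below $q$ makes $A$ a maximal antichain below $q$ in $\Q$. For each $q'' \in A$, the standard chain-condition argument---for each $\gamma < \beta_q$, the set of possible values of $\min(\dot{C}_{q''} \setminus \check{\gamma})$ forced below $q''$ lies in $V$, has size $<\nu$, and is bounded below $\beta_q$ since $\cof^V(\beta_q) = \nu$---produces a $V$-club $E_{q''} \subseteq \beta_q$ with $q'' \Vdash E_{q''} \subseteq \dot{C}_{q''}$. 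Because $|A| < \nu \le \cof^V(\beta_q)$, the intersection $E := \bigcap_{q'' \in A} E_{q''}$ is still a $V$-club in $\beta_q$. Pick $\alpha \in T_q \cap E$, non-empty since $T_q \cap \beta_q$ is stationary and $E$ is a club in $\beta_q$, and let $r \le q$ witness $\alpha \in T_q$. Maximality of $A$ gives some $q'' \in A$ compatible with $r$; any common extension $s \le r, q''$ forces both $\alpha \in \dot{S}$ (since $s \le r$) and $\alpha \in \dot{C}_{q''}$ (since $s \le q''$ and $\alpha \in E_{q''} \subseteq \dot{C}_{q''}$), contradicting $q'' \Vdash \dot{S} \cap \dot{C}_{q''} = \emptyset$.

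The main obstacle is that the natural witness $r$ for $\alpha \in T_q$ may be incompatible with any particular condition forcing non-reflection at $\beta_q$. This is overcome by treating a maximal antichain of such bad conditions simultaneously and intersecting the corresponding $V$-clubs: the $\nu$-cc bound on antichain size, combined with $\cof^V(\beta_q) = \nu$, is exactly what keeps this intersection a club and allows compatibility to be forced.
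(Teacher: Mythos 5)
Your proof is correct. The paper does not reproduce a proof of this fact---it is cited from the Honzik--Stejskalov\'a preprint \cite{HS:u}---so there is nothing in the paper to compare against directly, but your argument is the natural one and checks out. The key points are all handled properly: $T_q$ is stationary in $V$ and contained in $\cof^V(<\nu)$ because $\nu$-cc preserves $V$-clubs and the property ``$\cof \ge \nu$''; the closure-point trick produces $V$-clubs $E_{q''}\subseteq\beta_q$ with $q''\Vdash E_{q''}\subseteq\dot C_{q''}$, using $\nu$-cc to bound the values of $\min(\dot C_{q''}\setminus\gamma)$ and $\cof^V(\beta_q)=\nu$ to keep those bounds below $\beta_q$; and the intersection over a maximal antichain $A$ of size $<\nu$ stays club. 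You correctly identified the one genuine subtlety---a witness $r\le q$ for $\alpha\in T_q$ need not be compatible with any particular condition in $D$---and handled it by intersecting the $E_{q''}$ over a maximal antichain below $q$ so that compatibility is guaranteed and simultaneously forces $\alpha$ into both $\dot S$ and $\dot C_{q''}$. The final density argument over $q\le p$ is standard. The only implicit assumption worth making explicit is $\nu>\omega$ (needed so that the closure points of the sup function form a club in $\beta_q$), which holds automatically in the paper's application since $\nu=\kappa^+\ge\omega_1$ and is in any case implicit in the nontriviality of $\SR(\nu^+)$.
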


Therefore, we find that $\SR(\lambda)$ still holds in these models after forcing with $\Add(\kappa,\mu)$. As in the previous section, we also know that $\neg \SH(\kappa^{++})$ holds after forcing with $\Add(\kappa,\mu)$ over the model from Theorem~\ref{sr-notsh}. 

For the proof of the failure of approachability, we again use the Gitik and Krueger preservation theorem for $\kappa$-centered forcings.

\begin{lemma} If $W$ is either the model from Theorem~\ref{gk} or the model from Theorem~\ref{sr-notsh}, then $W[\Add(\kappa,\mu)] \models \neg \AP(\kappa^{++})$.\end{lemma}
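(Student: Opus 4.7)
The plan is to mimic, essentially verbatim, the short proof already given for Lemma~\ref{notap-large-cont} in the weakly compact setting, now applied to the Mahlo-case models $W$ from Theorems~\ref{gk} and \ref{sr-notsh}. By construction each such $W$ satisfies $\neg \AP(\kappa^{++})$, and, as noted at the outset of Section~\ref{sec-large-cont}, each such $W$ also satisfies $2^\kappa = \kappa^{++}$. These are the only features of the ground model we will need.

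I would proceed in three steps. First, since $\kappa^{++} = 2^\kappa$ in $W$, Lemma~\ref{Cohen:centered} tells us that $\Add(\kappa,\kappa^{++})$ is $\kappa$-centered over $W$. Second, I would apply the Gitik--Krueger preservation theorem (Fact~\ref{th:AP}) to conclude that $W[\Add(\kappa,\kappa^{++})] \models \neg \AP(\kappa^{++})$. Third, invoking the contrapositive of Proposition~\ref{AP-trick} extends the conclusion from $\Add(\kappa,\kappa^{++})$ to $\Add(\kappa,\mu)$ for any $\mu \geq \kappa^{++}$ with $\cf(\mu) > \kappa$, yielding $W[\Add(\kappa,\mu)] \models \neg \AP(\kappa^{++})$.

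There is essentially no obstacle here: the preservation/reduction machinery of Section~\ref{sec-large-cont} was developed precisely so that this three-line argument goes through uniformly and identically in both the weakly compact and Mahlo settings. The only point worth double-checking is that both Mahlo-case ground models really do satisfy $2^\kappa = \kappa^{++}$ prior to the large-continuum forcing, which is immediate from the constructions in Section~\ref{sec-mahlo-results}, and that the failure of approachability is part of the conclusion of each of Theorems~\ref{gk} and \ref{sr-notsh}.
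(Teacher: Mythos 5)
Your proposal is correct and is exactly the paper's argument: the paper's own proof is the single sentence that the lemma is an immediate consequence of Fact~\ref{th:AP}, Lemma~\ref{Cohen:centered}, and Proposition~\ref{AP-trick}, which is precisely the three-step chain you spell out. The only implicit point (which you correctly flag) is that $2^\kappa=\kappa^{++}$ in $W$, which licenses the application of Lemma~\ref{Cohen:centered}, and that the homogeneity of $\Add(\kappa,\mu)$ makes ``does not force $\AP(\kappa^{++})$'' equivalent to ``forces $\neg\AP(\kappa^{++})$'' so the contrapositive of Proposition~\ref{AP-trick} delivers what is wanted.
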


\begin{proof}
This is an immediate consequence of Fact \ref{th:AP}, Lemma \ref{Cohen:centered} and Proposition \ref{AP-trick}.
\end{proof}

The rest of the section will work towards showing that we can maintain $\wTP(\kappa^{++})$ after forcing with $\Add(\kappa,\mu)$ over the model from Theorem \ref{wtp-sr-ap}.

\begin{lemma}\label{fancylemma}  Suppose that $W \subset W'$ are transitive models of (enough of) $\ZFC$ such that $\P,\Q \in W$, $\nu$ is regular in $W'$, $W' \models \text{``}\Q$ is $\nu$-cc'', $W' \models W^{<\nu} \subset W$, and $W' \models \text{``}\P$ has a $\nu$-closed dense subset''. Then $\P$ is $\nu$-distributive over any extension of $W$ by $\Q$.\end{lemma}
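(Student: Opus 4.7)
The plan is to pass the statement up to the ambient model $W'$, apply Easton's lemma there to obtain $\nu$-distributivity of $\P$ in $W'[H]$ for an arbitrary $\Q$-generic $H$ over $W$, and then transfer the conclusion back down to $W[H]$ using the closure $W^{<\nu}\sub W$ holding in $W'$.

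The first two stages are quick. I would begin by observing that any $\Q$-generic filter $H$ over $W$ is automatically $\Q$-generic over $W'$: any maximal antichain $A \sub \Q$ lying in $W'$ has $|A|<\nu$ by $\nu$-cc, and as a subset of $\Q\sub W$ of size $<\nu$ in $W'$, it belongs to $W$ by the closure hypothesis $W^{<\nu}\sub W$; since antichain-ness is absolute, $A$ is still a maximal antichain in $W$ and is met by $H$. Thus $W'[H]$ is a well-defined forcing extension containing $W[H]$. Working now in $W'$, Easton's lemma (Fact~\ref{L:Easton}) applied to the $\nu$-cc poset $\Q$ and the $\nu$-closed dense $D \sub \P$ implies that $\Q$ forces $D$ to be $\nu$-distributive; as $D$ is dense in $\P$, the poset $\P$ itself is $\nu$-distributive in $W'[H]$.

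For the downward transfer, let $\dot f \in W[H]$ be a $\P$-name for a function $\mu \to \Ord$ with $\mu<\nu$, and let $p \in \P$. Since $\dot f$ also lies in $W'[H]$, the previous paragraph gives $q \le p$ and $g \in W'[H]$ with $q \Vdash_\P \dot f = \check g$. The key claim is $g \in W[H]$. Let $\dot g \in W'$ be a $\Q$-name for $g$. In $W'$, for each $\alpha<\mu$ pick a maximal antichain $A_\alpha \sub \Q$ of conditions deciding $\dot g(\alpha)$, together with the function $h_\alpha:A_\alpha \to \Ord$ recording the decided value. By $\nu$-cc each $|A_\alpha|<\nu$, so by $W^{<\nu}\sub W$ each $A_\alpha$ and $h_\alpha$ belongs to $W$, and the $\mu$-sequence $\la (A_\alpha,h_\alpha):\alpha<\mu\ra$ is itself a $<\nu$-sequence of elements of $W$ that lies in $W'$, hence in $W$. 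In $W[H]$ we then recover $g(\alpha)=h_\alpha(r_\alpha)$, where $r_\alpha$ is the unique element of $A_\alpha\cap H$, so $g \in W[H]$. Since the relation ``$q\Vdash_\P \dot f=\check g$'' is $\Delta_1$-definable from $\P, \dot f, g$ and therefore absolute between $W[H]$ and $W'[H]$, the set of conditions deciding $\dot f$ to an element of $W[H]$ is dense below $p$ in $W[H]$, yielding $\nu$-distributivity of $\P$ in $W[H]$.

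The main obstacle is precisely this last step. The lift from $W[H]$ up to $W'[H]$ is free, and Easton provides the distributivity there, but the descent requires us to know that $W'[H]$ does not introduce spurious $<\nu$-length sequences of ordinals that were not already in $W[H]$. This is where the $\nu$-cc of $\Q$ and the closure $W^{<\nu}\sub W$ play dual and jointly essential roles; without both, the reduction would break down.
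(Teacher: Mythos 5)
Your proof is correct and follows the same overall route as the paper's: lift to $W'$, apply Easton's lemma there to get $\nu$-distributivity of $\P$ in $W'[H]$, and transfer the conclusion back down to $W[H]$ using the chain-condition and closure hypotheses. The only difference is one of detail: the paper invokes as a black box the standard fact (citing Cummings' handbook chapter) that $\nu$-cc plus $W^{<\nu}\subseteq W$ in $W'$ yields $W[\Q]^{<\nu}\subseteq W[\Q]$ in $W'[\Q]$, whereas you prove that fact directly via the antichain-unpacking argument, and you additionally spell out two points left implicit in the paper—that $\Q$-generics over $W$ are automatically $\Q$-generic over $W'$, and the absoluteness of the forcing relation needed to descend from $q\Vdash^{W'[H]}\dot f=\check g$ to $q\Vdash^{W[H]}\dot f=\check g$.
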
 

\begin{proof} Because $\P$ is forcing-equivalent to its $\nu$-closed dense set in $W'$, it follows from Easton's Lemma that $W'[\Q] \models \text{``}\P$ is $\nu$-distributive''. Now suppose that $\dot f \in W$ is a $\Q \times \P$-name for a function $\dot f: \tau \to \Ord$ where $\tau < \nu$. If $f$ is the evaluation of $\dot f$ in $W'[\Q \times \P]$, it follows from distributivity that $f \in W'[\Q]$. It is a standard fact that $W' \models \text{``}W^{<\nu} \subset W$ and $W' \models \text{``}\Q$ is $\nu$-cc'' together imply that $W'[\Q] \models \text{``}W[\Q]^{<\nu} \subset W[\Q]$'' \cite{CUMhandbook}. It then follows that $f \in W[\Q]$.\end{proof}

\begin{lemma}\label{wtp-large-cont} If $W$ is the model from Theorem~\ref{wtp-sr-ap}, then $W[\Add(\kappa,\mu)] \models \wTP(\lambda)$.\end{lemma}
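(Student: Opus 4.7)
The strategy adapts the proof of Lemma~\ref{Mahlo-wtp}, handling the extra Cohen forcing as in Lemma~\ref{tp-large-cont}. Suppose for contradiction that $V[\M \ast \P_{\lambda^+} \ast \Add(\kappa,\mu)]$ contains a special $\lambda$-Aronszajn tree $T$ with specializing function $f$. Since $\M \ast \P_{\lambda^+} \ast \Add(\kappa,\mu)$ is $\lambda^+$-cc, both $T$ and $f$ admit nice names of size $\lambda$, and after relabeling the Cohen coordinates used (in the style of Proposition~\ref{a-tree-trick}), we may reduce to the case $T, f \in V[\M \ast \P_\alpha \ast \Add(\kappa,\lambda)]$ for some $\alpha < \lambda^+$. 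Fix a condition $(m, p, c)$ forcing the relevant facts about the names $\dot T, \dot f$, let $N = H_\Theta$ for sufficiently large $\Theta$, and let $M \prec N$ be a rich submodel containing all relevant parameters including $\alpha, \dot T, \dot f, (m,p,c)$.

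Let $\bar G \ast \bar H \ast \bar K$ be $\bar \M \ast \bar \P_\alpha \ast \Add(\kappa,\bar\lambda)$-generic over $N$ containing $(\bar m, \bar p, \bar c)$. I apply Lemma~\ref{mainlifting2} to lift $j: \bar M \to N$ to $j: \bar M[\bar G][\bar H] \to N[G][H]$. To handle $\bar K$, note that $\Add(\kappa,\lambda) \simeq \Add(\kappa,\bar\lambda) \times \Add(\kappa,[\bar\lambda,\lambda))$ in $N$; choose $K'$ generic for the second factor over $N[G][H][\bar K]$, set $K := \bar K \times K'$, and observe that $j[\bar K] = \bar K \subseteq K$ since $j$ is the identity below $\bar\lambda$. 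This extends the lift to $j: \bar M[\bar G][\bar H][\bar K] \to N[G][H][K]$. In the codomain, $j(\bar T) = T$ is a special $\lambda$-Aronszajn tree with $T \restriction \bar\lambda = \bar T$, so any node of $T$ at level $\bar\lambda$ produces a cofinal branch $b$ through $\bar T$ lying in $N[G][H][K]$.

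The goal is to show $b \in N[\bar G][\bar H][\bar K]$, which yields the desired contradiction: $\bar f \restriction b$ would be an injection from a set of order type $\bar\lambda$ into $\kappa$, whereas $\bar\lambda$ is preserved by $\bar \M \ast \bar \P_\alpha \ast \Add(\kappa,\bar\lambda)$, since $\bar\M$ is $\bar\lambda$-cc, $\bar\P_\alpha$ is $\bar\lambda$-distributive by Lemma~\ref{mainlifting2}(\ref{mainlifting2-distributivity}), and $\Add(\kappa,\bar\lambda)$ is $\kappa^+$-cc.

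The heart of the proof, and the main obstacle, is this branch-preservation step. Arranging the generics so that $G', H, K'$ are mutually generic over $W_2 := N[\bar G][\bar H][\bar K]$, I split the extension $W_2 \subseteq N[G][H][K]$ into two stages: the Mitchell quotient $\M/\bar G$ producing $G'$, followed by $\P_\alpha \times \Add(\kappa,[\bar\lambda,\lambda))$ producing $H$ and $K'$. In $W_2$ we have $2^\kappa \geq \bar\lambda$, and $\bar T$ has height $\bar\lambda$ of cofinality $\kappa^+$ with levels of size $<\bar\lambda \le 2^\kappa$. Since $\M/\bar G$ over $W_2$ remains a projection of $\Add(\kappa,\lambda) \times \U$ (square-$\kappa^+$-cc times $\kappa^+$-closed), and $\P_\alpha \times \Add(\kappa,[\bar\lambda,\lambda))$ over $W_2[G']$ is $\kappa^+$-closed times square-$\kappa^+$-cc, the combined branch-preservation theorem following Fact~\ref{F:Square_cc} prevents either stage from adding a cofinal branch to $\bar T$. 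The main technical subtlety is verifying via Easton's Lemma that the $\kappa^+$-closure of $\U$ and of $\P_\alpha$ is not destroyed by the Cohen generic $\bar K$, and that the mutual genericity of $G', H, K'$ over $W_2$ can be arranged.
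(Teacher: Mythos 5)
Your high-level plan — reduce to $\M \ast \P_\alpha \ast \Add(\kappa,\lambda)$, pick a rich $M$, lift $j$ through $\bar G \ast \bar H \ast \bar K$, and chase the cofinal branch $b$ of $\bar T$ from $N[G][H][K]$ back down into $N[\bar G][\bar H][\bar K]$ — matches the paper's. But the branch-preservation step, which you correctly identify as the heart of the matter, has a genuine gap.

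You want each of your two stages to be ``a projection of square-$\kappa^+$-cc $\times$ $\kappa^+$-closed,'' and you say the technical subtlety is ``verifying via Easton's Lemma that the $\kappa^+$-closure of $\U$ and of $\P_\alpha$ is not destroyed by the Cohen generic $\bar K$.'' This cannot be verified because it is not generally true. Easton's Lemma (Fact~\ref{L:Easton}) only gives $\kappa^+$-\emph{distributivity} of a $\kappa^+$-closed poset after a $\kappa^+$-cc forcing, not $\kappa^+$-\emph{closure}: the $\kappa^+$-cc forcing $\Add(\kappa,\bar\lambda)$ can add new $\kappa$-sequences of conditions in $\U$ (and in $\P_\alpha$) that need not have lower bounds. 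Distributivity is not enough for Silver's lemma (Fact~\ref{silverslemma}) or for the ``combined branch-preservation theorem'' you cite, since that remark applies to a product of ground-model forcings over a ground-model tree; here the relevant tree $\bar T$ lives in an intermediate $\kappa^+$-cc extension where the closure of $\U$ (resp.\ $\P_\alpha$) is no longer guaranteed. (The acknowledgements section of the paper in fact records that exactly this circle of issues required a correction and a counterexample of Eskew involving the Laver collapse.)

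The correct tool is Fact~\ref{F:ccc_Closed} (Unger), which is tailored precisely to the situation where the tree lives in the $\kappa^+$-cc extension $V[\P]$ while the $\kappa^+$-closed poset $\Q$ comes from $V$: it asserts that $\Q$ over $V[\P]$ adds no branches. Applied with $V = N[\bar G][\bar H]$, $\P = \Add(\kappa,\bar\lambda)$, $\Q = \U$ for your first stage, and with $V = N[G]$, $\P = \Add(\kappa,\lambda)$, $\Q = \P_\alpha$ for your second, this closes the gap and your factoring argument then goes through (together with Fact~\ref{F:Square_cc} for the purely square-$\kappa^+$-cc factors). You should also be a bit more careful about ``arranging the generics so that $G',H,K'$ are mutually generic over $W_2$'': $H$ must contain a master condition $q$ that lives in $N[\bar G][\bar H][G']$, so it depends on $G'$; the right order, as in the paper, is $\bar G \ast \bar H$, then $G'$, then $H$ containing $q$, then $K$ (from which $\bar K$ and $K'$ are read off) — after which one checks $\bar K$ is indeed generic over $N[G][H]$. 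Finally, note that the paper's own route is slightly different from your two-stage split: instead of treating the $\P_\alpha$-stage via Fact~\ref{F:ccc_Closed}, it first proves Lemma~\ref{fancylemma} and pushes it through the embedding to get $\lambda$-distributivity of $\P_\alpha$ over $N[G][K]$, concluding $b \in N[G][K]$ at once, and only then applies Fact~\ref{F:Square_cc} and Fact~\ref{F:ccc_Closed} to peel off the remaining layers. Both routes land in the same place, but the Easton's-Lemma step as you stated it does not.
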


\begin{proof} Assume for contradiction that some generic for $\M*\P_{\lambda^+} \ast \Add(\kappa,\mu)$ adds a special $\lambda$-Aronszajn tree. By using a version of Proposition~\ref{a-tree-trick} for special Aronszajn trees, there is some $\alpha<\lambda^{+}$ such that the tree is added by $\M*\P_\alpha*\Add(\kappa,\lambda)$. In other words, there is an $\M*\P_\alpha*\Add(\kappa,\lambda)$-name $\dot{T}$ and some $(m,p,c) \in \M* \P_\alpha*\Add(\kappa,\lambda)$ forcing that $\dot{T}$ is a $\lambda$-Aronszajn tree with a specializing function $\dot{f}$. Let $\Theta$ be sufficiently large with $N:=H_\Theta$ and let $M$ be a rich elementary submodel of $N$ with $\dot{T}$, $\dot{f}$, $(m,p,c)$, and $\alpha$ in $M$.

We first find an  $\M \ast \P_\alpha \ast \Add(\kappa,\lambda)$-generic $G*H*K$ containing $(m,p,c)$ such that the model $N[G][H][K]$ will define a lift $j:\bar M[\bar G][\bar H][\bar K] \to N[G][H][K]$, but we will need to exercise some care in doing so. First, let $\bar G \ast \bar H$ be an $\bar{\M} \ast \bar{\P}_\alpha$-generic over $N$ containing $(\bar m,\bar p)$, and use Lemma~\ref{mainlifting2}(\ref{mainlifting2-lift}) to choose an $\M \ast \P_\alpha$-generic $G*H$ such that $N[G][H]$ defines a lift $\bar M[\bar G][\bar H] \to N[G][H]=N[\bar G][\bar H][G'][H]$, where $G'$ is $\M/\bar G$-generic over $N[\bar G][\bar H]$. In particular, $G*H$ contains $(m,p)$. Let $K$ be an $\Add(\kappa,\lambda)$-generic over $N[G][H]$ such that $c \in K$. Since $j(\Add(\kappa,\bar \lambda)) = \Add(\kappa,\lambda) = \Add(\kappa,\bar \lambda) \times \Add(\kappa,A)$ where $A =[\bar \lambda,\lambda)$, $K=\bar K\times K'$. We can work inside $N[G][H][K]$ and use Fact~\ref{weakliftinglemma} to define a lift $j:\bar M[\bar G][\bar H][\bar K] \to N[G][H][K]$.

Let $T$ and $f$ be the respective interpretations of $\dot{T}$ and $\dot{f}$ in $N[G][H][K]$. As before, let $\bar T,\bar f \in \bar{M}[\bar G][\bar H][\bar K]$ be such that $j(\bar T) = T$ and $j(\bar f) = f$.  Using the classical argument (as in Lemma~\ref{Mahlo-wtp}), we know that $N[G][H][K]$ contains a cofinal branch $b$ of $\bar T$. It will be sufficient to show that $b \in N[\bar G][\bar H][\bar K]$ because $\bar{\M} \ast \bar{\P}_\alpha \ast \Add(\kappa,\bar \lambda)$ preserves $\bar \lambda$, and so we obtain a contradiction as in Lemma~\ref{Mahlo-wtp}.

We will argue now that $b \in N[G][K]$. By Claim~\ref{mainlifting2-denseclosedset}, $N[\bar G] \models \text{``}\bar{\P}_\alpha$ has a $\bar \lambda$-closed dense subset''. Moreover, $N[\bar G] \models$``$\bar M[\bar G]^{<\bar \lambda} \subset \bar M[\bar G]$'' because $N \models $``${\bar M}^{<\bar \lambda} \subset \bar M$'' and because $N \models \text{``}\bar \M$ is $\bar \lambda$-cc''. Furthermore, $N[\bar G] \models \text{``}\Add(\kappa,\bar \lambda)$ is $\bar \lambda$-cc''. Therefore we can apply Lemma~\ref{fancylemma} to find that $\bar{M}[\bar G] \models \text{``}\bar \P_\alpha$ is $\bar \lambda$-distributive over any extension by $\Add(\kappa,\bar \lambda)$''. It follows from elementarity of the embedding $j:\bar{M}[\bar G] \to N[G]$ that $N[G] \models \text{``}\P_\alpha$ is $\lambda$-distributive over any extension by $\Add(\kappa,\lambda)$''. We conclude that $b \in N[G][K]$ since $|b|<\lambda$.

Next we argue that $b \in N[G][\bar K]$. Recall that $N[G][K]=N[ G][\bar K][K']$, where $K'$ is $\Add(\kappa,A)$-generic. Note that $\Add(\kappa,A)$ is square-$\kappa^+$-cc in $N[ G][\bar K]$ since $\kappa^{<\kappa}=\kappa$. Hence Fact~\ref{F:Square_cc} implies that $b \in N[G][\bar K]$.

Now write $N[G][\bar K] = N[\bar G][\bar H][G'][\bar K]$. Because $\M/\bar G$ is $\kappa$-distributive, conditions in $\Add(\kappa,\bar \lambda)$ have size $<\kappa$, and $\M/\bar G$ does not change the definition of $\Add(\kappa,\bar \lambda)$, we can write $N[\bar G][\bar H][G'][\bar K] = N[\bar G][\bar H][\bar K][G']$ by Remark~\ref{product-trick}. Working in $N[\bar G]$, $\M/\bar G$ is a projection of a product of $\Add(\kappa,\lambda)$ and a $\kappa^+$-closed forcing $\U$, hence $\Add(\kappa,\bar \lambda) \times \M/G$ is a projection of $\Add(\kappa, \lambda) \times \U$ in $N[\bar G][\bar H]$ where $\U$ is still $\kappa^+$-closed. Then Fact~\ref{F:ccc_Closed} can be applied with $N[\bar G][\bar H]$ as the ground model to find that $b$ is in an extension of $N[\bar G][\bar H][\bar K]$ by a square-$\kappa^+$-cc forcing. Finally, Fact~\ref{F:Square_cc} implies that $b \in N[\bar G][\bar H][\bar K]$.\end{proof}

\subsection*{An Open Question} 

The reader may be concerned about the fact that despite the effort it took to show that $\SH(\kappa^{++})$ fails in the model of Theorem~\ref{sr-notsh}, we have not settled the question of $\SH(\kappa^{++})$ in the other Mahlo models.

\begin{question*}\label{sh-question} Does $\SH(\kappa^{++})$ hold in the models of Theorems \ref{wtp-sr-ap} and \ref{gk}? This is assuming that the $\lambda$ we begin with is \emph{not} weakly compact.\end{question*}

If $\lambda$ is regular and not weakly compact in $L$ and $\GCH$ holds, then there are $\lambda$-Suslin trees. More precisely, work of Jensen and Todor{\v c}evi{\'c} showed that if $\lambda$ is regular and not weakly compact in $L$, then the principle $\square(\lambda)$ holds in $V$ \cite{T:partitioning}; and then Rinot showed that $\square(\lambda)$ and $\GCH$ together imply the existence of $\lambda$-Suslin trees \cite{RINOT2017510}. From this reasoning it also follows that $L[\M] \models \neg \SH(\kappa^{++})$ if $\M$ is one of our Mitchell forcings. Hence, a positive answer to this question would necessarily make use of the interaction between our Mitchell forcing and the club-adding iteration that follows it. It would also confirm that  $\GCH$ is strictly necessary in the result of Rinot.

\subsection*{Acknowledgements:} We would like to thank Monroe Eskew for recognizing a problem with an earlier version of Lemma~\ref{fancylemma}---even finding a subtle counterexample to the original statement involving the Laver collapse.


\begin{thebibliography}{0}


\bibitem{ABR:tree}
Uri Abraham. Aronszajn trees on $\aleph_2$ and $\aleph_3$. {\it Annals of Pure and Applied Logic}, 24(3):213-230, 1983.	


\bibitem{CUM:comb}
James Cummings. Notes on Singular Cardinal Combinatorics. {\it Notre Dame Journal of Formal Logic}, 46(3):251-282, 2005.
	
\bibitem{CUMhandbook}
James Cummings. Iterated forcing and elementary embeddings. In Matthew Foreman and Akihiro Kanamori, editors, {\it Handbook of Set Theory}, volume 2, pages 885-1148. Springer, 2010.


\bibitem{CUMFOR:tp}
James Cummings and Matthew Foreman. The tree property. {\it Advances in Mathematics}, 133(1):1-32, 1998.
	

\bibitem{8fold}
James Cummings, Sy-David Friedman, Menachem Magidor, Assaf Rinot, and Dima Sinapova. The eightfold way. {\it The Journal of Symbolic Logic}, 83(1):349-371, 2018.

\bibitem{Devlin-aleph1trees}
Keith J. Devlin. $\aleph \sb{1}$-trees. {\it Ann. Math. Logic}, 13(3):267-330, 1978.
	
	
\bibitem{DEVbook}
Keith J. Devlin. {\it Constructibility}. Springer, 1984.

\bibitem{GilKru:8fold}
Thomas Gilton and John Krueger. A note on the eightfold way. {\it Proceedings of the American Mathematical Society}, 148(3):1283-1293, 2020.

\bibitem{Gilton-thesis}
Thomas D. Gilton. {\it On the Infinitary Combinatorics of Small Cardinals and the Cardinality of the Continuum}. PhD thesis, University of California, Los Angeles, 2019.


\bibitem{GK:a}
Moti Gitik and John Krueger. Approachability at the second successor of a singular cardinal. {\it The Journal of Symbolic Logic}, 74(4):1211-1224, 2009.

\bibitem{Hamkins:LaverDiamond}
Joel David Hamkins. A class of strong diamond principles. {\it ArXiv e-prints}, 2002.


\bibitem{HS:sr}
Leo Harrington and Saharon Shelah. Some Exact Equiconsistency Results in Set Theory. {\it Notre Dame Journal of Formal Logic}, 26(2):178-188, 1985.
	
	
\bibitem{HLN:small-embedding}
Peter Holy and Philipp L{\"u}cke and Ana Njegomir. Small embedding characterizations for large cardinals. {\it Annals of Pure and Applied Logic}, 170(2):251-271, 2019.


\bibitem{HS:u}
Radek Honzik and {\v S}{\'a}rka Stejskalov{\'a}. Small $\mathfrak{u}(\kappa)$ at singular with compactness at $\kappa^{++}$. Submitted, 2019.
	
\bibitem{JECHbook}
Thomas Jech. {\it Set Theory}. Springer, 2003.
	
	
\bibitem{KUNENsat}
Kenneth Kunen. Saturated ideals. {\it The Journal of Symbolic Logic}, 43(1), 1978.
	
\bibitem{Laver:i}
Richard Laver. Making the supercompactness of $\kappa$ indestructible under $\kappa$ directed closed forcing. {\it Israel Journal of Mathematics}, 29(4):385-388, 1978.


\bibitem{M:sr}
Menachem Magidor. Reflecting stationary sets. {\it The Journal of Symbolic Logic}, 47(4):755-771, 1982.
	
\bibitem{M:tree}
William J. Mitchell. {A}ronszajn trees and the independence of the transfer property. {\it Annals of Mathematical Logic}, 5(1):21-46, 1972.
	

\bibitem{RINOT2017510}
Assaf Rinot. Higher Souslin trees and the {GCH}, revisited. {\it Advances in Mathematics}, 311:510-531, 2017.



\bibitem{S:m}
Robert M. Solovay. A model of set theory in which every set of reals is {L}ebesgue measurable. {\it Annals of Mathematics}, 92:1-56, 1970.


\bibitem{T:partitioning}
Stevo Todorcevic. Partitioning pairs of countable ordinals. {\it Acta Math.}, 159(3-4):261-294, 1987.
	

\bibitem{UNGER:1}
Spencer Unger. Fragility and indestructibility of the tree property. {\it Archive for Mathematical Logic}, 51(5-6):635-645, 2012.

	
	
\bibitem{Unger:II}
Spencer Unger. Fragility and indestructibility {II}. {\it Annals of Pure and Applied Logic}, 166(11):1110-1122, 2015.
	

\end{thebibliography}
\end{document}